\definecolor{citation}{rgb}{0,.40,.80}
\definecolor{reference}{rgb}{.80,0,.40}
\numberwithin{equation}{section}
\theoremstyle{plain}
\newtheorem{theorem}{Theorem}[section]
\newtheorem{lemma}[theorem]{Lemma}
\newtheorem{proposition}[theorem]{Proposition}
\newtheorem{corollary}[theorem]{Corollary}
\newtheorem{sublemma}[theorem]{Sublemma}
\theoremstyle{definition}
\newtheorem{definition}[theorem]{Definition}
\newtheorem{example}[theorem]{Example}
\newtheorem{remark}[theorem]{Remark}
\newcommand{\st}{\mid} 
\newcommand{\set}[1]{\left\{ \, #1 \, \right\}}
\newcommand{\Dperf}{\mathrm{D}_{\mathrm{perf}}}
\newcommand{\Dqc}{\mathrm{D}_{\mathrm{qc}}}
\newcommand{\Db}{\mathrm{D^b}}
\newcommand{\llangle}{\left \langle}
\newcommand{\rrangle}{\right \rangle}
\DeclareMathOperator{\Forg}{Forg}
\newcommand{\Vect}{\mathrm{Vect}}
\newcommand{\fd}{\mathrm{fd}}
\newcommand{\Mod}{\mathrm{Mod}}
\newcommand{\Sch}{\mathrm{Sch}}
\newcommand{\Gpd}{\mathrm{Gpd}}
\newcommand{\Cl}{{\mathop{\mathcal{C}\!\ell}}}
\newcommand{\cB}{\mathcal{B}}
\newcommand{\Gr}{\mathrm{Gr}}
\DeclareMathOperator{\Pic}{Pic}
\newcommand{\tX}{\widetilde{X}}
\newcommand{\wtilde}{\widetilde}
\DeclareMathOperator{\colim}{colim}
\newcommand{\Spec}{\mathrm{Spec}}
\newcommand{\pug}{\mathrm{pug}}
\newcommand{\cHom}{\mathcal{H}\!{\it om}}
\DeclareMathOperator{\Hom}{Hom}
\DeclareMathOperator{\Ext}{Ext}
\DeclareMathOperator{\Aut}{Aut}
\newcommand{\id}{\mathrm{id}}
\newcommand{\Coh}{\mathrm{Coh}}
\newcommand{\Ku}{\mathcal{K}u}
\DeclareMathOperator{\characteristic}{char}
\newcommand{\num}{\mathrm{num}}
\DeclareMathOperator{\Knum}{\rK_{\num}}
\DeclareMathOperator{\Stab}{Stab}
\newcommand{\Inf}{\mathrm{Inf}}
\newcommand{\HH}{\mathrm{HH}}
\newcommand{\Cat}{\mathrm{Cat}}
\newcommand{\cl}{\mathrm{cl}}
\newcommand{\tr}{\mathrm{tr}}
\newcommand{\stable}{\mathrm{st}}
\newcommand{\br}{\mathrm{br}}
\newcommand{\op}{\mathrm{op}}
\newcommand{\GL}{\mathrm{GL}}
\newcommand{\ch}{\mathrm{ch}}
\newcommand{\td}{\mathrm{td}}
\newcommand{\cO}{\mathcal{O}}
\newcommand{\cA}{\mathcal{A}}
\newcommand{\cC}{\mathscr{C}}
\newcommand{\cD}{\mathscr{D}}
\newcommand{\cF}{\mathcal{F}}
\newcommand{\cG}{\mathcal{G}}
\newcommand{\cH}{\mathcal{H}}
\newcommand{\cM}{\mathcal{M}}
\newcommand{\cP}{\mathcal{P}}
\newcommand{\cQ}{\mathcal{Q}}
\newcommand{\cR}{\mathcal{R}}
\newcommand{\cS}{\mathcal{S}}
\newcommand{\cT}{\mathscr{T}}
\newcommand{\cU}{\mathcal{U}}
\newcommand{\cW}{\mathcal{W}}
\newcommand{\cX}{\mathcal{X}}
\newcommand{\cY}{\mathcal{Y}}
\newcommand{\cI}{\mathcal{I}}
\newcommand{\rH}{\mathrm{H}}
\newcommand{\rh}{\mathrm{h}}
\newcommand{\rK}{\mathrm{K}}
\newcommand{\rS}{\mathrm{S}}
\newcommand{\rL}{\mathrm{L}}
\newcommand{\rR}{\mathrm{R}}
\newcommand{\fC}{\mathfrak{C}}
\newcommand{\fD}{\mathfrak{D}}
\newcommand{\bA}{\mathbf{A}}
\newcommand{\bC}{\mathbf{C}}
\newcommand{\bG}{\mathbf{G}}
\newcommand{\bZ}{\mathbf{Z}}
\newcommand{\bP}{\mathbf{P}}
\newcommand{\bQ}{\mathbf{Q}}
\newcommand{\bR}{\mathbf{R}}
\newcommand{\bv}{\mathbf{v}}
\newcommand{\sO}{\mathsf{O}}
\begin{document}

\title{Moduli spaces of stable objects in Enriques categories}

\author{Alexander Perry}
\address{Department of Mathematics, University of Michigan, Ann Arbor, MI 48109, USA \smallskip}
\email{arper@umich.edu}
\urladdr{http://www-personal.umich.edu/~arper/}

\author{Laura Pertusi} 
\address{Dipartimento di Matematica ``F.\ Enriques'' \\
Via Cesare Saldini 50 \\
Universit\`a degli Studi di Milano \\
20133 Milano, Italy \smallskip 
}
\email{laura.pertusi@unimi.it}
\urladdr{http://www.mat.unimi.it/users/pertusi/index.html}

\author{Xiaolei Zhao}
\address{Department of Mathematics \\
South Hall, Room 6607 \\
University of California \\
Santa Barbara, CA 93106, USA \smallskip
}
\email{xlzhao@math.ucsb.edu}
\urladdr{https://sites.google.com/site/xiaoleizhaoswebsite/}

\thanks{A.P. was partially supported by NSF grants DMS-2112747, DMS-2052750, and DMS-2143271, and a Sloan Research Fellowship. L.P. was supported by the national research project PRIN 2017 Moduli and Lie Theory. 
X.Z.\ was partially supported by the Simons Collaborative Grant 636187, NSF grant DMS-2101789, and NSF FRG grant DMS-2052665. }

\maketitle

\begin{abstract}
We study moduli spaces of stable objects in Enriques categories by exploiting their relation to moduli spaces of stable objects in associated K3 categories. 
In particular, we settle the nonemptiness problem for moduli spaces of stable objects in the Kuznetsov components of several interesting classes of Fano varieties, and deduce the nonemptiness of fixed loci of certain antisymplectic involutions on modular hyperk\"{a}hler varieties. 
\end{abstract}

\section{Introduction}

The goal of this paper is to study moduli spaces of stable objects 
in Enriques categories arising as the Kuznetsov components of Fano varieties. 
We develop a general approach to this problem based on the relation to moduli spaces of stable objects in K3 categories. 
We work over the complex numbers for the overview of our results below. 

\subsection{K3 categories and their moduli spaces}
\label{K3-categories}
In the celebrated work \cite{Mukai:BundlesK3}, Mukai proved that moduli spaces of stable sheaves of primitive class on K3 surfaces are smooth projective hyperk\"ahler varieties of the expected dimension, deformation equivalent to Hilbert schemes of points on a K3 surface (see also \cite{Beauville:remarksonc1zero, OG:weight2, Yoshioka:Irreducibility, Yoshioka:Abelian}). This result has been extended in \cite{bayer-macri-projectivity} to moduli spaces of stable objects, in the sense of Bridgeland \cite{bridgeland, bridgeland-K3}, in the bounded derived category of coherent sheaves $\Db(S)$ on a K3 surface $S$. 

Recently, these results have been further extended to the case where $\Db(S)$ is replaced by a suitable \emph{K3 category}. 
The first such example arises as the \emph{Kuznetsov component} $\Ku(Y)$ of a cubic fourfold $Y \subset \bP^5$ \cite{kuznetsov-cubic}, defined by the semiorthogonal decomposition 
\begin{equation*}
\Db(Y) = \langle \Ku(Y), \cO_Y, \cO_Y(1), \cO_Y(2) \rangle. 
\end{equation*} 
The component $\Ku(Y)$ is a K3 category in the sense that it has the same homological invariants, namely Serre functor and Hochschild homology, as 
$\Db(S)$ for a K3 surface $S$. 
Stability conditions on $\Ku(Y)$ were constructed in \cite{BLMS}, 
and in \cite{BLMNPS} an exact analog of Mukai's results were proved in this setting, leading to new unirational locally complete families of polarized hyperk\"ahler varieties, as well as tight connections between the geometry of $Y$ and the structure of $\Ku(Y)$. 

Other examples of K3 categories can be constructed as Kuznetsov components of Fano varieties using the results of \cite{kuznetsov-CY}. Among them, the best understood are those associated to even-dimensional \emph{Gushel--Mukai (GM) varieties} \cite{KuzPerry:dercatGM}, which are obtained by taking a quadric section or double cover of a linear section of the Grassmannian $\Gr(2,5) \subset \bP^{10}$ (see Example~\ref{ex_GM}). 
In \cite{PPZ}, we constructed stability conditions on such K3 categories and proved analogs for their moduli spaces of the known results for cubic fourfolds. 

In general, the results of \cite{IHC-CY2} show that given a family of K3 categories equipped with a family of stability conditions, relative moduli spaces of stable objects are smooth. 
This allows one to deduce Mukai-type structure results for moduli spaces on a generic fiber when there are special fibers of the form $\Db(S)$ for a K3 surface $S$. 

\subsection{Kuznetsov components of Fano threefolds} 
Besides the above examples, Fano threefolds are the main case where stability conditions have been constructed on Kuznetsov components \cite{BLMS}. 
In fact, for many Fano threefolds $Y$ there exists a stability condition $\sigma$ on $\Ku(Y)$ such that:
\begin{enumerate}
\item $\sigma$ is \emph{Serre invariant}, i.e. fixed by the Serre functor modulo the natural $\widetilde{\GL}_2^{+}(\bR)$-action on the space of numerical stability conditions. 
\item $\sigma$ 
is the unique Serre invariant stability condition modulo the $\widetilde{\GL}_2^{+}(\bR)$-action. 
\end{enumerate} 
Namely, by a combination of various works \cite{PY, FP, JLLZ, PR} (see also \cite[\S5.5]{categorical-torelli-survey}), 
this holds for Fano threefolds of Picard number $1$, index $2$, and degree $d \geq 2$, 
as well as for Fano threefolds of Picard number $1$, index $1$, and and degree $d \geq 10$.\footnote{In general, however, Serre invariant stability conditions need not exist on $\Ku(Y)$; see \cite[Corollary 1.9]{kuznetsov-perry-SD}, which implies for instance that they do not exist when $Y$ has Picard number $1$, index $1$, and degree $6$.} 
Note that stability conditions which differ by the $\widetilde{\GL}_2^{+}(\bR)$-action have the same set of (semi)stable objects, so they are the same from the perspective of their moduli spaces; therefore, when $\sigma$ as above exists, its moduli spaces are canonical to $\Ku(Y)$.  

There is a rich emerging theory of moduli spaces of stable objects in Kuznetsov components of Fano threefolds, with applications including the realization of classical moduli spaces in these terms \cite{arithmetically-CM-LMS, rota, Zhang, LiuZhang, FP} and the proofs of (categorical) Torelli theorems \cite{BMMS:categoricalcubic3, feyz:desing, zhang-hochschild:}; see \cite{categorical-torelli-survey} for a survey. 
What is still lacking, however, are general structure theorems for these moduli spaces, parallel to the situation for K3 categories discussed in \S\ref{K3-categories}. 
In particular, a fundamental open problem is to show that these 
moduli spaces are nonempty when the expected dimension is nonnegative. 
In comparison to the case of K3 categories, the main difficulty is that the categories $\Ku(Y)$ arising from Fano threefolds within a given deformation class are typically not equivalent to the derived category of a variety for any choice of $Y$; this precludes any direct deformation-theoretic reduction of the problem to a more geometric one. 

In this paper, we solve the nonemptiness problem in two interesting cases, 
based on a general strategy that also allows access to other structural properties of the resulting moduli spaces, like their singularities. 

The first case is \emph{quartic double solids}, i.e. smooth double covers $Y \to \bP^3$ branched along a quartic surface, whose Kuznetsov component is defined by 
\begin{equation*}
\Db(Y) = \langle \Ku(Y), \cO_Y, \cO_Y(1) \rangle. 
\end{equation*} 
Among Fano threefolds of Picard number $1$ and index $2$, these are the ones of lowest degree, namely $2$, for which Serre-invariant stability conditions are known to exist and be unique modulo $\widetilde{\GL}_2^{+}(\bR)$.  
To state our result, we recall from \cite[Proposition 3.9]{Kuz_Fano} that the numerical Grothendieck group $\Knum(\Ku(Y))$ has rank $2$, and there is a basis $\mu_1, \mu_2$, described in~\eqref{eq_basisKnumqds}, with respect to which the Euler form is given by 
\begin{equation*}
\begin{pmatrix}
-1 & -1 \\ 
-1 & -2
\end{pmatrix}.
\end{equation*}

\begin{theorem}
\label{theorem-M-quartic-double}
Let $Y$ be a quartic double solid. 
Let $v \in \Knum(\Ku(Y))$ and let $\sigma$ be a Serre invariant stability condition on $\Ku(Y)$. 
\begin{enumerate}
\item \label{non-empty} The moduli space $M_{\sigma}(\Ku(Y), v)$ of $\sigma$-semistable objects of class $v$ is nonempty. 
\item \label{v-smooth-M-KuY}
If $v =  a\mu_1 + b\mu_2$ where $\gcd(a,b)=1$ and $a$ is odd, then for generic $Y$ the moduli space 
$M_{\sigma}(\Ku(Y), v)$ is smooth and projective of dimension $-\chi(v,v) + 1$. 
\end{enumerate} 
\end{theorem}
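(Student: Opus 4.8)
Our plan is to study $\cA := \Ku(Y)$ through its \emph{K3 cover}: a K3 category $\widetilde{\cA}$ with a residual involution $\tau$ and biadjoint functors $p^* \colon \cA \to \widetilde{\cA}$ and $p_* \colon \widetilde{\cA} \to \cA$ satisfying
\begin{equation*}
p_* p^* \cong \id \oplus \Phi, \qquad p^* p_* \cong \id \oplus \tau, \qquad \Phi \circ p_* \cong p_*, \qquad p_* \circ \tau \cong p_*,
\end{equation*}
where $\Phi$ is the involution of $\cA$ given by composing its Serre functor with $[-2]$. The first step is to check that the Serre invariant stability condition $\sigma$ induces a $\tau$-invariant stability condition $\widetilde{\sigma}$ on $\widetilde{\cA}$ — Serre invariance being what makes the support property pass to the induced data — and that $\Phi$ and $\tau$ preserve stability and phases on their respective sides. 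This yields two facts that structure the argument: \textbf{(a)} for $\widetilde{\sigma}$-stable $\widetilde{E} \in \widetilde{\cA}$, the object $p_* \widetilde{E}$ is $\sigma$-semistable, since any subobject of $p_* \widetilde{E}$ pulls back into the $\widetilde{\sigma}$-semistable object $p^* p_* \widetilde{E} \cong \widetilde{E} \oplus \tau\widetilde{E}$ of a single phase; \textbf{(b)} a $\sigma$-stable $E \in \cA$ satisfies $E \cong \Phi E$ if and only if $E \cong p_* \widetilde{E}$ for some $\widetilde{E} \in \widetilde{\cA}$, the forward direction because a $\Phi$-fixed simple object carries a canonical descent datum along $p$, the reverse from $\Phi \circ p_* \cong p_*$.

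For the nonemptiness statement: as $Y$ varies, the categories $\widetilde{\cA}$ form a family of K3 categories which, after deformation within the moduli of K3 categories, contains derived categories of (twisted) K3 surfaces; by the smoothness of relative moduli spaces \cite{IHC-CY2} and the nonemptiness and projectivity of moduli spaces of Bridgeland-stable objects on K3 surfaces \cite{bayer-macri-projectivity}, the space $M_{\widetilde{\sigma}}(\widetilde{\cA}, \widetilde{v})$ is then nonempty for every $\widetilde{v} \in \Knum(\widetilde{\cA})$. A computation with the form $\left(\begin{smallmatrix} -1 & -1 \\ -1 & -2 \end{smallmatrix}\right)$ identifies the image of $p_* \colon \Knum(\widetilde{\cA}) \to \Knum(\cA)$ as the index-two sublattice $\{\, a\mu_1 + b\mu_2 : a \text{ even}\,\}$. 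For $v$ in this sublattice, choosing $\widetilde{v}$ with $p_* \widetilde{v} = v$ and applying \textbf{(a)} to a $\widetilde{\sigma}$-stable object of class $\widetilde{v}$ produces a $\sigma$-semistable object of class $v$; the remaining classes (those with $a$ odd) are reached from the classes in the image of $p_*$ together with the class $\mu_1$ by a wall-crossing argument. Hence $M_{\sigma}(\cA, v) \neq \emptyset$ for every $v$.

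For the smoothness statement, suppose $\gcd(a,b) = 1$ and $a$ is odd, and fix a $\sigma$-stable $E$ of class $v = a\mu_1 + b\mu_2$. By Serre duality $\Ext^2_{\cA}(E,E) \cong \Hom_{\cA}(E, \Phi E)^{\svee}$, and since $\Phi E$ is $\sigma$-stable of the same phase as $E$ this vanishes unless $E \cong \Phi E$; but by \textbf{(b)} the latter would force $v$ into the image of $p_*$, contradicting that $a$ is odd. Hence $\Ext^2_{\cA}(E,E) = 0$ for every $\sigma$-stable $E$ of class $v$, so the $\sigma$-stable locus is smooth, and since $\Hom_{\cA}(E,E) = \bC$ and the higher Ext-groups vanish ($\cA$ having cohomological dimension $2$), the Euler pairing gives
\begin{equation*}
\dim_{\bC}\Ext^1_{\cA}(E,E) = 1 - \chi(v,v) = -\chi(v,v) + 1 .
\end{equation*}
Moreover, as $a$ is odd the class $p^* v$ is primitive on $\widetilde{\cA}$ with positive square, and $p^*$ realizes $M_{\sigma}(\cA, v)$ as a union of connected components of the fixed locus of the antisymplectic involution induced by $\tau$ on the projective hyperk\"{a}hler variety $M_{\widetilde{\sigma}}(\widetilde{\cA}, p^* v)$; in particular $M_{\sigma}(\cA, v)$ is projective. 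For generic $Y$ the Serre invariant stability condition lies on no wall for $v$, so $\sigma$-semistability coincides with $\sigma$-stability and the above applies to all of $M_{\sigma}(\cA, v)$; combined with the nonemptiness statement, this exhibits $M_{\sigma}(\cA, v)$ as a smooth projective variety of dimension $-\chi(v,v) + 1$.

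The main obstacle will be the construction and control of the K3 cover — producing $\widetilde{\cA}$ and the biadjoint functors $p^*, p_*$ for $\cA = \Ku(Y)$, proving that $\sigma$ descends to a genuine $\tau$-invariant stability condition, and identifying which members of the family $\{\widetilde{\cA}\}_Y$ are deformations of derived categories of (twisted) K3 surfaces, so that the structure theory for K3 categories applies. The remaining difficulty is the lattice bookkeeping: computing the image of $p_*$ and, for the classes with $a$ odd, carrying out the wall-crossing step, where matching the phases of the relevant objects is delicate.
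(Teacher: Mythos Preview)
Your overall architecture is right---work through the K3 cover $\widetilde{\cA} \simeq \Db(S)$ of $\Ku(Y)$, where $S$ is the branch quartic K3 surface---and your smoothness argument for part~\eqref{v-smooth-M-KuY} is essentially the paper's. But the nonemptiness argument has a genuine gap.

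First, a small correction: the CY2 cover of $\Ku(Y)$ is already $\Db(S)$ for the branch K3 surface $S$; there is no need to deform within ``moduli of K3 categories'' to reach a K3 surface. More importantly, moduli spaces on a K3 surface are nonempty only for classes $\widetilde{v}$ with $\widetilde{v}^2 \geq -2$, not for all $\widetilde{v}$. So even for $v$ with $a$ even, you must \emph{produce} a lift $\widetilde{v}$ with $p_*\widetilde{v} = v$ and $\widetilde{v}^2 \geq -2$; this is a nontrivial lattice computation, not automatic.

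The serious gap is the $a$ odd case. Your identification of the image of $p_*$ as $\{a\mu_1 + b\mu_2 : a \text{ even}\}$ is correct \emph{only when $S$ has Picard rank $1$}; so for generic $Y$ there is no lift of $v$ at all, and no pushforward can produce a semistable object of class $v$. The phrase ``reached \ldots\ by a wall-crossing argument'' is not a proof: wall-crossing changes the stability condition, not the numerical class, and there is no evident mechanism for producing a semistable object of class $v$ from objects of classes $\mu_1$ and $v - \mu_1$. The paper's resolution is different in kind: one deforms $Y$ to a quartic double solid $Y'$ whose branch K3 surface $S'$ contains a \emph{line} $L$. Then $\Knum(S')$ is larger, and one checks by explicit case analysis that every $v \in \Knum(\Ku(Y'))$ (including $a$ odd) admits a lift $w \in \Knum(S')$ with $w^2 \geq -2$. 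This gives nonemptiness at the special fiber $Y'$; to transport it back to $Y$ one needs a further argument showing the relative moduli space over the deformation is smooth at a generic point of the fiber over $Y'$ (this requires the dimension inequality $w^2 + 2 < -\chi(v,v)+1$, ensuring a generic stable object over $Y'$ is not $\bZ/2$-fixed), hence dominant, hence surjective by properness.

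Finally, your projectivity argument via embedding $M_\sigma(\cA,v)$ in the $\tau$-fixed locus of $M_{\widetilde{\sigma}}(\widetilde{\cA}, p^*v)$ runs into a known obstacle: the induced stability condition $\widetilde{\sigma}$ need not be $p^*v$-generic, and without that the K3-side moduli space is not known to be a projective hyperk\"ahler variety. The paper instead deduces projectivity from smoothness via a Moishezon-type criterion.
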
 

Above, by the moduli space $M_{\sigma}(\Ku(Y), v)$ we mean more precisely the good moduli space for the algebraic stack $\cM_{\sigma}(\Ku(Y), v)$ of $\sigma$-semistable objects of class $v$, whose existence follows from the results of \cite{BLMNPS}. 

\begin{remark}
The moduli space $M_\sigma(\Ku(Y), v)$ may fail to be smooth for all $Y$ when $v$ is not of the form described in part~\eqref{v-smooth-M-KuY} of the theorem; 
indeed, by \cite[Theorem 5.2]{rota}, $M_\sigma(\Ku(Y), \mu_2)$ is singular for every $Y$. 
On the other hand, we expect that $M_\sigma(\Ku(Y), v)$ is projective for every $v$ and $Y$. 
This would follow from projectivity of moduli spaces of semistable objects in K3 categories with 
respect to non-generic stability conditions (Remark~\ref{remark-projectivity}). 
\end{remark} 

The second class of Fano threefolds we consider are GM threefolds $X$, which can be described as either a smooth quadric section of $\Gr(2,5) \cap M$ where $M \subset \bP^{9}$ is a codimension $2$ linear subspace of Pl\"{u}cker space, or as a smooth double cover of $\Gr(2,5) \cap M$ branched over a quadric section where $M \subset \bP^{9}$ is a codimension $3$ subspace. 
Among Fano threefolds of Picard number $1$ and index $1$, these are the ones of lowest degree, namely $10$, for which Serre invariant stability conditions are known to exist and be unique modulo $\widetilde{\GL}_2^{+}(\bR)$.  
The Kuznetsov component is defined by 
\begin{equation*}
\Db(X) = \langle \Ku(X) , \cO_X, \cU_X^{\vee} \rangle 
\end{equation*} 
where $\cU_X$ is the pullback of the tautological rank $2$ bundle on $\Gr(2,5)$. 

\begin{theorem}
\label{theorem-Msigma-GM}
Let $X$ be a GM threefold. 
Let $v \in \Knum(\Ku(X))$ and let $\sigma$ be a Serre invariant stability condition on $\Ku(X)$. 
\begin{enumerate}
\item \label{nonempty-GM} The moduli space $M_{\sigma}(\Ku(X), v)$ of $\sigma$-semistable objects of class $v$ is nonempty. 
\item \label{Msigma-GM-smooth}
If $v$ is primitive, i.e. not divisible in $\Knum(\Ku(X))$, then for generic $X$ the moduli space $M_{\sigma}(\Ku(X), v)$ is smooth and projective of dimension $-\chi(v,v) + 1$. 
\end{enumerate} 
\end{theorem}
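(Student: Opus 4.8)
The plan is to derive Theorem~\ref{theorem-Msigma-GM} from the structure theory of moduli spaces in K3 categories, in the same way as Theorem~\ref{theorem-M-quartic-double}, by passing to the K3 cover of the Enriques category $\cE := \Ku(X)$. Recall that $\cE$ is an Enriques category: its Serre functor satisfies $S_{\cE}^2 \cong [4]$ but $S_{\cE} \not\cong [2]$, so $\tau := S_{\cE}[-2]$ is a nontrivial involutive autoequivalence, and it acts trivially on $\Knum(\cE)$. Attached to this data is a K3 category $\widetilde{\cE}$ carrying a residual $\bZ/2$-action, with involution $\iota$, together with an adjoint pair of restriction and induction functors $\pi^* \colon \cE \to \widetilde{\cE}$ and $\pi_* \colon \widetilde{\cE} \to \cE$ satisfying $\pi_* \pi^* \cong \id \oplus \tau$ and $\pi^* \pi_* \cong \id \oplus \iota^*$. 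Two features of this picture will be used. First, the Serre invariant stability condition $\sigma$ on $\cE$ — which exists and is unique modulo $\widetilde{\GL}_2^{+}(\bR)$ by \cite{PY, FP, JLLZ, PR} — is fixed by $\tau$ and therefore pulls back to an $\iota$-invariant stability condition $\widetilde{\sigma}$ on $\widetilde{\cE}$, and $\pi^*$, $\pi_*$ send (semi)stable objects to (semi)stable objects of the same phase. Second, as $X$ varies, the $\widetilde{\cE}$ fit into a family of K3 categories over a connected base that contains the moduli of GM threefolds and a fiber of geometric type — e.g. a fiber equivalent to $\Db(S)$ or $\Db(S, \alpha)$ for a (twisted) K3 surface; this geometricity is a period-theoretic statement about the Enriques-type weight-two Hodge structure of $\Ku(X)$, and it is the crux of the argument.

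I would prove the smoothness assertion \eqref{Msigma-GM-smooth} first, for primitive $v$ and generic $X$. For any $E \in M_{\sigma}(\cE, v)$ one has $\Ext^i_{\cE}(E, E) = 0$ for $i \notin \{0, 1, 2\}$ and, by Serre duality, $\Ext^2_{\cE}(E, E) \cong \Hom_{\cE}(E, \tau E)^{\vee}$. If $E$ is $\sigma$-stable then $\tau E$ is $\sigma$-stable of the same class, so $\Hom_{\cE}(E, \tau E)$ vanishes unless $E \cong \tau E$; hence at a $\sigma$-stable point with $E \not\cong \tau E$ the deformation theory is unobstructed and $M_{\sigma}(\cE, v)$ is smooth there of the expected dimension $-\chi(v, v) + 1$. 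It thus suffices to show that, for generic $X$, every point of $M_{\sigma}(\cE, v)$ is $\sigma$-stable with $E \not\cong \tau E$. Stability follows from the genericity of the Serre invariant $\sigma$ with respect to the primitive class $v$ (an established property of such stability conditions), which forces $\sigma$-semistable $=$ $\sigma$-stable. For $E \not\cong \tau E$: the relation $E \cong \tau E$ is equivalent to $E$ being of the form $\pi_* \widetilde{E}$, and then $\pi^* E \cong \widetilde{E} \oplus \iota^* \widetilde{E}$ is a direct sum of two $\widetilde{\sigma}$-semistable objects of equal phase whose classes add to $\pi^* v$; this is excluded for generic $X$ by combining the genericity of $\widetilde{\sigma}$ with respect to $\pi^* v$ on the generic fiber — where the lattice of algebraic classes is minimal — with a numerical computation of the image of $\pi^*$ on numerical Grothendieck groups. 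Projectivity then follows from the existence theorem for good moduli spaces in \cite{BLMNPS} together with ampleness of a Bayer--Macr\`i type determinant line bundle (valid because $\sigma$ is generic for $v$); equivalently, $M_{\sigma}(\cE, v)$ carries a finite morphism, \'etale of degree $2$ over the stable locus, onto a union of connected components of the $\iota$-fixed locus of the projective holomorphic symplectic variety $M_{\widetilde{\sigma}}(\widetilde{\cE}, \pi^* v)$, which displays both smoothness and projectivity.

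For the nonemptiness assertion \eqref{nonempty-GM}: if $v = d v_0$ with $v_0$ primitive, then $d$-fold polystable objects reduce the claim to the primitive case. For primitive $v$, the $\iota$-fixed part of $M_{\widetilde{\sigma}}(\widetilde{\cE}, \pi^* v)$ is nonempty over the geometric fiber, by the known nonemptiness of moduli of stable objects in K3 categories and their $\bZ/2$-quotients (ultimately \cite{bayer-macri-projectivity}); since the relative version of this fixed locus is smooth and proper over the connected base, it is nonempty over every fiber, in particular over every GM threefold. Choosing a $\widetilde{\sigma}$-semistable object $\widetilde{E}$ with $\iota^* \widetilde{E} \cong \widetilde{E}$, equipping it with a $\bZ/2$-equivariant structure, and applying $\pi_*$ then produces a $\sigma$-semistable object of class $v$ in $\Ku(X)$, so $M_{\sigma}(\Ku(X), v) \neq \emptyset$.

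The main obstacle is the geometric input in the first paragraph: proving that the K3 cover of $\Ku(X)$ deforms, within a family over the moduli of GM threefolds, to a K3 category whose moduli spaces are governed by \cite{bayer-macri-projectivity, IHC-CY2, BLMNPS} — ideally to $\Db$ of a (twisted) K3 surface — so that the nonemptiness, smoothness, and projectivity results for moduli in K3 categories can be transported along the family. This rests on a careful study of the period map for GM threefolds. The remaining delicate point is the numerical and genericity argument excluding $\sigma$-stable objects $E$ with $E \cong S_{\cE} E[-2]$ when $v$ is primitive and $X$ is generic; such objects, and hence singularities, do occur for special $X$ — in parallel with the singularity of $M_{\sigma}(\Ku(Y), \mu_2)$ recorded in \cite[Theorem 5.2]{rota} — which is why \eqref{Msigma-GM-smooth} is stated only for generic $X$.
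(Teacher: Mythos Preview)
Your smoothness argument for \eqref{Msigma-GM-smooth} is essentially the paper's: if $E \cong \tau E$ with $E$ simple then $E = \pi_*\widetilde E$ for some $\widetilde E \in \widetilde\cE$, whence $\pi^* E \cong \widetilde E \oplus \iota^*\widetilde E$; for $X$ the branch locus of a very general special GM fourfold $W$ one has $\Knum(\widetilde\cE) = \Knum(\Ku(W)) = -A_1^{\oplus 2}$ on which $\iota$ acts trivially, and the paper's Lemma~\ref{lemma_Forgkappa} gives $(\pi^*)_*(\kappa_i)=\lambda_i$, so $\pi^*v$ is primitive --- contradicting $[\pi^*E]=2[\widetilde E]$. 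Projectivity in the paper comes from \cite{Villa} rather than a Bayer--Macr\`i line bundle, though your alternative via the finite map to the $\iota$-fixed locus also works and appears in the paper's closing remark.

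Your nonemptiness argument, however, is circular. You want the $\iota$-fixed locus of $M_{\widetilde\sigma}(\widetilde\cE,\pi^*v)$ to be nonempty on some fiber and then deform. But by Theorem~\ref{theorem-moduli}\eqref{theorem-CY2-moduli} that fixed locus is precisely the image of $\bigsqcup_{\pi^*(v')=\pi^*v} M_\sigma(\cE,v')$; since $\pi_*\pi^*=2\cdot\id$ on $\Knum(\cE)$ the map $\pi^*$ is injective there, so the only such $v'$ is $v$, and the fixed locus is nonempty \emph{iff} $M_\sigma(\cE,v)$ is. The appeal to ``known nonemptiness \dots\ and their $\bZ/2$-quotients'' does not furnish an independent construction: \cite{bayer-macri-projectivity} says nothing about fixed loci of antisymplectic involutions, and on the geometric fiber $\Db(S)$ the residual involution is a mutation/spherical functor, not a geometric one, so no classical result applies. (Note also that your deformation step would need smoothness of the \emph{fixed locus} over the base, which is not available.)

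The paper's route is genuinely different: it uses $\pi_*$ rather than $\pi^*$. One deforms $X$ to a \emph{special} GM threefold $Y$ with branch K3 surface $S$ and seeks $w\in\Knum(S)$ with $\pi_*(w)=v$ and $w^2\ge -2$, so that $M_{\sigma_S}(\Db(S),w)\ne\emptyset$ by \cite{bayer-macri-projectivity}; pushing forward via Theorem~\ref{theorem-moduli}\eqref{theorem-enriques-moduli} produces objects in $M_{\sigma_Y}(\Ku(Y),v)$. The point --- and the real content of the proof --- is that $\pi_*\colon\Knum(S)\to\Knum(\Ku(Y))$ is \emph{not} surjective for a general branch K3 (Lemma~\ref{lemma_phifromtheK3}), so for each primitive $v=p\kappa_1+q\kappa_2$ one must construct a \emph{specific} Brill--Noether general $S$ with a tailored rank-$2$ Picard lattice on which a suitable $w$ exists (Lemmas~\ref{lemma_K3BNgeneral}--\ref{lemma_qnegative}). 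A dimension inequality $w^2+2<-\chi(v,v)+1$ then feeds into the deformational criterion Theorem~\ref{theorem-deformation} (via Lemma~\ref{lemma-todo-enriques}), which guarantees $\sigma_Y$-stable objects \emph{not} fixed by $\tau$; these are smooth points of the relative Enriques moduli and hence deform to every GM threefold. Your proposal misses both the switch to $\pi_*$ and this case-by-case lattice construction.
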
 

\begin{remark} 
The moduli space $M_{\sigma}(\Ku(X), v)$ may fail to be smooth when $X$ is not generic; indeed, by \cite[Theorem 1.5]{Zhang}, $M_\sigma(\Ku(Y), \kappa_1)$ is singular for certain $X$, 
where $\kappa_1$ is one of the basis vectors for $\Knum(\Ku(X))$ described in \eqref{eq_basisKnumGM3}. 
On the other hand, similar to the quartic double solid case, we expect that $M_{\sigma}(\Ku(X), v)$ is projective for 
every $v$ and $X$ (Remark~\ref{remark-projectivity}). 
\end{remark} 

\begin{remark}
\label{remark-GM-5fold}
By the duality conjecture proved in \cite[Theorem 1.6]{KuzPerry:cones}, for any GM fivefold there exists a GM threefold with equivalent Kuznetsov component. 
Therefore, Theorem~\ref{theorem-Msigma-GM} is also true verbatim for GM fivefolds. 
\end{remark} 

Theorem~\ref{theorem-Msigma-GM} has an interesting consequence for moduli spaces of stable objects in Kuznetsov components of GM fourfolds. 
For such a fourfold $W$, in \cite[Theorem 4.12 and Proposition 4.16]{PPZ} we constructed full numerical stability conditions $\sigma_{\alpha, \beta}$ on $\Ku(W)$, depending on suitable real parameters $\alpha$ and $\beta$; let $\Stab^{\circ}(\Ku(W))$ denote the collection of these stability conditions. 
We proved in \cite[Theorem 1.5]{PPZ} that if $w \in \Knum(\Ku(W))$ is primitive and $\sigma \in \Stab^{\circ}(\Ku(W))$ is $w$-generic (i.e. the notions of $\sigma$-semistability and $\sigma$-stability coincide for objects of class $w$), then the moduli space $M_{\sigma}(\Ku(W), w)$ is a smooth projective hyperk\"{a}hler variety of dimension $-\chi(w,w) + 2$.\footnote{In fact we showed that this statement holds for any $\sigma$ in the connected component $\Stab^{\dagger}(\Ku(W))$ of $\Stab(\Ku(W))$ containing $\Stab^{\circ}(\Ku(W))$, but below we will be interested in $\Stab^{\circ}(\Ku(W))$.}
Moreover, there is a canonical sublattice 
\begin{equation*}
-A_1^{\oplus 2} = \begin{pmatrix}
-2 & 0 \\ 
0 & -2
\end{pmatrix} 
\subset \Knum(\Ku(W)) , 
\end{equation*} 
described as the projection of $\rK_0(\Gr(2,5))$ \cite[Lemma 2.27]{KuzPerry:dercatGM}, and we observed in \cite[Proposition 5.16]{PPZ} that for $w \in -A_1^{\oplus 2}$ the moduli space $M_{\sigma}(\Ku(W), w)$ admits a natural antisymplectic involution $\iota$. As noted in \cite[Remark 5.8]{BP}, this involution is induced by an explicit involutive autoequivalence of $\Ku(W)$. 

\begin{theorem}
\label{theorem-fixed-locus}
Let $W$ be a GM fourfold. 
Let $w \in -A_1^{\oplus 2} $ be a primitive vector, and let $\sigma \in \Stab^{\circ}(\Ku(W))$ be a $w$-generic stability condition. 
Then the fixed locus of the involution $\iota$ on $M_{\sigma}(\Ku(W), w)$ is a nonempty smooth Lagrangian subvariety. 
\end{theorem}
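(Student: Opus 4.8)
The plan is to relate the involution $\iota$ on $M_\sigma(\Ku(W),w)$ to the fixed-point problem for an antisymplectic involution on a moduli space over a \emph{K3 surface}, where the answer is known. Recall (as used throughout this paper) that GM fourfolds come in families containing Hodge-special members $W$ for which $\Ku(W) \simeq \Db(S,\alpha)$ for a twisted K3 surface $(S,\alpha)$, and in fact for $w \in -A_1^{\oplus 2}$ one can arrange that the relevant part of the picture becomes untwisted. Using the family of stability conditions $\Stab^\circ(\Ku(W))$ and the smoothness of relative moduli spaces of stable objects in families of K3 categories \cite{IHC-CY2}, the moduli spaces $M_\sigma(\Ku(W),w)$ for varying $W$ fit into a smooth proper family over the moduli of GM fourfolds, and $\iota$ extends to a fiberwise involution of this family. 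The first step, therefore, is to \emph{spread out}: it suffices to prove the theorem for a single well-chosen $W$ in each deformation class, since the fixed locus $\Fix(\iota)$ is itself cut out smoothly (it is the fixed locus of a finite-order automorphism of a smooth variety, hence smooth), its dimension is locally constant, its being Lagrangian is a closed condition on a smooth family that propagates once known at one point, and — crucially — nonemptiness is \emph{open} in a proper family, so it is enough to exhibit one fiber where $\Fix(\iota) \neq \varnothing$.

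The second step is to carry out the reduction on a special fiber. Choose $W$ with $\Ku(W) \simeq \Db(S,\alpha)$; after identifying $-A_1^{\oplus 2}$ with a rank-two sublattice of the twisted Mukai lattice of $(S,\alpha)$, the autoequivalence inducing $\iota$ (the one from \cite[Remark 5.8]{BP}) transports to an antisymplectic involutive autoequivalence $\Phi$ of $\Db(S,\alpha)$ acting on $w$ as $-\id$ or compatibly so, and $M_\sigma(\Ku(W),w)$ becomes a Bridgeland moduli space $M_\tau(S,\alpha,w)$ — a smooth projective hyperk\"ahler variety of dimension $-\chi(w,w)+2$ by \cite{bayer-macri-projectivity} and its twisted analog — carrying the induced involution $\Phi_*$. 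Now the statement is a known type of result about fixed loci of antisymplectic involutions on moduli of sheaves/objects on K3 surfaces: such fixed loci are nonempty smooth Lagrangian, the standard mechanism being that $\Phi$ is (up to shift and twist) a duality-type or Fourier--Mukai involution whose fixed objects — e.g. objects $E$ with $\Phi(E) \simeq E$ — can be produced explicitly, and the Lagrangian property follows because the antisymplectic involution acts as $-1$ on the holomorphic symplectic form, so its fixed locus is isotropic of half dimension (and nonempty by, say, a Lefschetz-type or deformation argument, or by directly exhibiting a $\Phi$-fixed stable object, for instance the image of a symmetric spherical-type object).

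The third step is bookkeeping on classes: one must check that the sublattice $-A_1^{\oplus 2}$ and the vector $w$ land in the part of the (twisted) Mukai lattice where the $\alpha$-twist is trivial — this is the content of $-A_1^{\oplus 2}$ being the projection of $\rK_0(\Gr(2,5))$ \cite[Lemma 2.27]{KuzPerry:dercatGM}, which is an integral, untwisted class — so that the genericity of $\sigma$ for $w$ translates to genericity of $\tau$ for $w$ on $(S,\alpha)$ and $M_\tau(S,\alpha,w)$ is genuinely smooth projective of the expected dimension. Then count: $\dim \Fix(\iota) = \tfrac12 \dim M_\sigma(\Ku(W),w) = \tfrac12(-\chi(w,w)+2)$, consistent with being Lagrangian.

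The main obstacle I expect is the second step — identifying the transported involution $\Phi$ on $\Db(S,\alpha)$ concretely enough to prove its fixed locus is nonempty. Smoothness of $\Fix(\iota)$ and the Lagrangian property are soft (fixed locus of a symplectomorphism-reversing finite-order automorphism of a hyperk\"ahler variety), and the spreading-out is routine given the family machinery already invoked in the paper; but \emph{nonemptiness} genuinely requires producing a fixed object, and one must either recognize $\Phi$ as a concrete functor (a twist-duality, whose fixed stable objects are classical self-dual sheaves) or invoke a topological fixed-point argument — and in the latter case one needs that $M_\tau(S,\alpha,w)$ has the right cohomology and that $\iota^*$ acts on it so that the Lefschetz number is nonzero, which for hyperk\"ahler varieties of $\mathrm{K3}^{[n]}$-type with an antisymplectic involution is known but requires care to cite in the twisted setting.
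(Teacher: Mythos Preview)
Your proposal takes a genuinely different route from the paper, and the route has a real gap at the spreading-out step.

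The paper does \emph{not} specialize $W$ to a Hodge-special fourfold with $\Ku(W)\simeq\Db(S,\alpha)$ and then deform. Instead, it first reduces (via Lemma~\ref{lemma_fromspecialtoordinary}) from an arbitrary GM fourfold to a \emph{special} GM fourfold $W$, i.e.\ a double cover of a linear section of $\Gr(2,5)$ branched over an ordinary GM threefold $X$. The point is that $\Ku(W)$ is then the CY2 cover of the Enriques category $\Ku(X)$, and Theorem~\ref{theorem-moduli}\eqref{theorem-CY2-moduli} describes the $\bZ/2$-fixed locus $\cM_\sigma(\Ku(W),w)^{\bZ/2}$ as the image, under the inflation functor, of the moduli stacks $\cM_{\sigma_X}(\Ku(X),v)$ for those $v$ with $\pi^*(v)=w$. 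Lemma~\ref{lemma_Forgkappa} identifies such a $v$ explicitly, and its moduli space is nonempty by Theorem~\ref{theorem-Msigma-GM}\eqref{nonempty-GM}. So nonemptiness of the fixed locus comes for free from the already-proved nonemptiness for GM threefolds; there is no deformation of the fourfold at all. The Lagrangian property is then, as you say, Beauville's lemma.

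Your argument, by contrast, attempts to specialize $W$ in moduli and propagate nonemptiness of $\Fix(\iota)$ back to the general fiber. The specific error is the claim that ``nonemptiness is open in a proper family'': for a proper morphism $\Fix(\iota)\to S$, the image is \emph{closed}, so nonemptiness of fibers is a closed condition, not an open one. Thus knowing $\Fix(\iota)\neq\varnothing$ at a special point does not give it at a general point without further input (e.g.\ smoothness of $\Fix(\iota)\to S$, which you do not establish and which is not automatic from smoothness of each fiber). Even setting this aside, your second step --- identifying the transported involution on $\Db(S,\alpha)$ concretely enough to exhibit a fixed stable object --- is, as you yourself flag, not actually carried out; there is no result in the twisted K3 literature you can cite that gives nonemptiness of the fixed locus of an unspecified antisymplectic autoequivalence. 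The paper's approach sidesteps both problems by never leaving the given $W$ and by reducing nonemptiness to the GM threefold case, which is the content of the earlier sections.
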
 

\begin{remark}
By the same reasoning as in Remark~\ref{remark-GM-5fold}, 
Theorem~\ref{theorem-fixed-locus} is also true verbatim for GM sixfolds. 
\end{remark}

The content of the theorem is the nonemptiness of the fixed locus; indeed, then by a general result on antisymplectic involutions \cite[Lemma 1]{Beauville}, it is automatically a smooth Lagrangian. 
As we explain below, our proof in fact gives a description of the fixed locus of $\iota$ in terms of a moduli space of stable objects in the Kuznetsov component of a GM threefold. 
Finally, we note that Theorem~\ref{theorem-fixed-locus} in particular gives a new proof of the nonemptiness of $M_{\sigma}(\Ku(W), w)$, which was one of the main results of \cite{PPZ}. 

\subsection{Enriques categories and their moduli spaces} 
Now we explain the main ideas behind the proofs of the above theorems, 
which involve some results of independent interest. 

The key to our analysis is that the Kuznetsov components of quartic double solids and GM threefolds are \emph{Enriques categories} in the sense that, like the derived category of an Enriques surface, they have Serre functor of the form 
$\tau \circ [2]$ where $\tau$ is the generator of a nontrivial $\bZ/2$-action. 
To any Enriques category $\cC$, there is an associated \emph{CY2 cover} $\cD = \cC^{\bZ/2}$ defined as the invariant category for the $\bZ/2$-action. 
When $\cC = \Db(S)$ for an Enriques surface $S$, then $\cD = \Db(T)$ where $T$ is the K3 double cover of $S$, while for the Kuznetsov components of interest, the results of \cite{KuzPer_cycliccovers} show that $\cD$ can be described as follows:  
\begin{enumerate}
\item If $Y \to \bP^3$ is a quartic double solid branched over a quartic K3 surface $S \subset \bP^3$, then the CY2 cover of $\Ku(Y)$ is $\Db(S)$. 
\item If $X$ is a GM threefold, then the CY2 cover of $\Ku(X)$ is $\Ku(W)$ where $W$ is the ``opposite'' GM variety (Example~\ref{ex_GM}), which is either a Fano fourfold or a K3 surface. 
\end{enumerate}
By the above descriptions, the CY2 cover is in fact a K3 category in these cases. 

An important feature of the CY2 cover $\cD$ is that it comes equipped with a \emph{residual $\bZ/2$-action} --- which should be thought of as an analog of the covering involution of a K3 surface over an Enriques surface --- such that the invariant category $\cD^{\bZ/2}$ recovers $\cC$. In fact, as we review in \S\ref{section-reconstruction}, this is an instance of a more general result of Elagin \cite{Elagin}, which shows that for a finite abelian group $G$, taking $G$-invariants gives a duality between categories equipped with a $G$-action and categories equipped with a $G^{\vee}$-action, where $G^{\vee}$ is the dual group. 

Strictly speaking, to have a well-behaved theory of $G$-invariants, 
we work with enhanced triangulated categories in the $\infty$-categorical sense. 
In Proposition~\ref{proposition-enhancement-CG} we prove that the operation of taking $G$-invariants commutes with taking the homotopy category 
(cf. \cite{Elagin} and \cite{sosna} for similar results in the DG setting). 
This essentially allows us to ignore higher-categorical phenomenon when studying $G$-equivariant stability conditions, since stability conditions are defined at the level of the homotopy category. 
By results of \cite{polishchuk, MMS}, we thus find that if a finite group $G$ acts on an enhanced triangulated category $\cC$, and $\sigma$ is a stability condition that is fixed by each $g \in G$, then there is an induced stability condition $\sigma^G$ on $\cC^G$ (Theorem~\ref{theorem-equivariant-stability}).  
Moreover, we observe in Lemma~\ref{lemma_backtosamestability} that $\sigma^G$ is fixed by the $G^{\vee}$-action on $\cC^{G}$, and that $(\sigma^G)^{G^{\vee}}$ recovers $\sigma$ under the equivalence $(\cC^{G})^{G^{\vee}} \simeq \cC$ mentioned above. 

In particular, in the case of an Enriques category $\cC$ with CY2 cover $\cD$, we obtain a correspondence between stability conditions $\sigma_{\cC}$ on $\cC$ and $\sigma_{\cD}$ on $\cD$ which are fixed by the $\bZ/2$-actions. 
We will consider the case where $\cC$ and $\cD$ are semiorthogonal components of derived categories of varieties and the stability conditions $\sigma_{\cC}$ and $\sigma_{\cD}$ are \emph{proper} (in the sense of Remark~\ref{remark-proper-stab}), which in particular means that for numerical classes $v \in \Knum(\cC)$ and $w \in \Knum(\cD)$, there exist proper algebraic stacks $\cM_{\sigma_{\cC}}(\cC, v)$ and $\cM_{\sigma_{\cD}}(\cD, w)$ of semistable objects of class $v$ and $w$.\footnote{By the results of \cite{BLMNPS}, all known constructions of stability conditions lead to proper ones.} 
Let $\pi_* \colon \cD \to \cC$ and $\pi^* \colon \cC \to \cD$ suggestively denote the forgetful and inflation functors from and to $\cD = \cC^{\bZ/2}$; when $\cC = \Db(S)$ for an Enriques surface $S$ with K3 cover $\pi \colon T \to S$, then $\pi_*$ and $\pi^*$ agree with the usual pushforward and pullback functors along $\pi$. 
For an object in $\cC$ or $\cD$, we say it is ``fixed by the $\bZ/2$-action'' if it is isomorphic to the object obtained by applying the generator of the action. 

\begin{theorem}
\label{theorem-moduli}
Let $\cC$ be an Enriques category with CY2 cover $\cD$, and assume that $\cC$ and $\cD$ arise a semiorthogonal components of the derived categories of smooth proper varieties. 
Let $\sigma_{\cC}$ be a stability condition on $\cC$ fixed by the $\bZ/2$-action, let $\sigma_{\cD}$ be the corresponding stability condition on $\cD$, and assume that $\sigma_{\cC}$ and $\sigma_{\cD}$ are proper. 
\begin{enumerate}
\item \label{theorem-enriques-moduli}
For any $v \in \Knum(\cC)$, the moduli stack $\cM_{\sigma_{\cC}}(\cC, v)$ is smooth at any point corresponding to a $\sigma_{\cC}$-stable object $E$ that is not fixed by the $\bZ/2$-action on $\cC$.  
If $v$ is fixed by the $\bZ/2$-action, then 
the functor $\pi_* \colon \cD \to \cC$ induces a surjective morphism 
\begin{equation*}
f \colon \bigsqcup_{\pi_*(w) = v} \cM_{\sigma_{\cD}}(\cD, w) \to \cM_{\sigma_{\cC}}(\cC, v)^{\bZ/2} ,
\end{equation*} 
where the target is the fixed locus of the $\bZ/2$-action on $\cM_{\sigma_{\cC}}(\cC, v)$ and the union in the source ranges over $w \in \Knum(\cD)$ such that $\pi_*(w) = v$. 
Moreover, $f$ is \'{e}tale of degree $2$ at $\sigma_{\cD}$-stable objects not fixed by the residual $\bZ/2$-action. 

\item \label{theorem-CY2-moduli} 
For any $w \in \Knum(\cD)$, the moduli stack $\cM_{\sigma_{\cD}}(\cD, w)$ is smooth at any point corresponding to a $\sigma_{\cD}$-stable object. 
If $w$ is fixed by the residual $\bZ/2$-action, 
the functor $\pi^* \colon \cC \to \cD$ induces a surjective morphism 
\begin{equation*}
g \colon \bigsqcup_{\pi^*(v) = w} \cM_{\sigma_{\cC}}(\cC, v) \to \cM_{\sigma_{\cD}}(\cD, w)^{\bZ/2} ,
\end{equation*} 
where the target is the fixed locus of the $\bZ/2$-action on $\cM_{\sigma_{\cD}}(\cD, w)$ and the union in the source ranges over $v \in \Knum(\cC)$ such that $\pi^*(v) = w$. 
Moreover, $g$ is \'{e}tale of degree $2$ at $\sigma_{\cC}$-stable objects not fixed by the $\bZ/2$-action. 
\end{enumerate} 
\end{theorem}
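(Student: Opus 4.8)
The plan is to prove the two parts in parallel, combining the deformation theory of objects in $\cC$ and $\cD$ with the $\bZ/2$-equivariant descriptions $\cD = \cC^{\bZ/2}$ and $\cC = \cD^{\bZ/2}$ (for the residual action, by Elagin duality \cite{Elagin}) and the behaviour of the forgetful and inflation functors $\pi_*, \pi^*$ under the induced stability conditions.

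\emph{Smoothness.} For a $\sigma$-stable object $G$ in $\cC$ or $\cD$ one has $\Hom(G,G) = \bk$ and $\Ext^{<0}(G,G) = 0$ by stability, the tangent space to the moduli stack at $[G]$ is $\Ext^1(G,G)$, and the obstructions lie in $\Ext^2(G,G)$. Since $\cD$ is CY2, Serre duality gives $\Ext^2(F,F) \cong \Hom(F,F)^\vee = \bk$; as $\cD$ is a semiorthogonal component of the derived category of a smooth proper variety, the standard trace argument (this is essentially the content of \cite{IHC-CY2} applied to the constant family $\cD$ with $\sigma_\cD$) shows the obstruction class is traceless, hence lies in $\ker(\Ext^2(F,F)\to\bk) = 0$; so $\cM_{\sigma_\cD}(\cD,w)$ is smooth at every $\sigma_\cD$-stable $F$. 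For $\cC$ the Serre functor is $\tau\circ[2]$, so $\Ext^2(E,E) \cong \Hom(E,\tau E)^\vee$; if $E$ is $\sigma_\cC$-stable and $\tau E \not\cong E$, then, $\sigma_\cC$ being $\bZ/2$-fixed (so $\tau$ preserves phases), $\tau E$ is $\sigma_\cC$-stable of the same phase as $E$ and $\Hom(E,\tau E) = 0$, so $\cM_{\sigma_\cC}(\cC,v)$ is smooth at $[E]$. When $\tau E\cong E$ the obstruction space is $\bk$ and no trace is available, consistent with $\cM_{\sigma_\cC}(\cC,v)$ being singular at such points in general.

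\emph{The morphisms and surjectivity.} By the construction of the induced stability conditions (Theorem~\ref{theorem-equivariant-stability}, Lemma~\ref{lemma_backtosamestability}, following \cite{polishchuk, MMS}), $\pi_*$ and $\pi^*$ send $\sigma$-semistable objects to $\sigma$-semistable objects of the same phase, and an object of an equivariant category is semistable as soon as its underlying object is. Being forgetful functors out of $\bZ/2$-equivariant categories, $\pi_*$ and $\pi^*$ equip each object with a canonical isomorphism to its $\tau$- (resp.\ residual-$\tau$-) translate, so $\pi_*$ lands in the fixed locus of $\bZ/2$ on $\cM_{\sigma_\cC}(\cC,v)$ and $\pi^*$ in that of the residual $\bZ/2$ on $\cM_{\sigma_\cD}(\cD,w)$; compatibility of these functors and of the equivariant-category formalism with base change (relative semiorthogonal decompositions, using that $\cC,\cD$ come from smooth proper varieties) promotes the constructions to morphisms of stacks $f$ and $g$. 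For surjectivity of $f$: a point of the fixed locus is represented by a $\sigma_\cC$-semistable $E$ of class $v$ with $\tau E\cong E$; if $E$ is stable it carries a $\bZ/2$-equivariant structure $\alpha$ (take any isomorphism $\tau E\xrightarrow{\sim}E$; the cocycle condition then fails by a scalar in $\Aut(E) = \bk^\times$, removed by rescaling $\alpha$ — e.g.\ over $\bC$, by a square root), whence $(E,\alpha)\in\cD$ has $\pi_*(E,\alpha) = E$; the polystable case follows by decomposing $E$ into $\tau$-orbits of stable summands and using $\pi_*\pi^*(E_i) = E_i\oplus\tau E_i$ on the free orbits. Surjectivity of $g$ is symmetric.

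\emph{\'Etale of degree $2$, and the main difficulty.} For a $\sigma_\cD$-stable $F$, the standard $\bZ/2$-equivariant adjunction formula $\pi^*\pi_* F\cong F\oplus\tau_{\mathrm{res}}F$ yields $\Hom_\cC(\pi_* F,\pi_* F) = \Hom_\cD(F, F\oplus\tau_{\mathrm{res}}F)$, which — since $\sigma_\cD$ is fixed by the residual $\bZ/2$ (Lemma~\ref{lemma_backtosamestability}), so $\tau_{\mathrm{res}}F$ is $\sigma_\cD$-stable of the same phase — has dimension $1$ if $\tau_{\mathrm{res}}F\not\cong F$ and $2$ if $\tau_{\mathrm{res}}F\cong F$; hence $\pi_* F$ is $\sigma_\cC$-stable precisely when $F$ is not fixed by the residual $\bZ/2$. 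Given such an $F$, set $E = \pi_* F$ and $v = \pi_*(w)$: the fibre of $f$ over the point $[E]$ of the fixed locus is the set of $\bZ/2$-equivariant structures on $E$, which (as $\Aut(E) = \bk^\times$ and $\mathrm{char}\,\bk\neq 2$) has exactly two elements $(E,\alpha)$ and $(E,-\alpha) = \tau_{\mathrm{res}}(E,\alpha)$, lying in the source components indexed by $w$ and by $(\tau_{\mathrm{res}})_* w$. Since $\alpha$ and $-\alpha$ induce the same $\bZ/2$-action on $\Ext^\bullet_\cC(E,E)$, the deformation--obstruction theory at either of these two points agrees with the $\bZ/2$-equivariant deformation theory of $E$, i.e.\ with that of $[E]$ inside the fixed locus; so $f$ is \'etale there, hence, having two-element fibres, \'etale of degree $2$ near $F$, and the fixed locus is smooth at $[E]$, being \'etale-locally $\cM_{\sigma_\cD}(\cD,w)$, which is smooth at $F$. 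The statement for $g$ follows by exchanging the two $\bZ/2$-actions, noting that for $\sigma_\cC$-stable $E$ with $\tau E\not\cong E$ the object $\pi^* E$ is $\sigma_\cD$-stable, fixed by the residual $\bZ/2$, with $\Aut(\pi^* E) = \bk^\times$. The routine inputs are the deformation theory and the preservation of semistability under $\pi_*,\pi^*$; the main difficulty is the fixed-locus bookkeeping above — promoting $f,g$ to morphisms of stacks compatibly with families, identifying the fibre over a fixed point with the two equivariant structures, and establishing the dichotomy characterizing the locus where $\pi_*$ (resp.\ $\pi^*$) produces a stable object.
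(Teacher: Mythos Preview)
Your proposal is correct and follows essentially the same strategy as the paper: smoothness from Serre duality (citing \cite{IHC-CY2} for the CY2 side and the $\Ext^2(E,E)\cong\Hom(E,\tau E)^\vee$ argument for the Enriques side), construction of $f,g$ from the forgetful/inflation functors, and a tangent-space argument for \'etaleness. Two points of execution differ from the paper and are worth noting.

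First, your surjectivity argument is more laborious than necessary because you implicitly treat the fixed locus as the naive one (isomorphism classes $E$ with $\tau E\cong E$) rather than the stack-theoretic one used in the statement, whose points are already $\bZ/2$-\emph{linearized} objects $(E,\alpha)$. With that definition, surjectivity is essentially tautological once one invokes Proposition~\ref{proposition-enhancement-CG} (so that objects of $\cD=\cC^{\bZ/2}$ are exactly linearized objects of $\cC$) together with Theorem~\ref{theorem-equivariant-stability}\eqref{G-ss} for the semistability. In particular your separate treatment of the polystable case, and the square-root rescaling to fix the cocycle, are not needed.

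Second, for \'etaleness the paper gives a more concrete computation than your abstract identification of deformation theories: it writes the differential of $\Forg$ as the map $\Ext^1(E,E)\to\Ext^1(F,F)$ on extensions (with $E=(F,\theta)$), assumes the image splits, applies $\Inf$ and uses $\Inf(F)\cong (F,\theta)\oplus(F,\theta\otimes\chi)$ together with $E\not\cong E\otimes\chi$ to force the original extension to split. Your argument is valid but the paper's has the advantage of making explicit exactly where the hypothesis ``not fixed by the residual action'' enters.
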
 

Above, the fixed loci of the $\bZ/2$-actions are taken in the stack-theoretic sense, so that they parameterize linearized objects in the sense described in Example~\ref{example-classical-linear}. See \cite{Romagny} for background on fixed point stacks; in particular, \cite[Theorem 3.3]{Romagny} shows that the fixed point stacks for finite group actions considered in this paper are automatically algebraic. 

\begin{remark}
A stability condition $\sigma_{\cC}$ which is fixed by the $\bZ/2$-action is in particular Serre invariant. 
By the results of \cite{PY, FP, JLLZ, PR}, when $\cC$ is the Kuznetsov component of a quartic double solid or GM threefold, a $\bZ/2$-fixed $\sigma_{\cC}$ exists and is unique up to the action of $\widetilde{\GL}_2^{+}(\bR)$. 
Furthermore, the $\bZ/2$-action on $\Knum(\cC)$ is trivial in these examples (Lemmas~\ref{lemma_iotaisidentity_qds} and \ref{lemma_iotaisidentity_GM3}), so~\eqref{theorem-enriques-moduli} applies for arbitrary  $v \in \Knum(\cC)$. 
\end{remark} 

Theorem~\ref{theorem-moduli} combines several results: smoothness criteria for moduli of objects in Enriques and CY2 categories, proved in more general relative forms in Corollary~\ref{corollary-enriques-moduli-smooth} and \cite[Theorem 1.4]{IHC-CY2}, as well as Proposition~\ref{proposition-maps-bw-moduli} and Remark~\ref{remark-symmetrized-maps-bw-moduli} which describe the morphisms $f$ and $g$ in a more general setting of categories equipped with a $\bZ/2$-action. 
We note that in case $\cC = \Db(S)$ for an Enriques surface $S$ and $\cD = \Db(T)$ where $T$ is the associated K3 surface, the construction and properties of the morphism $g$ in~\eqref{theorem-CY2-moduli} go back to Nuer \cite{Nuer}. 

Theorem~\ref{theorem-moduli} provides a useful tool for studying moduli spaces of objects in $\cC$, since those of the CY2 cover $\cD$ are often well-understood in examples. 
For instance, consider the nonemptiness problem for a vector $v \in \Knum(\cC)$. 
By part~\eqref{theorem-enriques-moduli} of the theorem, it would be enough to find a vector $w \in \Knum(\cD)$ such that $\pi_*(w) = v$ and $\cM_{\sigma_{\cD}}(\cD, w)$ is nonempty. 
Unfortunately, in our examples, for a given $\cC$ and $v$ there often does not exist such a $w$. 
However, we prove the following more flexible deformation-theoretic criterion for nonemptiness. 
The formulation uses the notions of an Enriques category $\mathfrak{C}$ over a base $S$ (see Definition~\ref{definition-family-Enriques}) and of a stability condition $\underline{\sigma} = (\sigma_s)_{s \in S}$ on $\fC$ over $S$ (see \cite{BLMNPS} and \S\ref{subsec_moduliandfamilies}), which consists of proper numerical stability conditions $\sigma_s$ on the the fibers $\fC_s$ satisfying certain compatibility and tameness properties.

\begin{theorem}
\label{theorem-deformation} 
Let $\cC$ be an Enriques category arising as a semiorthogonal component of the derived category of a smooth proper variety,  
let $\sigma$ be a proper numerical stability condition on $\cC$, 
and let $v \in \Knum(\cC)$. 
Assume there exists: 
\begin{enumerate}
\item \label{1} An Enriques category $\fC$ over a complex variety $S$ such that $\fC$ is an $S$-linear semiorthogonal component of $\Dperf(\cX)$ for a smooth proper morphism $\cX \to S$. 
\item \label{2} A stability condition $\underline{\sigma} = (\sigma_s)_{s \in S}$ on $\fC$ over $S$ with respect to a lattice $\Lambda$ 
and homomorphisms $\bv_s \colon \Knum(\fC_s) \to \Lambda$.  
\item \label{defo-fiber-0}
A point $0 \in S(\bC)$ such that $\fC_0 \simeq \cC$ and under this equivalence $\sigma_0 = \sigma$ and the map $\bv_0 \colon \Knum(\cC) = \Knum(\fC_0) \to \Lambda$ is injective. 
\item \label{1-exists}
A point $1 \in S(\bC)$ such that: 
\begin{enumerate}
    \item \label{1-exists-a} $\sigma_1$ is fixed by the $\bZ/2$-action on $\fC_1$ and $\cM_{\sigma_1}(\fC_1, \bv_0(v))$ consists of stable objects. 
    \item \label{1-exists-b} If $\pi_{1*} \colon \fD_1 \to \fC_1$ is the CY2 cover of $\fC_1$ with corresponding stability condition $\sigma_{\fD_1}$, then 
    $\sigma_{\fD_1}$ is proper. 
    \item \label{conditions-w1} There exists a class $w_1 \in \Knum(\fD_1)$ such that if $v_1 = \pi_*(w_1)$ then $\bv_1(v_1) = \bv_0(v)$, 
    $\cM_{\sigma_{\fD_1}}(\fD_1, w_1)$ is nonempty, and $-\chi(w_1, w_1) + 2 < -\chi(v_1, v_1) + 1$.
\end{enumerate}
\end{enumerate}
Then the moduli stack $\cM_{\sigma}(\cC,v)$ is nonempty. 
\end{theorem}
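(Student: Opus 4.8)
The plan is to spread the class $v$ out over the family $\fC$ and to propagate nonemptiness from the fiber over $1$ to the fiber over $0$, using properness of the relative moduli over $S$ together with the smoothness results for moduli of objects in Enriques and CY2 categories. After restricting to an irreducible component of $S$ containing $0$ and $1$, we may assume $S$ is irreducible. By \cite{BLMNPS}, the relative moduli stack $\cM := \cM_{\underline{\sigma}}(\fC,\bv_0(v)) \to S$ of $\underline{\sigma}$-semistable objects of $\Lambda$-class $\bv_0(v)$ is proper over $S$, and since $\bv_0$ is injective by~\eqref{defo-fiber-0}, its fiber over $0$ is exactly $\cM_{\sigma}(\cC,v)$. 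The image of a proper morphism is closed, so it suffices to prove that $\cM \to S$ is dominant. First I would fix $F_1 \in \cM_{\sigma_{\fD_1}}(\fD_1,w_1)$ (nonempty by~\eqref{conditions-w1}) and set $E_1 := \pi_{1*}F_1$. By part~\eqref{theorem-enriques-moduli} of Theorem~\ref{theorem-moduli}, $\pi_{1*}$ carries $\cM_{\sigma_{\fD_1}}(\fD_1,w_1)$ into the $\bZ/2$-fixed locus of $\cM_{\sigma_1}(\fC_1,v_1)$, and since $\bv_1(v_1) = \bv_0(v)$ we get $(1,E_1) \in \cM$; moreover $E_1$ is $\sigma_1$-stable by~\eqref{1-exists-a}. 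The isomorphism $\pi_1^*\pi_{1*}F_1 \cong F_1 \oplus \tau_{\fD_1}F_1$, together with $\Hom_{\fC_1}(E_1,E_1) = \bC$, forces $F_1$ to be $\sigma_{\fD_1}$-stable and not fixed by the residual $\bZ/2$-action; hence, again by part~\eqref{theorem-enriques-moduli} of Theorem~\ref{theorem-moduli}, the map $\cM_{\sigma_{\fD_1}}(\fD_1,w_1) \to \cM_{\sigma_1}(\fC_1,v_1)$ is \'etale of degree $2$ onto the $\bZ/2$-fixed locus near $E_1$, and in particular $E_1$ is $\bZ/2$-fixed.

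Next I would distinguish two cases. Suppose first that $\cM_{\sigma_1}(\fC_1,\bv_0(v))$ contains a $\sigma_1$-stable object $E$ that is \emph{not} fixed by the $\bZ/2$-action. Then $E$ and $\tau_1 E$ are $\sigma_1$-stable of the same phase (as $\sigma_1$ is $\bZ/2$-fixed) and non-isomorphic, so $\Ext^2_{\fC_1}(E,E) \cong \Hom_{\fC_1}(E,\tau_1 E)^{\vee} = 0$. By the relative form of the smoothness criterion for Enriques categories (Corollary~\ref{corollary-enriques-moduli-smooth}), $\cM \to S$ is then smooth at $(1,E)$, so its image contains an open neighborhood of $1$; being also closed, and $S$ irreducible, this image is all of $S$, and in particular $\cM_{\sigma}(\cC,v) = \cM_0 \neq \emptyset$.

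The remaining case --- every $\sigma_1$-semistable object of $\fC_1$ of $\Lambda$-class $\bv_0(v)$ is $\bZ/2$-fixed (and $\sigma_1$-stable by~\eqref{1-exists-a}) --- is the heart of the matter, and I expect it to be the main obstacle. Here $\cM \to S$ fails to be smooth at $(1,E_1)$, since $\Ext^2_{\fC_1}(E_1,E_1) \cong \Hom_{\fC_1}(E_1,E_1)^{\vee} \neq 0$, so $E_1$ cannot be propagated directly; the idea is to derive a contradiction by estimating dimensions. On one hand, near $E_1$ the fiber $\cM_1 := \cM_{\sigma_1}(\fC_1,\bv_0(v))$ coincides with $\cM_{\sigma_1}(\fC_1,v_1)$ and equals its $\bZ/2$-fixed locus, which is \'etale of degree $2$ over $\cM_{\sigma_{\fD_1}}(\fD_1,w_1)$; the latter is smooth of dimension $-\chi(w_1,w_1)+1$ at $F_1$ by the smoothness of moduli in CY2 categories (\cite[Theorem~1.4]{IHC-CY2}; see also Theorem~\ref{theorem-moduli}), so $\dim_{(1,E_1)}\cM_1 = -\chi(w_1,w_1)+1$. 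On the other hand, the CY2 cover $\fD = \fC^{\bZ/2}$ is a family of CY2 categories over $S$, and by the relative form of \cite[Theorem~1.4]{IHC-CY2} the relative moduli of objects in $\fD$ is smooth over $S$; hence $F_1$ deforms over $S$ to first order in every direction of $T_1 S$, and applying the exact functor $\pi_{1*}$, so does $E_1$, i.e.\ the obstruction map $T_1 S \to \Ext^2_{\fC_1}(E_1,E_1)$ vanishes. Since $E_1$ is $\sigma_1$-stable and $\bZ/2$-fixed, $\Ext^2_{\fC_1}(E_1,E_1)$ is one-dimensional and $\dim\Ext^1_{\fC_1}(E_1,E_1) = 2 - \chi(v_1,v_1)$; combined with the vanishing of the obstruction map and $\Aut(E_1) = \bG_m$, standard deformation theory then gives $\dim_{(1,E_1)}\cM \geq -\chi(v_1,v_1) + \dim S$.

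To finish this case I would invoke the elementary bound $\dim_1 \overline{g(\cM)} \geq \dim_{(1,E_1)}\cM - \dim_{(1,E_1)}\cM_1$ for $g \colon \cM \to S$, which follows from upper semicontinuity of fiber dimension applied to a component of $\cM$ through $(1,E_1)$ of maximal dimension. This yields
\begin{align*}
\dim_1 \overline{g(\cM)}
&\;\geq\; \bigl( -\chi(v_1,v_1) + \dim S \bigr) - \bigl( -\chi(w_1,w_1) + 1 \bigr) \\
&\;=\; \dim S + \bigl( \chi(w_1,w_1) - \chi(v_1,v_1) - 1 \bigr) \;\geq\; \dim S + 1 ,
\end{align*}
where the last inequality is precisely the estimate $-\chi(w_1,w_1) + 2 < -\chi(v_1,v_1) + 1$ of~\eqref{conditions-w1}. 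This contradicts $\overline{g(\cM)} \subseteq S$, so the second case does not occur, and the first case completes the proof. The two delicate points in this plan are the deformation-theoretic lower bound for $\dim_{(1,E_1)}\cM$ --- which genuinely uses that $E_1$ is a pushforward from the CY2 cover, where deformations are unobstructed --- and checking that the cited relative smoothness results apply to $\fC$ and to $\fD$ as families over $S$.
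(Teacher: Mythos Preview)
Your overall architecture matches the paper's: form the relative moduli $\cM_{\underline{\sigma}}(\fC/S,\bv_0(v)) \to S$, note it is universally closed, and produce a point in the fiber over $1$ at which the morphism is smooth (hence dominant). Your first case --- there is a $\sigma_1$-stable, non-$\bZ/2$-fixed object in $\fC_1$ of the right class --- is exactly what the paper uses (via Corollary~\ref{corollary-enriques-moduli-smooth}) to finish.

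The difference is in how the second case is handled. The paper shows directly that the second case never occurs by a dimension count \emph{entirely inside the fiber over $1$} (this is Lemma~\ref{lemma-todo-enriques}). Namely, since every object in $\cM_{\sigma_1}(\fC_1,v_1)$ is simple, deformation theory alone gives $\dim_E M_{\sigma_1}(\fC_1,v_1) \geq -\chi(v_1,v_1)+1$ at every point. If every such object were $\bZ/2$-fixed, then by the surjectivity in Theorem~\ref{theorem-moduli}\eqref{theorem-enriques-moduli} an irreducible component $M_0 \ni E_1$ would be dominated by a component of $M_{\sigma_{\fD_1}}(\fD_1,w')$ for some $w'$ with $\pi_*(w')=v_1$; using Ploog's lemma and simplicity, any two lifts of a fixed $E$ differ by the residual involution, so $\chi(w',w')=\chi(w_1,w_1)$, whence $\dim M_{\sigma_{\fD_1}}(\fD_1,w') = -\chi(w_1,w_1)+2 < -\chi(v_1,v_1)+1 \leq \dim M_0$, a contradiction. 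No relative information about $S$ enters.

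Your treatment of the second case instead tries to bound $\dim_{(1,E_1)}\cM$ from below by deforming $F_1$ over $S$ in a global CY2 family $\fD = \fC^{\bZ/2}$ and pushing forward. This is the genuine gap: the hypotheses only say $\fC$ is an Enriques category over $S$ in the fibral sense of Definition~\ref{definition-family-Enriques} (cf.\ the remark following it), so there is no $S$-linear $\bZ/2$-action on $\fC$, no global CY2 cover $\fD \to S$, and no $S$-linear functor $\pi_*$ to transport first-order deformations of $F_1$ over $S$ to deformations of $E_1$. The ``vanishing of the obstruction map $T_1S \to \Ext^2_{\fC_1}(E_1,E_1)$'' is therefore not available from the stated assumptions. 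The cleaner fiberwise argument above avoids this entirely and also makes the role of the numerical inequality in~\eqref{conditions-w1} transparent: it is exactly what forces the CY2 moduli to be too small to cover a component of the Enriques moduli.
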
 

\begin{remark}
In our applications, by known results $\cM_{\sigma_{\fD_1}}(\fD_1, w_1)$ will be nonempty if and only if $-\chi(w_1, w_1) + 2 \geq 0$, so \eqref{conditions-w1} becomes a purely lattice theoretic condition. 
\end{remark} 

Theorem~\ref{theorem-deformation} provides an effective tool for proving nonemptiness of moduli spaces of semistable objects in Enriques categories. 
Indeed, we prove Theorems~\ref{theorem-M-quartic-double} and~\ref{theorem-Msigma-GM} by considering specializations of $\Ku(Y)$ or $\Ku(X)$ for which the K3 cover becomes the derived category of a K3 surface and the hypotheses of Theorem~\ref{theorem-deformation} can be verified. 
The smoothness statements in Theorems~\ref{theorem-M-quartic-double} and~\ref{theorem-Msigma-GM} are proved via the smoothness criterion of Theorem~\ref{theorem-moduli}\eqref{theorem-enriques-moduli}. 

Finally, Theorem~\ref{theorem-fixed-locus} is proved by combining the nonemptiness result of Theorem~\ref{theorem-Msigma-GM} with the description of the fixed locus as the image of the morphism $g$ in Theorem~\ref{theorem-moduli}\eqref{theorem-CY2-moduli}. 
As an important technical ingredient, we prove that the stability conditions $\sigma \in \Stab^{\circ}(\Ku(W))$ are fixed by the residual $\bZ/2$-action (Theorem~\ref{thm_invariantstab}), and deduce that they are all in the same $\widetilde{\GL}_2^{+}(\bR)$-orbit (Corollary~\ref{cor_samestabinStabcirc}). 

\subsection{Further directions} 
Now that their nonemptiness is known, 
it would be interesting to study other geometric properties of the moduli spaces from Theorems~\ref{theorem-M-quartic-double} and \ref{theorem-Msigma-GM}, 
like their singularities, Kodaira dimension, number of irreducible components, map to the intermediate Jacobian, and so on. 

Our approach to proving Theorems~\ref{theorem-M-quartic-double} and \ref{theorem-Msigma-GM} is quite general and should apply to other geometric examples of Enriques categories, like Examples~\ref{example-verra-3fold} and~\ref{example-other-enriques-cats}, but for brevity we have focused on just two interesting cases.
Even in the case of Enriques surfaces, it would be interesting to reprove nonemptiness results for moduli spaces using our method. 

Finally, we expect a similar nonemptiness result for moduli spaces of stable objects in the Kuznetsov component of a cubic threefold
with respect to Serre invariant stability conditions. 
Building on the method of this paper, one potential approach to the problem would be to use the relation between the Kuznetsov component of a cubic threefold and that of a cyclic cubic fourfold branched along it. 

\subsection{Related work} 

The nonemptiness result in Theorems~\ref{theorem-M-quartic-double} and \ref{theorem-Msigma-GM} has been applied in \cite{LPZ3} to show that the Kuznetsov components of quartic double solids and GM threefolds have a strongly unique enhancement.

In \cite{BP} the remaining case of Kuznetsov's Fano threefold conjecture, concerning the Kuznetsov components of quartic double solids and GM threefolds, was settled. 
Like ours, the paper~\cite{BP} is based on the relation between Enriques and CY2 categories. 

In a somewhat similar spirit to this paper, in \cite{BO} the authors studied finite group actions on bounded derived categories of smooth projective varieties and fixed loci in the moduli space of the equivariant derived category. 

Fixed loci of antisymplectic involutions on projective hyperk\"ahler manifolds have been extensively studied in \cite{FMOGS}. We also mention that the fixed loci studied in Theorem~\ref{theorem-fixed-locus} have a conjectural description in terms of the Lagrangian subspaces arising from smooth hyperplane sections in the GM fourfold by \cite{GLZ}.

Finally, we note that Lemma~\ref{lemma_backtosamestability} on the correspondence between $G$-fixed and $G^{\vee}$-fixed stability conditions was independently proved by Dell \cite{dell}. 

\subsection{Organization of the paper} 
In \S\ref{sec_groupactiononcat} we provide some preliminary material on group actions on categories and their (co)invariants. 
In \S\ref{sec_enriques} we give the definition of Enriques categories with a list of geometric examples, as well as recall some results on the derived categories of branched double covers. In  \S\ref{sec_stabcond} we provide a quick introduction to stability conditions, moduli spaces, stability conditions on equivariant categories, tilt stability, and the known results for the Kuznetsov components of GM varieties and quartic double solids. 
In \S\ref{sec_moduli} we prove our general structural results on moduli spaces: 
a smoothness criterion (Lemma~\ref{lemma-smoothness-Ext2} and Corollary~\ref{corollary-enriques-moduli-smooth}), the relation between moduli spaces in an Enriques category and its CY2 cover (Theorem~\ref{theorem-moduli}), and our deformational nonemptiness criterion (Theorem~\ref{theorem-deformation}). 
In the final three sections, we apply these results to our examples of interest:  \S\ref{section-quartic-double-solids} deals with quartic double solids, \S\ref{section-GM-3folds} with GM threefolds, and \S\ref{section-GM-4folds} with GM fourfolds. 

\subsection{Conventions} 
A variety over a field $k$ is an integral scheme which is separated and of finite type over $k$. 
For a scheme $X$, $\Dperf(X)$ denotes the category of perfect complexes, $\Dqc(X)$ denotes the unbounded derived category of quasi-coherent sheaves, and $\Db(X)$ denotes the bounded derived category of coherent sheaves. In fact, in all cases where we consider $\Db(X)$ in this paper, $X$ will be a smooth variety, so $\Db(X) = \Dperf(X)$. 
All functors are implictily derived; e.g. we write simply $f^*$ and $f_*$ for the derived pushforward and pullback functors along a morphism $f \colon X \to Y$, and $E \otimes F$ for the derived tensor product of objects $E, F \in \Dperf(X)$. 

For technical reasons, we work with linear categories, which are $\infty$-categorical enhancements of triangulated categories; see \S\ref{sec_groupactiononcat} and in particular Example~\ref{example-linear-categories} for our conventions and terminology. 
For a $k$-linear category $\cC$ and objects $E, F \in \cC$, we denote by $\cHom_k(E,F)$ the corresponding mapping complex, which coincides with $\mathrm{RHom}(E,F)$ in case $\cC = \Dperf(X)$ for a variety $X$. 

\subsection{Acknowledgements} 
We thank Arend Bayer, Soheyla Feyzbakhsh, Sasha Kuznetsov, Emanuele Macr\`{i}, and Shizhuo Zhang for interesting discussions related to this work.


\section{Group actions on categories} \label{sec_groupactiononcat}

In this section, we cover some preliminaries about group actions on categories and their (co)invariants. 
In particular, we compare the invariants of a group action on an enhanced category and its homotopy category, and we recall that for abelian groups the formation of invariants gives rise to a duality on categories equipped with group actions (subject to an assumption on the characteristic of the ground field). 

Throughout this section, we fix a finite group $G$ and a field $k$ (often assumed to have characteristic prime to $|G|$, but we state this explicitly when it is needed). 

\subsection{Higher-categorical actions}  
If $X$ is an object of a $1$-category $\cD$,  there is only one reasonable definition of a $G$-action on $X$: a group homomorphism $\phi \colon G \to \Aut(X)$. If $X$ is an object of a $2$-category $\cD$, then it is more natural to define a $G$-action to consist of an automorphism $\phi_g$ of $X$ for every $g \in G$, together with isomorphisms $\phi_{g,h} \colon \phi_g \circ \phi_h \cong \phi_{g \cdot h}$ for every $g, h \in G$ which satisfy a cocycle condition. Taking this to the limit, one arrives at the notion of a $G$-action on an object in an $\infty$-category, as well as the corresponding notions of invariants and coinvariants: 

\begin{definition}
\label{definition-G-action} 
Let $X$ be an object of an $\infty$-category $\cD$. 
A \emph{$G$-action} on $X$ is a functor $\phi \colon BG \to \cD$ such that $\phi(*) = X$, where $BG$ denotes the classifying space of $G$ regarded as an $\infty$-groupoid and $* \in BG$ is the unique object. 

Given a $G$-action $\phi$ on $X$, the  \emph{$G$-invariants} $X^G$ and \emph{$G$-coinvariants} $X_G$ are defined by 
\begin{equation*}
    X^G = \lim(\phi) \quad \text{and} \quad 
    X_G = \colim(\phi) 
\end{equation*}
provided the displayed limit and colimit exist. 
\end{definition}

\begin{remark}
We refer to \cite[\S3.1]{BP} which spells out how Definition~\ref{definition-G-action}  recovers the classical notions of actions on objects in $1$- or $2$-categories. 
\end{remark} 

\subsection{Linear categories} 
In this paper, we will be interested in $G$-actions on categories $\cC$ which are linear over a field $k$. 
There are several relevant settings: 

\begin{example}[Classical linear categories]
\label{example-classical-linear}
Suppose $\cC$ is a \emph{classical $k$-linear category}, i.e. a $1$-category enriched in $k$-vector spaces which admits finite products and coproducts. 
Equivalently, $\cC$ is a $1$-category equipped with a module structure over the symmetric monoidal category of finite-dimensional $k$-vector spaces $\Vect_k^{\mathrm{fd}}$. 
Then we can regard $\cC$ as an object of the $(2,1)$-category $\Cat^{\cl}_k$
with objects classical $k$-linear categories, $1$-morphisms $k$-linear functors, and $2$-morphisms isomorphisms of functors.  
The category $\Cat^{\cl}_k$ admits all limits and colimits (see e.g. \cite[\S4.2.3]{HA}), so the categories $\cC^G$ and $\cC_G$ exist for any $G$-action $\phi$ in this setting. 

For instance, let us recall the explicit description of $\cC^G$. The action $\phi$ consists of autoequivalences $\phi_g$ of $\cC$ for every $g \in G$ and compatible 
isomorphisms $\phi_{g,h} \colon \phi_g \circ \phi_h \cong \phi_{g \cdot h}$. Then the objects of $\cC^G$ are ``linearized objects'' of $\cC$, i.e. pairs $(E, \theta)$ where $E \in \cC$ and $\theta$ is a collection of isomorphisms $\theta_g \colon E \cong \phi_g(E)$ for $g \in G$ such that the diagram 
\begin{equation*}
    \xymatrix{
E \ar[rr]^{\theta_{gh}}\ar[d]_{\theta_g} && \phi_{gh}(E) \ar[d]^{\phi_{g,h}} \\ 
\phi_{g}(E) \ar[rr]^{\phi_{g}\theta_h} && \phi_g\phi_h(E) 
    }
\end{equation*}
commutes for all $g,h \in G$. 
\end{example} 

\begin{example}[Triangulated categories]
\label{example-triangulated}
Suppose $\cC$ is a $k$-linear triangulated category. Then we can regard $\cC$ as an object of the $(2,1)$-category $\Cat_k^{\tr}$ with objects $k$-linear triangulated categories, $1$-morphisms triangulated $k$-linear functors, and $2$-morphisms isomorphisms of functors. 
However, $\Cat_k^{\tr}$ does not admit all limits and colimits, and in particular it is not clear when $\cC^G$ exists in this setting. 

Some positive results were proved by Elagin \cite{Elagin}. 
Namely, we can form $\cC^G$ as a classical $k$-linear category, and ask when $\cC^G$ is triangulated. The shift functor on $\cC$ induces one on $\cC^G$, and we can consider the class of triangles in $\cC^G$ which become distinguished under the forgetful functor $\cC^G \to \cC$. 
Elagin proved that this gives $\cC^G$ the structure of a triangulated category, 
assuming the existence of a DG-enhancement of $\cC$ and invertibility of $|G|$ in $k$ \cite[Corollary 6.10]{Elagin}. 
For this and other reasons, it is natural to instead work with enhanced categories\footnote{Below for technical reasons we work with stable $\infty$-categories instead of DG categories, but in a precise sense these notions are equivalent \cite{dg-versus-stable}.} from the outset.
\end{example} 

\begin{example}[Linear categories]
\label{example-linear-categories} 
Suppose $\cC$ is a small idempotent-complete stable $\infty$-category equipped with a $\Dperf(k)$-module structure; for short, we simply refer to such a $\cC$ as a \emph{$k$-linear category}. 
The collection of all $k$-linear categories (with morphisms between them the exact $k$-linear functors) is organized into an $\infty$-category $\Cat_k^{\stable}$. 
The $\infty$-category $\Cat_k^{\stable}$ admits all limits and colimits (see \cite[\S2.1]{akhil-galois}), so the categories $\cC^G$ and $\cC_G$ exist for any $G$-action in this setting. 
\end{example} 

\begin{example}
\label{example-XmodG}
There is a simple description of $\cC^G$ in the case where $X$ is a scheme with a $G$-action and $\cC = \Dperf(X)$ with the induced action. 
Namely, descent along $X \to [X/G]$ shows that 
$\Dperf(X)^G \simeq \Dperf([X/G])$, where $[X/G]$ is the quotient stack. 
\end{example}

Taking the homotopy category of a $k$-linear category gives a functor $\Cat_k^{\stable} \to \Cat_k^{\tr}$. 
Essentially every $k$-linear triangulated category $\cT$ appearing in nature, and in particular any that appears in this paper, admits a natural \emph{enhancement}, i.e. a $k$-linear category $\cC$ with $\cT$ as its homotopy category. 
Moreover, 
given a $G$-action on a $k$-linear category $\cC$ there is an induced $G$-action on its homotopy category, and 
under mild hypotheses a $G$-action on a $k$-linear triangulated category $\cT$ can be lifted to a $G$-action on an enhancement $\cC$ \cite[Corollary 3.4]{BP}. 
In this situation, it is useful (both for psychological and practical reasons) to have a comparison between the homotopy category of $\cC^G$ and the $G$-invariants of the homotopy category computed in $\Cat^{\cl}_k$. 
We prove such a result below; see \cite[Theorem 8.7]{Elagin} and \cite{sosna} for related statements in the DG setting. 
Let $\rh \colon \Cat_k^{\stable} \to \Cat_k^{\cl}$ be the functor given by taking the homotopy category (so it is the composition of the functor $\Cat^{\stable}_k \to \Cat^{\tr}_k$ with the forgetful functor $\Cat^{\tr}_k \to \Cat^{\cl}_k$). 

\begin{proposition}
\label{proposition-enhancement-CG}
Assume that the order of the group $G$ is invertible in the field $k$. Then for any $k$-linear category $\cC$ with a $G$-action, there is a natural equivalence $\rh(\cC^G) \simeq (\rh \cC)^G$. 
\end{proposition}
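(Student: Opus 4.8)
The plan is to construct a canonical $k$-linear comparison functor $\Phi \colon \rh(\cC^G) \to (\rh \cC)^G$ and to prove it is an equivalence. The single input that drives the whole argument is the vanishing $\mathrm{H}^{i}(G;M) = 0$ for all $i>0$ and all $k$-modules $M$, which holds because $|G|$ is invertible in $k$; naturality in the pair $(\cC,\phi)$ will be automatic from the construction.

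First I would build $\Phi$ from the universal properties in play. Since $\cC^G = \lim(\phi)$ is the limit in $\Cat_k^{\stable}$ of $\phi \colon BG \to \Cat_k^{\stable}$, the forgetful functor $p \colon \cC^G \to \cC$ (evaluation at $\ast \in BG$) comes with a coherent family of equivalences $\phi_g \circ p \simeq p$. Applying $\rh$ and passing to homotopy categories, the functor $\rh(p) \colon \rh(\cC^G) \to \rh \cC$ acquires isomorphisms $\rh(\phi_g)\circ \rh(p) \cong \rh(p)$ satisfying the cocycle condition. By the universal property of $G$-invariants in the $(2,1)$-category $\Cat^{\cl}_k$ --- equivalently, by the explicit description of $(\rh \cC)^G$ as the category of $G$-linearized objects of $\rh \cC$ recalled in Example~\ref{example-classical-linear} --- this datum is precisely a $k$-linear functor $\Phi \colon \rh(\cC^G) \to (\rh \cC)^G$ whose composite to $\rh \cC$ is $\rh(p)$. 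It then remains to check that $\Phi$ is fully faithful and essentially surjective.

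For full faithfulness I would use that mapping complexes in the limit $\cC^G = \lim_{BG}\phi$ are computed as homotopy fixed points: for $x,y \in \cC^G$ lying over $E, F \in \cC$, one has $\cHom_k(x,y) \simeq \cHom_k(E,F)^{hG}$, with $G$ acting through $\phi$ and through the linearizations of $x$ and $y$. Since the higher group cohomology of any $k$-module vanishes, the descent spectral sequence with input $\mathrm{H}^{\ast}(G; \cHom_k(E,F))$ collapses, so the degree-$0$ cohomology of the homotopy fixed points is the $G$-invariants of $\Hom_{\rh \cC}(E,F)$, giving $\Hom_{\rh(\cC^G)}(x,y) = \Hom_{\rh \cC}(E,F)^G$. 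On the other hand, morphisms between the linearized objects $\Phi x$ and $\Phi y$ in $(\rh \cC)^G$ are by definition the maps $E \to F$ in $\rh \cC$ commuting with the linearizations, i.e. the same group of $G$-invariants; a short verification that $\Phi$ realizes this identification completes this step.

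The hard part will be essential surjectivity, since it amounts to promoting a $G$-linearization that exists only on the homotopy category to an honest homotopy-fixed-point structure on the enhanced category. The approach: taking cores commutes with $\infty$-categorical limits, so $\mathrm{core}(\cC^G) \simeq (\mathrm{core}\,\cC)^{hG}$, and an isomorphism class of $(\rh \cC)^G$ lies in the image of $\Phi$ precisely when the corresponding point lifts along the map $\pi_0\!\left((\mathrm{core}\,\cC)^{hG}\right) \to \pi_0\!\left((\tau_{\le 1}\mathrm{core}\,\cC)^{hG}\right)$ induced by $\mathrm{core}\,\cC \to \tau_{\le 1}\mathrm{core}\,\cC$; by the analogous analysis of a $1$-truncated homotopy-fixed-point space, the latter set is exactly the set of isomorphism classes of $G$-linearized objects of $\rh \cC$. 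Now $\mathrm{core}\,\cC \to \tau_{\le 1}\mathrm{core}\,\cC$ is an isomorphism on $\pi_{\le 1}$ with $1$-connected homotopy fibers, whose homotopy group in each degree $n \ge 2$ is the $k$-module $\pi_{n-1}\cHom_k(E,E)$; climbing the associated Postnikov tower, the successive obstructions to lifting a point through the homotopy-fixed-points functor lie in groups $\mathrm{H}^{\ge 2}(G;M)$ with $M$ a $k$-module, hence vanish. Thus every class lifts, $\Phi$ is essentially surjective, and together with full faithfulness it is an equivalence --- natural in $(\cC,\phi)$ as already noted. I expect this obstruction-theoretic lifting, which is exactly where invertibility of $|G|$ is used decisively, to be the crux of the proof.
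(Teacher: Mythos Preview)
Your proposal is correct, but it takes a route quite different from the paper's. You argue directly: construct the comparison functor $\Phi$, prove full faithfulness by identifying mapping complexes in $\cC^G$ with homotopy fixed points and using the collapse of the homotopy-fixed-point spectral sequence, then prove essential surjectivity by climbing the Postnikov tower of $\mathrm{core}\,\cC$, with the obstructions to lifting a $G$-linearized object landing in $\mathrm{H}^{\ge 2}(G;M)$ for $k$-modules $M$ and hence vanishing. This is a hands-on obstruction-theoretic argument, and the key input --- vanishing of higher group cohomology when $|G| \in k^\times$ --- is invoked twice, once in each half.

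The paper instead factors $\rh$ as $\Cat_k^{\stable} \xrightarrow{a} \Mod_{\Dperf(k)}(\Cat) \xrightarrow{b} \Mod_{\Vect_k^{\fd}}(\Cat) \xrightarrow{c} \Cat_k^{\cl}$ and argues abstractly: $a$ and $b$ preserve limits, $c$ preserves colimits, and in each of these settings the norm map $\cD_G \to \cD^G$ is an equivalence (this is where $|G| \in k^\times$ enters). One then computes
\[
\rh(\cC^G) \simeq c((b\circ a)(\cC)^G) \simeq c((b\circ a)(\cC)_G) \simeq (\rh\cC)_G \simeq (\rh\cC)^G.
\]
So the paper trades your explicit obstruction theory for the single structural fact that invariants and coinvariants agree, together with formal limit/colimit preservation. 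Your approach has the virtue of being self-contained and making explicit exactly which cohomology groups must vanish; the paper's approach is slicker, yields the coinvariant statement $\rh(\cC_G) \simeq (\rh\cC)_G$ for free, and avoids any convergence bookkeeping on the Postnikov tower.
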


\begin{remark}
Since $\cC^G$ is a $k$-linear category, its homotopy category $\rh( \cC^G)$ inherits a natural triangulated structure. 
The corresponding triangulated structure on $(\rh\cC)^G$ via the equivalence of the proposition can be described explicitly: the shift functor is induced by that on $\rh\cC$, and a triangle in $(\rh\cC)^G$ is distinguished if and only if it becomes distinguished under the forgetful functor $(\rh\cC)^G \to \rh \cC$ (cf. Example~\ref{example-triangulated}). 
\end{remark}

\begin{remark}
As we will see in the proof, there are also equivalences 
\begin{equation*} 
\cC^G \simeq \cC_G \qquad \text{and} \qquad (\rh \cC)^G \simeq (\rh \cC)_G,
\end{equation*} 
so the assertion of Proposition~\ref{proposition-enhancement-CG} also holds for coinvariants in place of invariants. 
\end{remark}

\begin{proof}
Note that there is a canonical morphism $\rh(\cC^G) \to (\rh \cC)^G$ induced by the universal property of $(\rh \cC)^G$ as a limit, which we  need to prove is an equivalence. 
In other words, if $\phi \colon BG \to \Cat_k^{\stable}$ is the given $G$-action on $\cC$, we want to prove that $\rh \colon \Cat_k^{\stable} \to \Cat_k^{\cl}$
commutes with taking the limit along $\phi$. 
To do so, we will consider a factorization of the functor $\rh$. 

By definition, 
\begin{equation*}
    \Cat_k^{\stable} = \Mod_{\Dperf(k)}(\Cat^{\stable})
\end{equation*}
is the category of modules over the commutative algebra object $\Dperf(k) \in \Cat^{\stable}$, where $\Cat^{\stable}$ is the $\infty$-category of small idempotent-complete stable $\infty$-categories. 
Similarly, 
\begin{equation*}
    \Cat_k^{\cl} =     \Mod_{\Vect_k^{\fd}}(\Cat^{\cl})
\end{equation*}
where $\Cat^{\cl}$ denotes the $(2,1)$-category of small $1$-categories. 
It will be convenient to consider two intermediate variants of these constructions. 
Namely, if $\Cat$ denotes the $\infty$-category of small  $\infty$-categories, 
then both $\Dperf(k)$ and $\Vect_k^{\fd}$ can be considered as commutative algebra objects in $\Cat$, so we can consider the corresponding categories of modules. 

Now consider the factorization 
\begin{equation} 
\label{factorization-h} 
    \rh \colon \Cat_k^{\stable} \xrightarrow{\, a \,} 
    \Mod_{\Dperf(k)}(\Cat) \xrightarrow{\, b \,} 
    \Mod_{\Vect_k^{\fd}}(\Cat) \xrightarrow{\, c \,}  
    \Cat_k^{\cl}
\end{equation}
where $a$ is the obvious inclusion, 
$b$ is given by restriction along the symmetric monoidal functor $\Vect_k^{\fd} \to \Dperf(k)$, and $c$  is given by taking homotopy categories. We will use the following observations about limits and colimits in the above situation: 
\begin{enumerate}
\item All of the categories appearing in the factorization~\eqref{factorization-h} admit all limits and colimits; see \cite[\S4.2.3]{HA}. 
In particular, for an object in one of these categories with a $G$-action, we may form the invariants and coinvariants. 
\item \label{norm-equivalence} Let $\cD$ be an idempotent-complete category contained in one of the classes of categories appearing in~\eqref{factorization-h}, and assume $\cD$ is equipped with a $G$-action. 
Then there is an equivalence $\cD_G \simeq \cD^G$ given by the norm functor; see \cite[Proposition 3.4]{perry-HH} where this is proved for $\cD \in \Cat_k^{\stable}$, but the same argument works for $\Mod_{\Dperf(k)}(\Cat)$, $\Mod_{\Vect_{k}^{\fd}}(\Cat)$, and $\Cat_k^{\cl}$. This is where we use the assumption that $|G|$ is invertible in $k$. 
\item \label{a-limits} The functor $a$ preserves limits; see \cite[Theorem 1.1.4.4]{HA} where this is proved in the non-$k$-linear setting, but the situation over $k$ is analogous. 
\item \label{b-limits} The functor $b$ preserves limits by \cite[Corollary 4.2.3.3]{HA}. 
\item \label{c-colimits} The functor $c$ preserves colimits. Indeed, $c$ is the left adjoint to the nerve functor. 
\end{enumerate} 

The proposition now follows formally from the above observations. 
Indeed, let $\cD = (b\circ a)(\cC)$ with its induced $G$-action. 
Then we have 
\begin{equation*}
    \rh(\cC^G) = c((b \circ a)(\cC^G))
    \underset{\eqref{a-limits}, \eqref{b-limits}}{\simeq} c( \cD^G ) 
    \underset{\eqref{norm-equivalence}}{\simeq} c( \cD_G ) 
    \underset{\eqref{c-colimits}}{\simeq} \rh(\cC)_G 
    \underset{\eqref{norm-equivalence}}{\simeq} \rh(\cC)^G, 
\end{equation*}
where the subscripts indicate the invoked observation from above. 
\end{proof}

\subsection{Reconstruction theorem} 
\label{section-reconstruction} 
Let $\cC$ be a $k$-linear category with $G$-action. 
By functoriality of the operation of taking $G$-invariants, the $k$-linear structure of $\cC$ induces the structure of a $\Dperf(k)^G$-module on $\cC^G$. 
Note that $\Dperf(k)^G$ is identified with the derived category of finite-dimensional $G$-representations. 
In particular, if $G^{\vee} = \Hom(G, k^{\times})$ is the dual group of characters of $G$, then $\cC^G$ inherits a \emph{residual $G^{\vee}$-action}. 
Concretely, for $\chi \in G^{\vee}$ the corresponding autoequivalence is 
\begin{equation*}
    - \otimes \chi \colon \cC^G \to \cC^G, \quad (E, \theta) \mapsto (E, \theta \otimes \chi),
\end{equation*}
where $\theta \otimes \chi$ is the linearization on $E$ given by $(\theta \otimes \chi)_g=\chi(g)\theta_g$ for every $g \in G$. 
For abelian groups, the category $\cC$ together with its $G$-action can be reconstructed from $\cC^G$ with its $G^{\vee}$-action: 

\begin{theorem}[{\cite[Theorem 4.2]{Elagin}}] \label{thm_elagin}
Assume that $G$ is abelian and $k$ is algebraically closed of characteristic prime to $|G|$. 
Let $\cC$ be a $k$-linear category with a $G$-action. 
Then there is an equivalence 
\begin{equation*}
 \rho \colon  (\cC^G)^{G^{\vee}} \xrightarrow{\sim} \cC
\end{equation*}
of $k$-linear categories, under which the $G$-action on $\cC$ is identified with the $G \cong (G^\vee)^{\vee}$-action on the left-hand side. 
\end{theorem}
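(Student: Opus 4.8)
The plan is to deduce this from the $\infty$-categorical Barr--Beck theorem \cite{HA}, applied twice, exploiting that invertibility of $|G|$ in $k$ makes the relevant forgetful and inflation functors into biadjoint pairs. Write $\Forg_G \colon \cC^G \to \cC$ for the forgetful functor and $\Inf_G \colon \cC \to \cC^G$ for inflation, $F \mapsto \bigoplus_{g \in G} \phi_g(F)$ with its canonical linearization. First I would record that, since $|G|$ is invertible in $k$, the norm construction (as in \cite[Proposition 3.4]{perry-HH}, cf.\ the proof of Proposition~\ref{proposition-enhancement-CG}) identifies $\Inf_G$ with both the left and the right adjoint of $\Forg_G$; consequently both functors are conservative and preserve all limits and colimits. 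A direct computation then gives $\Forg_G \circ \Inf_G \simeq \bigoplus_{g \in G} \phi_g$ and
\[
\Inf_G \circ \Forg_G \simeq \bigoplus_{\chi \in G^{\vee}} (- \otimes \chi) \colon \cC^G \to \cC^G,
\]
where the second identification uses the decomposition of the regular representation of $G$ into characters --- this is exactly where the hypotheses that $G$ is abelian and $k$ is algebraically closed of characteristic prime to $|G|$ are needed, since they guarantee $k[G] \simeq k^{G^{\vee}}$. For the residual $G^{\vee}$-action on $\cC^G$ the analogous functors satisfy $\Forg_{G^{\vee}} \circ \Inf_{G^{\vee}} \simeq \bigoplus_{\chi \in G^{\vee}}(- \otimes \chi)$ as well, so the comonad $\Theta := \Inf_G \circ \Forg_G$ and the monad $P := \Forg_{G^{\vee}} \circ \Inf_{G^{\vee}}$ on $\cC^G$ coincide as endofunctors.

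Next I would invoke Barr--Beck. Since $\Inf_G$ is conservative and preserves limits, it is comonadic, so the comparison functor is an equivalence $\cC \simeq \mathrm{coMod}_{\Theta}(\cC^G)$. Since $\Forg_{G^{\vee}}$ is conservative and preserves colimits, it is monadic, so $(\cC^G)^{G^{\vee}} \simeq \Mod_{P}(\cC^G)$. It then remains to identify $\mathrm{coMod}_{\Theta}(\cC^G)$ with $\Mod_{P}(\cC^G)$: the functor $\Theta \simeq P$ is the group-algebra (co)monad of $G^{\vee}$ --- informally ``$\otimes\, k[G^{\vee}]$'' --- and since $|G^{\vee}| = |G|$ is invertible in $k$ the algebra $k[G^{\vee}]$ is separable Frobenius, so the same norm mechanism upgrades this to a canonical equivalence between its comodule and module categories. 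Composing the three equivalences yields the desired $\rho \colon (\cC^G)^{G^{\vee}} \xrightarrow{\sim} \cC$.

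Finally I would check that $\rho$ intertwines the residual $(G^{\vee})^{\vee}$-action on the source with the action $\phi$ on the target, under the canonical isomorphism $G \simeq (G^{\vee})^{\vee}$; concretely, under $\cC \simeq \mathrm{coMod}_{\Theta}(\cC^G)$ an object $F$ corresponds to $\Inf_G F = \bigoplus_{g} \phi_g F$, on which $\phi_g$ permutes the summands, and one traces this through the identification with $\Mod_P(\cC^G)$. I expect this last point --- carrying all of the (co)algebra structure and the equivariance data, rather than just the underlying functors, through the Barr--Beck equivalences --- to be the main technical burden; a purely $1$-categorical alternative is Elagin's original argument \cite[Theorem 4.2]{Elagin}, which exhibits $\rho$ directly by simultaneously diagonalizing the two commuting families of automorphisms that make up a ``double linearization'', again relying on $k$ being algebraically closed of characteristic prime to $|G|$.
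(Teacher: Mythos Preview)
Your approach is correct and is essentially the same as the paper's, which simply cites Elagin and remarks that his argument carries over to the $\infty$-categorical setting. As the paper makes explicit in the proof of Lemma~\ref{lemma_generalnonsense}, Elagin's equivalence $\rho$ is precisely the composite of two (co)monadic comparison functors $(\cC^G)^{G^\vee} \simeq (\cC^G)_{T(q^*,q_*)}$ and $\cC \simeq (\cC^G)_{T(p_*,p^*)}$ followed by an identification of the two comonads on $\cC^G$; this is exactly your Barr--Beck argument, with the only cosmetic difference that you phrase one of the two comparisons monadically rather than comonadically (harmless, since $\Forg$ and $\Inf$ are biadjoint).
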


\begin{proof}
In \cite[Theorem 4.2]{Elagin} this is proved for $\cC$ an idempotent-complete classical $k$-linear category, but the analogous arguments work in the setting of $k$-linear categories. 
\end{proof}

Recall that there are mutually left and right adjoint functors between $\cC^G$ and $\cC$: the \emph{forgetful functor}  
\begin{equation*}
    \Forg_G \colon \cC^G \to \cC 
\end{equation*}
given by forgetting the $G$-equivariant structure, 
and the \emph{inflation functor} 
\begin{equation*}
    \Inf_G \colon \cC \to \cC^G 
\end{equation*}
given on objects by $E \mapsto \bigoplus_{g \in G} \phi_g(E)$,  where $\phi_g$ are the autoequivalences of $\cC$ given by the $G$-action and the sum is given the natural $G$-linearization (see \cite[Lemma 3.8]{Elagin}, \cite[\S3.2]{perry-HH}). 
When it is clear from context, we will omit $G$ from the notation and write $\Forg$ and $\Inf$. 
For later use, we make an observation about the interaction of these functors with the equivalence of Theorem~\ref{thm_elagin}.  

\begin{lemma}
\label{lemma_generalnonsense}
Under the assumptions of Theorem \ref{thm_elagin}, there is an equivalence of functors
\begin{equation*}
   \Forg_{G^{\vee}} \simeq \Inf_G  \circ \rho \colon 
   (\cC^G)^{G^{\vee}} \to \cC^G. 
\end{equation*} 
\end{lemma}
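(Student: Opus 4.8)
The plan is to establish the equivalence $\Forg_{G^\vee} \simeq \Inf_G \circ \rho$ by working through the explicit descriptions of all three functors and using the double-invariants picture from Theorem~\ref{thm_elagin}. First I would unwind what $(\cC^G)^{G^\vee}$ looks like concretely: an object is a pair $(E,\theta)$ in $\cC^G$ (so $E \in \cC$ with a $G$-linearization $\theta$) together with a $G^\vee$-linearization, i.e. for each $\chi \in G^\vee$ an isomorphism $(E,\theta) \cong (E, \theta\otimes\chi)$ in $\cC^G$ satisfying the cocycle condition. Since $G$ is abelian and finite, a $G^\vee$-linearization is precisely an eigenspace-type decomposition; concretely it amounts to a splitting of $E$ (in $\cC$) into isotypic pieces indexed by $G = (G^\vee)^\vee$, compatible with $\theta$. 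Under Elagin's equivalence $\rho$, the object $((E,\theta), \psi)$ maps to the underlying object of the trivial-character isotypic component (or, depending on the normalization in \cite{Elagin}, a fixed chosen component) — I would quote the precise formula for $\rho$ from the proof of \cite[Theorem~4.2]{Elagin}. The key structural input is that the composite $\cC \xrightarrow{\Inf_G} \cC^G \xrightarrow{\Inf_{G^\vee}} (\cC^G)^{G^\vee}$ is, up to the natural identifications, the regular-representation bimodule, and $\rho$ is characterized by splitting this off.

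Next I would identify $\Forg_{G^\vee} \colon (\cC^G)^{G^\vee} \to \cC^G$: it simply discards the $G^\vee$-linearization, returning $(E,\theta)$. On the other hand $\Inf_G \circ \rho$ takes $((E,\theta),\psi)$, applies $\rho$ to land on some object $E_0 \in \cC$ (a summand of $E$), and then applies $\Inf_G$, which on objects is $E_0 \mapsto \bigoplus_{g\in G}\phi_g(E_0)$ with its canonical $G$-linearization. So the content of the lemma is the natural isomorphism in $\cC^G$
\[
\bigoplus_{g\in G}\phi_g(E_0)\ \cong\ (E,\theta),
\]
compatible with $G$-linearizations, which is exactly the statement that $E$, as a $G$-linearized object carrying a compatible $G^\vee$-linearization, reassembles from its isotypic components via the $G$-action permuting them. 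This is precisely the computation underlying Elagin's reconstruction — in fact it is the unit/counit of the adjunction packaged into Theorem~\ref{thm_elagin} — so the cleanest route is to deduce it formally: apply $\Forg_{G^\vee}$ to the equivalence $\Inf_{G^\vee} \circ \Forg_{G^\vee} \simeq \bigoplus_{\chi \in G^\vee}(-\otimes\chi)$ and compare with the analogous decomposition of $\Inf_G \circ \Forg_G$ under $\rho$, or more directly invoke the compatibility of $\rho$ with inflation/forgetful functors that is implicit in \cite[\S4]{Elagin}.

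Concretely, the argument I would write: (1) recall from \cite[Lemma~3.8]{Elagin} that $\Inf_G$ is both adjoint to $\Forg_G$ and that $\Forg_G\circ\Inf_G \simeq \bigoplus_{g\in G}\phi_g$; (2) apply this with $G$ replaced by $G^\vee$ acting on $\cC^G$, giving $\Forg_{G^\vee}\circ\Inf_{G^\vee}\simeq\bigoplus_{\chi}(-\otimes\chi)$; (3) note that $\rho$ is constructed in \cite{Elagin} so that $\rho \circ \Inf_{G^\vee} \simeq \Forg_G$ (this is the defining property of $\rho$ — reconstruction sends the inflated object to its underlying object with the $G$-structure), hence $\Inf_G\circ\rho\circ\Inf_{G^\vee}\simeq\Inf_G\circ\Forg_G\simeq\bigoplus_g\phi_g\circ(-)$ viewed appropriately in $\cC^G$; (4) both $\Forg_{G^\vee}$ and $\Inf_G\circ\rho$ are exact $k$-linear functors out of $(\cC^G)^{G^\vee} \simeq ((\cC^G)^{G^\vee})$, and since every object of $(\cC^G)^{G^\vee}$ is a summand of one in the image of $\Inf_{G^\vee}$ (as $\Forg_{G^\vee}$ is conservative and $|G^\vee|$ is invertible, the unit of the $(\Inf_{G^\vee},\Forg_{G^\vee})$-adjunction is split), the two functors, agreeing after precomposition with $\Inf_{G^\vee}$ and both commuting with the relevant idempotent projections, must be equivalent. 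I expect step (3) — pinning down the precise compatibility of Elagin's reconstruction equivalence $\rho$ with the inflation functor, i.e. that $\rho$ really is "evaluate at the trivial component" and not some twist — to be the main obstacle, since it requires reading off the explicit form of $\rho$ from \cite[Theorem~4.2]{Elagin} rather than just its existence; once that normalization is fixed, the rest is a formal splitting argument valid because $|G|$ is invertible in $k$.
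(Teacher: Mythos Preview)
Your approach is more roundabout than the paper's, and as written it has a gap at the same place you yourself flag.

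The paper's proof is a one-liner from the construction of $\rho$: Elagin builds $\rho$ by identifying both $(\cC^G)^{G^\vee}$ and $\cC$ with the category of comodules over (isomorphic) comonads on $\cC^G$, via the Barr--Beck comparison functors for the adjunctions $(\Inf_{G^\vee}, \Forg_{G^\vee})$ and $(\Inf_G, \Forg_G)$. The forgetful functor from a comodule category, composed with the comparison equivalence, is by definition the left adjoint in the underlying adjunction. So under $\rho$ the two ``forgetful-to-$\cC^G$'' functors match, which is exactly $\Forg_{G^\vee} \simeq \Inf_G \circ \rho$. No explicit manipulation of linearized objects is needed.

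Your step~(3), the claim $\rho \circ \Inf_{G^\vee} \simeq \Forg_G$, is precisely the adjoint of the statement you are trying to prove: since $\Inf$ and $\Forg$ are mutually adjoint on both sides and $\rho$ is an equivalence, taking left adjoints in $\rho \circ \Inf_{G^\vee} \simeq \Forg_G$ yields $\Forg_{G^\vee} \circ \rho^{-1} \simeq \Inf_G$, i.e.\ the lemma. So if you had (3), step~(4) would be unnecessary --- and in fact step~(4) as written is not quite an argument: agreement of two functors after precomposition with $\Inf_{G^\vee}$, together with every object being a retract of an inflated one, does \emph{not} formally give a natural isomorphism without tracking compatibility with the retractions. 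The adjoint argument avoids this entirely.

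The real issue is that you never prove (3); you describe it as ``the defining property of $\rho$'' and then say reading it off from Elagin is ``the main obstacle''. That is exactly the content of the lemma, just transposed. The paper resolves it by appealing to the comonadic definition of $\rho$ (Elagin's Comparison Theorem, \cite[Proposition~2.6]{Elagin}), which gives the compatibility directly. Your explicit linearized-object picture is correct intuition but does not by itself pin down $\rho$ well enough to establish either (3) or the lemma; you still need the comonadic input, at which point the paper's direct route is shorter.
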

\begin{proof}
The equivalence $\rho$ is the composition of two comparison functors $(\cC^G)^{G^\vee} \simeq (\cC^G)_{T(q^*,q_*)}$ and $\cC \simeq (\cC^G)_{T(p_*,p^*)}$, where 
$$p_*=\Forg_G, \quad p^*=\Inf_G, \quad q_*=\Forg_{G^{\vee}}, \quad q^*=\Inf_{G^{\vee}},$$
$(\cC^G)_{T(q^*,q_*)}$ is the category of comodules over the comonad $T(q^*,q_*)$ on $\cC^G$, $(\cC^G)_{T(p_*,p^*)}$ is the category of comodues over the comonad $T(p_*,p^*)$ on $\cC^G$, together with an equivalence $(\cC^G)_{T(q^*,q_*)} \simeq (\cC^G)_{T(p_*,p^*)}$. By the Comparison Theorem (see \cite[Proposition 2.6]{Elagin}), we have that $p_* \circ \rho \simeq q^*$, which in our notation means $\Inf_G \circ \rho \simeq \Forg_{G^{\vee}}$.
\end{proof}


\section{Enriques categories}  \label{sec_enriques}

In this section we define Enriques categories, give some important examples, and recall a related result about derived categories of branched double covers. 

\subsection{Semiorthogonal decompositions} \label{subsec_preliminarydef}
Since many examples of Enriques categories arise as components in semiorthogonal decompositions, 
we very briefly review some of the basic definitions; see for instance \cite{bondal,Bondal:representable-functors-serre-functor} for more details. 

Let $\cC$ be a $k$-linear category. A \emph{semiorthogonal decomposition} of $\cC$ consists of a collection $\cC_1, \dots, \cC_m$ of full triangulated subcategories of $\cC$ such that:
\begin{enumerate}
    \item $\Hom(F,G)=0$ for every $F \in \cC_i$, $G \in \cC_j$, $i>j$; 
    \item For every $F \in \cC$, there is a sequence of morphisms
		\begin{equation*}  
		0 = F_m \to F_{m-1} \to \cdots \to F_1 \to F_0 = F,
		\end{equation*}
		such that the cone of $F_i \to F_{i-1}$ is in $\cC_i$ for $1 \leq i \leq m$. 
\end{enumerate}
A semiorthogonal decomposition is denoted by $\cC=\langle \cC_1, \dots, \cC_m \rangle$. 

Examples of semiorthogonal decompositions arise from exceptional collections in $\cC$. An object $E \in\cC$ is \emph{exceptional} if $\cHom_k(E,E) \simeq k[0]$ is a copy of $k$ in degree $0$. 
An \emph{exceptional collection} is a sequence of exceptional objects $E_1,\dots,E_m$ in $\cC$ satisfying $\cHom_k(E_i,E_j)=0$ for all $i>j$. In this case, we have a semiorthogonal decomposition
$$\cC=\langle \cD, E_1, \dots, E_m \rangle$$
where $\cD:=\langle E_1, \dots, E_m \rangle^\perp=\set{ F \in \cC \st \cHom_k(E_i, F)=0 \text{ for all } 1 \leq i \leq m }$ is the right orthogonal to the exceptional collection.

If $E \in \cC$ is an exceptional object, then the \emph{left mutation functor} $\rL_{E} \colon \cC \to \langle E \rangle^{\perp}$ is defined by the exact triangle
\begin{equation*}
  \cHom_k(E,F) \otimes E \to F \to \rL_E(F).
\end{equation*} 
Analogously the \emph{right mutation functor} $\rR_E \colon \cC \to { }^\perp\langle E \rangle$ is defined by the exact triangle 
\begin{equation*}
    \rR_E(F) \to F \to \cHom(F, E)^\vee \otimes E .
\end{equation*}
More generally, given an admissible subcategory $\cA \subset \cC$, i.e. a full triangulated subcategory whose inclusion admits left and right adjoints, one can similarly define mutation functors $\rL_{\cA}$ and $\rR_{\cA}$ through $\cA$. 

\subsection{Definition and examples of Enriques categories} 
Below we consider smooth proper $k$-linear categories, where $k$ is a field. 
We refer to \cite[\S4]{NCHPD} for background on this notion, but note that if $X$ is a smooth proper $k$-variety and $\cC$ is a semiorthogonal component of $\Dperf(X)$, then $\cC$ is a smooth proper $k$-linear category \cite[Lemma 4.9]{NCHPD}.
Further, recall that if $\cC$ is a proper $k$-linear category, then a \emph{Serre functor} is an autoequivalence $\rS_{\cC}$ such that there are functorial equivalences 
\begin{equation*}
    \cHom_k(E, \rS_{\cC}(F)) 
    \simeq \cHom_k(F, E)^{\vee}
\end{equation*}
for $E, F \in \cC$. 
A Serre functor is unique when it exists, and it exists for any smooth proper $k$-linear category, e.g. any semiorthogonal component of a smooth proper variety \cite{Bondal:representable-functors-serre-functor}. 

\begin{definition}
\label{definition-CY2-Enriques}
Let $\cC$ be a smooth proper $k$-linear category. 
\begin{enumerate}
\item $\cC$ is an \emph{Enriques category} if it is equipped with a $\bZ/2$-action whose generator $\tau$ is a nontrivial autoequivalence of $\cC$ satisfying $\rS_{\cC} \simeq \tau \circ [2]$. 
\item $\cC$ is a \emph{$2$-Calabi--Yau (CY2) category} if $\rS_{\cC} \simeq [2]$. 
If $\cC$ further has Hochschild homology isomorphic to that of a K3 surface, then we 
say that $\cC$ is a \emph{K3 category}. 
\end{enumerate}
\end{definition}

Although we have made the definition of an Enriques category without any restrictions on the characteristic of $k$, 
in the paper we will often avoid pathologies by assuming $\mathrm{char}(k) \neq 2$. 

\begin{remark} 
In the literature, sometimes more assumptions are imposed in the definition of a CY2 category. 
For instance, in \cite[Definition~6.1]{IHC-CY2} it is required that: 
\begin{enumerate}
    \item $\cC$ is \emph{connected} as a $k$-linear category, in the sense that its Hochschild cohomology satisfies 
$\HH^0(\cC) = k$ and $\HH^{<0}(\cC) = 0$; and \item $\cC$ is \emph{geometric}, in the sense that it can be realized as a semiorthogonal component in $\Dperf(X)$ for a smooth proper variety $X$ over $k$. 
\end{enumerate}
We have taken the least restrictive definition as many arguments work in that generality.
\end{remark} 

\begin{remark}
Let us emphasize that in general it is an extra condition for an involutive autoequivalence to lift to a $\bZ/2$-action. 
Namely, suppose that $\cC$ is a connected $k$-linear category 
and let $\tau$ be an autoequivalence of $\cC$ such that $\tau \circ \tau \simeq \id_{\cC}$. 
Then there is an obstruction $\mathrm{ob}(\tau) \in \rH^3(\bZ/2, k^{\times})$ ($=\bZ/2$ when $k$ is algebraically closed of characteristic different from $2$) which vanishes if and only if there exists a $\bZ/2$-action on $\cC$ with generator $\tau$, in which case this action is unique up to equivalence; see \cite[Corollary 3.4 and Remark 4.2]{BP}. 
All of the examples of Enriques categories $\cC$ considered in this paper will be connected, so there will be a unique $\bZ/2$-action with generator the involutive autoequivalence $\tau = \rS_{\cC} \circ [-2]$. 
\end{remark}

Passing to $\bZ/2$-invariants leads to a correspondence between Enriques and CY2 categories, summarized in the following lemma; note that the second statement is just a special case of Theorem~\ref{thm_elagin}. 

\begin{lemma}[{\cite[Lemmas 4.5 and 4.6]{BP}}]
\label{lemma-Enriques-CY2}
Let $\cC$ be an Enriques category over an algebraically closed field $k$ of characteristic prime to $2$. 
Let $\cD = \cC^{\bZ/2}$ be the invariant category for the $\bZ/2$-action on $\cC$. 
\begin{enumerate}
    \item $\cD$ is a CY2 category, called the \emph{CY2 cover} of $\cC$. 
    \item \label{D-residual}
  $\cD$ is equipped with a natural $\bZ/2$-action, called the \emph{residual $\bZ/2$-action}, such that there is an equivalence $\cC \simeq \cD^{\bZ/2}$. 
\end{enumerate}
\end{lemma}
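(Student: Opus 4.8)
The plan is to deduce both parts from Elagin's reconstruction theorem (Theorem~\ref{thm_elagin}) together with a computation of the Serre functor of an invariant category.

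Part~(2) should be essentially formal. Since $k$ is algebraically closed of characteristic $\neq 2$, the dual group $(\bZ/2)^{\vee} = \Hom(\bZ/2,k^{\times})$ is identified with $\mu_2(k) = \{\pm 1\}$, hence is again cyclic of order $2$. Applying Theorem~\ref{thm_elagin} with $G = \bZ/2$ gives an equivalence $(\cC^{\bZ/2})^{(\bZ/2)^{\vee}} \xrightarrow{\sim} \cC$. I would then take the residual $(\bZ/2)^{\vee}$-action on $\cD = \cC^{\bZ/2}$ constructed in \S\ref{section-reconstruction} as the asserted residual $\bZ/2$-action, so that this equivalence is precisely the claimed equivalence $\cC \simeq \cD^{\bZ/2}$. (If one wishes, one also notes this action is nontrivial: otherwise $\cD^{\bZ/2}$ would split as $\cD \oplus \cD$, contradicting connectedness of $\cC$; but this is not needed for the statement.)

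For part~(1), I would first record that $\cD = \cC^{\bZ/2}$ is again smooth and proper. Properness is quick: for objects of $\cD$ the mapping complex is the $\bZ/2$-invariant summand of the corresponding mapping complex in $\cC$ (using that $2$ is invertible), hence a retract of a perfect $k$-complex, hence perfect. For smoothness I would invoke the general fact that invariant categories under finite group actions of order invertible in $k$ inherit smoothness (see \cite{Elagin, perry-HH, BP}). The substantive point is to show $\rS_{\cD} \simeq [2]$, and here the plan has four steps: (i) recall from \S\ref{section-reconstruction} that $\Forg \colon \cD \to \cC$ and $\Inf \colon \cC \to \cD$ are mutually left and right adjoint, so in particular each is ambidextrous; (ii) observe that an ambidextrous functor between proper categories intertwines their Serre functors (a formal manipulation of the Serre duality pairings), so $\rS_{\cD} \circ \Inf \simeq \Inf \circ \rS_{\cC}$; (iii) substitute $\rS_{\cC} \simeq \tau \circ [2]$ and use that $\Inf \circ \tau \simeq \Inf$, since $\tau$ is the generator of the $\bZ/2$-action and hence $\Inf(\tau E) = \bigoplus_{g} \phi_g \phi_\tau(E) \simeq \bigoplus_g \phi_g(E) = \Inf(E)$ by reindexing, compatibly with linearizations, to get $\rS_{\cD} \circ \Inf \simeq [2] \circ \Inf$; (iv) upgrade this to a natural isomorphism $\rS_{\cD} \simeq [2]$ on all of $\cD$, using that the image of $\Inf$ generates $\cD$ as a thick subcategory (each $X \in \cD$ is a retract of $\Inf\Forg X$ since $2$ is invertible), so the thick subcategory of objects on which a given natural transformation $\rS_{\cD} \Rightarrow [2]$ is an isomorphism must be all of $\cD$. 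As an alternative to step~(iv), kept in reserve, one can instead use the explicit objectwise description of the Serre functor of $\cC^{\bZ/2}$ — namely $\rS_{\cC}$ equipped with the linearization induced via the canonical commutation of $\rS_{\cC}$ with the group autoequivalences — plug in $\rS_{\cC} = \tau \circ [2]$, and write down the isomorphism with $[2]$ by hand; by Proposition~\ref{proposition-enhancement-CG} this computation may be carried out on homotopy categories, where Elagin's formulas apply directly.

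I expect step~(iv) to be the main technical point: one must produce the natural transformation $\rS_{\cD} \Rightarrow [2]$ coherently, extending the isomorphism $\rS_{\cD} \circ \Inf \simeq [2] \circ \Inf$ off the generating subcategory, rather than merely checking the statement object by object. The explicit-formula alternative sidesteps this but replaces it with a routine, if slightly tedious, cocycle verification. Everything else in the argument is either formal (part~(2), ambidexterity, properness) or a citation (smoothness of invariant categories).
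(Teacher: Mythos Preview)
Your proposal is correct. The paper does not give its own proof of this lemma but simply cites \cite[Lemmas~4.5 and~4.6]{BP}; indeed, the sentence immediately preceding the lemma already records that part~\eqref{D-residual} is a special case of Theorem~\ref{thm_elagin}, exactly as you argue.

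Your outline for part~(1) matches the standard argument. One small comment on step~(iv): rather than first producing $\rS_{\cD}\circ\Inf \simeq [2]\circ\Inf$ and then trying to descend a natural transformation $\rS_{\cD}\Rightarrow[2]$ through the retraction, it is cleaner to use your alternative directly. The Serre functor of $\cC^{G}$ is the $G$-equivariant lift of $\rS_{\cC}$ (via the canonical commutation of $\rS_{\cC}$ with autoequivalences), and the functor on $\cC^{G}$ induced by the inner autoequivalence $\tau=\phi_\iota$ is naturally isomorphic to the identity via the linearization maps $\theta_\iota$; plugging in $\rS_{\cC}\simeq\tau\circ[2]$ then gives $\rS_{\cD}\simeq[2]$ at once, and the only thing to check is a short cocycle compatibility. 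This avoids the issue you flag of extending an isomorphism off the essential image of $\Inf$.
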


\begin{remark} 
In the definition of an Enriques category $\cC$, it would be natural to require that its CY2 cover is a K3 category. 
Indeed, Lemma~\ref{lemma-Enriques-CY2}\eqref{D-residual} would then say $\cC$ is a ``$\bZ/2$-quotient'' (i.e. $\bZ/2$-invariant category) of a K3 category, parallel to the situation for a geometric Enriques surface. 
We have not included this as part of the definition as it is not necessary for our arguments, 
but it will be satisfied in all of the examples of interest in the paper. 
\end{remark} 

We define an Enriques category over a base scheme by the requirement that its fibers are so. 
To formulate this, we use the formalism of categories linear over a base scheme $S$, of which the $k$-linear categories of Example~\ref{example-linear-categories} are the special case when $S = \Spec(k)$; see \cite[\S2]{IHC-CY2} for a quick summary of the key points of this theory. 
In particular, for any $S$-linear category $\cC$ and morphism $T \to S$, we can form the base changed $T$-linear category $\cC_T$, compatible with semiorthogonal decompositions. 
For a point $s \in S$, this gives a $\kappa(s)$-linear category $\cC_s$, called the \emph{fiber} of $\cC$ over $s$. 

\begin{definition}
\label{definition-family-Enriques}
Let $S$ be a scheme and let $\cC$ be a smooth proper $S$-linear category. 
We say that $\cC$ is an \emph{Enriques category over $S$} if for every point $s \in S$, the fiber $\cC_s$ is an Enriques category over $\kappa(s)$. 
\end{definition}

\begin{remark}
There are other possible notions of Enriques categories over a base. 
For instance, a more global definition would require the existence of an $S$-linear $\bZ/2$-action on $\cC$ such that the relative Serre functor is given by $\tau \circ [2]$ for the generator $\tau$ of the action. 
For our purposes in this paper, the fibral definition will suffice. 
\end{remark}

\subsection{Examples} \label{subsection_examples}
Below we identify some interesting Enriques categories and their CY2 covers, assuming $\mathrm{char}(k) \neq 2$ throughout. 

\begin{example}[Enriques surfaces]
\label{example-enriques-surface} 
Let $S$ be an Enriques surface. 
Then $\Db(S)$ is an Enriques category, with $\bZ/2$-action given by tensoring with the canonical bundle $\omega_S$. 
If $T \to S$ is the corresponding K3 double cover of $S$, then $\Db(T) \simeq \Db(S)^{\bZ/2}$ is the corresponding CY2 cover, with residual $\bZ/2$-action given by the covering involution. 
\end{example} 

\begin{example}[Quartic double solids] \label{example-quartic-ds}
Let $Y \to \bP^3$ be a quartic double solid. 
Then the Kuznetsov component $\Ku(Y):=\langle \cO_Y, \cO_Y(1) \rangle^\perp \subset \Db(Y)$ is a connected Enriques category, with $\bZ/2$-action given by the covering involution, see \cite[Proposition 4.14]{BP}. 
If $Y_{\br} \subset \bP^3$ is the branch locus of the double cover $Y \to \bP^3$, then 
$\Db(Y_{\br}) \simeq \Ku(Y)^{\bZ/2}$ is the CY2 cover, and the residual $\bZ/2$-action can be described in terms of the spherical twist around $\cO_{Y_{\br}}$, see \cite{KuzPer_cycliccovers} and \cite[Theorem 4.15]{BP}. 
\end{example}

\begin{example}[Gushel--Mukai varieties] \label{ex_GM}
A \emph{Gushel--Mukai (GM) variety} of dimension $n$, $2 \leq n \leq 6$, is a smooth intersection 
$$X= \text{CGr}(2,5) \cap Q \subset \bP^{10},$$
where $\text{CGr}(2,5)$ is the cone over the Grassmannian $\Gr(2, 5)$ embedded via the Pl\"ucker embedding in a $10$-dimensional projective space $\bP^{10}$, and $Q$ is a quadric hypersurface in a linear subspace $\bP^{n+4} \subset \bP^{10}$.
If $n=2$ then $X$ is a Brill-Noether general degree $10$ polarized K3 surface, 
while for $n \geq 3$
GM varieties coincide with the class of smooth Fano varieties of Picard number $1$, degree $10$ and coindex $3$ \cite{gushel, mukai, DebKuz:birGM}.

The linear projection from the vertex $\nu$ of $\text{CGr}(2,5)$ restricts to a well defined morphism $\gamma_X \colon X \to\Gr(2,5)$. We say that $X$ is an \emph{ordinary} GM variety if $\nu$ does not belong to the linear space $\bP^{n+4}$. In this case, $X$ is isomorphic to a quadric section of a linear section of $\Gr(2,5)$. If $\nu$ belongs to $\bP^{n+4}$, then $X$ is a \emph{special} GM variety. In this case, $\gamma_X$ is a double cover of a linear section $M \coloneqq \Gr(2,5) \cap \bP^{n+3}$ ramified in a quadric section $X'$ of $M$, which is an ordinary GM variety of dimension $n-1$. We say that $X$ and $X'$ are \emph{opposite} GM varieties, and use the notation $X^{\op} \coloneqq X'$ and $(X')^{\op} \coloneqq X$ for the operation of passing to opposites. 

For technical reasons and ease of reference to the literature, we will assume $\mathrm{char}(k) = 0$ in our discussion of derived categories of GM varieties below. 
The Kuznetsov component is defined by the semiorthogonal decomposition 
\begin{equation*}
\Db(X)= \langle \Ku(X), \cO_X, \cU_X^\vee, \dots, \cO_X((n-3)H), \cU^\vee_X((n-3)H)\rangle
\end{equation*}
where $\cU_X$ and $\cO_X(H)$ denote the pullbacks to $X$ of the tautological rank $2$ bundle and Pl\"{u}cker line bundle on $\Gr(2,5)$. 
Note that when $n = 2$ the exceptional collection is empty and $\Ku(X) = \Db(X)$. 
The categories $\Ku(X)$ were first introduced and studied in detail in \cite{KuzPerry:dercatGM}. 

If $X$ is an odd-dimensional GM variety, then the results of \cite{KuzPerry:dercatGM, KuzPer_cycliccovers} show that $\Ku(X)$ is a connected Enriques category, with $\bZ/2$-action given by the covering involution when $X$ is special. 
Moreover, the CY2 cover of $\Ku(X)$ is $\Ku(X^{\op})$, and the residual $\bZ/2$-action is given by the covering involution of $X^{\op}$ when $X$ is ordinary and in terms of certain mutation or spherical functors in general (see \cite[Theorem 4.15]{BP} for the explicit formula). 
\end{example} 

\begin{example}[Verra threefolds]
\label{example-verra-3fold}
A \emph{Verra threefold} is a smooth Fano threefold $V$ which is either a $(2,2)$-divisor $V \subset \bP^2 \times \bP^2$, or a double cover $V \to M$ of a $(1,1)$-divisor $M \subset \bP^{2} \times \bP^2$ branched along the K3 surface $Z \subset M$ given by the intersection with a $(2,2)$-divisor\footnote{Similar to the definition of a GM variety, we could also define $V$ more uniformly as a complete intersection in the cone over $\bP^2 \times \bP^2 \subset \bP^8$.}. 
We say $V$ is \emph{ordinary} in the first case, and \emph{special} in the second. 
In either case, if $\pi_1 \colon V \to \bP^2$ is the projection to the first $\bP^2$ factor, then 
there is a semiorthogonal decomposition 
\begin{equation*}
    \Db(V) = \langle \Ku(V), \pi_1^* \Db(\bP^2) \rangle , 
\end{equation*}
where $\Ku(V)$ is a connected Enriques category. 
Indeed, that $\Ku(V)$ is Enriques follows from the results of \cite{kuznetsov-CY}, while connectedness follows from the results of \cite{perry-HH} (cf. \cite[Proposition 4.14]{BP} and \cite[Proposition 2.12]{KuzPerry:dercatGM}). 

The results of \cite{KuzPer_cycliccovers} show that the CY2 cover of $\Ku(V)$ is $\Db(Z)$ if $V$ is special, or the Kuznetsov component of the Verra fourfold $W$ corresponding to $V$ if $V$ is ordinary. By definition, $W \to \bP^2 \times \bP^2$ is the double cover branched over $V$, with Kuznetsov component defined by 
\begin{equation*}
\Db(W) = \langle \Ku(W), \pi_1^* \Db(\bP^2) , \pi_1^*\Db(\bP^2) \otimes \cO(H_2) \rangle, 
\end{equation*}
where $\pi_1 \colon W \to \bP^2$ is the projection to the first $\bP^2$ factor and $H_2$ is the pullback of the hyperplane class on the second $\bP^2$ factor. 
In fact, $\Ku(W) \simeq \Db(S, \alpha)$ for a degree $2$ K3 surface $S$ with a $2$-torsion Brauer class $\alpha$. Indeed, this follows from Kuznetsov's results on the derived category of a quadric fibration \cite{kuznetsov08quadrics}; see \cite{rational-point-twisted} for details.  
\end{example} 

Using the results of \cite{KuzPer_cycliccovers} and \cite{kuznetsov-CY}, more examples of geometrically interesting Enriques categories can be constructed by taking a double cover branched along a variety with a CY2 Kuznetsov component. Let us give just one more example. 

\begin{example}
\label{example-other-enriques-cats}
Let $T \to \bP^2$ be a K3 surface of degree $2$, i.e. a double cover of $\bP^2$ branched along a smooth sextic curve $C$. 
We can also regard $T$ as a degree $6$ divisor $T \subset \bP(1,1,1,3)$; 
here and below, we use $\bP(w_0, w_1, \dots, w_n)$ to denote the weighted projective stack with weights $w_i$, i.e. the Deligne--Mumford stack given as the quotient of $\bA^{n+1}_k \setminus \{ 0 \}$ by $\bG_m$ acting with weight $w_i$ on the $i$-th coordinate. 
Let $Y \to \bP(1,1,1,3)$ be the double cover branched along $T$; as above, we can also regard $Y \subset \bP(1,1,1,3,3)$ as a degree $6$ divisor. 
Then $Y$ is a stacky Fano threefold with a semiorthogonal decomposition 
\begin{equation*}
    \Db(Y) = \langle \Ku(Y), \cO_Y, \cO_Y(1), \cO_Y(2) \rangle, 
\end{equation*}
and $\Ku(Y)$ is a connected Enriques category with CY2 cover $\Db(T)$. 

By \cite[Theorem 1.1]{KuzPer_cycliccovers} the K3 cover of $\Ku(Y)$ is $\Db(T)$, and by \cite[Proposition 1.1 and Theorem 7.7]{KuzPer_cycliccovers} the generator of the residual $\bZ/2$-action on $\Db(T)$ is $i_*$, where $i$ is the involution of the double cover $T \to \bP^2$. 
It follows that $\Ku(Y) \simeq \Db(T)^{\bZ/2} \simeq \Db([T/i])$, where $[T/i]$ is the quotient stack. 
Moreover, if $f \colon [T/i] \to \bP^2$ is the map induced by the double cover and $j \colon C \to [T/i]$ is the map obtained from the inclusion of $C$ in $T$ as the ramification divisor, then by \cite[Theorem 4.1]{KuzPer_cycliccovers} there is a semiorthogonal decomposition 
\begin{equation*}
\Ku(Y) \simeq \Db([T/i]) = \langle 
f^*\Db(\bP^2), j_*\Db(C) \rangle ,
\end{equation*}
where $f^*$ and $j_*$ are fully faithful. 
Altogether, we conclude that the Enriques category $\Ku(Y)$ admits a semiorthogonal decomposition with a copy of the derived category of a sextic plane curve and $3$ exceptional objects. 
\end{example} 

\begin{remark} \label{rmk_families_qdsGM}
It is easy to show that the above examples work in families. For instance, suppose that $f \colon \cY \to S$ is a smooth proper morphism equipped with a line bundle $\cO_{\cY}(1)$ such that the fibers $\cY_s$ for $s \in S$ are quartic double solids with  $\cO_{\cY_s}(1)$ the pullback of $\cO_{\bP^3}(1)$. 
Then there is a $S$-linear semiorthogonal decomposition 
\begin{equation*}
    \Dperf(\cY) = \langle \Ku(\cY), f^*\Dperf(S), f^*\Dperf(S) \otimes \cO_{\cY}(1) \rangle 
\end{equation*} 
where $\Ku(\cY)$ is an Enriques category over $S$ with fibers $\Ku(\cY_s)$. 
For families of GM varieties, see \cite[Lemma 5.9(1)]{BP}.  
\end{remark}

\subsection{Kuznetsov components of branched double covers} \label{subsec_generalsetting}

The descriptions of the CY2 covers of Enriques categories listed in Section \ref{subsection_examples} are special cases of a general result proved in \cite{KuzPer_cycliccovers} about semiorthogonal decompositions of certain ramified cyclic covers.  
In this section, we review this result in the case of double covers, fixing the notation we will need later in the paper. 

Let $M$ be a variety with a line bundle $\cO_M(1)$. Assume that the bounded derived category of $M$ admits a rectangular Lefschetz decomposition with respect to $\cO_M(1)$ of length $m$. Explicitly, there exists an admissible subcategory $\cB$ of $\Db(M)$ sitting in a semiorthogonal decomposition of the form
$$\Db(M)= \langle \cB, \cB(1), \dots, \cB(m-1)\rangle,$$
where $\cB(i) \coloneqq \cB \otimes \cO_M(1)$. 

Let $d$ be a positive integer such that $2d \leq m$. Let $f \colon Y \to M$ be a double cover ramified in a Cartier divisor $X$ in the linear system $|\cO_M(2d)|$. In particular, we are in the setting of \cite{KuzPer_cycliccovers} in the special case $n=2$. 

We denote by $j \colon X \hookrightarrow Y$ the inclusion of the ramification divisor and by $i \colon X \hookrightarrow M$ the inclusion of the branch divisor, which fit into the commutative diagram
$$
\xymatrix{
& Y \ar^{f}[d] \\
X \ar@{^{(}->}[r]_i \ar@{^{(}->}[ru]^j & M.
}
$$
By \cite[Lemma 5.1]{KuzPer_cycliccovers} the pullback $f^*$ is fully faithful on $\cB$, and if $\cB_Y$ denotes the essential image of $\cB$ via $f^*$, then there is a semiorthogonal decomposition of the form
$$\Db(Y)=\langle \Ku(Y), \cB_Y,\dots, \cB_Y(m-d-1) \rangle,$$
where $\Ku(Y):=\langle \cB_Y,\dots, \cB_Y(m-d-1)\rangle^{\perp}$. Analogously, by \cite[Lemma 5.5]{KuzPer_cycliccovers} there is a semiorthogonal decomposition
$$\Db(X)=\langle \Ku(X), \cB_X,\dots, \cB_X(m-2d-1) \rangle,$$
where $\cB_X$ is the essential image of $\cB$ via $i^*$ and $\Ku(X):=\langle \cB_X,\dots, \cB_X(m-2d-1)\rangle^{\perp}$ (if $m=2d$, then $\Ku(X)=\Db(X)$).
The group $\bZ/2$ acts on $Y$ over $X$ via the covering automorphism, and induces an action of $\bZ/2$ on $\Db(Y)$ which preserves $\Ku(Y)$. 

\begin{theorem}[\cite{KuzPer_cycliccovers}, Theorem 1.1]
\label{theorem-cyclic-cover}
In the above setting, there is an equivalence
$$\Ku(X) \simeq \Ku(Y)^{\bZ/2}.$$     
\end{theorem}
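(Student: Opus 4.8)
The plan is to reduce the statement to a comparison of two semiorthogonal decompositions of the derived category of the quotient stack $\cY := [Y/(\bZ/2)]$ for the $\bZ/2$-action on $Y$ by the covering involution; we assume $\mathrm{char}(k)\neq 2$ so that this quotient behaves well.

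\textbf{Step 1 (descent).} The stack $\cY$ is a smooth Deligne--Mumford stack with coarse space $M$, isomorphic to $M$ over $M\setminus X$ and with $\bZ/2$-stabilizers along $X$; concretely it is the square-root stack of $M$ along the Cartier divisor $X$. Descent along the $\bZ/2$-torsor $Y\to\cY$ gives $\Db(Y)^{\bZ/2}\simeq\Db(\cY)$ (cf.\ Example~\ref{example-XmodG}). The decomposition $\Db(Y)=\langle\Ku(Y),\cB_Y,\dots,\cB_Y(m-d-1)\rangle$ is preserved componentwise by the action --- it fixes $\Ku(Y)$ by construction and fixes $\cB_Y(i)=f^*\cB(i)$ because $f$ is $\bZ/2$-invariant --- so passing to invariants yields
\begin{equation*}
\Db(\cY)=\langle \Ku(Y)^{\bZ/2},\ (\cB_Y)^{\bZ/2},\ \dots,\ (\cB_Y(m-d-1))^{\bZ/2}\rangle .
\end{equation*}
The canonical identification $\sigma^*f^*\simeq f^*$ makes the induced $\bZ/2$-action on each $\cB_Y(i)\simeq\cB(i)$ trivial, so $(\cB_Y(i))^{\bZ/2}$ is the completely orthogonal sum of $p^*\cB(i)$ and $p^*\cB(i)\otimes L$, where $p\colon\cY\to M$ is the projection and $L$ is the nontrivial $2$-torsion line bundle on $\cY$ corresponding to the sign character of $\bZ/2$. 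This is the first decomposition, with residual component $\Ku(Y)^{\bZ/2}$.

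\textbf{Step 2 (the geometric decomposition).} Since $p_*\cO_\cY\simeq\cO_M$, the pullback $p^*\colon\Db(M)\to\Db(\cY)$ is fully faithful, and a local computation (equivalently, the known semiorthogonal decomposition of the derived category of a root stack) gives $\Db(\cY)=\langle \iota_*(q^*\Db(X)\otimes\chi),\ p^*\Db(M)\rangle$, where $\iota\colon\cX\hookrightarrow\cY$ is the inclusion of the reduced stacky preimage $\cX=X\times B(\bZ/2)$ of $X$, $q\colon\cX\to X$ is the projection, $\chi$ is the sign character, and $\iota_*(q^*(-)\otimes\chi)$ is fully faithful on the sign-isotypic part. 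Refining $p^*\Db(M)$ by the rectangular Lefschetz decomposition of $\Db(M)$ and $\iota_*(q^*\Db(X)\otimes\chi)$ by the decomposition $\Db(X)=\langle\Ku(X),\cB_X,\dots,\cB_X(m-2d-1)\rangle$ of \cite[Lemma 5.5]{KuzPer_cycliccovers} produces a second semiorthogonal decomposition of $\Db(\cY)$ consisting of $m+(m-2d)=2(m-d)$ blocks each equivalent to $\cB$, together with one block equivalent to $\Ku(X)$.

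\textbf{Step 3 (matching --- the crux).} Both decompositions of $\Db(\cY)$ have $2(m-d)$ ``$\cB$-blocks'' and a single residual block; it suffices to transform one into the other by mutations, for then the residual blocks $\Ku(Y)^{\bZ/2}$ and $\Ku(X)$ must be equivalent. The elementary move is the short exact sequence
\begin{equation*}
0\to p^*F\to p^*F\otimes\cO_\cY(\cX)\to \iota_*\!\bigl(q^*(i^*F\otimes\cO_X(d))\otimes\chi\bigr)\to 0, \qquad F\in\Db(M),
\end{equation*}
obtained from $0\to\cO_\cY(-\cX)\to\cO_\cY\to\iota_*\cO_\cX\to 0$ by tensoring with $p^*F\otimes\cO_\cY(\cX)$, combined with the identity $L\simeq\cO_\cY(\cX)\otimes p^*\cO_M(-d)$ (equivalently: $\cX$ pulls back to the ramification divisor of $Y\to M$, which is $f^*\cO_M(d)$ twisted by the sign character). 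Taking $F=\cB(i-d)$ exhibits each sign-twisted block $p^*\cB(i)\otimes L$ as built from $p^*\cB(i-d)$ and $\iota_*(q^*\cB_X(i)\otimes\chi)$, and mutating these across the untwisted blocks converts the first decomposition into the second. The hard part --- the one place where genuine work is required --- is exactly this mutation bookkeeping: one must order the mutations so that the Lefschetz ``staircases'' of $\Db(M)$ and of $\Db(X)$ interleave correctly, handle the wrap-around relations (governed by the Serre functors) for twists leaving the given ranges, and check that no semiorthogonality is violated. It is here that the hypothesis $2d\le m$ is essential: it guarantees that all twists occurring in the process remain within the rectangular Lefschetz decompositions of $\Db(M)$ and $\Db(X)$.
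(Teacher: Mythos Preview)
The paper does not give its own proof of this theorem: it is quoted from \cite{KuzPer_cycliccovers}, and the paper only recalls the explicit functor $\Phi_0$ of~\eqref{eq_Phi0} realizing the equivalence, deferring the verification that $\Phi_0$ is an equivalence to \cite[Proposition~6.5]{KuzPer_cycliccovers}. So there is nothing in the present paper to compare your argument to directly.

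That said, your sketch is a reasonable outline of the strategy behind the cited result, reformulated via the identification of $\Db(Y)^{\bZ/2}$ with $\Db(\cY)$ for $\cY$ the square-root stack of $M$ along $X$. Steps~1 and~2 are correct: the equivariant decomposition of $\Db(Y)$ does descend as you say, the identification $L\simeq\cO_{\cY}(\cX)\otimes p^*\cO_M(-d)$ is right, and the root-stack decomposition you invoke in Step~2 is standard. Your short exact sequence in Step~3 is also correct and is indeed the basic mutation relating the two collections of $\cB$-blocks.

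The gap is that you have not actually carried out Step~3. You are candid about this (``the hard part''), but what remains is not just bookkeeping: one must exhibit a specific sequence of mutations taking one decomposition to the other, and at each stage verify the required semiorthogonalities between blocks of the form $p^*\cB(i)$, $p^*\cB(i)\otimes L$, and $\iota_*(q^*\cB_X(j)\otimes\chi)$. In \cite{KuzPer_cycliccovers} this is done not by mutating one decomposition into the other globally, but by proving directly that the explicit functor $\Phi_0$ lands in $\Ku(Y)^{\bZ/2}$, is fully faithful, and has the correct orthogonal; this avoids tracking a long chain of mutations. Your approach would work, but as written it stops at exactly the point where the content begins.
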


Let us recall the definition of the functor realizing this equivalence. 
We equip $X$ with the trivial $\bZ/2$-action, so that there is a completely orthogonal decomposition 
\begin{equation*}
\Db(X)^{\bZ/2} = \langle \Db(X) \otimes 1, \Db(X) \otimes \chi \rangle,
\end{equation*}
where $1$ and $\chi$ are the trivial and nontrivial characters of $\bZ/2$. 
The morphism $j \colon X \to Y$ is $\bZ/2$-equivariant, so there is a pushforward functor $j^{\bZ/2}_* \colon \Db(X)^{\bZ/2} \to \Db(Y)^{\bZ/2}$ on equivariant categories. 
Let $j_{0*} \colon \Db(X) \to \Db(Y)^{\bZ/2}$ be the restriction to the first component in the above orthogonal decomposition, given by $j_{0*}(E) = j^{\bZ/2}_*(E \otimes 1) = j_*(E) \otimes 1$. 

 Next, note that $\bZ/2$-action on $\Db(Y)$ induces a trivial action on $\cB_Y$; hence, as above, we have a completely orthogonal decomposition $\cB_Y^{\bZ/2} = \langle \cB_Y \otimes 1, \cB_Y \otimes \chi \rangle$. 
Moreover, by \cite[Lemma 5.4]{KuzPer_cycliccovers} there is a semiorthognal decomposition 
\begin{equation*}
\Db(Y)^{\bZ/2} = \langle \Ku(Y)^{\bZ/2}, \cB_Y^{\bZ/2}, \dots, \cB_Y^{\bZ/2}(m-d-1) \rangle. 
\end{equation*}

Then by \cite[Proposition 6.5]{KuzPer_cycliccovers} the functor
\begin{equation}
\label{eq_Phi0}    
\Phi_0:=\rL_{\langle \cB_Y \otimes 1, \dots, \cB_Y(d-1) \otimes 1 \rangle} \circ j_{0*} \circ (- \otimes \cO_X(d)) \colon \Ku(X) \to \Ku(Y)^{\bZ/2}
\end{equation}
gives the equivalence of Theorem~\ref{theorem-cyclic-cover}.
For later use, we note a simple description of the composition of this equivalence with the forgetful functor $\Forg \colon \Ku(Y)^{\bZ/2} \to \Ku(Y)$. 

\begin{lemma}
\label{lemma1}
The composition $\Forg \circ \Phi_0$ is isomorphic to the functor 
$$\rL_{\langle \cB_Y, \dots, \cB_Y(d-1) \rangle} \circ j_* \circ (- \otimes \cO_X(d)): \Ku(X) \to \Ku(Y).$$
\end{lemma}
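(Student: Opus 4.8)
The statement compares two functors $\Ku(X) \to \Ku(Y)$: the composition $\Forg \circ \Phi_0$, where $\Phi_0$ is the equivalence \eqref{eq_Phi0}, and the functor
\begin{equation*}
\Psi := \rL_{\langle \cB_Y, \dots, \cB_Y(d-1) \rangle} \circ j_* \circ (- \otimes \cO_X(d)).
\end{equation*}
The key point is that the forgetful functor $\Forg \colon \Db(Y)^{\bZ/2} \to \Db(Y)$ is an exact $k$-linear functor that intertwines the two pieces of structure appearing in the definition of $\Phi_0$: on the one hand, $\Forg$ takes mutation through $\langle \cB_Y \otimes 1, \dots, \cB_Y(d-1) \otimes 1 \rangle$ in $\Db(Y)^{\bZ/2}$ to mutation through $\langle \cB_Y, \dots, \cB_Y(d-1) \rangle$ in $\Db(Y)$; on the other hand, $\Forg \circ j_{0*}$ is visibly isomorphic to $j_*$, since $j_{0*}(E) = j_*(E) \otimes 1$ and $\Forg(j_*(E) \otimes 1) = j_*(E)$.

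The plan is therefore as follows. First I would record the second identity, $\Forg \circ j_{0*} \simeq j_*$, which is immediate from the definition of $j_{0*}$ recalled just before the lemma. Second, and this is the main content, I would show that $\Forg$ commutes with the relevant left mutation functors. Concretely, write $\mathcal{A} = \langle \cB_Y \otimes 1, \dots, \cB_Y(d-1) \otimes 1 \rangle \subset \Db(Y)^{\bZ/2}$ and $\mathcal{A}' = \langle \cB_Y, \dots, \cB_Y(d-1) \rangle \subset \Db(Y)$. The left mutation $\rL_{\mathcal{A}}$ through an admissible subcategory is defined by the triangle $i_{\mathcal{A}} i_{\mathcal{A}}^! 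F \to F \to \rL_{\mathcal{A}} F$, where $i_{\mathcal{A}}$ is the inclusion and $i_{\mathcal{A}}^!$ its right adjoint. Since $\Forg$ is exact, it suffices to check that $\Forg$ carries the counit triangle for $\mathcal{A}$ to the counit triangle for $\mathcal{A}'$; equivalently, that $\Forg$ sends the projection onto $\mathcal{A}$ (the functor $i_{\mathcal{A}} i_{\mathcal{A}}^!$) to the projection onto $\mathcal{A}'$. This in turn follows because $\Forg$ restricts to an equivalence $\mathcal{A} \simeq \mathcal{A}'$ — indeed the $\bZ/2$-action on $\cB_Y$ is trivial, so $\cB_Y \otimes 1$ is a copy of $\cB_Y$ and $\Forg$ identifies it with $\cB_Y \subset \Db(Y)$, compatibly with the grading shifts — and because $\Forg$ commutes with the ambient mapping complexes $\cHom_k(-,-)$, which are preserved by any exact $k$-linear functor that is fully faithful on the relevant subcategories. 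More precisely, $\Forg$ admits both adjoints ($\Inf$ on both sides, since $|\bZ/2|$ is invertible), so it preserves the adjunction data defining $i_{\mathcal{A}}^!$, giving $\Forg \circ i_{\mathcal{A}} i_{\mathcal{A}}^! \simeq i_{\mathcal{A}'} i_{\mathcal{A}'}^! \circ \Forg$ on the subcategory where this makes sense. Applying $\Forg$ to the mutation triangle for $F = j_{0*}(E \otimes \cO_X(d))$ and using the first identity then yields $\Forg(\Phi_0(E)) \simeq \rL_{\mathcal{A}'}(j_*(E \otimes \cO_X(d))) = \Psi(E)$, as desired. Finally I would remark that the essential image lands in $\Ku(Y)$: this is automatic, since $\Forg$ sends $\Ku(Y)^{\bZ/2} = \Phi_0(\Ku(X))$ into $\Ku(Y)$ by \cite[Lemma 5.4]{KuzPer_cycliccovers} (the decomposition of $\Db(Y)^{\bZ/2}$ is compatible with $\Forg$ and the decomposition of $\Db(Y)$).

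The step I expect to be the main obstacle is the careful verification that $\Forg$ commutes with the mutation functors $\rL_{\mathcal{A}}$ — not because it is deep, but because it requires being precise about how the semiorthogonal decomposition $\Db(Y)^{\bZ/2} = \langle \Ku(Y)^{\bZ/2}, \cB_Y^{\bZ/2}, \dots, \cB_Y^{\bZ/2}(m-d-1)\rangle$ of \cite[Lemma 5.4]{KuzPer_cycliccovers} interacts with $\Forg$, and about which subcategory's projection is being applied. The cleanest way to organize this is to observe that $\Forg$ is the forgetful functor along $Y^{\bZ/2} = [Y/(\bZ/2)] \to Y$ when $\bZ/2$ acts trivially on the relevant strata, so that pull-push formulas and base change make the compatibility of mutations formal; alternatively, one can invoke the fact (used implicitly in \cite{KuzPer_cycliccovers}) that an exact functor admitting both adjoints commutes with mutations through subcategories on which it restricts to an equivalence onto an admissible subcategory. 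Either route reduces the lemma to bookkeeping, with no genuinely new input beyond the definitions already recalled in this section.
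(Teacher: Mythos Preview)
Your approach is essentially the same idea as the paper's, but the paper's execution is considerably more direct. Rather than invoking abstract properties of $\Forg$ (adjoints, compatibility of semiorthogonal decompositions) to argue that $\Forg$ commutes with the mutation, the paper simply observes the single identity
\[
\Hom(F \otimes 1, j_*(G) \otimes 1) \cong \Hom(F, j_*(G)) \qquad \text{for } F \in \cB_Y,\ G \in \Db(X).
\]
This immediately shows that the mutation triangle for $j_*(E(d)) \otimes 1$ through $\langle \cB_Y \otimes 1, \dots, \cB_Y(d-1) \otimes 1 \rangle$ is the trivially-linearized version of the mutation triangle for $j_*(E(d))$ through $\langle \cB_Y, \dots, \cB_Y(d-1) \rangle$; applying $\Forg$ then strips the linearization. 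Your more general claim that $\Forg \circ i_{\mathcal{A}} i_{\mathcal{A}}^! \simeq i_{\mathcal{A}'} i_{\mathcal{A}'}^! \circ \Forg$ is in fact \emph{false} on arbitrary objects of $\Db(Y)^{\bZ/2}$ (e.g.\ for objects of the form $H \otimes \chi$ the left side vanishes while the right need not), which is presumably why you hedged with ``on the subcategory where this makes sense''; the point is precisely that it holds on objects of the form $H \otimes 1$, and the cleanest way to see this is the Hom identity above rather than adjoint gymnastics.
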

\begin{proof}
For $E \in \Db(X)$, we have 
\begin{equation*}
\Phi_0(E) = \rL_{\langle \cB_Y \otimes 1, \dots, \cB_Y(d-1) \otimes 1 \rangle}(j_*(E(d))\otimes 1). 
\end{equation*} 
Therefore, the claim follows from the observation that for $F \in \cB_Y$ and $G \in \Db(X)$, we have 
$\Hom(F \otimes 1, j_*(G) \otimes 1) \cong \Hom(F, j_*(G))$. 
\end{proof}

Let us also recall the definition of the following autoequivalences introduced in \cite[Section 7.2]{KuzPer_cycliccovers}, called \emph{rotation functors}\footnote{The rotation functors in our notation are the shift by $(-1)$ of those in \cite{KuzPer_cycliccovers}.}: 
$$\sO_Y:= \rL_{\cB_Y}(- \otimes \cO_Y(1))[-1]\colon \Ku(Y) \simeq \Ku(Y),$$
$$\sO_Y^{\bZ/2}:= \rL_{\cB_Y^{\bZ/2}}(- \otimes \cO_Y(1))[-1]\colon \Ku(Y)^{\bZ/2} \simeq \Ku(Y)^{\bZ/2},$$
$$\sO_X:= \rL_{\cB_X}(- \otimes \cO_X(1))[-1]\colon \Ku(X) \simeq \Ku(X).$$
By \cite[Theorem 7.7, Proposition 7.9]{KuzPer_cycliccovers} the rotation functors satisfy the following compatibility conditions:
\begin{equation} \label{eq_rotfunct_iota}
\sO_Y^d \simeq \iota[1-d],    
\end{equation}
\begin{equation} \label{eq_rotfunct_chi}
(\sO_Y^{\bZ/2})^d \simeq (-\otimes \chi)[1-d],  
\end{equation}
\begin{equation} \label{eq_intertwine}
\sO_Y^{\bZ/2} \circ \Phi_0 \simeq \Phi_0 \circ \sO_X, 
\end{equation}
where $\iota$ denotes the involution of $\Ku(Y)$ induced by the involution of $Y$ over $X$ and $\chi$ denotes the nontrivial character of $\bZ/2$. 

On the other hand, the group $\bZ/2$ defines a dual action on $\Ku(X)$ given by
\begin{equation} \label{eq_tau}
\tau \simeq \sO_X^d[d-1]    
\end{equation}
by \cite[Proposition 7.10]{KuzPer_cycliccovers}. We denote by $\Ku(X)^{\bZ/2}$ the associated invariant category. As recalled in Theorem~\ref{thm_elagin}, there is an equivalence 
$$\rho \colon (\Ku(Y)^{\bZ/2})^{\bZ/2} \to \Ku(Y).$$
Then $\Phi_0$ induces an equivalence $(\Phi_0)^{\bZ/2} \colon \Ku(X)^{\bZ/2} \to (\Ku(Y)^{\bZ/2})^{\bZ/2}$ and
\begin{equation*} \label{eq_defpsi}
(\Phi_0^{-1})^{\bZ/2} \circ \rho^{-1}  \colon \Ku(Y) \cong \Ku(X)^{\bZ/2}.    
\end{equation*}


\section{Stability conditions}
\label{sec_stabcond}

In this section we review the definition of stability conditions, their associated moduli spaces,  their behavior in the equivariant setting, and their construction from tilt stability in the case of quartic double solids and GM varieties. 

\subsection{Stability conditions} 
\label{section-stability}
The notion of a stability condition, due to Bridgeland \cite{bridgeland}, gives a well-behaved notion of a stable object in a triangulated category. 
Below we recall the definition, as well as a ``weak'' variant which comes up in the known construction methods. 

\begin{definition}
\label{definition-stability}
Let $\cC$ be a triangulated category and let 
$\bv  \colon \rK(\cC) \to \Lambda$ be a homomorphism from the Grothendieck group of $\cC$ to a finite rank free abelian group. 
A \emph{(weak) stability condition} on $\cC$ with respect to $\bv$ is a pair $\sigma = (\cA, Z)$, where $\cA$ is the heart of a bounded t-structure on $\cC$ and $Z \colon \Lambda \to \bC$ is a group homomorphism called the \emph{central charge},  satisfying the following compatibility conditions: 
\begin{enumerate}
    \item The composition $\rK(\cA)= \rK(\cC) \xrightarrow{\bv } \Lambda \xrightarrow{Z} \bC$ is a \emph{(weak) stability function} on $\cA$: for any $0 \neq E \in \cA$ we have $\Im Z(E) \geq 0$, and in the case that $\Im Z(E) = 0$, we have $\Re Z(E) (\leq) < 0$, where for simplicity we write $Z(-)$ instead of $Z(\bv (-))$. 
    
    \noindent The \emph{slope} of $E \in \cA$ is 
     $$\mu_{Z}(E)= 
     \begin{cases}
     -\frac{\Re Z(E)}{\Im Z(E)} & \text{if } \Im Z(E) > 0, \\
     + \infty & \text{otherwise.}
     \end{cases}$$
     We say that an object $E \in \cC$ is $\sigma$-(semi)stable if $E[k] \in \cA$ for some $k \in \bZ$, and for every proper subobject $F \subset E[k]$ in $\cA$ we have $\mu_{Z}(F) (\leq) <  \mu_{Z}(E[k]/F)$.
    \item Every object $E \in \cA$ admits a  \emph{Harder--Narasimhan (HN) filtration}: there is a filtration 
    $$0=E_0 \hookrightarrow E_1 \hookrightarrow \dots E_{m-1} \hookrightarrow E_m=E$$
    in $\cA$, whose factors $A_i:=E_i/E_{i-1} \neq 0$ are $\sigma$-semistable and $\mu_Z(A_1) > \dots > \mu_Z(A_m)$. 
    \item The \emph{support property} holds: there exists a quadratic form $Q$ on $\Lambda \otimes \bR$ such that the restriction of $Q$ to $\ker Z_{\bR} \subset \Lambda \otimes \bR$ is negative definite and $Q(E) \geq 0$ for all $\sigma$-semistable objects $E$ in $\cA$.
    \end{enumerate}
\end{definition}

\begin{remark}
\label{remark-numerical-stability}
If $\cC \subset \Dperf(X)$ is a semiorthogonal component of the derived category of a smooth proper $k$-variety $X$, then there is a natural choice for the lattice $\Lambda$. 
Namely, consider the Euler pairing $\rK(\cC) \times \rK(\cC) \to \bZ$, given by 
$\chi(E,F) = \sum_i (-1)^i \dim_k \Ext^i(E,F)$. The left and right kernel of this pairing agree (by Serre duality on $\cC$), and the 
\emph{numerical Grothendieck group} $\rK_\num(\cC)$ is by definition the quotient of $\rK(\cC)$ by the kernel. 
It is a finite rank free abelian group \cite[Lemma 12.7]{BLMNPS}, equipped with a canonical homomorphism $\bv_{\mathrm{num}} \colon \rK(\cC) \to \rK_{\num}(\cC)$. 
A \emph{numerical} stability condition is one for which $\bv$ factors via $\bv_{\mathrm{num}}$, and a \emph{full numerical} stability condition is one for which $\bv = \bv_{\mathrm{num}}$. 

More generally, if $\cC$ is a proper $k$-linear category whose numerical Grothendieck group $\Knum(\cC)$ has finite rank, then we can similarly define the notion of a (full) numerical stability condition. 
\end{remark} 

Instead of slopes, it is often convenient to order objects by their phase. 
Given a (weak) stability condition $\sigma$ on $\cC$, the \emph{phase} of an object $E \in \cA$ is 
$$\phi(E)= 
\begin{cases}
\frac{\text{arg}(Z(E))}{\pi} & \text{if } \Im Z(E)>0, \\
\hfil 1 & \text{otherwise}. 
\end{cases}
$$
We can define a collection of full additive subcategories $\cP(\phi) \subset \cC$ for $\phi \in \bR$ as follows:
\begin{enumerate}
\item[(1)] for $\phi \in (0,1]$, the subcategory $\cP(\phi)$ is the union of the zero object and all $\sigma$-semistable objects with phase $\phi$;
\item[(2)] for $\phi+n$ with $\phi \in (0,1]$ and $n \in \bZ$, set $\cP(\phi+n):=\cP(\phi)[n]$.
\end{enumerate}
We write $\cP(I)$, where $I \subset \bR$ is an interval, to denote the extension-closed subcategory of $\cC$ generated by the subcategories $\cP(\phi)$ with $\phi \in I$. Note that $\cP((0, 1])= \cA$.

\begin{remark} 
The collection of subcategories $\cP = (\cP(\phi))_{\phi \in \bR}$ form what is known as a \emph{slicing}, and the definition of a stability condition can be rephrased as the data of a pair $(\cP, Z)$ of a slicing and a suitably compatible central charge \cite[Proposition 5.3]{bridgeland}.  
\end{remark} 

By Bridgeland's deformation theorem \cite{bridgeland}, the set of all stability conditions $\Stab_{\Lambda}(\cC)$ (with respect to $\bv \colon \rK(\cC) \to \Lambda$) has the structure of a complex manifold. 
We will be particularly interested in full numerical stability conditions in the setting of Remark~\ref{remark-numerical-stability}, 
whose stability manifold we denote simply by $\Stab(\cC)$. 
There are two natural group actions on $\Stab(\cC)$, corresponding to reparameterizations of the category $\cC$ or of the central charge: 

\begin{enumerate}
    \item Let $\Aut(\cC)$ be the group of triangulated autoequivalences of $\cC$, modulo isomorphisms of functors. 
    Then $\Aut(\cC)$ acts on $\Stab(\cC)$ on the left, via 
    \begin{equation}
    \label{Phi-action-sigma} 
        \Phi \cdot (\cA, Z) \coloneqq (\Phi(\cA), Z \circ \Phi_*^{-1})  
    \end{equation}
    where $\Phi_*$ denotes the induced automorphism of $\Knum(\cC)$. 
    
    \item Let $\widetilde{\GL}^+_2(\bR) \to \GL_2^+(\bR)$ be the universal cover of the group of $2 \times 2$ real matrices with positive determinant. 
    Explicitly, 
\begin{equation}
\label{GLplus}
 \widetilde{\GL}^+_2(\bR)  = \left \{ (M, f) ~  \middle\vert ~ 
 \begin{array}{l} 
 M \in \GL^+_2(\bR), f \colon \bR \to \bR \text{ is an increasing function} \\ 
 \text{such that for all $\phi \in \bR$ we have $f(\phi+1) = f(\phi) + 1$} \\
 \text{and $M \cdot e^{i\pi \phi} \in \bR_{>0} \cdot e^{i \pi f(\phi)}$} 
 \end{array}  \right \}.
\end{equation} 
The group $\widetilde{\GL}^+_2(\bR)$ acts on the right on $\Stab(\cC)$, via 
\begin{equation*} 
(\cA, Z) \cdot (M, f) \coloneqq (\cP((f(0), f(1)]), M^{-1} \circ Z). 
\end{equation*} 
\end{enumerate}

\subsection{Moduli spaces and stability in families} \label{subsec_moduliandfamilies}
One of the main upshots of stability conditions is that they allow for the formulation of (potentially) well-behaved moduli problems. 
Although the notion of a stability condition is defined for any triangulated category, to make sense of moduli spaces one needs to assume the existence of an enhancement\footnote{By definition, a stability condition on an enhanced category is just a stability condition on the underlying triangulated category.}, either via a realization as a semiorthogonal component of a variety or as the homotopy category of a linear category as in Example~\ref{example-linear-categories}. 
For simplicity, we take the first approach, which will suffice for our purposes. 
We will also work in a relative setting, needed for some of the applications later in the paper. 
Our discussion will proceed in steps, from moduli of complexes on a scheme, to moduli of objects in a semiorthogonal component, and finally to moduli of (semi)stable objects in such a component, with each step resulting in an open substack of the previous. 

Let $f \colon X \to S$ be a flat and finitely presented morphism of schemes. We need to recall some properties of objects in $\Dqc(X)$, the unbounded derived category of quasi-coherent sheaves. 
An object $E \in \Dqc(X)$ is 
\emph{relatively perfect} over $S$ if it is pseudo-coherent (in the noetherian setting, equivalently $E$ has bounded above coherent cohomology) and locally of finite Tor-dimension over $f^{-1}\cO_S$; 
in particular, perfect complexes are relatively perfect, and the converse holds when $f \colon X \to S$ is smooth. 
The base change of a relatively perfect object is relatively perfect \cite[\href{https://stacks.math.columbia.edu/tag/0DI5}{Tag 0DI5}]{stacks-project}, and when $S = \Spec(k)$ is the spectrum of a field, $E$ is relatively perfect if and only if it lies in $\Db(X)$; 
therefore, a relatively perfect object can be thought of as a family of bounded coherent complexes on the fibers of $f$. 
An object $E \in \Dqc(X)$ is \emph{universally gluable} over $S$ if $\Ext_{\kappa(s)}^{<0}(E_s, E_s) = 0$ for all $s \in S$. 
Lieblich proved the following; see also \cite[\href{https://stacks.math.columbia.edu/tag/0DLB}{Tag 0DLB}]{stacks-project} for a convenient exposition. 

\begin{theorem}[{\cite{lieblich-moduli}}]
Let $f \colon X \to S$ be a proper, flat, finitely presented morphism of schemes. Then the moduli functor 
\begin{equation*}
    \cM_{\pug}(X/S) \colon (\Sch/S)^{\op} \to \Gpd   
\end{equation*}
taking $T \in (\Sch/S)$ to the groupoid of all $T$-perfect, universally gluable objects in $\Dqc(X_T)$ is an algebraic stack locally of finite presentation over $S$. 
\end{theorem}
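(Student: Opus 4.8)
The plan is to deduce algebraicity from Artin's representability theorem, after verifying that $\cM_{\pug}(X/S)$ is a stack for the fppf topology and is locally of finite presentation over $S$. For the stack axiom, I would first invoke fppf descent for objects of $\Dqc$: an object of $\Dqc(X_{T'})$ equipped with descent data along an fppf cover $T' \to T$ glues to an object of $\Dqc(X_T)$, and likewise morphisms form a sheaf. It then suffices to observe that the two defining conditions descend: ``$T$-perfect'' (pseudo-coherent and locally of finite Tor-dimension over $f^{-1}\cO_S$) is fppf-local by the flat base-change statements cited in the excerpt, and ``universally gluable'' ($\Ext^{<0}_{\kappa(s)}(E_s,E_s) = 0$ for all $s \in T$) is a condition on geometric fibers, hence insensitive to base change.

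For limit preservation, I would check that $\cM_{\pug}(X/S)$ commutes with filtered colimits of affine $S$-schemes. Since $f$ is proper, flat and finitely presented, a relatively perfect complex on $X_T$ is Zariski-locally on $X_T$ represented by a bounded complex of finite free modules; such local models, the gluing isomorphisms on overlaps, and morphisms between two such objects are all finitely presented data, hence defined over a finite stage of any filtered colimit. The vanishing $\Ext^{<0}(E_s, E_s) = 0$ is similarly a finitely presented, constructible condition, so it too is detected at a finite stage.

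The core of the argument is the verification of Artin's criteria. The deformation theory of perfect complexes (Illusie's cotangent-complex computation) supplies, for a square-zero extension $T \hookrightarrow T'$ with ideal an $\cO_T$-module $I$ and $E \in \cM_{\pug}(T)$: infinitesimal automorphisms given by $\Hom_{X_T}(E, E \otimes^{\rL}_{\cO_T} I)$, a torsor structure on the set of extensions of $E$ to $T'$ under $\Ext^1_{X_T}(E, E \otimes^{\rL}_{\cO_T} I)$, and an obstruction class in $\Ext^2_{X_T}(E, E \otimes^{\rL}_{\cO_T} I)$. Because $E$ is relatively perfect over $T$ and $f$ is proper, the relative $\cHom$-complex $\cHom_{X_T/T}(E,E)$ is perfect on $T$ and compatible with base change, so this deformation--obstruction theory is coherent and linear in the base, as Artin's criteria require. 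What remains is routine in outline: the Rim--Schlessinger homogeneity conditions are formal; openness of versality follows from coherence of the obstruction theory together with Artin approximation; and one must prove effectivity of formal deformations --- given a complete Noetherian local $\cO_S$-algebra $R$ with maximal ideal $\mathfrak m$ and a compatible system $E_n \in \cM_{\pug}(R/\mathfrak m^{n+1})$, produce $E \in \cM_{\pug}(R)$ inducing the $E_n$.

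The main obstacle is this effectivity statement, together with the closely related problem of representing the diagonal, i.e. the $\mathrm{Isom}$-functor between two families of complexes. This is exactly where the ``universally gluable'' hypothesis does its work: the vanishing $\Ext^{<0}(E_0,E_0)=0$ forces, via cohomology and base change, the complexes $\cHom_{X_T/T}(E,E)$ to be concentrated in non-negative degrees, so that automorphism sheaves are honest affine group schemes and the moduli problem is genuinely a $1$-stack with no higher-categorical corrections. Under this hypothesis the formal family $(E_n)$ is assembled from coherent-sheaf data to which Grothendieck's existence theorem applies, yielding the required algebraization and completing the verification of Artin's criteria. (Alternatively, one could cite that the derived stack $\mathbf{Perf}(X/S)$ of perfect complexes is locally geometric and note that $\cM_{\pug}(X/S)$ is precisely its classical open substack cut out by $\Ext^{<0}(E_s,E_s)=0$, the condition which makes the truncation a classical, $1$-truncated algebraic stack.)
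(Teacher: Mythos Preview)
The paper does not give a proof of this theorem: it is stated with attribution to \cite{lieblich-moduli} (and a pointer to the Stacks Project exposition) and used as a black box. So there is no ``paper's own proof'' to compare against.

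That said, your outline is a reasonable sketch of the standard argument. Lieblich's original proof and the Stacks Project treatment both proceed along the lines you describe: fppf descent for $\Dqc$ to get the stack axiom, limit-preservation for local finite presentation, and then Artin's criteria with the deformation--obstruction theory of complexes controlled by $\cHom_{X_T/T}(E,E)$; the universally gluable hypothesis is exactly what forces the moduli problem to be $1$-truncated and makes the $\mathrm{Isom}$-functors representable by algebraic spaces, while properness allows Grothendieck existence for effectivity. Your alternative route via the derived stack of perfect complexes and its $1$-truncated open locus is also legitimate and is how one would package the same content in modern language. If you were writing this up, the points most in need of care are effectivity (formal GAGA for complexes rather than sheaves requires the gluability hypothesis in an essential way) and representability of the diagonal; the rest is as routine as you say.
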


Now we consider the sub-moduli problem parameterizing objects in a fixed semiorthogonal component. 
For simplicity, for the rest of this subsection we work with a smooth proper morphism of varieties, but at the expense of extra technical details everything works in much greater generality (see \cite{BLMNPS}); note that due to the smoothness assumption, the notions of relatively perfect and perfect objects coincide. 

\begin{proposition}[{\cite[Proposition 9.2]{BLMNPS}}] \label{prop_BLMNPS}
\label{proposition-MC-open-MX}
Let $\cC \subset \Dperf(X)$ be an $S$-linear semiorthogonal component, where $X \to S$ is a smooth proper morphism of varieties. 
Then there is an open substack $\cM_{\pug}(\cC/S) \subset \cM_{\pug}(X/S)$ 
parameterizing objects whose fibers are contained in the fibers of $\cC$. 
\end{proposition}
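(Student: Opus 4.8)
The plan is to realize $\cM_{\pug}(\cC/S)$ as the locus inside $\cM_{\pug}(X/S)$ over which the parameterized complex has all fibers contained in the fibers of $\cC$, and to prove that this locus is open by a support/semicontinuity argument applied to the semiorthogonal projection away from $\cC$.

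First, since $\cC$ is an $S$-linear semiorthogonal component of $\Dperf(X)$, it is admissible and there is an $S$-linear semiorthogonal decomposition $\Dperf(X) = \langle \cC^{\perp}, \cC \rangle$, with $\cC^{\perp}$ the right orthogonal. Let $\pr_{\cC} \colon \Dperf(X) \to \cC$ and $\pr_{\cC^{\perp}} \colon \Dperf(X) \to \cC^{\perp}$ be the associated projection functors, so that every $F \in \Dperf(X)$ sits in a functorial triangle $\pr_{\cC}(F) \to F \to \pr_{\cC^{\perp}}(F)$; a standard consequence of the semiorthogonal decomposition axioms is that $F \in \cC$ if and only if $\pr_{\cC^{\perp}}(F) = 0$. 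Here I would invoke the base change formalism for $S$-linear semiorthogonal decompositions (see \cite[\S4]{NCHPD}, \cite[\S2]{IHC-CY2}, and \cite{BLMNPS}): for any morphism $T \to S$ the decomposition base changes to an analogous one on $\Dperf(X_T)$, the functor $\pr_{\cC^{\perp}}$ is $S$-linear and therefore commutes with arbitrary base change $T' \to T$, and it preserves the subcategory of perfect complexes $\Dperf(X_T) \subseteq \Dqc(X_T)$; moreover, since $X_T \to T$ is smooth, the perfect objects of $\Dqc(X_T)$ coincide with the relatively perfect (i.e. $T$-perfect) ones.

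Next I would run the openness argument. Let $T \to \cM_{\pug}(X/S)$ be an arbitrary morphism, corresponding to a $T$-perfect, universally gluable object $E \in \Dqc(X_T)$; by smoothness of $X_T \to T$ it is a perfect complex, hence so is $G \coloneqq \pr_{\cC^{\perp}}(E) \in \Dperf(X_T)$. By the base change compatibility above, for each $t \in T$ the object $E_t$ lies in $\cC_t$ if and only if $G_t = 0$ in $\Dperf(X_{\kappa(t)})$, which is equivalent to $\mathrm{Supp}(G) \cap X_t = \varnothing$. The support of the perfect complex $G$ is closed in $X_T$, and since $X_T \to T$ is proper (being the base change of the proper morphism $X \to S$), its image in $T$ is closed; hence $U \coloneqq \{\, t \in T : E_t \in \cC_t \,\} = T \setminus \mathrm{image}(\mathrm{Supp}(G))$ is open in $T$. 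Base change compatibility of $\pr_{\cC^{\perp}}$ also shows that the formation of $U$ is compatible with pullback along any $T'' \to T$, so the rule $T \mapsto U$ defines an open substack $\cM_{\pug}(\cC/S) \subseteq \cM_{\pug}(X/S)$. Its $T$-points are exactly the objects $E$ with $U = T$: for such $E$, $\pr_{\cC^{\perp}}(E)$ is a perfect complex of empty support, hence zero, so $E \in \cC_T$, and conversely every $E \in \cC_T$ gives a $T$-point. Thus this open substack parameterizes precisely the objects all of whose fibers lie in the fibers of $\cC$, as claimed.

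I expect the only real obstacle to be the invocation in the first step: extracting from the theory of $S$-linear semiorthogonal decompositions that the projection $\pr_{\cC^{\perp}}$ commutes with arbitrary base change and carries perfect (equivalently, relatively perfect) complexes to perfect complexes. Once that is in place, the rest is the standard fact that the support of a perfect complex is closed, combined with properness of $X_T \to T$.
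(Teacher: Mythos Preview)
The paper does not supply its own proof of this proposition; it is stated as a direct citation of \cite[Proposition~9.2]{BLMNPS}. Your argument is correct and is the standard one: the $S$-linearity of the semiorthogonal decomposition ensures that the projection $\pr_{\cC^{\perp}}$ commutes with base change and preserves perfect complexes, and then the openness of the locus where a perfect complex vanishes fiberwise follows from properness of $X_T \to T$ and closedness of the support. This is essentially the approach in the cited reference as well, so there is nothing to add.
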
 

For $\cC$ as above (or again, in an even more general setting), in \cite{BLMNPS} the notion of a stability condition on $\cC$ over $S$ is developed. 
This consists of a collection $\underline{\sigma} = (\sigma_s)_{s \in S}$ of numerical stability conditions on the fibers $\cC_s$ whose central charges all factor via a fixed finite rank free abelian group $\Lambda$, satisfying certain compatibility and tameness conditions. 
For details see \cite{BLMNPS}, where among other things it is shown that a stability condition on $\cC$ over $S$ exists in all of the cases where stability conditions are known to exist on fibers.

\begin{theorem}[{\cite[Theorem 21.24]{BLMNPS}}] \label{thm_BLMNPS}
Let $\cC \subset \Dperf(X)$ be an $S$-linear semiorthogonal component, where $X \to S$ is a smooth proper morphism of varieties. 
Let $\underline{\sigma}$ be a stability condition on $\cC$ over $S$ with respect to a lattice $\Lambda$. 
Let $v \in \Lambda$. 
Then there are algebraic stacks $\cM_{\underline{\sigma}}^{\mathrm{st}}(\cC/S, v)$ and $\cM_{\underline{\sigma}}(\cC/S, v)$ parameterizing objects in $\cC$ that are fiberwise stable (in the first case) or semistable (in the second) with respect to $\underline{\sigma}$, such that: 
\begin{enumerate}
    \item \label{BLMNPS-open} 
    There are open immersions 
    $\cM_{\underline{\sigma}}^{\mathrm{st}}(\cC/S, v) \subset \cM_{\underline{\sigma}}(\cC/S, v) \subset \cM_{\pug}(\cC/S)$. 
    \item $\cM_{\underline{\sigma}}^{\mathrm{st}}(\cC/S, v)$ and $\cM_{\underline{\sigma}}(\cC/S, v)$ are finite type over $S$ and $\cM_{\underline{\sigma}}(\cC/S, v) \to S$ is universally closed. 
    \item \label{BLMNPS-gerbe} If $\cM_{\underline{\sigma}}^{\mathrm{st}}(\cC/S, v) = \cM_{\underline{\sigma}}(\cC/S, v)$, then this stack is a $\bG_m$-gerbe over an algebraic space $M_{\underline{\sigma}}(\cC/S, v)$ proper over $S$. 
    \item If $S$ is characteristic $0$, then $\cM_{\underline{\sigma}}(\cC/S, v)$ admits a good moduli space $M_{\underline{\sigma}}(\cC/S, v)$ proper over $S$. 
\end{enumerate} 
\end{theorem}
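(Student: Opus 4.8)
The statement is \cite[Theorem 21.24]{BLMNPS}, so I only sketch the strategy. The plan is to realize the moduli stacks as locally closed substacks of Lieblich's stack of complexes and then verify the listed properties one at a time.

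First I would start from the theorem of Lieblich recalled above, that $\cM_{\pug}(X/S)$ is an algebraic stack locally of finite presentation over $S$, and from Proposition~\ref{proposition-MC-open-MX}, which produces the open substack $\cM_{\pug}(\cC/S)$ of objects lying fiberwise in $\cC$. Inside the latter, the locus of objects of a fixed numerical class $v \in \Lambda$ is open and closed, since the relevant Euler characteristics are locally constant in families; I would then \emph{define} $\cM_{\underline{\sigma}}^{\mathrm{st}}(\cC/S, v)$ and $\cM_{\underline{\sigma}}(\cC/S, v)$ as the subloci where the fibers are $\sigma_s$-stable, resp.\ $\sigma_s$-semistable. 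The first nontrivial point is openness of (semi)stability in families: for a perfect family $E$ over a base $T$, one must show that $\set{t \in T \st E_t \text{ is } \sigma_t\text{-(semi)stable}}$ is open in $T$. The crucial inputs are boundedness --- the support property, together with the tameness axioms in the definition of a stability condition over $S$, guarantees that $\sigma_s$-semistable objects of class $v$ form a bounded family uniformly in $s \in S$ --- and the observation that a sub- or quotient object destabilizing a limit has bounded discrete invariants, so the relevant destabilizing loci are governed by finite type parameter spaces and a constructibility/specialization argument upgrades this to openness. This gives the open immersions in~\eqref{BLMNPS-open}, and boundedness gives finite type over $S$.

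Next I would treat the properness statements. Universal closedness of $\cM_{\underline{\sigma}}(\cC/S, v) \to S$ is the surjectivity part of the valuative criterion: given a DVR $R$ with fraction field $K$, a map $\Spec R \to S$, and a $\underline{\sigma}$-semistable family over $\Spec K$, one extends --- after a finite extension of $R$, using algebraicity of $\cM_{\pug}$ --- to some relatively perfect family over $\Spec R$, and then runs a Langton-type argument: if the special fiber is unstable, perform an elementary modification along its maximal destabilizing subobject, and use the support property to see this process terminates with a semistable special fiber. When $\cM_{\underline{\sigma}}^{\mathrm{st}}(\cC/S, v) = \cM_{\underline{\sigma}}(\cC/S, v)$, every semistable object is stable, hence simple, so the inertia is exactly $\bG_m$ and the stack is a $\bG_m$-gerbe over its rigidification, which is an algebraic space; separatedness of this space follows from uniqueness in the Langton argument (two extensions whose special fibers are stable of the same phase must agree), which gives~\eqref{BLMNPS-gerbe}. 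Finally, in characteristic $0$ I would deduce existence of the good moduli space from the intrinsic criterion of Alper--Halpern-Leistner--Heinloth: one checks that $\cM_{\underline{\sigma}}(\cC/S, v)$ has linearly reductive stabilizers, is $\Theta$-reductive (which encodes existence and uniqueness of Harder--Narasimhan filtrations) and S-complete (again a Langton-type semistable-reduction statement), and has separated diagonal and finite type; the criterion then produces the good moduli space, which is proper over $S$ by the universal closedness just established.

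The hard part will be the two appearances of a Bridgeland-stability analogue of Langton's semistable reduction --- once for universal closedness and once for S-completeness. Unlike the case of sheaves there is no Harder--Narasimhan stratification of a family available off the shelf, so one must combine the support property (to produce a discrete invariant that strictly decreases under each elementary modification, forcing termination) with careful control of how relatively perfect complexes behave under such modifications. By contrast, openness of semistability, while technical, should be comparatively routine once uniform boundedness over $S$ is in hand.
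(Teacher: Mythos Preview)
The paper does not prove this theorem: it is stated purely as a citation of \cite[Theorem~21.24]{BLMNPS}, with no argument given. Your proposal correctly acknowledges this at the outset and then sketches the strategy of the cited reference; since there is no proof in the present paper to compare against, your outline (Lieblich's stack, openness of semistability via boundedness and the support property, Langton-type semistable reduction for universal closedness, and the Alper--Halpern-Leistner--Heinloth criterion for the good moduli space) is an accurate summary of how \cite{BLMNPS} proceeds, and there is nothing further to compare.
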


\begin{remark}
\label{remark-proper-stab}
We emphasize that even in the case where $S = \Spec(k)$ is the spectrum of a field, a stability condition on $\cC$ over $S$ is a stability condition $\sigma$ in the sense of Definition~\ref{definition-stability} satisfying additional properties --- essentially, the existence of proper moduli spaces of semistable objects --- which a priori may not be satisfied by an arbitrary $\sigma \in \Stab(\cC)$. 
To differentiate the terminology more clearly, in the special case where $S = \Spec(k)$ we say $\sigma$ is a \emph{proper stability condition} to mean that it is a stability condition on $\cC$ over $S$.  
\end{remark}

\subsection{Equivariant stability} \label{subsec_dcdcandstability}
Let $\cC$ be a $k$-linear category equipped with an action by a finite group $G$. 
Let $\Lambda$ be a finite rank free abelian group equipped with a $G$-action, and let $\bv \colon \rK(\cC) \to \Lambda$ be a $G$-equivariant homomorphism; for instance, in the situation of Remark~\ref{remark-numerical-stability} we can take $\Lambda = \Knum(\cC)$ to be the numerical Grothendieck group. 
There is an induced $G$-action on $\Stab_{\Lambda}(\cC)$ where $g \in G$ acts on $\sigma = (\cA, Z) \in \Stab_{\Lambda}(\cC)$ by 
\begin{equation*}
    g \cdot \sigma = (\phi_g(\cA), Z \circ g_*^{-1}) 
\end{equation*}
where $\phi_g$ is the  autoequivalence corresponding to $g$ and $g_*$ is the action of $g$ of $\Lambda$. 
We say that $\sigma \in \Stab_{\Lambda}(\cC)$ is \emph{$G$-fixed} if for every $g \in G$ we have 
$g \cdot \sigma = \sigma$. 

\begin{theorem}[{\cite{polishchuk, MMS}}]
\label{theorem-equivariant-stability} 
In the above setup, assume that the order of $G$ is invertible in the field $k$. 
Let $\sigma = (\cA, Z) \in \Stab_{\Lambda}(\cC)$ be a $G$-fixed stability condition. 
Define 
\begin{align*}
    \bv^G & = \bv \circ \Forg_* \colon \rK(\cC^G) \to \Lambda, \\ 
    \cA^G  &= \set{ E \in \cC^G \st \Forg(E) \in \cA } , \\ 
    Z^G & = Z \circ \bv^G, 
\end{align*}
where $\Forg_*$ is the map on Grothendieck groups induced by the forgetful functor. 
Then the pair $\sigma^G = (\cA^G, Z^G)$ is a stability condition on $\cC^G$ with respect to $\bv^G$. Moreover, for an object $E \in \cC^G$ we have: 
\begin{enumerate}
    \item \label{G-ss} $E$ is $\sigma^G$-semistable if and only if $\Forg(E)$ is $\sigma$-semistable. 
    \item \label{G-s} If $E$ is $\sigma^G$-stable, then $\Forg(E)$ is $\sigma$-polystable. 
    \item \label{Forg-G-s} If $\Forg(E)$ is $\sigma$-stable then $E$ is $\sigma^G$-stable.
\end{enumerate}
\end{theorem}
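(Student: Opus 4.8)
The plan is to verify the three axioms of Definition~\ref{definition-stability} for the pair $\sigma^G = (\cA^G, Z^G)$ with respect to $\bv^G$, and to read off the comparison statements \eqref{G-ss}--\eqref{Forg-G-s} from that verification. The workhorse throughout is the forgetful functor $\Forg \colon \cC^G \to \cC$: it is exact and conservative (so it reflects monomorphisms and isomorphisms), it has $\Inf$ as both a left and a right adjoint, and when $|G|$ is invertible in $k$ one has $\cHom_k(X,Y) \simeq \cHom_k(\Forg X, \Forg Y)^G$ and $\Forg\circ\Inf \simeq \bigoplus_{g \in G} \phi_g$. Since $\sigma$ is $G$-fixed, $\phi_g(\cA) = \cA$ and $Z \circ g_* = Z$ for every $g$; in particular each $\phi_g$ preserves both the heart $\cA$ and the $Z$-slope of every object.

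\textbf{The heart and the central charge.} Because each $\phi_g$ preserves $\cA$, it preserves the bounded t-structure with heart $\cA$, and by uniqueness of truncation functors this t-structure is $G$-equivariant; it therefore descends along $\Forg$ to a bounded t-structure on $\cC^G$. Concretely, the truncation of $E \in \cC^G$ is obtained by linearizing the canonical truncation triangle of $\Forg E$, the required $\Hom$-vanishings in $\cC^G$ following from those in $\cC$ after passing to $G$-invariants, and the heart of the descended t-structure is exactly $\cA^G = \set{E \in \cC^G \st \Forg E \in \cA}$. Next, $Z^G = Z \circ \bv^G$ is a (weak) stability function on $\cA^G$: for $0 \neq E \in \cA^G$ the object $\Forg E \in \cA$ is nonzero by conservativity, and $Z^G(E) = Z(\Forg E)$, so the sign conditions on $\Im$ and $\Re$ are inherited verbatim.

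\textbf{Comparison of (semi)stability and the support property.} If $F \hookrightarrow E$ is a subobject in $\cA^G$, then $\Forg F \hookrightarrow \Forg E$ is a subobject in $\cA$ with $\mu_Z(\Forg F) = \mu_{Z^G}(F)$; hence $\Forg E$ being $\sigma$-semistable forces $E$ to be $\sigma^G$-semistable. Conversely, if $\Forg E$ is not $\sigma$-semistable, its maximal destabilizing subobject $F \subset \Forg E$ is canonical, hence stable under the linearization isomorphisms of $\Forg E$ (here one uses $\phi_g(\cA)=\cA$ and $\phi_g$-invariance of slopes), hence lifts to a subobject $\widetilde F \hookrightarrow E$ in $\cA^G$ with $\mu_{Z^G}(\widetilde F) > \mu_{Z^G}(E)$; this proves \eqref{G-ss}. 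For \eqref{G-s}: a $\sigma^G$-stable $E$ is simple in the abelian category $\cP^G(\phi)$ of $\sigma^G$-semistables of its phase $\phi$; by \eqref{G-ss}, $\Forg E$ is $\sigma$-semistable of phase $\phi$, so it admits a simple quotient $S$ in $\cP(\phi)$; by the $\Forg \dashv \Inf$ adjunction $\Hom_{\cC^G}(E, \Inf S) \neq 0$ with $\Inf S \in \cP^G(\phi)$, so $E \hookrightarrow \Inf S$, and applying $\Forg$ embeds $\Forg E$ into $\bigoplus_{g} \phi_g S$, a finite direct sum of $\sigma$-stable objects of phase $\phi$; a subobject of such a semisimple object is semisimple, so $\Forg E$ is $\sigma$-polystable. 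For \eqref{Forg-G-s}: if $\Forg E$ is $\sigma$-stable then $E$ is $\sigma^G$-semistable by \eqref{G-ss}, and any proper subobject $0 \neq F \subsetneq E$ in $\cP^G(\phi)$ would forget to a subobject $\Forg F \subseteq \Forg E$ of the same phase which is proper (as $\Forg$ reflects isomorphisms), contradicting stability. Finally, the support property for $\sigma^G$ holds with the \emph{same} quadratic form $Q$ that works for $\sigma$: one has $\ker Z^G_{\bR} = \ker Z_{\bR}$ inside $\Lambda \otimes \bR$, and for $\sigma^G$-semistable $E$, \eqref{G-ss} gives $Q(\bv^G(E)) = Q(\bv(\Forg E)) \geq 0$.

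\textbf{Harder--Narasimhan filtrations, and the main difficulty.} Given $E \in \cA^G$, take the HN filtration $0 = F_0 \subset F_1 \subset \cdots \subset F_m = \Forg E$ of $\Forg E$ in $\cA$ with respect to $\sigma$. By uniqueness of HN filtrations, together with $\phi_g(\cA)=\cA$ and the $\phi_g$-invariance of slopes, the linearization isomorphisms of $\Forg E$ preserve this filtration, so each $F_i$ is a $G$-subobject and lifts to $\widetilde F_i \hookrightarrow E$ in $\cA^G$. The successive quotients forget to the $\sigma$-semistable objects $F_i/F_{i-1}$, hence are $\sigma^G$-semistable by \eqref{G-ss}, with identical and therefore strictly decreasing slopes $\mu_{Z^G}(\widetilde F_i/\widetilde F_{i-1}) = \mu_Z(F_i/F_{i-1})$; this is the desired HN filtration. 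With the three axioms verified, $\sigma^G \in \Stab_\Lambda(\cC^G)$ with respect to $\bv^G$, and \eqref{G-ss}--\eqref{Forg-G-s} have been obtained en route. I expect the genuinely delicate point to be the circular-looking interdependence of the middle step: the support property and the HN step for $\sigma^G$ cannot be checked in isolation but only in tandem with the dictionary between $\sigma^G$-(semi)stability in $\cC^G$ and $\sigma$-(semi)stability in $\cC$, and both the descent of canonical subobjects and the adjunction argument for polystability rely essentially on the hypothesis that $|G|$ is invertible in $k$ (so that $G$-invariants are exact and $\Forg$, $\Inf$ are biadjoint with the properties listed above).
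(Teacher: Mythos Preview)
Your proof is correct and essentially follows the standard route of \cite{polishchuk}, \cite{MMS}, and \cite{BO-equivariant}, which is precisely what the paper invokes: the paper's own proof is just a citation to these references together with the remark that the arguments carry over to the present setting (using Proposition~\ref{proposition-enhancement-CG} to identify the homotopy category of $\cC^G$), so your argument is a faithful unpacking of those references rather than an alternative approach. The only cosmetic difference is that the paper delegates claims~\eqref{G-s} and~\eqref{Forg-G-s} to \cite[Lemma~4.3]{BO-equivariant}, whereas you supply the adjunction/semisimplicity argument for polystability directly; both are the same idea.
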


\begin{remark}
As the notation suggests, $\cA^G$ as defined above is indeed equivalent to the category of $G$-invariants for the induced action of $G$ on $\cA$. 
\end{remark}

\begin{proof}
That $\sigma^G$ is a stability condition is essentially \cite[Proposition 2.2.3]{polishchuk} and \cite[Theorem 1.1]{MMS}, with two caveats. 
First, the references consider the special situation where $\cC = \Db(X)$ and $G$ acts through a geometric action on $X$; however, the argument from \cite{MMS} goes through in our context, using the identification of the homotopy category of $\cC^G$ from Proposition~\ref{proposition-enhancement-CG}. 
(Note that a stability condition on $\cC^G$ is by definition one on its triangulated homotopy category.) Second, the notion of a stability condition in the references is slightly different than ours (which is now the standard one) and does not require the support property with respect to $\Lambda$; however, the arguments still go through in our context, cf. \cite[Theorem 10.1]{BMS:stabCY3s}. 
Finally, the claims~\eqref{G-ss}-\eqref{Forg-G-s} follow formally from the construction.  Explicitly, \eqref{G-ss} is observed in \cite[Theorem 1.1]{MMS}, and \eqref{G-s} and \eqref{Forg-G-s} in \cite[Lemma 4.3]{BO-equivariant}. 
\end{proof}

\begin{remark}
In \cite{MMS} it is also proved that the above construction gives a closed embedding 
$\Stab_{\Lambda}(\cC)^G \to \Stab_{\Lambda}(\cC^G)$ from the set of $G$-fixed stability conditions, with explicitly describable image which is a union of connected components. 
However, we will not need these finer statements. 
\end{remark}

Note that in the construction from Theorem~\ref{theorem-equivariant-stability}, 
the map $\bv^G \colon \rK(\cC^G) \to \Lambda$ is $G^{\vee}$-equivariant for the trivial $G^{\vee}$-action on $\Lambda$ and the 
stability condition $\sigma^G$ is $G^{\vee}$-fixed, so we may apply the theorem again to obtain a stability condition $(\sigma^G)^{G^{\vee}}$ on $(\cC^G)^{G^{\vee}}$ with respect to $(\bv^G)^{G^{\vee}} \colon \rK((\cC^G)^{G^{\vee}}) \to \Lambda$. 
We observe that when $G$ is abelian, this iterated construction is compatible with the duality between $k$-linear categories with $G$-action and $G^{\vee}$-action from Theorem~\ref{thm_elagin}, up to a factor of $|G|$. 

\begin{lemma} \label{lemma_backtosamestability}
In the setup of Theorem~\ref{theorem-equivariant-stability}, further assume that $G$ is abelian and $k$ is algebraically closed. 
Then under the equivalence $\rho \colon (\cC^G)^{G^{\vee}} \xrightarrow{\sim} \cC$ of Theorem~\ref{thm_elagin}, the stability condition $(\sigma^{G})^{G^{\vee}}$ corresponds to the stability condition 
$|G| \cdot \sigma = (\cA, |G| \cdot Z)$. 
\end{lemma}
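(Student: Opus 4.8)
The plan is to unwind the two applications of the construction in Theorem~\ref{theorem-equivariant-stability} and identify the resulting heart and central charge explicitly via the equivalence $\rho$. First I would track the central charge. Applying the construction once, $Z^G = Z \circ \bv^G = Z \circ \bv \circ \Forg_*$; applying it again, $(Z^G)^{G^\vee} = Z^G \circ (\bv^G)^{G^\vee} = Z \circ \bv \circ (\Forg_G)_* \circ (\Forg_{G^\vee})_*$. So I need to compute the composite of the two forgetful functors on Grothendieck groups and transport it through $\rho$. By Lemma~\ref{lemma_generalnonsense} we have $\Forg_{G^\vee} \simeq \Inf_G \circ \rho$, hence $\Forg_G \circ \Forg_{G^\vee} \simeq \Forg_G \circ \Inf_G \circ \rho$. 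Now $\Forg_G \circ \Inf_G$ is the standard "average over the group" composite: on objects $\Forg_G \Inf_G (E) = \bigoplus_{g \in G} \phi_g(E)$, and since $\sigma$ is $G$-fixed, each $\phi_g(E)$ has the same class in $\Knum(\cC)$ up to the $G$-action on $\Lambda$ — but $\bv$ is $G$-equivariant and $\sigma$ is $G$-fixed, so on the level of the central charge each summand contributes equally. Concretely, $(\Forg_G \circ \Inf_G)_* = \sum_{g \in G} (\phi_g)_*$ on $\rK(\cC)$, and composing with $Z \circ \bv$ gives $\sum_{g} Z \circ \bv \circ (\phi_g)_* = \sum_g Z \circ g_*^{-1} \circ \bv = |G| \cdot (Z \circ \bv)$, using $G$-fixedness of $\sigma$ in the form $Z \circ g_*^{-1} = Z$ (which holds because $\bv$ is $G$-equivariant, so $Z \circ g_*^{-1} \circ \bv = Z \circ \bv \circ (\phi_g^{-1})_*$ composed appropriately — here one must be a little careful about whether the $G$-action on $\Lambda$ is trivial, which it is in the relevant application, but in general $g$-fixedness of $(\cA, Z)$ means precisely $Z \circ g_* = Z$ as functionals). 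Thus under $\rho$, the central charge of $(\sigma^G)^{G^\vee}$ becomes $|G| \cdot Z$.

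Next I would identify the heart. By construction $\cA^G = \{E \in \cC^G : \Forg_G(E) \in \cA\}$ and then $(\cA^G)^{G^\vee} = \{F \in (\cC^G)^{G^\vee} : \Forg_{G^\vee}(F) \in \cA^G\} = \{F : \Forg_G \Forg_{G^\vee}(F) \in \cA\}$. Transporting via $\rho$: $F$ corresponds to $\rho(F) \in \cC$, and $\Forg_G \Forg_{G^\vee}(F) \simeq \Forg_G \Inf_G \rho(F) = \bigoplus_{g} \phi_g(\rho(F))$. This lies in $\cA$ if and only if $\rho(F) \in \cA$, using that $\cA$ is $G$-stable (again because $\sigma$ is $G$-fixed, so $\phi_g(\cA) = \cA$) and that $\cA$, being the heart of a bounded t-structure, is closed under finite direct sums and summands. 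Hence $\rho$ carries $(\cA^G)^{G^\vee}$ onto $\cA$ itself. Scaling the central charge by the positive constant $|G|$ does not change the heart, the slope function, or the set of (semi)stable objects, so $(\sigma^G)^{G^\vee}$ corresponds under $\rho$ exactly to $(\cA, |G| \cdot Z) = |G| \cdot \sigma$.

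The main obstacle I expect is bookkeeping around the functor identifications: making sure the equivalence $\rho$ of Theorem~\ref{thm_elagin} is the \emph{same} equivalence used implicitly when one writes $(\sigma^G)^{G^\vee}$ and pushes it forward, and that the iterated $\Forg$'s compose correctly under $\rho$ rather than its inverse. This is precisely what Lemma~\ref{lemma_generalnonsense} is designed to supply — it pins down $\Forg_{G^\vee} \simeq \Inf_G \circ \rho$ — so the proof should be short: invoke that lemma, compute $(\Forg_G \circ \Inf_G)_* = \sum_{g \in G} (\phi_g)_*$, use $G$-fixedness of $\sigma$ to collapse the sum on the central charge to multiplication by $|G|$ and to see that $\cA$ is preserved, and conclude. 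A secondary subtlety is the role of the $G$-action on $\Lambda$: in full generality one should phrase $G$-fixedness as $(\phi_g(\cA), Z \circ g_*^{-1}) = (\cA, Z)$ and check the computation still goes through; this adds no real difficulty since $\bv$ is assumed $G$-equivariant, which is exactly what lets $g_*^{-1}$ on $\Lambda$ be traded for $(\phi_g^{-1})_*$ on $\rK(\cC)$.
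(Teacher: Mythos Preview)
Your proof is correct and follows essentially the same route as the paper: invoke Lemma~\ref{lemma_generalnonsense} to identify $\Forg_{G^{\vee}}$ with $\Inf_G \circ \rho$, compute $\Forg_G \circ \Inf_G \simeq \bigoplus_{g \in G} \phi_g$, and then use $G$-fixedness of $\sigma$ to collapse the resulting sum on the central charge to $|G|\cdot Z$ and to see that the heart is unchanged. The only cosmetic difference is that the paper concludes $\cA' = \cA$ by observing $\cA' \subset \cA$ and that two hearts with one contained in the other must coincide, whereas you argue the biconditional directly via closure of $\cA$ under direct sums and summands.
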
 

\begin{proof}
By definition we have 
\begin{align*}
    (\bv^G)^{G^{\vee}} & = \bv \circ (\Forg_G \circ \Forg_{G^{\vee}})_* \colon \rK((\cC^G)^{G^{\vee}}) \to \Lambda, \\ 
    (\cA^G)^{G^{\vee}}  &= \set{ E \in (\cC^G)^{G^{\vee}} \st \Forg_G \circ \Forg_{G^{\vee}}(E) \in \cA } , \\ 
    (Z^G)^{G^{\vee}} & = Z \circ (\bv^G)^{G^{\vee}},  
\end{align*}
where $\Forg_{G^{\vee}} \colon (\cC^G)^{G^{\vee}} \to \cC^G$ and 
$\Forg_{G} \colon \cC^G \to \cC$ are the forgetful functors. 
By Lemma~\ref{lemma_generalnonsense}, under the identification $\rho \colon (\cC^G)^{G^{\vee}} \xrightarrow{\sim} \cC$, the functor $\Forg_{G^{\vee}}$ corresponds to $\Inf_G$. 
Note that 
$\Forg_G \circ \Inf_G \simeq \bigoplus_{g \in G} \phi_g$, where as usual $\phi_g$ is the autoequivalence of $\cC$ corresponding to $g$.
Therefore, if $\sigma' = (\cA', Z')$ with respect to $\bv' \colon \rK(\cC) \to \Lambda$ denotes the stability condition on $\cC$ corresponding to $(\sigma^{G})^{G^{\vee}}$, then 
\begin{align*}
    \bv' & = \sum_{g \in G} g_* \circ \bv \colon \rK(\cC) \to \Lambda, \\ 
    \cA'  &= \set{ E \in \cC \st \bigoplus_{g \in G} \phi_g(E) \in \cA } , \\ 
    Z' & = |G| \cdot Z,   
\end{align*}
where the first line holds by $G$-equivariance of $\bv \colon \rK(\cC) \to \Lambda$ and the third holds by $G$-fixedness of $Z$. Moreover, by $G$-fixedness of $\cA$ we find that $\cA' \subset \cA$, and therefore $\cA' = \cA$ as both are hearts of bounded $t$-structures. 
The above shows that $\sigma'$ can also be regarded as a stability condition with respect to the original homomorphism $\bv \colon \rK(\cC) \to \Lambda$. 
\end{proof}

\subsection{Construction of stability conditions} 
 
Consider $\cC=\Db(X)$ for a smooth projective variety $X$ of dimension $n$ defined over an algebraically closed field of characteristic $0$. Fix an ample class $H$ on $X$ and denote by $\ch_i(F)$ the $i$-th Chern character of an object $F \in \Db(X)$. The slope stability 
$$\sigma_{\text{slope}}=(\Coh(X), Z_{\text{slope}}=- H^{n-1}\ch_1+ \sqrt{-1}H^n \ch_0)$$
is a weak stability condition on $\Db(X)$ with respect to 
$$\bv \colon \rK(\Db(X)) \to \bZ^2,\quad \bv(F)=(H^n\ch_0(F), H^{n-1}\ch_1(F)),$$
and $\Lambda:=\text{Im}(\bv)$ (see \cite[Example 2.8]{BLMS}). If $X$ is a curve, then $\sigma_{\text{slope}}$ is further a proper stability condition.

We now recall the tilting procedure in this special case, which allows to construct stability conditions on surfaces and weak stability conditions on higher dimensional varieties. Assume $n \geq 2$. Fix $\beta \in \bR$ and consider the following subcategories of $\Coh(X)$:
$$\cT^\beta:=\lbrace F \in \Coh(X) \colon \text{ all HN factors of $F$ have slope } \mu_{Z_{\text{slope}}}(F) > \beta \rbrace,$$
$$\cF^\beta:=\lbrace F \in \Coh(X) \colon \text{ all HN factors of $F$ have slope } \mu_{Z_{\text{slope}}}(F) \leq \beta \rbrace.$$
By \cite{HapReiSm} the extension-closure $\Coh^\beta(X):= \langle \cT^\beta, \cF^\beta[1] \rangle$ is the heart of a bounded t-structure. Define
$$\bv \colon \rK(\Db(X)) \to \bQ^3,\quad \bv(F)=(H^n\ch_0(F), H^{n-1}\ch_1(F), H^{n-2}\ch_2(F))$$    
and $\Lambda:=\text{Im}(\bv)$. For $\alpha >0$ in $\bR$, define $Z_{\alpha, \beta} \colon \Lambda \to \bC$ by
$$Z_{\alpha, \beta}=- (H^{n-2}\ch_2- \beta H^{n-1}\ch_1+\frac{\beta^2-\alpha^2}{2}H^n\ch_0)+ \sqrt{-1}(H^{n-1}\ch_1- \beta H^n\ch_0).$$
Then the pair 
$$\sigma_{\alpha, \beta}=(\Coh^\beta(X), Z_{\alpha, \beta})$$
is a weak stability condition on $\Db(X)$ with respect to $\Lambda$, and it is a proper stability condition if $n=2$ \cite{bridgeland-K3,BMT:3folds-BG,BMS:stabCY3s,BLMNPS}. We call $\sigma_{\alpha,\beta}$ \emph{tilt stability} and we denote its slope by $\mu_{\alpha,\beta}$.

For the construction of stability conditions on Kuznetsov components it is useful to consider the following rotation of $\sigma_{\alpha, \beta}$. Fix $\mu\in \bR$ and let $u$ be the unit vector in the upper half plane with $\mu=-\frac{\Re u}{\Im u}$. We set
$$\cT^\mu_{\alpha, \beta}:=\lbrace F \in \Coh^\beta(X) \colon \text{ all HN factors of $F$ have slope } \mu_{\alpha,\beta}(F) > \mu \rbrace,$$
$$\cF^\mu_{\alpha, \beta}:=\lbrace F \in \Coh^\beta(X) \colon \text{ all HN factors of $F$ have slope } \mu_{\alpha, \beta}(F) \leq \mu \rbrace.$$
Then the extension-closure $\Coh_{\alpha, \beta}^{\mu}(X):=\langle \cT^\mu_{\alpha, \beta}, \cF^\mu_{\alpha, \beta}[1] \rangle$ is the heart of a bounded t-structure on $\Db(X)$ and the pair
$$\sigma_{\alpha, \beta}^{\mu}:=(\Coh_{\alpha, \beta}^{\mu}(X), Z_{\alpha, \beta}^{\mu}:=\frac{1}{u} Z_{\alpha, \beta})$$
is a weak stability condition on $\Db(X)$ with respect to $\bv$ by \cite[Proposition 2.15]{BLMS}.

Now assume that a $k$-linear triangulated category $\cC$ admits an exceptional collection with right orthogonal category $\cD$. Then \cite[Proposition 5.1]{BLMS} provides a criterion to ensure that the restriction of a weak stability condition on $\cC$ induces a stability condition on $\cD$. This criterion has been applied in \cite{BLMS} to construct stability conditions on the Kuznetsov components of Fano threefolds of Picard rank $1$ and index $2$ or $1$ and cubic fourfolds, and in \cite{PPZ} for GM fourfolds. 

More precisely, let $Y$ be a quartic double solid (see Example~\ref{example-quartic-ds}). By \cite[Theorem 6.8]{BLMS} there exist values of $\alpha$ and $\beta$ such that the pair
\begin{equation} \label{eq_stabonKu}
\sigma(\alpha, \beta)=(\cA(\alpha, \beta):=\Coh^0_{\alpha, \beta}(Y) \cap \Ku(Y), Z(\alpha, \beta):=Z^0_{\alpha, \beta}|_{\Knum(\Ku(Y))})  
\end{equation}
is a numerical stability condition on $\Ku(Y)$. Further, by \cite[Proposition 25.3]{BLMNPS} we have that $\sigma(\alpha, \beta)$ is a proper stability condition. If $X$ is a GM threefold, the same construction defines numerical proper stability conditions on $\Ku(X)$ by \cite[Theorem 6.9]{BLMS}. 

If $X$ is a GM fourfold, the construction of proper stability conditions on $\Ku(X)$ is more involved. Similarly to the case of cubic fourfolds studied in \cite[Section 9]{BLMS}, it has been proved that $\Ku(X)$ embeds in the bounded derived category of a twisted quadric threefold $(Y, \Cl_0)$; then the restriction of a rotation of tilt stability induces a stability condition on the Kuznetsov component by the criterion in \cite{BLMS}. More details about this construction will be explained in Section~\ref{subsec_invstabGM}.

\begin{remark} \label{rmk_stabcondinfamily}
The above mentioned construction of stability conditions works more generally in families. More precisely, assume that $\cX \to S$ is a family of quartic double solids or GM threefolds over a complex variety $S$. Then combining \cite[Proposition 25.3 and Theorem 23.1]{BLMNPS} and Remark~\ref{rmk_families_qdsGM}, we have that there exists a stability condition $\underline{\sigma}=(\sigma_s)_{s \in S}$ on $\Ku(\cX)$ over $S$ such that $\sigma_s$ is the numerical stability condition on $\Ku(\cX_s)$ defined in~\eqref{eq_stabonKu}.
\end{remark}

\begin{remark}
By the duality conjecture proved in \cite[Theorem 1.6]{KuzPerry:cones}, the existence of stability conditions on the Kuznetsov components of GM threefolds and fourfolds implies the same for the Kuznetsov components of GM fivefolds and sixfolds.    
\end{remark}


\section{Structure of moduli spaces} 
\label{sec_moduli}
In this section we prove some general results about the structure of moduli spaces of objects, with a focus on the case of Enriques categories. 
First, we give a smoothness criterion, which is often easy to verify for Enriques categories. 
Second, we study the relation between moduli spaces in a category and its invariant category for a $\bZ/2$-action. 
Third, we prove the nonemptiness criterion for moduli spaces, Theorem~\ref{theorem-deformation}, from the introduction. 

\subsection{A smoothness criterion} 

The following smoothness criterion is certainly well-known in the classical situation of moduli of sheaves, but we prove it here for lack of a suitable reference in the generality we need. 

\begin{lemma}
\label{lemma-smoothness-Ext2}
Let $X \to S$ be a proper, flat, finitely presented morphism of schemes. 
Let $s \colon \Spec(k) \to S$ be a field-valued point of $S$, and let $E$ be a relatively perfect, universally gluable object on $X_k$ such that $\Ext^{\geq 2}_k(E,E) = 0$. 
Then the morphism $\cM_{\pug}(X/S) \to S$ is smooth at the $k$-point given by $E$. 
\end{lemma}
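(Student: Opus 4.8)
The plan is to establish smoothness of the morphism $\cM_{\pug}(X/S) \to S$ at the point $[E]$ by verifying the infinitesimal lifting criterion: given a square-zero extension $A' \twoheadrightarrow A$ of Artinian local $S$-algebras with residue field $k$, every $A$-point of $\cM_{\pug}(X/S)$ lying over the given map $\Spec A \to S$ lifts to an $A'$-point. Equivalently, I want to show that the relative cotangent complex, or more concretely the relative deformation-obstruction theory of $\cM_{\pug}(X/S)$ over $S$, has vanishing obstruction group at $[E]$. Since $\cM_{\pug}(X/S) \to S$ is locally of finite presentation by Lieblich's theorem, it suffices to check this formal smoothness together with local finite presentation.

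First I would recall the standard deformation theory of the moduli of universally gluable perfect complexes, as developed by Lieblich and Inaba (see also \cite{BLMNPS}): for an object $E$ on $X_A$ with $A \to A'$ a square-zero extension by a $k$-module $I$, the obstruction to deforming $E$ to $X_{A'}$ lies in $\Ext^2_{X_A}(E, E \otimes_A I)$, and when this obstruction vanishes the set of deformations is a torsor under $\Ext^1_{X_A}(E, E \otimes_A I)$, with infinitesimal automorphisms $\Ext^0_{X_A}(E, E \otimes_A I)$. The key point is that this is genuinely a \emph{relative} obstruction theory over $S$: because $X \to S$ is flat and $E$ is relatively perfect, base change gives $\Ext^i_{X_A}(E, E \otimes_A I) \cong \Ext^i_{X_k}(E_k, E_k) \otimes_k I$ — here one uses that $I$ is a $k$-module and that $E_k \cong E$ since the residue field is $k$. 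Under the hypothesis $\Ext^{\geq 2}_k(E, E) = 0$, the obstruction group $\Ext^2_{X_k}(E, E) \otimes_k I$ vanishes, so all deformations exist; this is precisely formal smoothness of $\cM_{\pug}(X/S)$ over $S$ at $[E]$.

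Concretely, I would argue as follows. Let $A' \to A$ be a surjection of Artinian local $k$-algebras with kernel $I$ satisfying $\mathfrak{m}_{A'} I = 0$ (so $I$ is a $k$-vector space), equipped with a map $\Spec A' \to S$, and let $E_A$ be an $A$-flat relatively perfect object on $X_A$ giving an $A$-point of $\cM_{\pug}(X/S)$ compatible with $\Spec A \to S$. By the cited deformation theory the obstruction to lifting $E_A$ to $X_{A'}$ over $A'$ lives in $\Ext^2_{X_A}(E_A, E_A) \otimes_A I \cong \Ext^2_{X_k}(E, E) \otimes_k I$, where the isomorphism follows from relative perfectness of $E_A$, flatness of $X_A/A$, and the fact that $I$ is annihilated by $\mathfrak{m}_A$ (a standard base-change argument, e.g.\ via the local-to-global spectral sequence and the derived tensor decomposition $E_A \otimes^{\mathbb{L}}_A I \simeq E_k \otimes_k I$). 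Since $\Ext^2_{X_k}(E,E) = 0$ by hypothesis, the obstruction vanishes and a lift exists. Together with Lieblich's theorem that $\cM_{\pug}(X/S) \to S$ is locally of finite presentation, this formal smoothness implies smoothness at $[E]$ by the standard characterization of smoothness via the infinitesimal lifting criterion (for instance \cite[\href{https://stacks.math.columbia.edu/tag/02H6}{Tag 02H6}]{stacks-project} in the scheme case, and its analog for algebraic stacks).

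The main obstacle, and the step requiring the most care, is the base-change identification $\Ext^2_{X_A}(E_A, E_A) \otimes_A I \cong \Ext^2_{X_k}(E,E)\otimes_k I$ for an \emph{arbitrary} relatively perfect, universally gluable object — not just for a locally free sheaf — and in the relative setting over $S$ where $X \to S$ need only be proper, flat, and finitely presented rather than smooth. One must be somewhat careful that relative perfectness is exactly what makes $E_A \otimes^{\mathbb{L}}_A -$ behave well and that properness ensures the relevant $\Ext$ groups are finitely generated, so that the deformation-obstruction formalism applies; this is where I would lean on the machinery already set up in \cite{lieblich-moduli} and \cite{BLMNPS} rather than reproving it. Once that identification is in hand, the vanishing of the obstruction group is immediate from the hypothesis, and the smoothness conclusion follows formally.
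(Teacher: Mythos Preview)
Your approach is correct and follows the same overall strategy as the paper: reduce smoothness to formal smoothness (using that $\cM_{\pug}(X/S)\to S$ is locally of finite presentation), invoke Lieblich's obstruction class in $\Ext^2_{X_A}(F, F \otimes_A I)$, and show this group vanishes. Where you differ is in the vanishing step. You argue directly that $\Ext^i_{X_A}(F, F \otimes^L_A I) \cong \Ext^i_{k}(E, E) \otimes_k I$ via adjunction along the closed immersion $i\colon X_k\hookrightarrow X_A$ (using that $F \otimes^L_A I \simeq i_*(E \otimes_k I)$ since $I$ is a $k$-vector space, together with $Li^* F = E$); this is clean and in fact only uses $\Ext^2_k(E,E)=0$. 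The paper instead base changes the complex $\cHom_A(F, F\otimes I)$ along $\Spec k \to \Spec A$ and runs a spectral sequence; because this introduces $s^*I = I \otimes^L_A k$ with its higher Tor terms, that argument genuinely consumes the full hypothesis $\Ext^{\geq 2}_k(E,E)=0$. Your route sidesteps this, at the cost of needing to justify the projection formula and adjunction carefully for relatively perfect (not necessarily perfect) $F$, which is fine in this setting. One small slip: in your second paragraph you write the obstruction group as $\Ext^2_{X_A}(E_A, E_A)\otimes_A I$ rather than $\Ext^2_{X_A}(E_A, E_A \otimes_A I)$; these are not the same in general, but your surrounding discussion and the invoked identity $E_A\otimes^L_A I\simeq E_k\otimes_k I$ make clear you intend the latter.
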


\begin{proof} 
Since $\cM_{\pug}(X/S) \to S$ is locally of finite presentation, it suffices to show that it is formally smooth at $E$. 
More precisely, given
\begin{equation*}
0 \to I \to A' \to A \to 0 
\end{equation*} 
a square-zero extension of artinian local rings over $S$ with residue field $k$ 
such that the composition 
\begin{equation*}
\Spec(k) \to \Spec(A) \to \Spec(A') \to S
\end{equation*}
equals the given $k$-point $s$ of $S$,
and given $F \in \Dqc(X_A)$ an $A$-perfect, universally gluable object, 
we must show that there exists an $A'$-perfect, universally gluable object $F' \in \Dqc(X_{A'})$ such that $F'_A \cong F$. 
By \cite[Theorem 3.1.1]{lieblich-moduli}, the obstruction to the existence of $F'$ is an element in $\Ext^2_A(F, F \otimes I)$, so it suffices to show that this group vanishes. 

In fact, we claim that $\Ext^{\geq 2}_A(F, F \otimes I) = 0$. 
Note that by base change and the identification $F_k \cong E$, we have 
\begin{equation*}
    s^*\cHom_A(F, F \otimes I) \simeq \cHom_k(E, E \otimes s^*I). 
\end{equation*}
Therefore, we have a spectral sequence 
\begin{equation*}
    E_2^{i,j} = \rH^j(s^*\Ext^i_A(F, F \otimes I)) \implies 
    \Ext^{i+j}_{k}(E, E \otimes s^*I). 
\end{equation*}
As $s^*I \in \mathrm{D}^{\leq 0}(k)$, our assumption $\Ext^{\geq 2}_k(E,E) = 0$ implies that the right-hand side vanishes for $i + j \geq 2$. 
If $i$ is maximal such that $\Ext^i_A(F, F\otimes I)$ is nonzero, then also $\rH^0(s^*\Ext_A^i(F, F \otimes I)$ must be nonzero, and it survives the spectral sequence; therefore, $i \leq 1$, which proves our claim. 
\end{proof}

\subsection{Invariant stability} 
To formulate an interesting situation in which Lemma~\ref{lemma-smoothness-Ext2} applies, 
we use the notion of invariant stability. Recall from \S\ref{section-stability} the actions of $\Aut(\cC)$ and $\widetilde{\GL}^+_2(\bR)$ on the stability manifold $\Stab(\cC)$ of full numerical stability conditions, defined when $\cC$ is proper with finite rank $\Knum(\cC)$. 

\begin{definition}
\label{def_serreinv}
Let $\cC$ be a proper $k$-linear category
whose numerical Grothendieck group $\Knum(\cC)$ has finite rank. 
Let $\Phi$ be an autoequivalence of $\cC$. 
A full numerical stability condition $\sigma$ on $\cC$ is called \emph{$\Phi$-invariant} if 
there exists $\wtilde{g} \in \widetilde{\GL}^+_2(\bR)$ such that $\Phi \cdot \sigma = \sigma \cdot \wtilde{g}$, 
i.e. the action of $\Phi$ preserves the $\widetilde{\GL}^+_2(\bR)$-orbit of $\sigma$. 
If $\cC$ admits a Serre functor $\rS_{\cC}$, we say $\sigma$ is \emph{Serre invariant} if it is $\rS_{\cC}$-invariant. 
\end{definition}

\begin{remark}
In the situation of Definition~\ref{def_serreinv}, we can also consider the notion of a 
\emph{$\Phi$-fixed} stability condition $\sigma$ appearing in Theorem~\ref{theorem-equivariant-stability}, namely one for which $\Phi \cdot \sigma = \sigma$. 
It is slightly weaker to require that $\sigma$ is $\Phi$-invariant. 
However, if $\sigma$ is $\Phi$-invariant, then $\sigma$ and $\Phi \cdot \sigma$ have the same collections of (semi)stable objects, so from the perspective moduli spaces they are essentially equivalent. 
In fact, as we observe below, when $\Phi$ has finite order, the collections of (semi)stable objects of any given phase also coincide. 
\end{remark} 

\begin{lemma}
\label{lemma-Phi-invt} 
In the situation of Definition~\ref{def_serreinv}, assume that $\Phi$ has finite order, i.e. $\Phi^n \simeq \id_{\cC}$ for some integer $n$, and $\sigma$ is $\Phi$-invariant. 
Then $\Phi$ preserves the collection of $\sigma$-(semi)stable objects of given phase~$\phi$, i.e. $\Phi(\cP(\phi)) = \cP(\phi)$ for every $\phi \in \bR$ where $\cP$ is the slicing for $\sigma$. 
\end{lemma}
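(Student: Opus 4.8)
The plan is to move to the level of the slicing $\cP$ of $\sigma$ and reduce the statement to an elementary fact about strictly increasing self-maps of finite order. First I would unwind $\Phi$-invariance: by Definition~\ref{def_serreinv} there is $\wtilde{g} = (M,f) \in \widetilde{\GL}^+_2(\bR)$ with $\Phi \cdot \sigma = \sigma \cdot \wtilde{g}$. Comparing the slicings on the two sides — the left $\Aut(\cC)$-action carries the slicing of $\sigma$ to $\phi \mapsto \Phi(\cP(\phi))$, while the right $\widetilde{\GL}^+_2(\bR)$-action reparametrizes it to $\phi \mapsto \cP(f(\phi))$ — this identity says
\[
\Phi(\cP(\phi)) = \cP(f(\phi)) \qquad \text{for all } \phi \in \bR .
\]
Here $f \colon \bR \to \bR$ is strictly increasing (forced by invertibility of $M$ together with the compatibility $M e^{i\pi\phi} \in \bR_{>0}\, e^{i\pi f(\phi)}$ in~\eqref{GLplus}) and satisfies $f(\phi+1) = f(\phi)+1$.

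Next I would feed in the finite-order hypothesis. Since $\Phi^n \simeq \id_{\cC}$, iterating the displayed identity $n$ times gives $\cP(\phi) = \Phi^n(\cP(\phi)) = \cP\bigl(f^{(n)}(\phi)\bigr)$ for all $\phi$, where $f^{(n)}$ is the $n$-fold composite of $f$. Let $S = \set{\phi \in \bR \st \cP(\phi) \neq 0}$ be the set of phases of nonzero $\sigma$-semistable objects. Because $\Phi$ is an autoequivalence, $\cP(\phi) = 0$ if and only if $\cP(f(\phi)) = 0$, so $\phi \in S \Leftrightarrow f(\phi) \in S$; in particular $f$ (hence $f^{(n)}$) restricts to a self-map of $S$. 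A nonzero $\sigma$-semistable object has a well-defined phase, so $\psi \mapsto \cP(\psi)$ is injective on $S$; combined with $\cP(f^{(n)}(\phi)) = \cP(\phi)$ this forces $f^{(n)}(\phi) = \phi$ for every $\phi \in S$.

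It remains to promote this to $f(\phi) = \phi$ on $S$. The restriction $h \coloneqq f|_S \colon S \to S$ is a strictly increasing self-map of the totally ordered set $S$ with $h^{(n)} = \id_S$, and any such map is the identity: if $h(x) > x$ for some $x \in S$, then $h^{(n)}(x) > \cdots > h(x) > x$ contradicts $h^{(n)}(x) = x$, and $h(x) < x$ is excluded symmetrically. Hence $f(\phi) = \phi$ for all $\phi \in S$, so $\Phi(\cP(\phi)) = \cP(f(\phi)) = \cP(\phi)$ there; while for $\phi \notin S$ we have $f(\phi) \notin S$ as well, so $\Phi(\cP(\phi)) = 0 = \cP(\phi)$. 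This gives $\Phi(\cP(\phi)) = \cP(\phi)$ for every $\phi$, and since $\Phi$ is exact it then also preserves the simple objects of each $\cP(\phi)$, i.e.\ the $\sigma$-stable objects of each phase.

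The point where a naive argument stumbles, and the thing to be careful about, is that one should not try to pin down $\wtilde{g}$ itself — for instance by declaring it a torsion, hence trivial, element of the torsion-free group $\widetilde{\GL}^+_2(\bR)$ — because $\wtilde{g}$ is only well-defined modulo the stabilizer of $\sigma$, which can be positive-dimensional (already for $\cC = \Db(k)$). The correct bookkeeping is to track only the reparametrization $f$ that $\wtilde{g}$ induces on the slicing and to restrict it to the set $S$ of phases that actually occur; everything else is formal once one has the slicing dictionary and the uniqueness of the phase of a semistable object in hand.
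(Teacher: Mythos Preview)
Your proof is correct and follows essentially the same approach as the paper: both arguments reduce to showing that the increasing reparametrization $f$ fixes every phase $\phi$ that actually occurs, by first deducing $f^{(n)}(\phi)=\phi$ from $\Phi^n\simeq\id_{\cC}$ and then using monotonicity to conclude $f(\phi)=\phi$. Your write-up is in fact more explicit than the paper's terse version, and your closing remark about not trying to pin down $\wtilde{g}$ itself (since the stabilizer of $\sigma$ may be nontrivial) is a genuinely useful clarification that the paper omits.
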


\begin{proof}
Write $\wtilde{g} = (M, f)$ as in \eqref{GLplus}. 
Let $\cP$ be the slicing for $\sigma$. 
Together with $\Phi^n(E) \cong E$, it follows that the increasing function $f \colon \bR \to \bR$ satisfies $f^n(\phi(E)) = \phi(E)$, where $f^n$ is the $n$-fold composition of $f$. 
This implies that $f(\phi(E)) = \phi(E)$. 
\end{proof} 

\begin{lemma}
\label{lemma-M-Enriques-smooth}
Let $\cC$ be an Enriques category over a field $k$. 
Let $\sigma$ be a Serre invariant stability condition on $\cC$. 
If $E$ is a $\sigma$-stable object of $\cC$ such that $E$ is not fixed by the $\bZ/2$-action on $\cC$, i.e. 
$\tau(E) \ncong E$ where $\tau$ is the generating involution of the action, then $\Ext_k^{\geq 2}(E,E) = 0$. 
\end{lemma}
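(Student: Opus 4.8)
The plan is to deduce the vanishing $\Ext^{\geq 2}_k(E,E) = 0$ from Serre duality in $\cC$, once we know that the autoequivalence $\tau$ carries $E$ to another $\sigma$-stable object of the \emph{same} phase. The first step is to pass from $\rS_{\cC}$-invariance to $\tau$-invariance of $\sigma$. Since $\cC$ is an Enriques category, $\rS_{\cC} \simeq \tau \circ [2]$ with $\tau$ the generator of a $\bZ/2$-action, so $\tau$ has finite order even though $\rS_{\cC}$ does not. The left $\Aut(\cC)$-action and the right $\widetilde{\GL}^+_2(\bR)$-action on $\Stab(\cC)$ from \S\ref{section-stability} commute, and the shift $[2]$ acts as multiplication by a central element of $\widetilde{\GL}^+_2(\bR)$ (the one with underlying matrix $\id$ and $f \colon \phi \mapsto \phi + 2$); hence $\rS_{\cC}\cdot\sigma$ lies in the $\widetilde{\GL}^+_2(\bR)$-orbit of $\sigma$ if and only if $\tau\cdot\sigma$ does. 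Thus $\sigma$ being Serre invariant is equivalent to $\sigma$ being $\tau$-invariant.

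Next I would apply Lemma~\ref{lemma-Phi-invt} to $\Phi = \tau$: as $\tau$ has finite order and $\sigma$ is $\tau$-invariant, $\tau$ preserves the slicing $\cP$ of $\sigma$, i.e.\ $\tau(\cP(\phi)) = \cP(\phi)$ for all $\phi$. Letting $\phi$ be the phase of $E$, it follows that $\tau(E)$ is again $\sigma$-stable of phase $\phi$. Since $E$ and $\tau(E)$ are $\sigma$-stable of the same phase and are non-isomorphic by hypothesis, $\Hom_k(E, \tau(E)) = 0$, and more generally $\Hom_k(E, \tau(E)[n]) = 0$ for every $n \leq 0$: for $n < 0$ this is because $\tau(E)[n] \in \cP(\phi+n)$ with $\phi + n < \phi = \phi(E)$, so the vanishing is forced by the phase ordering of morphisms between semistable objects.

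Finally, Serre duality in $\cC$ gives, for every $i$,
\begin{equation*}
\Ext^i_k(E,E)^{\vee} \;\cong\; \Hom_k\bigl(E, \rS_{\cC}(E)[-i]\bigr) \;=\; \Hom_k\bigl(E, \tau(E)[2-i]\bigr),
\end{equation*}
and for $i \geq 2$ the exponent $2 - i$ is $\leq 0$, so the right-hand side vanishes by the previous step. Hence $\Ext^i_k(E,E) = 0$ for all $i \geq 2$, which is the claim.

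I do not expect a serious obstacle here: the whole argument is formal once the setup is in place. The only point requiring care is the bookkeeping with the two commuting group actions on $\Stab(\cC)$ in the first step, whose purpose is precisely to replace the infinite-order autoequivalence $\rS_{\cC}$ by the involution $\tau$, so that the finite-order hypothesis of Lemma~\ref{lemma-Phi-invt} becomes available; after that, everything follows from the phase ordering of morphisms and Serre duality.
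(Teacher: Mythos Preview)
Your proof is correct and follows essentially the same approach as the paper: Serre duality reduces the claim to $\Hom_k(E,\tau(E)[n])=0$ for $n\leq 0$, which follows once $\tau(E)$ is $\sigma$-stable of the same phase as $E$ via Lemma~\ref{lemma-Phi-invt}. Your write-up is in fact more careful than the paper's on one point: you explicitly argue that Serre invariance of $\sigma$ is equivalent to $\tau$-invariance (using that $[2]$ acts via $\widetilde{\GL}^+_2(\bR)$), which is needed since Lemma~\ref{lemma-Phi-invt} applies to the finite-order $\tau$ rather than to $\rS_{\cC}$ directly; the paper leaves this step implicit.
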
 

\begin{proof}
For any object $E \in \cC$, Serre duality gives 
\begin{equation*}
\Ext_k^i(E,E) \cong \Ext_k^{2-i}(E, \tau(E))^{\vee}. 
\end{equation*} 
If $E$ is $\sigma$-(semi)stable, then by Lemma~\ref{lemma-Phi-invt}, 
$\tau(E)$ is $\sigma$-(semi)stable of the same phase $\phi(E)$. 
In particular, it follows that the above group vanishes for $i > 2$. 
If $E$ is $\sigma$-stable and $\tau(E) \ncong E$, 
then there can be no nonzero morphism $E \to \tau(E)$, so the 
above group also vanishes for $i = 2$. 
\end{proof} 

In summary, we obtain the following smoothness criterion for the moduli space of objects in a family of Enriques categories. 

\begin{corollary}
\label{corollary-enriques-moduli-smooth}
Let $\cC$ be an Enriques category over $S$ such that 
$\cC \subset \Dperf(X)$ is an $S$-linear semiorthogonal component,  
where $X \to S$ is a smooth proper morphism of varieties. 
Let $s \colon \Spec(k) \to S$ be a field-valued point of $S$, let 
$\sigma_{s}$ be a Serre invariant stability condition on the fiber $\cC_s$, and 
let $E$ be a $\sigma_s$-stable object which is not fixed by the $\bZ/2$-action on $\cC_s$. 
Then the morphism $\cM_{\pug}(\cC/S) \to S$ is smooth at the $k$-point given by $E$. 
\end{corollary}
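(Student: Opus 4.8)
The plan is to reduce the statement to the two results already established in this section: the $\Ext$-vanishing Lemma~\ref{lemma-M-Enriques-smooth} and the smoothness criterion Lemma~\ref{lemma-smoothness-Ext2}, combined with the open immersion of Proposition~\ref{proposition-MC-open-MX}.

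First I would check that $E$ indeed defines a $k$-point of $\cM_{\pug}(\cC/S)$, hence of $\cM_{\pug}(X/S)$. By Definition~\ref{definition-family-Enriques} the fiber $\cC_s$ is an Enriques category over $k = \kappa(s)$, and via the inclusion $\cC_s \subset \Dperf(X_s)$ the object $E$ is a bounded complex on the smooth variety $X_s = X_k$; since $X \to S$ is smooth, this makes $E$ relatively perfect over $\Spec(k)$. It is also universally gluable over $\Spec(k)$: as $E$ is $\sigma_s$-stable (indeed semistable suffices), $E[m]$ lies in the heart $\cA$ of the bounded t-structure underlying $\sigma_s$ for some $m \in \bZ$, and negative self-extensions of an object of the heart of a bounded t-structure vanish, so $\Ext^{<0}_k(E,E) = 0$. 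Since $E \in \cC_s$, the resulting $k$-point lies in the open substack $\cM_{\pug}(\cC/S) \subset \cM_{\pug}(X/S)$ provided by Proposition~\ref{proposition-MC-open-MX}.

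Next I would apply Lemma~\ref{lemma-M-Enriques-smooth} to the Enriques category $\cC_s$: since $\sigma_s$ is Serre invariant, $E$ is $\sigma_s$-stable, and $E$ is not fixed by the $\bZ/2$-action, we obtain $\Ext^{\geq 2}_k(E,E) = 0$. Together with the previous paragraph, $E$ satisfies the hypotheses of Lemma~\ref{lemma-smoothness-Ext2} for the proper flat morphism $X \to S$ and the point $s$, which therefore yields that $\cM_{\pug}(X/S) \to S$ is smooth at the $k$-point determined by $E$. Finally, smoothness of a morphism is local on the source, so restricting to the open substack $\cM_{\pug}(\cC/S)$, which contains this point, the morphism $\cM_{\pug}(\cC/S) \to S$ is smooth there as well.

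There is no genuine obstacle in this argument; all of the substance is packaged into Lemmas~\ref{lemma-smoothness-Ext2} and~\ref{lemma-M-Enriques-smooth}. The only points requiring care are verifying the relative perfectness and universal gluability of $E$ needed to invoke Lemma~\ref{lemma-smoothness-Ext2}, and keeping track of the identification of $k$-points under the open immersion $\cM_{\pug}(\cC/S) \subset \cM_{\pug}(X/S)$ of Proposition~\ref{proposition-MC-open-MX}.
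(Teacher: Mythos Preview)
Your proposal is correct and follows essentially the same route as the paper: reduce to $\cM_{\pug}(X/S)$ via the open immersion of Proposition~\ref{proposition-MC-open-MX}, then combine Lemma~\ref{lemma-M-Enriques-smooth} (Ext-vanishing) with Lemma~\ref{lemma-smoothness-Ext2} (smoothness criterion). The paper's proof is slightly terser, omitting the explicit verification of relative perfectness and universal gluability that you spell out, but the argument is the same.
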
 

\begin{proof}
By Proposition~\ref{proposition-MC-open-MX}, $\cM_{\pug}(\cC/S) \to S$ factors through $\cM_{\pug}(X/S) \to S$ via an open immersion, so it suffices to show the latter is smooth at $E$, which 
follows from Lemmas~\ref{lemma-smoothness-Ext2} and~\ref{lemma-M-Enriques-smooth}. 
\end{proof} 

\subsection{Maps between moduli spaces} 
Next we explain the relation between moduli spaces of stable objects for categories related by a group action 
as in Theorem~\ref{theorem-equivariant-stability}, restricting for simplicity to the case where $G = \bZ/2$. 

\begin{proposition}
\label{proposition-maps-bw-moduli}
Let $\cC \subset \Dperf(X)$ be an $k$-linear semiorthogonal component of a smooth proper variety $X$ over a field $k$ such that $\characteristic(k) \neq 2$. 
Assume that $\cC$ is equipped with a $\bZ/2$-action such that 
$\cC^{\bZ/2}$ also embeds as a $k$-linear semiorthogonal component of the derived category of a smooth proper variety. 
Let $\sigma$ be a proper stability condition on $\cC$ (in the sense of Remark~\ref{remark-proper-stab}) 
with respect to a $\bZ/2$-equivariant homomorphism $\rK(\cC) \to \Lambda$ such that $\sigma$ is $\bZ/2$-fixed.  
Let $\sigma^{\bZ/2}$ be the stability condition on $\cC^{\bZ/2}$ given by Theorem~\ref{theorem-equivariant-stability} 
and assume that $\sigma^{\bZ/2}$ is proper. 
Then for any $v \in \Lambda$, the forgetful functor $\Forg \colon \cC^{\bZ/2} \to \cC$ induces a morphism of moduli stacks  
\begin{equation*}
\Forg \colon \cM_{\sigma^{\bZ/2}}(\cC^{\bZ/2}, v) \to \cM_{\sigma}(\cC, v)
\end{equation*} 
such that: 
\begin{enumerate}
\item \label{def-F}
If $v \in \Lambda$ is fixed by the $\bZ/2$-action, then 
$\Forg$ factors via a surjective morphism
\begin{equation*}
f \colon \cM_{\sigma^{\bZ/2}}(\cC^{\bZ/2}, v) \to \cM_{\sigma}(\cC, v)^{\bZ/2}
\end{equation*} 
onto the fixed locus of the induced 
$\bZ/2$-action on $\cM_{\sigma}(\cC, v)$. 
\item \label{F-etale} 
The restriction of $f$ to the stable locus $\cM_{\sigma^{\bZ/2}}^{\mathrm{st}}(\cC^{\bZ/2}, v)$ is degree $2$ onto its image and \'{e}tale away from points corresponding to objects $F \in \cC^{\bZ/2}$ that are fixed by the generating involution of the residual $\bZ/2 = (\bZ/2)^{\vee}$-action from \S\ref{section-reconstruction}. 
\end{enumerate} 
\end{proposition}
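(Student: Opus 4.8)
The plan is to first produce the morphism on moduli stacks, then establish the factorization through the fixed locus, and finally analyze it on the stable locus by a fiber count together with deformation theory. Concretely, I would first observe that the forgetful functor $\Forg\colon\cC^{\bZ/2}\to\cC$ is an exact $k$-linear functor compatible with base change: since $2$ is invertible in $k$, base change commutes with $\bZ/2$-invariants (by the norm equivalence $\cC^{\bZ/2}\simeq\cC_{\bZ/2}$, cf.\ the proof of Proposition~\ref{proposition-enhancement-CG}), so $\Forg$ base-changes to the forgetful functor $(\cC_T)^{\bZ/2}\to\cC_T$ for every $T$, and therefore induces a morphism $\cM_{\pug}(\cC^{\bZ/2})\to\cM_{\pug}(\cC)$ of stacks over $\Spec(k)$. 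Since $\bv^{\bZ/2}=\bv\circ\Forg_*$ this preserves numerical classes, and by Theorem~\ref{theorem-equivariant-stability}\eqref{G-ss} it carries $\sigma^{\bZ/2}$-semistable objects of class $v$ to $\sigma$-semistable objects of class $v$; restricting to the corresponding open substacks yields $\Forg\colon\cM_{\sigma^{\bZ/2}}(\cC^{\bZ/2},v)\to\cM_\sigma(\cC,v)$.

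For part~\eqref{def-F}, assume $v$ is $\bZ/2$-fixed, so $\tau_*v=v$; since $\sigma$ is $\bZ/2$-fixed, the involution $\tau$ preserves $\sigma$-semistability (Lemma~\ref{lemma-Phi-invt}) and hence induces a $\bZ/2$-action on $\cM_\sigma(\cC,v)$, whose stack-theoretic fixed locus $\cM_\sigma(\cC,v)^{\bZ/2}$ parameterizes $\sigma$-semistable objects of class $v$ equipped with a $\bZ/2$-linearization. For any $T$, an object of $(\cC^{\bZ/2})_T=(\cC_T)^{\bZ/2}$ is exactly its image under $\Forg$ together with a $\bZ/2$-linearization, i.e.\ a lift of the corresponding $T$-point of $\cM_\sigma(\cC,v)$ to $\cM_\sigma(\cC,v)^{\bZ/2}$, functorially in $T$; this defines the morphism $f\colon\cM_{\sigma^{\bZ/2}}(\cC^{\bZ/2},v)\to\cM_\sigma(\cC,v)^{\bZ/2}$ with $\Forg=q\circ f$, where $q$ is the structure morphism of the fixed locus. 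Surjectivity of $f$ is checked on geometric points: over $\bar k$, a point of $\cM_\sigma(\cC,v)^{\bZ/2}$ is a $\sigma$-semistable object of class $v$ with a $\bZ/2$-linearization, i.e.\ an object of $(\cC^{\bZ/2})_{\bar k}$, which by Theorem~\ref{theorem-equivariant-stability}\eqref{G-ss} is $\sigma^{\bZ/2}$-semistable of class $v$ and hence lies in the image of $f$. Throughout I use the standard identities $\Forg\circ\Inf\simeq\id_\cC\oplus\tau$ and $\Inf\circ\Forg\simeq\bigoplus_{\chi\in(\bZ/2)^\vee}(-\otimes\chi)$.

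For part~\eqref{F-etale} I would argue on the stable locus over geometric points. If $E$ is $\sigma$-stable with $\tau(E)\cong E$, then its only automorphisms are scalars, so its $\bZ/2$-linearizations form a torsor under $(\bZ/2)^\vee=\{\pm1\}$: there are exactly two up to isomorphism, $(E,\theta)$ and $(E,\theta\otimes\chi)$, and they are not isomorphic, since an isomorphism between them would be a scalar $\mu$ with $\mu=-\mu$, impossible as $\characteristic(k)\neq2$; thus $\Forg|_{\mathrm{st}}=q\circ f|_{\mathrm{st}}$ is $2$ to $1$ over the open locus of $\sigma$-stable objects in its image, which is the degree $2$ statement. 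For étaleness, take $(E,\theta)$ that is $\sigma^{\bZ/2}$-stable and not fixed by $-\otimes\chi$. By Theorem~\ref{theorem-equivariant-stability}\eqref{G-s}, $\Forg(E,\theta)$ is $\sigma$-polystable; were it strictly polystable, then — as $(E,\theta)$ has no proper $\bZ/2$-subobject and $\bZ/2$ has only one-dimensional characters — its stable summands would form a single free $\langle\tau\rangle$-orbit $\{A,\tau A\}$ with $\tau A\not\cong A$, forcing $(E,\theta)\cong\Inf(A)$, which is $-\otimes\chi$-fixed, a contradiction. So $\Forg(E,\theta)=E$ is $\sigma$-stable and $\tau$-fixed. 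Now $\Forg$ induces isomorphisms $\Ext^i_{\cC^{\bZ/2}}((E,\theta),(E,\theta))\xrightarrow{\sim}\Ext^i_\cC(E,E)^{\bZ/2}$ for all $i$ (invariants under the $\bZ/2$-action defined by $\theta$, exact because $2$ is invertible), and exactness of $\bZ/2$-invariants likewise identifies the tangent–obstruction theory of $\cM_\sigma(\cC,v)^{\bZ/2}$ at $f(E,\theta)$ with the $\bZ/2$-invariant part of that of $\cM_\sigma(\cC,v)$ at $[E]$; hence $f$ is an isomorphism on tangent spaces and compatible with obstruction maps, so $f$ — and therefore $\Forg$, onto the smooth $\tau$-fixed locus — is étale at $(E,\theta)$.

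The main obstacle, I expect, is the deformation-theoretic input just used: one has to justify carefully that the cotangent complex (equivalently, the tangent–obstruction theory) of the stack-theoretic fixed locus $\cM_\sigma(\cC,v)^{\bZ/2}$ is the $\bZ/2$-invariant part of that of $\cM_\sigma(\cC,v)$, which rests on Romagny's algebraicity of fixed-point stacks \cite{Romagny} and a careful use of the exactness of $\bZ/2$-invariants. The construction of the morphism on $\cM_{\pug}$ and the compatibility of base change with invariants are technical but routine within the $\infty$-categorical formalism, and the remaining fiber count is formal given the adjunction identities above.
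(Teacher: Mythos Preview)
Your proposal is essentially correct and agrees with the paper on the construction of $\Forg$ and on part~\eqref{def-F}: both identify the Romagny fixed stack with linearized objects (the paper cites Proposition~\ref{proposition-enhancement-CG} and Theorem~\ref{theorem-equivariant-stability}, you spell it out functorially), and both obtain the degree-$2$ count from the fact that linearizations of a simple object form a $(\bZ/2)^\vee$-torsor (the paper cites \cite[Lemma~1]{Ploog}).

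The \'etaleness argument is where the two genuinely diverge. You argue structurally: first you rule out the strictly polystable case by showing it forces $(E,\theta)\cong\Inf(A)$, hence $\chi$-fixed; then you invoke the identification $\Ext^i_{\cC^{\bZ/2}}((E,\theta),(E,\theta))\cong\Ext^i_\cC(E,E)^{\bZ/2}$ together with the fact that the tangent--obstruction theory of the fixed stack is the $\bZ/2$-invariant part of that of $\cM_\sigma(\cC,v)$, so $f$ is an isomorphism on deformation theory. The paper instead shows directly that the differential $\Ext^1_{\cC^{\bZ/2}}(E,E)\to\Ext^1_\cC(\Forg E,\Forg E)$ is injective: if a self-extension of $E=(F,\theta)$ in $\cC^{\bZ/2}$ splits after forgetting, one applies $\Inf$, uses $\Inf(F)\cong E\oplus(E\otimes\chi)$ (their Lemma~\ref{lemma_infvschi}, which is exactly your identity $\Inf\circ\Forg\simeq\id\oplus(-\otimes\chi)$), and then observes that since $E\not\cong E\otimes\chi$ are both stable one has $\Hom(E,E\otimes\chi)=0=\Hom(E',E\otimes\chi)$, so the splitting of the inflated sequence restricts to a splitting of the original. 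Your route is conceptually cleaner and actually identifies $f$ with an isomorphism at the level of deformation theory, but it rests on the fixed-stack deformation input you flag as the main obstacle; the paper's hands-on splitting argument avoids that input entirely and does not require your preliminary reduction to $\Forg(E,\theta)$ being $\sigma$-stable.
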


\begin{proof}
The fixed stack $\cM_{\sigma}(\cC, v)^{\bZ/2}$ parameterizes $\bZ/2$-linearized (in the sense of Example~\ref{example-classical-linear}) objects in $\cC$ of class $v$; therefore,~\eqref{def-F} follows from Proposition~\ref{proposition-enhancement-CG} and Theorem~\ref{theorem-equivariant-stability}. 

About \eqref{F-etale}, the fact that the restriction of $f$ to the stable locus has degree $2$ follows from \cite[Lemma 1]{Ploog}. For the second part, we argue similarly to \cite[Proposition 6.2]{Nuer}. Set $E=(F, \theta)$ where $F=\Forg(E)$. Up to shifting, we may assume $E$ is in the heart $\cA^{\bZ/2}$ of $\sigma^{\bZ/2}$. By \cite[Theorem 3.1.1 and Proposition 3.5.1]{lieblich-moduli}, 
the tangent space of $\cM_{\sigma^{\bZ/2}}(\cC^{\bZ/2}, v)$ at the class of $E$ is identified with $\Ext^1(E, E)$. An analogous identification holds for the tangent space of $\cM_{\sigma}(\cC, v)$ at the class of $F$, and the differential of $f$ at the class of $E$ is given by the map
$$\mathsf{d}f \colon \Ext^1(E, E) \to \Ext^1(F, F)$$ which operates as follows. An element of $\Ext^1(E, E)$ is by definition an extension in $\cA^{\bZ/2}$ of the form
\begin{equation}
\label{SES-EE'} 
0 \to E \to E' \to E \to 0.
\end{equation} 
Indeed, $E'$ is semistable in $\cA^{\bZ/2}$ with the same slope of $E$. Writing $E'=(F', \theta')$, we have $\mathsf{d}f(E')=F'$, where $F'$ is an extension in $\Ext^1(F,F)$ and is $\sigma$-semistable in the heart $\cA$ of $\sigma$ with the same slope as $F$.

Now if $\mathsf{d}\Phi(E')=0$, then the sequence
$$0 \to F \to F' \to F \to 0$$
in $\cA$ defining $F'$ splits. Applying the inflation functor $\Inf \colon \cC \to \cC^{\bZ /2\bZ}$ we get a triangle
\begin{equation}
\label{eq_seqInfF}    
\Inf(F) \to \Inf(F') \to \Inf(F).
\end{equation}
By Lemma~\ref{lemma_infvschi} below, we have $\Inf(F) \cong (F, \theta) \oplus (F, \theta \otimes \chi)$. Moreover, as $\Forg(\Inf(F))\cong F \oplus F$ is $\sigma$-semistable in $\cA$, we have that $\Inf(F)$
is $\sigma^{\bZ/2}$-semistable in $\cA^{\bZ/2}$. The same observation holds for $\Inf(F')$ and thus \eqref{eq_seqInfF} is a short exact sequence in $\cA^{\bZ/2}$. 
The sequence \eqref{eq_seqInfF} also splits, because the one defining $F'$ does. Thus we get a split sequence 
\begin{equation}
\label{SES-EE'-sum}
0 \to E \oplus E \otimes \chi \to E' \oplus E' \otimes \chi \to E \oplus E \otimes \chi \to 0.
\end{equation}
Now assume that $E$ is not fixed by the residual action, i.e.\ $E \ncong E \otimes \chi$. Since $E$ is stable, we deduce that $\Hom(E, E \otimes \chi)=0=\Hom(E \otimes \chi, E)$, and hence by the exact sequence~\eqref{SES-EE'} we have $\Hom(E', E \otimes \chi)=0=\Hom(E' \otimes \chi, E)$. 
It follows that a splitting of~\eqref{SES-EE'-sum} must also induce a splitting of~\eqref{SES-EE'}. 
\end{proof}

\begin{lemma} \label{lemma_infvschi}
With the same assumptions as in Proposition~\ref{proposition-maps-bw-moduli}, for every $B \in \cC$, we have $\Inf(B) \cong (B, \theta) \oplus (B, \theta \otimes \chi)$ for every linearization $\theta$ on $B$.
\end{lemma}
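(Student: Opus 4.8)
The plan is to produce an explicit morphism
\[
\Psi\colon (B,\theta)\oplus(B,\theta\otimes\chi)\longrightarrow \Inf(B)
\]
in $\cC^{\bZ/2}$ and to verify it is an equivalence by checking this after applying the forgetful functor $\Forg\colon \cC^{\bZ/2}\to\cC$, which is conservative (I will justify this last point below). This is essentially the approach of \cite[Proposition 6.2]{Nuer} in the guise used there.

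First I would observe that $\Forg(B,\theta)=\Forg(B,\theta\otimes\chi)=B$, since the forgetful functor discards the linearization entirely; in particular $\Inf(B)=\Inf(\Forg(B,\theta))=\Inf(\Forg(B,\theta\otimes\chi))$. Let $\eta\colon \id_{\cC^{\bZ/2}}\to \Inf\circ\Forg$ be the unit of the adjunction $\Forg\dashv \Inf$. Evaluating $\eta$ at $(B,\theta)$ and at $(B,\theta\otimes\chi)$ and using these identifications gives morphisms $(B,\theta)\to\Inf(B)$ and $(B,\theta\otimes\chi)\to\Inf(B)$ in $\cC^{\bZ/2}$, and I take $\Psi$ to be the induced morphism out of their direct sum. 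By naturality of $\eta$ this is automatically a morphism in $\cC^{\bZ/2}$.

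Next I would compute $\Forg(\Psi)$. Write $\tau$ for the generating autoequivalence of the $\bZ/2$-action on $\cC$ and $e,g$ for the elements of $\bZ/2$, so that $\Forg(\Inf(B))=\bigoplus_{h\in\bZ/2}\phi_h(B)=B\oplus\tau(B)$. Using the standard description of the forget/inflate adjunction (cf.\ \cite[Lemma 3.8]{Elagin}, \cite[\S3.2]{perry-HH}), for any linearized object $(B,\vartheta)$ the map $\Forg(\eta_{(B,\vartheta)})\colon B\to B\oplus\tau(B)$ has components $(\vartheta_e,\vartheta_g)=(\id_B,\vartheta_g)$. Since $(\theta\otimes\chi)_g=\chi(g)\theta_g=-\theta_g$, this gives
\[
\Forg(\Psi)=\begin{pmatrix}\id_B & \id_B\\ \theta_g & -\theta_g\end{pmatrix}=\begin{pmatrix}\id_B & 0\\ 0 & \theta_g\end{pmatrix}\begin{pmatrix}\id_B & \id_B\\ \id_B & -\id_B\end{pmatrix}\colon B\oplus B\longrightarrow B\oplus\tau(B).
\]
The first factor is invertible because $\theta_g$ is an isomorphism, and the second is invertible because its determinant $-2$ is a unit in $k$ by the hypothesis $\characteristic(k)\neq 2$; hence $\Forg(\Psi)$ is an equivalence.

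Finally I would upgrade this to an equivalence in $\cC^{\bZ/2}$. Let $\epsilon\colon \Inf\circ\Forg\to \id_{\cC^{\bZ/2}}$ be the counit of $\Inf\dashv\Forg$. The composite $\id_{\cC^{\bZ/2}}\xrightarrow{\eta}\Inf\circ\Forg\xrightarrow{\epsilon}\id_{\cC^{\bZ/2}}$ is multiplication by $2$ (this is the standard fact underlying the norm equivalence, cf.\ \cite[Proposition 3.4]{perry-HH}), hence an equivalence since $\characteristic(k)\neq 2$; thus $\id_{\cC^{\bZ/2}}$ is a retract of $\Inf\circ\Forg$, and in particular $\Forg$ is conservative. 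Since a morphism which is a retract of an equivalence is an equivalence, $\Forg(\Psi)$ being an equivalence forces $\Psi$ to be one. The only genuinely delicate point is the bookkeeping of the $\bZ/2$-equivariant structures, in particular pinning down the components of $\Forg(\eta)$; everything else is formal. A slightly more conceptual alternative would be to invoke the projection formula $\Inf\circ\Forg\simeq(-)\otimes_k\Inf(k)$ together with the decomposition $\Inf(k)\simeq k[\bZ/2]\simeq \mathbf{1}\oplus\chi$ of the regular representation (valid since $\characteristic(k)\neq 2$), which yields $\Inf(B)=\Inf(\Forg(B,\theta))\simeq (B,\theta)\oplus(B,\theta\otimes\chi)$ directly.
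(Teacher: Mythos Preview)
Your proof is correct but takes a genuinely different route from the paper's. The paper proceeds by Yoneda: using the adjunction $\Forg\dashv\Inf$ it reduces to showing
\[
\Hom_{\cC}(A,B)\cong \Hom_{\cC^{\bZ/2}}((A,\psi),(B,\theta))\oplus\Hom_{\cC^{\bZ/2}}((A,\psi),(B,\theta\otimes\chi))
\]
for every $(A,\psi)\in\cC^{\bZ/2}$, and then writes down the explicit averaging inverse $f\mapsto\tfrac12(f\pm\theta_g^{-1}\iota(f)\psi_g)$. You instead build a concrete morphism $\Psi$ in $\cC^{\bZ/2}$ from the adjunction unit, compute $\Forg(\Psi)$ as a $2\times 2$ matrix with determinant $-2\theta_g$, and conclude via conservativity of $\Forg$ (deduced from $\epsilon\circ\eta=2\cdot\id$). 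Your argument is more structural and immediately produces the isomorphism in the enhanced category, while the paper's is elementary and stays at the level of classical Hom-sets; both hinge on the same division by $2$. Your closing remark invoking the projection formula $\Inf\circ\Forg\simeq(-)\otimes\Inf(k)$ together with $\Inf(k)\simeq \mathbf{1}\oplus\chi$ is the cleanest conceptual proof of all, and could replace the matrix computation entirely.
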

\begin{proof}
By adjunction we have $\Hom_{\cC}(\Forg((A, \psi)), B) \cong \Hom_{\cC^{\bZ/2}}((A, \psi), \Inf(B))$ for every $(A, \psi) \in \cC^{\bZ/2}$. So the statement is equivalent to showing 
\begin{equation*}
    \Hom_{\cC}(\Forg((A, \psi)), B) \cong \Hom_{\cC^{\bZ/2}}((A, \psi), (B, \theta) \oplus (B, \theta \otimes \chi)),
\end{equation*}
which is the same as proving
$$\Hom_{\cC}(A, B) \cong \Hom_{\cC^{\bZ/2}}((A, \psi), (B, \theta)) \oplus \Hom_{\cC^{\bZ/2}}((A, \psi), (B, \theta \otimes \chi).$$
Denote by $\iota$ the generating involution of the $\bZ/2$-action on $\cC$. First, note that if 
\begin{equation*}
    f \in \Hom_{\cC^{\bZ/2}}((A, \psi), (B, \theta))  \cap \Hom_{\cC^{\bZ/2}}((A, \psi), (B, \theta \otimes \chi),\end{equation*}
then $f=0$. Indeed, by definition $f$ satisfies $\theta_\iota \circ f = \iota(f) \circ \psi_\iota$ and $-\theta_\iota \circ f = \iota(f) \circ \psi_\iota$, which is possible only if $f=0$. Thus we have an injective map
$$\Hom_{\cC^{\bZ/2}}((A, \psi), (B, \theta)) \oplus \Hom_{\cC^{\bZ/2}}((A, \psi), (B, \theta \otimes \chi)) \hookrightarrow \Hom_{\cC}(A, B), \, f_1 \oplus f_2 \mapsto f_1+f_2.$$
We claim that this map is an isomorphism. To define an inverse, for $f \in \Hom_{\cC}(A,B)$, we set
$$f_1=\frac{1}{2}(f + \theta_\iota^{-1} \circ \iota(f) \circ \psi_\iota),$$
$$f_2=\frac{1}{2}(f - \theta_\iota^{-1} \circ \iota(f) \circ \psi_\iota).$$
To check that $f_1 \in \Hom_{\cC^{\bZ/2}}((A, \psi), (B, \theta))$, $f_2 \in \Hom_{\cC^{\bZ/2}}((A, \psi), (B, \theta \otimes \chi))$, it is enough to observe that $\theta_\iota \circ f_1= \iota(f_1) \circ \psi_\iota$, $-\theta_\iota \circ f_2= \iota(f_2) \circ \psi_\iota$, which follows from $\iota(\psi_\iota)^{-1}=\psi_\iota$, $\iota(\theta_\iota)^{-1}=\theta_\iota$.
\end{proof}

\begin{remark}[Symmetrized version of Proposition~\ref{proposition-maps-bw-moduli}] 
\label{remark-symmetrized-maps-bw-moduli}
Via the duality of categories equipped with a $\bZ/2$-action and a $\bZ/2$-fixed stability condition, explained in \S\ref{section-reconstruction} and \S\ref{subsec_dcdcandstability}, we can reverse the roles of $\cC$ and $\cC^{\bZ/2}$ in Proposition~\ref{proposition-maps-bw-moduli}. 
To explain this, for notational clarity we 
let $\cD = \cC^{\bZ/2}$ equipped with the residual $\bZ/2 = (\bZ/2)^{\vee}$-action, 
and write $\sigma_{\cC} = \sigma$ and $\sigma_{\cD} = \sigma^{\bZ/2}$. 
Then by Lemma~\ref{lemma_backtosamestability}, the pairs $(\cC, \sigma_{\cC})$ and $(\cD, \sigma_{\cD})$ (up to scaling by $2$ the central charge of the stability conditions) correspond to each other under the above-mentioned duality. 
Therefore, in addition to the morphism $f \colon \cM_{\sigma_{\cD}}(\cD, v) \to \cM_{\sigma_{\cC}}(\cC, v)^{\bZ/2}$, we also get a morphism 
$g \colon \cM_{\sigma_{\cC}}(\cC, v) \to \cM_{\sigma_{\cD}}(\cD, v)^{\bZ/2}$ with similar properties. 

Note that above, both $\sigma_{\cD}$ and $\sigma_{\cC}$ are both regarded as stability conditions with respect to a fixed lattice $\Lambda$. It is perhaps more natural, however, to use the numerical Grothendieck groups to fix the class of objects, as follows. 
Let us suggestively denote by $\pi_* \colon \cD \to \cC$ and $\pi^* \colon \cC \to \cD$ the canonical functors, namely, the forgetful and inflation functors between $\cD = \cC^{\bZ/2}$ and $\cC$. 
Then for any $v \in \Knum(\cC)$ fixed by the $\bZ/2$-action, we obtain a surjective morphism  
\begin{equation*}
f \colon \bigsqcup_{\pi_*(w) = v} \cM_{\sigma_{\cD}}(\cD, w) \to \cM_{\sigma_{\cC}}(\cC, v)^{\bZ/2} ,
\end{equation*} 
where the unions ranges over $w \in \Knum(\cD)$ such that $\pi_*(w) = v$, 
with the property that $f$ is \'{e}tale of degree $2$ at $\sigma_{\cD}$-stable objects not fixed by the $\bZ/2$-action. 
Similarly, in the other direction, for any $w \in \Knum(\cD)$ fixed by the $\bZ/2$-action, 
we obtain a surjective morphism  
\begin{equation*}
g \colon \bigsqcup_{\pi^*(v) = w} \cM_{\sigma_{\cC}}(\cC, v) \to \cM_{\sigma_{\cD}}(\cD, w)^{\bZ/2} ,
\end{equation*} 
where the union ranges over $v \in \Knum(\cC)$ such that $\pi^*(v) = w$, 
with the property that $g$ is \'{e}tale of degree $2$ at $\sigma_{\cC}$-stable objects not fixed by the $\bZ/2$-action. 
\end{remark} 

\begin{proof}[Proof of Theorem~\ref{theorem-moduli}]
The result follows from Corollary~\ref{corollary-enriques-moduli-smooth}, Proposition~\ref{proposition-maps-bw-moduli}, Remark~\ref{remark-symmetrized-maps-bw-moduli}, and \cite[Theorem 1.4]{IHC-CY2}.   
\end{proof}

\subsection{Nonemptiness criterion} 
In this section, we prove Theorem~\ref{theorem-deformation} from the introduction. 
We will use the following lemma. 

\begin{lemma}
\label{lemma-todo-enriques} 
In the setup of Remark~\ref{remark-symmetrized-maps-bw-moduli}, assume that $\cC$ is an Enriques category equipped with the corresponding $\bZ/2$-action. 
Let $v \in \Knum(\cC)$ be class such that $\cM_{\sigma_{\cC}}(\cC, v)$ consists of stable objects. 
Let $w \in \Knum(\cD)$ be a class such that $\pi_*(w) = v$,  
$\cM_{\sigma_{\cD}}(\cD, w)$ is nonempty, and the inequality $-\chi(w,w)+2 < -\chi(v,v)+1$ holds. 
Then there exist objects in $\cM_{\sigma_{\cC}}(\cC, v)$ which are not fixed by the $\bZ/2$-action. 
\end{lemma}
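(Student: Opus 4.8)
The plan is to prove the statement by a local dimension count at one well-chosen point of $\cM_{\sigma_{\cC}}(\cC, v)$. The point is that the $\bZ/2$-fixed locus is swept out, through the forgetful functor $\pi_{*}$ and the morphism $f$ of Remark~\ref{remark-symmetrized-maps-bw-moduli}, by moduli spaces on the CY2 cover $\cD$; by the hypothesis $-\chi(w,w)+2 < -\chi(v,v)+1$ these are strictly too small to exhaust $\cM_{\sigma_{\cC}}(\cC, v)$ near a suitable point, so $\cM_{\sigma_{\cC}}(\cC, v)$ must contain non-fixed objects there. Throughout I pass freely between $\cM_{\sigma_{\cC}}(\cC, v)$ and its good moduli space, the distinction being immaterial for the conclusion about objects.

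\textbf{Choice of base point.} First I would pick $F \in \cM_{\sigma_{\cD}}(\cD, w)$, which is nonempty by hypothesis, and set $E := \pi_{*}(F)$. By Theorem~\ref{theorem-equivariant-stability}\eqref{G-ss} the object $E$ is $\sigma_{\cC}$-semistable of class $v$, so $[E] \in \cM_{\sigma_{\cC}}(\cC, v)$; since by the standing hypothesis this moduli space consists of stable objects, $E$ is $\sigma_{\cC}$-stable and hence simple, $\Hom(E,E) = k$. As $F \cong (E,\theta)$ records a $\bZ/2$-linearization of $E$, we have $\tau(E) \cong E$ for the generating involution $\tau$ of the $\bZ/2$-action on $\cC$; in particular the class $v$ is $\bZ/2$-fixed and the morphism $f$ is defined. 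Because $E$ is simple it has exactly two linearizations, differing by $-\otimes\chi$, namely those underlying $F$ and $F\otimes\chi$; these are therefore the only objects of $\cD$ with $\pi_{*}(-) \cong E$. Applying Theorem~\ref{theorem-equivariant-stability}\eqref{Forg-G-s} to $F$ and to $F\otimes\chi$ (both of which map under $\pi_{*}$ to the $\sigma_{\cC}$-stable object $E$) shows that both are $\sigma_{\cD}$-stable, of classes $w$ and $w' := [F\otimes\chi]$ respectively, with $\chi(w',w') = \chi(w,w)$ since $-\otimes\chi$ is an autoequivalence.

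\textbf{Two dimension estimates at $[E]$.} For the lower bound: after shifting $E$ into the heart of $\sigma_{\cC}$, Serre duality in the Enriques category gives $\Ext^{i}(E,E) \cong \Ext^{2-i}(E, \tau(E))^{\vee} \cong \Ext^{2-i}(E,E)^{\vee}$, so $\Ext^{i}(E,E) = 0$ for $i \notin \{0,1,2\}$, $\Ext^{2}(E,E) \cong \Hom(E,E)^{\vee} \cong k$, and hence $\dim\Ext^{1}(E,E) = 2 - \chi(v,v)$. Since near $[E]$ the moduli space is cut out inside a smooth space of dimension $\dim\Ext^{1}(E,E)$ by $\dim\Ext^{2}(E,E)$ equations (the Kuranishi description, using the perfect obstruction theory for moduli of objects in $\cC$), some irreducible component $C$ of $\cM_{\sigma_{\cC}}(\cC, v)$ through $[E]$ satisfies $\dim C \geq \dim\Ext^{1}(E,E) - \dim\Ext^{2}(E,E) = -\chi(v,v)+1$. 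For the upper bound: by Theorem~\ref{theorem-moduli}\eqref{theorem-CY2-moduli}, $\cM_{\sigma_{\cD}}(\cD, w)$ and $\cM_{\sigma_{\cD}}(\cD, w')$ are smooth of dimension $-\chi(w,w)+2$ at $[F]$ and $[F\otimes\chi]$. Since $f$ is proper and surjective onto $\cM_{\sigma_{\cC}}(\cC, v)^{\bZ/2}$, while by the previous paragraph no piece of its source other than those indexed by $w$ and $w'$ can meet a point lying over $[E]$, every irreducible component of $\cM_{\sigma_{\cC}}(\cC, v)^{\bZ/2}$ through a point over $[E]$ is dominated by a component of $\cM_{\sigma_{\cD}}(\cD, w)$ or $\cM_{\sigma_{\cD}}(\cD, w')$ through $[F]$ or $[F\otimes\chi]$, hence has dimension at most $-\chi(w,w)+2$.

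\textbf{Conclusion and main obstacle.} Combining the estimates with the hypothesis $-\chi(w,w)+2 < -\chi(v,v)+1 \leq \dim C$, the irreducible component $C$ is strictly larger than every component of the fixed locus passing through $[E]$; since the fixed locus is closed, $C \not\subseteq \cM_{\sigma_{\cC}}(\cC, v)^{\bZ/2}$, so $C$ contains a point off it. By the standing hypothesis that point is a $\sigma_{\cC}$-stable object of class $v$ whose isomorphism class is not preserved by $\tau$, i.e.\ an object of $\cM_{\sigma_{\cC}}(\cC, v)$ not fixed by the $\bZ/2$-action, as desired. I expect the delicate part to be the upper bound: one must check that in a neighbourhood of $[E]$ the fixed locus is accounted for \emph{only} by the two pieces indexed by $w$ and $w'$ — this is exactly where the simplicity of the stable object $E$, its having precisely two linearizations, and the properness of $f$ enter — and that the bounds pass correctly between the moduli stack, its coarse space, and the fixed-point locus. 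The lower bound is routine deformation theory, contingent only on the perfect obstruction theory afforded by the realization of $\cC$ as a semiorthogonal component of a smooth proper variety.
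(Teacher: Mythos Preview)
Your proposal is correct and follows essentially the same approach as the paper: a dimension comparison between an irreducible component of $M_{\sigma_{\cC}}(\cC,v)$ through $[E]=[\pi_*F]$ and the locus of $\tau$-fixed objects near $[E]$, using that the latter is dominated via $f$ by CY2 moduli spaces of dimension $-\chi(w,w)+2$. The paper phrases the final step as a contradiction (assume the entire component $M_0$ is $\tau$-fixed, then a component of some $M_{\sigma_{\cD}}(\cD,w')$ with $\chi(w',w')=\chi(w,w)$ surjects onto $M_0$, contradicting the dimension bound), whereas you argue directly that $C$ is too large to lie in the fixed locus; the key inputs---the deformation-theoretic lower bound $\dim C\geq -\chi(v,v)+1$, the smoothness of CY2 moduli, and the fact that the two linearizations of the simple object $E$ have the same Euler self-pairing---are identical. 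One terminological point worth tightening: you oscillate between the stack-theoretic fixed locus $\cM_{\sigma_{\cC}}(\cC,v)^{\bZ/2}$ (which parameterizes linearized objects and is the target of $f$) and the closed locus of $\tau$-fixed objects inside $\cM_{\sigma_{\cC}}(\cC,v)$; these differ by the $2$-to-$1$ forgetful map, but since you are only comparing dimensions this does not affect the conclusion, and you correctly flag it in your final paragraph.
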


\begin{proof}
By Theorem~\ref{theorem-equivariant-stability}\eqref{Forg-G-s},  $\cM_{\sigma_{\cD}}(\cD, w)$ also consists of stable objects. 
Therefore there are $\bG_m$-gerbes $\cM_{\sigma_{\cC}}(\cC, v) \to M_{\sigma_{\cC}}(\cC, v)$ and 
$\cM_{\sigma_{\cD}}(\cD, w) \to M_{\sigma_{\cD}}(\cD, w)$ over proper algebraic spaces. 
By \cite{IHC-CY2} the space $M_{\sigma_{\cD}}(\cD, w)$ is smooth of dimension $-\chi(w,w)+2$. 
By deformation theory, the dimension of $M_{\sigma_{\cC}}(\cC, v)$ at any point $E$ satisfies 
\begin{equation*}
    \dim_E M_{\sigma_{\cC}}(\cC, v) \geq \dim \Ext^1(E,E) - \dim \Ext^2(E,E) = - \chi(v,v) + 1, 
\end{equation*}
where the last equality uses that $E$ is stable, and in particular simple; therefore, the irreducible components of $M_{\sigma_{\cC}}(\cC, v)$ all have dimension at least $-\chi(v,v) + 1$.  
Another consequence of the simplicity of $E$ which we will use below is that there exists an object $F \in \cD$ such that $\pi_*(F) \cong E$, and the isomorphism classes of such $F$ form a torsor under the $\bZ/2$-action on $\cD$; this follows from \cite[Lemma 1]{Ploog}, since $\rH^2(\bZ/2, k^{\times}) = 0$ by our assumption that $\characteristic(k) \neq 2$. 

Let $M_0$ be an irreducible component of $M_{\sigma_{\cC}}(\cC, v)$ which meets the image of $M_{\sigma_{\cD}}(\cD,w)$. 
We claim that an object in $\cC$ corresponding to a generic point of $M_0$ is not fixed by the $\bZ/2$-action. 
Assume on the contrary that every point of $M_0$ is fixed by the $\bZ/2$-action. 
Then by Remark~\ref{remark-symmetrized-maps-bw-moduli}, there exists $w'\in \Knum(\cD)$ such that $\pi_*(w') = v$ and an irreducible component of $M_{\sigma_{\cD}}(\cD, w')$ which surjects onto $M_0$. 
Let $E$ be an object in $M_0$ contained in the image of map $M_{\sigma_{\cD}}(\cD,w) \to M_{\sigma_{\cC}}(\cC, v)$, say $\pi_*(F) \cong E$ for $F$ in $M_{\sigma_{\cD}}(\cD,w)$. 
By our choice of $w'$, there also exists $F'$ in $M_{\sigma_{\cD}}(\cD, w')$ such that $\pi_*(F') \cong E$. 
By the remarks in the previous paragraph, we conclude that $F$ and $F'$ are related by the $\bZ/2$-action on $\cD$, and in particular, it follows that $\chi(w,w) = \chi(w',w')$. 
But then  
\begin{equation*}
\dim M_{\sigma_{\cD}}(\cD, w') = - \chi(w',w') + 2 < -\chi(v,v)+1 \leq \dim M_0
\end{equation*} 
which contradicts that a component of $M_{\sigma_{\cD}}(\cD, w')$ surjects onto $M_0$. 
\end{proof}

\begin{proof}[Proof of Theorem~\ref{theorem-deformation}]
We consider the relative moduli space $\cM_{\underline{\sigma}}(\fC/S, \bv_0(v)) \to S$. 
Note that by~\eqref{defo-fiber-0} its fiber over $0 \in S(\bC)$ is given by $\cM_{\sigma_{0}}(\fC_0, \bv_0(v)) \cong \cM_{\sigma}(\cC, v)$, and its fiber over $1 \in S(\bC)$ is 
$\cM_{\sigma_{1}}(\fC_1, \bv_0(v))$. 
By~\eqref{1-exists} and Lemma~\ref{lemma-todo-enriques}, 
there exists a $\bC$-point of $\cM_{\sigma_{1}}(\fC_1, \bv_0(v))$ for which the corresponding object $E \in \fC_1$ is $\sigma_1$-stable and not fixed by the $\bZ/2$-action on $\fC_1$. 
Thus by Corollary~\ref{corollary-enriques-moduli-smooth} the morphism $\cM_{\pug}(\fC/S) \to S$ is smooth at $E$; as by Theorem~\ref{thm_BLMNPS}\eqref{BLMNPS-open} there is an open immersion $\cM_{\underline{\sigma}}(\fC/S, \bv_0(v)) \subset \cM_{\pug}(\fC/S)$, it follows that 
$\cM_{\underline{\sigma}}(\fC/S, \bv_0(v)) \to S$ is also smooth at $E$, and in particular dominant. 
But by Theorem~\ref{thm_BLMNPS} 
the morphism $\cM_{\underline{\sigma}}(\fC/S, \bv_0(v)) \to S$ is also universally closed, so it must be surjective. 
In particular, its fiber $\cM_{\sigma}(\cC, v)$ over $0$ is nonempty.  
\end{proof}


\section{Quartic double solids} 
\label{section-quartic-double-solids}

This section is devoted to the proof of Theorem~\ref{theorem-M-quartic-double}, which is obtained as an application of Theorem~\ref{theorem-deformation}.

\subsection{Preparation} \label{sec_qds_preparation}

Let $Y$ be a quartic double solid. As seen in Example~\ref{example-quartic-ds}, its Kuznetsov component $\Ku(Y)$ is an Enriques category. 

We denote by $i \colon X \hookrightarrow \bP^3$ the branch locus of the double covering $f \colon Y \to \bP^3$, which is a quartic K3 surface, and by $j \colon X \hookrightarrow Y$ its inclusion as the branch divisor. Let $H$ be the hyperlane class on $\bP^3$, whose pullbacks to $Y$ and $X$ we also denote by the same symbol. 

The category $\Db(X)$ is the CY2 cover of $\Ku(Y)$. Indeed, we are in the setting of \S\ref{subsec_generalsetting} with $d=2$, $m=4$, $\cB=\langle \cO \rangle$. In particular, we have the equivalence 
$$\Phi_0:=\rL_{\langle \cO_Y \otimes 1, \cO_Y(H) \otimes 1 \rangle} \circ (j_0)_* \circ (- \otimes \cO_X(2H)) \colon \Db(X) \simeq \Ku(Y)^{\bZ/2}.$$
The $\bZ/2$-action on $\Ku(Y)$ is induced by the involutive autoequivalence $\iota \colon \Ku(Y) \to \Ku(Y)$ given by pullback along the double cover involution. The Serre functor $S_{\Ku(Y)}$ of $\Ku(Y)$ satisfies $S_{\Ku(Y)}=\iota[2]$ by \cite[Corollary 4.6]{kuznetsov-CY}. 

By \cite[Proposition 3.9]{Kuz_Fano} the numerical Grothendieck group $\Knum(\Ku(Y))$ has rank $2$, with a basis $\mu_1, \mu_2$ such that 
\begin{equation} \label{eq_basisKnumqds}
\ch(\mu_1) =1 - \frac{1}{2}H^2, \quad \ch(\mu_2) =H-\frac{1}{2}H^2-\frac{2}{3}p  , 
\end{equation}
where $p$ denotes the class of a point. 
Moreover, the Euler form is given in this basis by 
$$\langle \mu_1,\mu_2 \rangle= 
\begin{pmatrix}
-1 & -1 \\ 
-1 & -2
\end{pmatrix}.$$
Since $\iota$ is induced by the double covering involution, and the latter preserves $H$, we have the following lemma. 
\begin{lemma} \label{lemma_iotaisidentity_qds}
The involution $\iota_*$ induced by $\iota$ on $\Knum(\Ku(Y))$ is trivial.    
\end{lemma}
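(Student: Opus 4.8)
The plan is to identify $\iota_*$ with the action on numerical classes of the covering involution $\sigma_Y \colon Y \to Y$ of $f \colon Y \to \bP^3$, and then to separate the basis vectors $\mu_1$ and $\mu_2$ using only the rank and the degree.

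First I would recall that $\iota$ is the restriction to $\Ku(Y)$ of the autoequivalence $\sigma_Y^*$ of $\Db(Y)$. This makes sense because $\sigma_Y^*\cO_Y \cong \cO_Y$ and, since $H = f^*H_{\bP^3}$ and $f\circ\sigma_Y = f$, also $\sigma_Y^*\cO_Y(H)\cong\cO_Y(H)$; hence $\sigma_Y^*$ preserves the semiorthogonal decomposition $\Db(Y) = \langle \Ku(Y), \cO_Y, \cO_Y(H)\rangle$, and in particular the component $\Ku(Y) = \langle \cO_Y, \cO_Y(H)\rangle^\perp$. Thus $\iota(E) \cong \sigma_Y^*E$ for $E \in \Ku(Y)$, and since Chern classes commute with pullback and $\sigma_Y$ is an isomorphism with $\sigma_Y^*H = H$, we get $\ch_0(\iota(E)) = \ch_0(E)$ and $\ch_1(\iota(E)) = \sigma_Y^*\ch_1(E) = \ch_1(E)$; for the last equality note that, by \eqref{eq_basisKnumqds}, $\ch_1$ takes values in $\bZ\cdot H$ on $\Knum(\Ku(Y))$, a subgroup on which $\sigma_Y^*$ acts as the identity. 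Consequently $\iota_*$ commutes with the rank $\ch_0$ and with $\ch_1$, the latter viewed as a homomorphism $\Knum(\Ku(Y)) \to \bZ\cdot H$.

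To conclude I would feed in the explicit values from \eqref{eq_basisKnumqds}, namely $\ch_0(\mu_1) = 1$, $\ch_1(\mu_1) = 0$, $\ch_0(\mu_2) = 0$, $\ch_1(\mu_2) = H$. Writing $\iota_*\mu_1 = a\mu_1 + b\mu_2$ with $a,b\in\bZ$ (legitimate since $\Knum(\Ku(Y))$ is free with basis $\mu_1,\mu_2$), applying $\ch_0$ gives $a = \ch_0(\mu_1) = 1$ and applying $\ch_1$ gives $bH = \ch_1(\mu_1) = 0$, whence $b = 0$ and $\iota_*\mu_1 = \mu_1$; likewise, writing $\iota_*\mu_2 = c\mu_1 + d\mu_2$, the rank forces $c = 0$ and the degree forces $dH = H$, i.e. $d = 1$, so $\iota_*\mu_2 = \mu_2$. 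Hence $\iota_*$ is the identity. I do not anticipate any real obstacle: the only steps requiring a moment's care are verifying that $\iota$ is genuinely $\sigma_Y^*|_{\Ku(Y)}$ with no mutation correction — immediate from $\sigma_Y^*$ fixing both exceptional objects — and that $\ch_0$ together with $\ch_1$ already separates $\mu_1$ and $\mu_2$, which is read off from \eqref{eq_basisKnumqds}.
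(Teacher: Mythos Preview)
Your argument is correct and follows essentially the same approach as the paper: the paper's proof is the one-line remark that $\iota$ is induced by the covering involution, which preserves $H$, hence fixes the Chern characters of $\mu_1$ and $\mu_2$ described in \eqref{eq_basisKnumqds}. You have simply spelled this out more carefully, verifying that $\sigma_Y^*$ preserves $\Ku(Y)$ and observing that already $\ch_0$ and $\ch_1$ suffice to separate the basis vectors.
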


In the next lemma, we show that the stability conditions constructed on $\Ku(Y)$ in \cite{BLMS} are fixed by the action of $\iota$.

\begin{lemma}[{\cite[Lemma 6.1]{PY}}] \label{lemma_BLMSstabiotainv_qds}
Let $\sigma(\alpha, \beta)$ be a stability condition defined in \eqref{eq_stabonKu} as constructed in \cite{BLMS}. Then $\iota \cdot \sigma(\alpha, \beta)=\sigma(\alpha, \beta)$.    
\end{lemma}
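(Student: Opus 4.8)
The plan is to show that the autoequivalence $\iota$ fixes both pieces of data in the stability condition $\sigma(\alpha,\beta) = (\cA(\alpha,\beta), Z(\alpha,\beta))$ defined in~\eqref{eq_stabonKu}, namely the heart $\cA(\alpha,\beta) = \Coh^0_{\alpha,\beta}(Y) \cap \Ku(Y)$ and the central charge $Z(\alpha,\beta) = Z^0_{\alpha,\beta}|_{\Knum(\Ku(Y))}$. By the description~\eqref{Phi-action-sigma} of the $\Aut$-action on $\Stab$, the assertion $\iota \cdot \sigma(\alpha,\beta) = \sigma(\alpha,\beta)$ unwinds to the two conditions $\iota(\cA(\alpha,\beta)) = \cA(\alpha,\beta)$ and $Z(\alpha,\beta) \circ (\iota_*)^{-1} = Z(\alpha,\beta)$ on $\Knum(\Ku(Y))$.

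First I would handle the central charge. Since $\iota$ is induced by the covering involution of the double cover $f \colon Y \to \bP^3$, which fixes the hyperplane class $H$ (as $H$ is pulled back from $\bP^3$), the induced map $\iota_*$ on $\Knum(\Ku(Y))$ is the identity by Lemma~\ref{lemma_iotaisidentity_qds}. Hence $Z(\alpha,\beta) \circ (\iota_*)^{-1} = Z(\alpha,\beta)$ is immediate, and in fact $\iota_*$ preserves numerical classes so it preserves all the Chern-character-based data entering $Z^0_{\alpha,\beta}$. This step is essentially free once Lemma~\ref{lemma_iotaisidentity_qds} is in hand.

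The main work is to show $\iota$ preserves the heart. Recall $\Coh^0_{\alpha,\beta}(Y)$ is built by two successive tilts of $\Coh(Y)$ with respect to slope stability for the polarization $H$ (first the tilt $\Coh^\beta(Y)$, then the rotation $\Coh^0_{\alpha,\beta}(Y)$), and the key point is that each tilting torsion pair is defined purely in terms of $\mu_{Z_{\mathrm{slope}}}$ and $\mu_{\alpha,\beta}$, both of which depend only on the $H$-degrees of the Chern characters $\ch_0, \ch_1, \ch_2$. Since $\iota$ is an exact autoequivalence of $\Db(Y)$ induced by an automorphism of $Y$ fixing $H$, it sends a coherent sheaf to a coherent sheaf, commutes with the Harder--Narasimhan filtrations for $H$-slope stability, and preserves $H$-slopes and tilt slopes; therefore it preserves $\cT^\beta$, $\cF^\beta$, and then $\cT^0_{\alpha,\beta}$, $\cF^0_{\alpha,\beta}$, hence $\Coh^0_{\alpha,\beta}(Y)$. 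Intersecting with $\Ku(Y)$, which $\iota$ preserves by construction of the $\bZ/2$-action, gives $\iota(\cA(\alpha,\beta)) = \cA(\alpha,\beta)$. Combining the two parts yields $\iota \cdot \sigma(\alpha,\beta) = \sigma(\alpha,\beta)$.

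The only mild subtlety — and the place I would be most careful — is checking that $\iota$ genuinely commutes with the two tilting operations: one must verify that applying $\iota$ and taking the HN filtration for $H$-slope stability commute, which follows from $\iota$ being an autoequivalence induced by a polarized automorphism (so it carries semistable sheaves of slope $\mu$ to semistable sheaves of slope $\mu$), and likewise that $\iota$ permutes tilt-semistable objects of a given tilt slope. This is routine but is the technical heart; everything else reduces to the triviality of $\iota_*$ on the numerical Grothendieck group. (Alternatively, one can cite \cite[Lemma 6.1]{PY} directly, whose proof is exactly this argument.)
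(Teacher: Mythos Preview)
Your proof is correct and follows essentially the same approach as the paper: both argue that since $\iota$ is induced by an automorphism of $Y$ preserving $H$, it fixes the Chern characters, hence preserves each successive tilt $\Coh^\beta(Y)$ and $\Coh^0_{\alpha,\beta}(Y)$ as well as the corresponding central charges, and therefore fixes the restricted stability condition on $\Ku(Y)$. The paper's version is slightly terser (it simply notes that $\iota$ preserves $\Coh(Y)$ and fixes Chern characters, then propagates this through the two tilts), but the content is the same.
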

\begin{proof}
Note that $\iota$ preserves $\Coh(Y)$ and fixes the Chern character of every object in $\Db(Y)$. Thus $\iota$ preserves the tilted heart $\Coh^\beta(Y)$ and acts as the identity on the weak stability function $Z_{\alpha,\beta}$. Analogously $\iota$ preserves $\Coh^0_{\alpha, \beta}(Y)$ and fixes $Z^0_{\alpha, \beta}$. Since $\iota$ restricts to an involutive autoequivalence of $\Ku(Y)$, we conclude that $\sigma(\alpha, \beta)$ is $\iota$-fixed.
\end{proof}

To simplify the notation, we set 
\begin{equation} \label{eq_sigmaY}
\sigma_Y:=\sigma(\alpha, \beta).    
\end{equation}
By Theorem~\ref{theorem-equivariant-stability} it follows that $\sigma_Y$ induces a stability condition $\sigma_Y^{\bZ/2}$
on $\Ku(Y)^{\bZ/2}$ and thus a stability condition 
$$\sigma_X:=\Phi_0^{-1}(\sigma_Y^{\bZ/2})$$ on $\Db(X)$. We also introduce the notation 
\begin{equation}
    \Phi \coloneqq \Forg \circ \Phi_0 \colon \Db(X) \to \Ku(Y), 
\end{equation}
as this functor will occur in many of our computations below. 
Note that by Lemma~\ref{lemma1}, we have the formula 
\begin{equation}
\label{formula-forgphi}
    \Phi = \rL_{\langle \cO_Y, \cO_Y(H) \rangle} \circ j_* \circ (- \otimes \cO_X(2H)) .
\end{equation}

\begin{lemma} \label{lemma_geometricsigma_qds}
The stability condition $\sigma_X$ on $\Db(X)$ is geometric, i.e. the skyscraper sheaf $\cO_x$ at any point $x \in X$ is $\sigma_X$-stable.
\end{lemma}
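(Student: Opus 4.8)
The plan is to reduce, via the equivalence $\Phi_0$ and Theorem~\ref{theorem-equivariant-stability}, to the $\sigma_Y$-stability of $\Phi(\cO_x) = \Forg(\Phi_0(\cO_x)) \in \Ku(Y)$, and then to establish that stability from the tilt-theoretic construction of $\sigma_Y$ together with a primitivity argument. By definition $\sigma_X = \Phi_0^{-1}(\sigma_Y^{\bZ/2})$, so $\cO_x$ is $\sigma_X$-stable if and only if $\Phi_0(\cO_x)$ is $\sigma_Y^{\bZ/2}$-stable; by Theorem~\ref{theorem-equivariant-stability}\eqref{Forg-G-s} it therefore suffices that $\Phi(\cO_x)$ be $\sigma_Y$-stable. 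Moreover, since (as I check below) the numerical class of $\Phi(\cO_x)$ is primitive, every $\sigma_Y$-semistable object of that class is automatically $\sigma_Y$-stable, so it is enough to prove that $\Phi(\cO_x)$ is $\sigma_Y$-semistable.

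First I would identify $\Phi(\cO_x)$ and compute its class. Using~\eqref{formula-forgphi} and $\cO_x \otimes \cO_X(2H) \simeq \cO_x$, we have $\Phi(\cO_x) = \rL_{\langle \cO_Y, \cO_Y(H)\rangle}(j_*\cO_x)$. From $\cHom(\cO_Y(H), j_*\cO_x) \simeq k$ one gets $\rL_{\cO_Y(H)}(j_*\cO_x) \simeq \cI_{j(x)}(H)[1]$, and from $\cHom(\cO_Y, \cI_{j(x)}(H)[1]) \simeq k^{\oplus 3}[1]$ the second mutation triangle yields $\ch(\Phi(\cO_x)) = 3\,\ch(\cO_Y) - \ch(\cO_Y(H)) + \ch(j_*\cO_x)$. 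Comparing with the basis in~\eqref{eq_basisKnumqds} shows that the class of $\Phi(\cO_x)$ in $\Knum(\Ku(Y))$ equals $v = 2\mu_1 - \mu_2$, which is primitive.

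The heart of the argument is the $\sigma_Y$-semistability of $\Phi(\cO_x)$. Recall that $\sigma_Y = \sigma(\alpha,\beta)$ is obtained by restricting to $\Ku(Y)$ the rotated tilt stability $\sigma^0_{\alpha,\beta}$ on $\Db(Y)$ via \cite[Theorem 6.8]{BLMS}, with heart $\cA(\alpha,\beta) = \Coh^0_{\alpha,\beta}(Y) \cap \Ku(Y)$ and central charge the restriction of $Z^0_{\alpha,\beta}$. I would check that every object occurring in the mutation sequence producing $\Phi(\cO_x)$ is $\sigma^0_{\alpha,\beta}$-semistable of controlled phase: the skyscraper $j_*\cO_x$ lies in the kernel of $Z^0_{\alpha,\beta}$ (which depends only on $\ch_{\leq 2}$), so it is $\sigma^0_{\alpha,\beta}$-semistable of maximal phase, while $\cO_Y$ and $\cO_Y(H)$ are tilt-semistable line bundles. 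Tracking these through the two left mutations then shows that, after an appropriate shift, $\Phi(\cO_x)$ lies in $\cA(\alpha,\beta)$ and is $\sigma^0_{\alpha,\beta}$-semistable in $\Db(Y)$; since the central charge of $\sigma(\alpha,\beta)$ is literally the restriction of $Z^0_{\alpha,\beta}$, a destabilizing sub- or quotient object of $\Phi(\cO_x)$ in $\cA(\alpha,\beta)$ would already destabilize it in $\Coh^0_{\alpha,\beta}(Y)$, so $\Phi(\cO_x)$ is $\sigma_Y$-semistable.

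Finally, I would upgrade semistability to stability. Since $\Knum(\Ku(Y))$ has rank $2$ and the central charge of $\sigma_Y$ has trivial kernel (immediate from its values on $\mu_1, \mu_2$), every Jordan--H\"older factor of a $\sigma_Y$-semistable object of the primitive class $v$ would have class a positive integer multiple of $v$, which is impossible unless there is a single factor; thus every $\sigma_Y$-semistable object of class $v$ is $\sigma_Y$-stable (cf.\ \cite{PY, FP}). In particular $\Phi(\cO_x)$ is $\sigma_Y$-stable, and by the first paragraph $\cO_x$ is $\sigma_X$-stable for every $x \in X$. The main obstacle is the third step: carrying out the tilt-stability and mutation bookkeeping on $\Db(Y)$ carefully — in particular pinning down the shift placing $\Phi(\cO_x)$ in $\cA(\alpha,\beta)$ and excluding destabilizing subobjects there — whereas the reduction and the stable-equals-semistable step are formal given the cited results, and the class computation is a finite check.
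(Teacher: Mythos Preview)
Your reduction to the $\sigma_Y$-stability of $\Phi(\cO_x)$, the identification of its class as $2\mu_1-\mu_2$, and the semistable-implies-stable step via primitivity are all correct and match the paper. The genuine gap is your third step: the claim that ``tracking through the two left mutations'' shows $\Phi(\cO_x)$ is $\sigma^0_{\alpha,\beta}$-semistable in $\Db(Y)$. Mutations are cones of evaluation maps, and cones of maps between (weak) tilt-semistable objects are in general not semistable; there is no mechanism in your outline that forces $K_x = \Phi(\cO_x)[-2]$ --- which has $\cH^0$ a rank-$2$ sheaf and $\cH^1 = \cO_x$ --- to be tilt-semistable, nor does the fact that $j_*\cO_x$ has $Z^0_{\alpha,\beta} = 0$ help after two mutations. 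You flag this as the ``main obstacle'' but give no argument that actually overcomes it.

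The paper's proof avoids this obstacle entirely and uses an ingredient missing from your sketch: the $\iota$-invariance of $K_x$ (which holds because $x$ lies on the ramification locus). Instead of trying to descend semistability from $\Db(Y)$, the paper computes $\hom^i(K_x,K_x) = 1,4,1$ for $i=0,1,2$ via adjunction and a spectral sequence, and then argues by contradiction inside $\Ku(Y)$: if $K_x$ were not $\sigma_Y$-semistable, the last HN factor $B$ would also be $\iota$-invariant (by $\iota$-invariance of $\sigma_Y$ and uniqueness of HN filtrations), so $\hom^2(A,B)=\hom(A,\iota B)=0$, whence a Mukai-type inequality forces $\hom^1(A,A)+\hom^1(B,B)\le 4$; the only numerical possibility gives $\chi(B,B)\ge 0$, contradicting negative-definiteness of the Euler form on $\Knum(\Ku(Y))$. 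This argument is internal to $\Ku(Y)$ and never appeals to tilt-semistability of $K_x$ in the ambient category.
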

\begin{proof}
By Theorem~\ref{theorem-equivariant-stability} it is enough to show that $\Phi(\cO_x) = \Forg(\Phi_0(\cO_x))$ is $\sigma_Y$-stable. 
By~\eqref{formula-forgphi}, we thus need to compute $\rL_{\langle \cO_Y, \cO_Y(H) \rangle}(\cO_x)$.
The first mutation is 
$$\rL_{\cO_Y(H)}(\cO_x)=\cI_{x/Y}(H)[1],$$ 
where $\cI_{x/Y}$ denotes the ideal sheaf of the point $x$ in $Y$. 
Applying $\rL_{\cO_Y}$ we find   \begin{equation*}
  \Forg(\Phi_0(\cO_x))=K_x[2],  
\end{equation*} 
where $K_x$ fits into an exact triangle 
\begin{equation*}
    K_x \to \cO_Y^{\oplus 3} \to \cI_{x/Y}(H). 
\end{equation*}
In particular, we have that 
\begin{equation*}
\ch(K_x)=2-H-\frac{1}{2}H^2+\frac{2}{3}p, 
\end{equation*} 
from which it follows that that 
\begin{equation}
\label{class-Kx}
    [K_x] = 2\mu_1-\mu_2 \in \Knum(\Ku(Y))
\end{equation}
We need to show the $\sigma_Y$-stability of $K_x$. 
Note that since $\Knum(\Ku(Y))$ has rank $2$ and $[K_x]$ is primitive, this is equivalent to the $\sigma_Y$-semistability of $K_x$. 

If $f \colon Y \to \bP^3$ denotes the double covering map, then $f^*\cI_{x/\bP^3}$ is a subsheaf of $\cI_{x/Y}$, since $x$ is in the ramification locus and the pullback via $f$ of a function vanishing at $x$ will vanish at $x$ to order $2$. Since $\cO_Y(H)$ is pullback from the base, by the above computation $f^*\cI_{x/\bP^3}(H)$ is a subsheaf of $\cI_{x/Y}(H)$. Computing the Chern character, we observe that the cokernel of this map is $\cO_x$ and we get the commutative diagram of exact triangles
\[
\xymatrix{
F \ar[r] \ar[d] & f^*\cO_{\bP^3}^{\oplus 3} \ar[r] \ar[d]& f^*\cI_{x/\bP^3}(H) \ar[d]\\ 
K_x \ar[r] \ar[d] & \cO_Y^{\oplus 3} \ar[r] \ar[d] & \cI_{x/Y}(H) \ar[d] \\
\cO_x[-1] \ar[r] & 0 \ar[r] & \cO_x
}
\]
where the first row and the third column are short exact sequences of sheaves. In particular, this implies that the nonzero cohomology sheaves of $K_x$ are $\cH^0(K_x) = F$ and $\cH^1(K_x)=\cO_x$. 

Next we show that 
\begin{equation*}
    \hom^i(K_x, K_x) = 
    \begin{cases}
1 & \text{ if $i = 0,2$} \\ 
4 & \text{ if $i = 1$} \\ 
0 & \text{ else}
    \end{cases}
\end{equation*}
By adjunction we have $$\Hom_{\Ku(Y)}(K_x,K_x[i])=\Hom_Y(\cO_x,K_x[i+2]),$$
so the claim for $i \geq 2$ follows directly from the spectral sequence 
\begin{equation*}
    E_2^{p,q}=\Hom(\cO_x,\cH^q(K_x)[p]) \implies  \Hom(\cO_x,K_x[p+q]). 
\end{equation*}
Since $K_x$ is $\iota$-invariant, we have
$$\Hom(K_x, K_x[j])=\Hom(K_x, \iota(K_x)[2-j])=\Hom(K_x,K_x[2-j]),$$ 
so the claim for $i \leq 0$ follows. Finally, using~\eqref{class-Kx} we compute $\chi(K_x, K_x) = -2$, from which the claim for $i = 1$ follows. 

If $K_x$ is not $\sigma_Y$-semistable, then consider the destabilizing sequence $A \to K_x \to B$ where $B$ is the last semistable factor in the HN filtration. Applying $\iota$ we get the sequence 
\begin{equation*} 
\iota(A) \to \iota (K_x) \cong K_x \to \iota(B). 
\end{equation*} 
Since $\sigma$ is $\iota$-invariant, we have that $\iota(B)$ is semistable of the same phase as $B$. But then $\iota(B) \cong B$ by uniqueness of the HN filtration. It follows that 
$$\hom(A, B)=\hom(A, \iota(B))=0.$$
By \cite[Lemma 6.4]{PY} it follows that
$$\hom^1(A, A)+ \hom^1(B, B) \leq \hom^1(K_x, K_x)=4.$$
By \cite[Proposition 3.3(c)]{FP} the only possibility is $\hom^1(A,A)=\hom^1(B,B)=2$. However, $B$ would then be a $\iota$-invariant semistable object with $\hom^1(B,B)=2$, and hence 
\begin{equation*}
    \chi(B, B) = 2\hom(B,B) - \hom^1(B,B) \geq 0. 
\end{equation*}
This contradicts the inequality $\chi(v,v) \leq -1$ for every $0 \neq v \in \Knum(\Ku(Y))$. We conclude that $K_x$ is $\sigma_Y$-stable.
\end{proof}

The above computations show that $\Ku(Y)$ and $\Db(X)$ together with the stability conditions $\sigma_Y$ and $\sigma_X$ satisfy the assumptions of Theorem~\ref{theorem-moduli}. Moreover, since $\sigma_X$ is a geometric stability condition, by \cite{bayer-macri-projectivity} the moduli space $M_{\sigma_X}(\Db(X), w)$ is nonempty if and only if the inequality $w^2+2 \geq 0$ holds, where $w^2=-\chi(w,w)$. Thus the missing ingredient to apply Theorem~\ref{theorem-moduli} for proving Theorem~\ref{theorem-M-quartic-double} is to find for every $v \in \Knum(\Ku(Y))$, a class $w \in \Knum(X)$ such that $\Phi_*(w)=v$ and $w^2 \geq -2$. We will see in the next section that this property does not hold for every $v \in \Knum(\Ku(Y))$, so instead we will use the deformational criterion of Theorem~\ref{theorem-deformation}. 

\subsection{Numerical Grothendieck groups}

Let us keep the notation of \S\ref{sec_qds_preparation}. 

\begin{lemma} \label{lemma_forgofO-H}
We have the following equalities in $\Knum(\Ku(Y))$: 
\begin{align*}
\Phi_*([\cO_x]) & = 2\mu_1 - \mu_2  \quad \textup{for all~~}  x \in X, \\ 
    \Phi_*([\cO_X(-H)]) & =2 \mu_1 , \\ 
    \Phi_*([\cO_X]) & =2 \mu_1-2\mu_2 ,  \\ 
    \Phi_*([\cO_H]) & =-2 \mu_2. 
\intertext{Further, if $X$ contains a line $L$, then}
    \Phi_*([\cO_L]) & =3\mu_1-2\mu_2. 
\end{align*} 
\end{lemma}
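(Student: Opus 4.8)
The plan is to compute $\Phi_*([\cO_L]) \in \Knum(\Ku(Y))$ directly from the formula $\Phi = \rL_{\langle \cO_Y, \cO_Y(H) \rangle} \circ j_* \circ (- \otimes \cO_X(2H))$ of~\eqref{formula-forgphi}, in three steps parallel to the treatment of the other classes in the lemma. First I would compute $\ch(\cO_L(2H))$ on the K3 surface $X$. Since a line $L \subset X$ satisfies $L \cong \bP^1$, $\cO_X(H)|_L = \cO_{\bP^1}(1)$, and $L^2 = -2$, the sequence $0 \to \cO_X(-L) \to \cO_X \to \cO_L \to 0$ gives $\ch(\cO_L) = [L] + p$, where $p$ is the point class, hence $\ch(\cO_L(2H)) = [L] + 3p$.

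Next I would push forward to $Y$ via Grothendieck--Riemann--Roch: $\ch(j_*\cO_L(2H)) = j_*\bigl(\ch(\cO_L(2H)) \cdot \td(N_{X/Y})^{-1}\bigr)$. The key geometric input is that $X$ is the ramification divisor of the double cover $f \colon Y \to \bP^3$ branched over a quartic, so $\cO_Y(X) = f^*\cO_{\bP^3}(2) = \cO_Y(2H)$ and therefore $N_{X/Y} \cong \cO_X(2H)$; this gives $\td(N_{X/Y})^{-1} = 1 - H|_X + \tfrac{2}{3}(H|_X)^2$. Multiplying yields $\ch(\cO_L(2H)) \cdot \td(N_{X/Y})^{-1} = [L] + 2p$, and applying the Gysin pushforward --- with $j_*(p) = p$ and $j_*[L] = \tfrac{1}{2}H^2$ in $\rH^4(Y,\bQ) = \bQ \cdot H^2$, which follows by intersecting with $H$ since $j_*[L] \cdot H = L \cdot H|_X = 1 = \tfrac{1}{2}H^3$ --- gives $\ch(j_*\cO_L(2H)) = \tfrac{1}{2}H^2 + 2p$.

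Finally I would apply the projection of $\Db(Y)$ onto $\Ku(Y) = \langle \cO_Y, \cO_Y(H)\rangle^{\perp}$. On numerical Grothendieck groups this projection sends $v$ to $v - \alpha[\cO_Y] - \beta[\cO_Y(H)]$, where the coefficients are pinned down by the vanishing of $\chi(\cO_Y, -)$ and $\chi(\cO_Y(H), -)$ on the projection: using the Euler pairings $\chi(\cO_Y,\cO_Y(H)) = 4$ and $\chi(\cO_Y(H),\cO_Y) = 0$ on the Fano threefold $Y$, one gets $\beta = \chi(\cO_Y(H), v)$ and $\alpha = \chi(\cO_Y, v) - 4\beta$. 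For $v = [j_*\cO_L(2H)]$, adjunction gives $\chi(\cO_Y, v) = \chi(L, \cO_L(2H)) = 3$ and $\chi(\cO_Y(H), v) = \chi(L, \cO_L(H)) = 2$, so $\beta = 2$ and $\alpha = -5$. Converting back to Chern characters, $\ch(\Phi(\cO_L)) = \ch(j_*\cO_L(2H)) + 5\ch(\cO_Y) - 2\ch(\cO_Y(H)) = \tfrac{1}{2}H^2 + 2p + 5 - 2e^{H} = 3 - 2H - \tfrac{1}{2}H^2 + \tfrac{4}{3}p$; comparing with $\ch(a\mu_1 + b\mu_2) = a + bH - \tfrac{a+b}{2}H^2 - \tfrac{2b}{3}p$ forces $a = 3$, $b = -2$, i.e. $\Phi_*([\cO_L]) = 3\mu_1 - 2\mu_2$.

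The main obstacle is simply keeping the bookkeeping consistent rather than any isolated difficulty: correctly identifying $N_{X/Y} \cong \cO_X(2H)$ and the pushforward $j_*[L] = \tfrac{1}{2}H^2 \in \rH^4(Y,\bQ)$, and handling the mutation/projection with the nonsymmetric Euler pairings $\chi(\cO_Y,\cO_Y(H)) = 4 \neq 0 = \chi(\cO_Y(H),\cO_Y)$. I would also remark that, unlike some entries of the lemma, this one cannot be deduced formally from the others: since $\ch_1(\cO_L) = [L]$ is not a rational multiple of $H$ on $X$, the class $[\cO_L]$ does not lie in the span of $[\cO_X]$, $[\cO_X(-H)]$, and $[\cO_x]$ in $\Knum(X)$, so a direct computation is genuinely needed.
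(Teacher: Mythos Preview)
Your argument is correct and follows essentially the same route as the paper's proof, which computes the Euler characteristics $\chi(\cO_Y(H),j_*(-))$ and $\chi(\cO_Y,j_*(-))$ to determine the mutation and then compares Chern characters; the paper carries this out in detail for $\cO_X(-H)$ and simply says the case of $\cO_L$ is analogous, so you are supplying exactly the omitted details. The one minor technical difference is that for $\cO_X(-H)$ the paper avoids Grothendieck--Riemann--Roch by using the divisor sequence $0\to\cO_Y(-H)\to\cO_Y(H)\to j_*\cO_X(H)\to 0$ to write $[j_*\cO_X(H)]$ as a combination of line bundles on $Y$, whereas for a line $L$ no such shortcut is available and your use of GRR together with the identification $N_{X/Y}\cong\cO_X(2H)$ and $j_*[L]=\tfrac{1}{2}H^2$ is the natural way to proceed.
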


\begin{proof}
We already computed $\Phi_*([\cO_x])$ in the proof of Lemma~\ref{lemma_geometricsigma_qds}. 

For $\Phi_*([\cO_X(-H)])$, 
by the formula~\eqref{formula-forgphi} for $\Phi$, we need to compute the numerical class of $\rL_{\langle \cO_Y, \cO_Y(H) \rangle}j_*(\cO_X(H))$. 
A straightforward computation gives 
\begin{equation*}
    \chi(\cO_Y(H), j_*\cO_X(H)) = 2, \quad 
    \chi(\cO_Y, j_*\cO_X(H)) = 4, \quad 
    \chi(\cO_Y, \cO_Y(H)) = 4. 
\end{equation*}
Therefore, in $\Knum(\Ku(Y))$ we have 
\begin{equation*}
   [ \rL_{\cO_Y(H)}j_*(\cO_X(H)) = [j_*(\cO_X(H))] - 2 [\cO_Y(H)],  
\end{equation*}
from which we find 
\begin{align*}
    [ \rL_{\langle \cO_Y, \cO_Y(H) \rangle}j_*(\cO_X(H))] &= 
    [j_*(\cO_X(H))] - 2 [\cO_Y(H)] + 4[\cO_Y]  \\
    & = -[\cO_Y(H)] - [\cO_Y(-H)] + 4[\cO_Y], 
\end{align*}
where in the last line we used that the class of $X \subset Y$ is $2H$. 
Now computing the Chern character of this class and comparing to~\eqref{eq_basisKnumqds}, we conclude that it equals $2\mu_1$. 

The formulas for $\Phi_*([\cO_X])$ and $\Phi_*([\cO_L])$ can be proved similarly, while the one for $\Phi_*([\cO_H])$ follows from $[\cO_H] = [\cO_X] - [\cO_X(-H)]$. 
\end{proof}

\begin{remark} \label{rmk_aodd_noimage}
Since $[\cO_X], [\cO_H], [\cO_x]$ form a basis for the numerical Grothendieck group of a general K3 surface, in this case it follows that $\Phi_*$ is not surjective. Indeed, if $X$ is general, i.e.\ $X$ has Picard rank $1$, then classes of the form $a \mu_1+b \mu_2$, where $a$ is an odd integer, are not in the image.   
\end{remark}

Theorem~\ref{theorem-M-quartic-double} will be obtained in virtue of Theorem~\ref{theorem-deformation}, by specializing to quartic double solids whose ramification K3 surface contains a line, using the following lemma.

\begin{lemma} \label{lemma_deformationtoquartiwithline}
Assume that $Y$ is a quartic double solid whose ramification K3 surface $X$ contains a line $L$. Then for every $v \in \Knum(\Ku(Y))$, there exists $w \in \Knum(X)$ such that $\Phi_*(w)=v$ and $w^2 \geq -2$.
\end{lemma}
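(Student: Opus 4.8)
The plan is to combine the explicit formulas of Lemma~\ref{lemma_forgofO-H} with a lattice-geometric analysis of the fibers of $\Phi_*$. First, Lemma~\ref{lemma_forgofO-H} gives $\mu_1 = \Phi_*([\cO_L] - [\cO_X])$ and $\mu_2 = \Phi_*([\cO_X(-H)] - [\cO_x])$, so $\Phi_*$ is surjective. Setting $\alpha \coloneqq [\cO_L] - [\cO_X]$ and $\beta \coloneqq [\cO_X(-H)] - [\cO_x]$ and passing to Mukai vectors (using $H^2 = 4$, $L^2 = -2$, $H\cdot L = 1$), one computes $\alpha^2 = -2$, $\beta^2 = 0$, $\alpha\cdot\beta = 1$. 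Thus for $v = a\mu_1 + b\mu_2$ the class $w_0 \coloneqq a\alpha + b\beta$ is a preimage of $v$ with $w_0^2 = -2a^2 + 2ab$, which already settles the lemma when $a(a-b) \le 1$. For the remaining $v$ we correct $w_0$ by an element of $K \coloneqq \ker\Phi_*$, and we may assume $\Pic(X) = \bZ H \oplus \bZ L$ --- this is the case needed for the application of Theorem~\ref{theorem-deformation}, since we are free to degenerate $Y$ to one whose branch K3 surface is a general quartic containing a line.

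In that case $[\cO_X], [\cO_X(-H)], [\cO_x], [\cO_L]$ is a $\bZ$-basis of $\Knum(X)$, and Lemma~\ref{lemma_forgofO-H} shows $K$ is spanned by $k_1 = [\cO_X] + 2[\cO_x] - 2[\cO_L]$ and $k_2 = [\cO_X(-H)] - 4[\cO_x] + 2[\cO_L]$, with Mukai Gram matrix $\left(\begin{smallmatrix} -10 & 8 \\ 8 & -10 \end{smallmatrix}\right)$; hence $K$ is negative definite, so it is nondegenerate and $\Knum(X)\otimes\bR = (K\otimes\bR) \oplus (K^{\perp}\otimes\bR)$, the Mukai-orthogonal complement. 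One checks $K^{\perp}\subset\Knum(X)$ is positive definite of rank $2$, spanned by $f_1 = [\cO_X] - 2[\cO_x]$ and $f_2 = [\cO_H]$ with $\Phi_*(f_1) = -2\mu_1$ and $\Phi_*(f_2) = -2\mu_2$; transporting the Mukai form along the rationally invertible map $\Phi_*|_{K^{\perp}\otimes\bR}$ gives on $\Knum(\Ku(Y))\otimes\bR$ the positive definite form $q'(a\mu_1+b\mu_2) = \tfrac12(a^2 + 2ab + 2b^2) = -\tfrac12\chi(v,v)$. Writing an arbitrary preimage as $w_0 = p + w^{\perp}$ with $p \in K\otimes\bR$ and $w^{\perp} \in K^{\perp}\otimes\bR$, we have $\Phi_*(w^{\perp}) = v$, hence $(w^{\perp})^2 = q'(v) = -\tfrac12\chi(v,v)$, and orthogonality yields, for every $k \in K$,
\begin{equation*}
(w_0 + k)^2 = -\tfrac12\chi(v,v) + (p+k)^2 .
\end{equation*}

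Gauss-reducing $K$ to the basis $\{k_1+k_2,\, k_1\}$ (Gram $\left(\begin{smallmatrix} -4 & -2 \\ -2 & -10 \end{smallmatrix}\right)$) and applying a standard rounding argument (Babai nearest-plane with respect to the Gram--Schmidt basis) produces $k \in K$ with $-(p+k)^2 \le \tfrac14(4+9) = \tfrac{13}{4}$. Therefore $(w_0 + k)^2 \ge -\tfrac12\chi(v,v) - \tfrac{13}{4} \ge \tfrac12 - \tfrac{13}{4} = -\tfrac{11}{4}$ whenever $v \ne 0$, using $-\chi(v,v) = (a+b)^2 + b^2 \ge 1$; since $(w_0+k)^2$ is an integer it must be $\ge -2$, so $w \coloneqq w_0 + k$ has the required properties (and $w = 0$ works for $v = 0$).

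The main obstacle is precisely this lattice computation: identifying $K$ and $K^{\perp}$ explicitly, verifying the (in)definiteness statements, and bounding the covering radius of $K$ sharply enough that the (integer, indeed even) quantity $-\tfrac12\chi(v,v) - (\text{covering radius})^2$ stays above $-3$ for all $v \ne 0$. For branch K3 surfaces of Picard rank larger than $2$ the covering radius of $K$ may exceed this bound, and one then additionally checks by hand the finitely many classes $v$ with $-\chi(v,v)$ below the resulting threshold --- for instance using the square $-2$ preimage $[\cO_L] - [\cO_X]$ of $\mu_1$ and its translates under the rotation functor $\sO_X$, which exchange fibers of $\Phi_*$ while preserving $w^2$.
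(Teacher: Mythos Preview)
Your argument is correct and takes a genuinely different route from the paper. The paper proceeds by a bare-hands case analysis: writing $v=a\mu_1+b\mu_2$, it splits on the parity of $a$ and on the sign and parity of certain residues of $a,b$, and in each case writes down an explicit $w$ as an integer combination of $[\cO_x],[\cI_x],[\cO_H],[\cO_X],[\cO_L]$ whose square it then checks by hand. Your approach is structural: you identify the rank-$2$ kernel $K=\ker\Phi_*$ and its orthogonal complement inside the rank-$4$ sublattice generated by $[\cO_X],[\cO_X(-H)],[\cO_x],[\cO_L]$, observe that the Mukai form is negative definite on $K$ and positive definite on $K^\perp$, compute the induced form on $\Knum(\Ku(Y))$ as $-\tfrac12\chi(v,v)$, and then use a Babai nearest-plane bound on $K$ (Gram--Schmidt norms $4$ and $9$) to conclude. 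This gives a uniform bound $(w_0+k)^2\ge -\tfrac12\chi(v,v)-\tfrac{13}{4}\ge -\tfrac{11}{4}>-3$, hence $\ge -2$ by evenness of the Mukai form. Your method is cleaner and explains \emph{why} a lift with $w^2\ge -2$ exists, while the paper's method has the virtue of producing completely explicit preimages that are then reused verbatim in checking condition~\eqref{conditions-w1} of Theorem~\ref{theorem-deformation}.

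One clarification: your restriction to $\Pic(X)=\bZ H\oplus\bZ L$ is unnecessary and your final paragraph's hedging is misplaced. Every class you use ($\alpha,\beta,k_1,k_2,f_1,f_2$) already lies in the rank-$4$ sublattice $\Lambda_0\subset\Knum(X)$ spanned by $[\cO_X],[\cO_X(-H)],[\cO_x],[\cO_L]$, and all pairings you compute depend only on $H^2=4$, $H\cdot L=1$, $L^2=-2$, which hold for any quartic K3 with a line. The map $\Phi_*|_{\Lambda_0}$ is surjective with kernel exactly $\bZ k_1\oplus\bZ k_2$, and your covering-radius bound and orthogonal decomposition take place entirely inside $\Lambda_0\otimes\bR$; the resulting $w=w_0+k$ lies in $\Lambda_0\subset\Knum(X)$. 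So the lemma follows for every such $X$, not only the generic one, and the remarks about larger Picard rank and the rotation functor can be dropped.
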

\begin{proof}
Consider $v=a\mu_1+b\mu_2 \in \Knum(\Ku(Y))$. Note that it is enough to prove the statement for $a \geq 0$. Indeed, if $a<0$, we have $\Phi_*(-w)=v$, where $w \in \Knum(X)$ satisfies $\Phi_*(w)=-v$. So assume $a \geq 0$. We distinguish several case.

Assume $a$ is even, so $a=2a'$. If $a'+b \geq 0$, we may write  
\begin{align*}
    v=2a'\mu_1+b\mu_2 &=a'(2\mu_1-\mu_2)+(a'+b)\mu_2  \\ 
    &=a'\Phi_*([\cO_x])+(a'+b)\mu_2 \\ 
    & =a'\Phi_*([\cO_x])-(a'+b)\Phi_*([\cI_x]), 
\end{align*}
since $\Phi_*([\cI_x])=\Phi_*([\cO_X])-\Phi_*([\cO_x])=-\mu_2$. Then the statement follows from 
$$(a'[\cO_x]-(a'+b)[\cI_x])^2=-2a'(a'+b) \langle [\cO_x], [\cI_x] \rangle=2a'(a'+b) \geq 0$$
by our assumption. Instead if $a'+b < 0$, write $b=-a'-k$ for $k >0$. If $k=2h$ is even, then write \begin{equation*}
    v=2a'\mu_1-(a'+2h)\mu_2=a'\Phi_*([\cO_x])-h(2\mu_2)=a'\Phi_*([\cO_x])+h\Phi_*([\cO_H]).
\end{equation*}
Then the statement follows from
$$(a'[\cO_x]+h[\cO_H])^2=4h^2 \geq 0.$$
Otherwise, we can write $k=2h-1$ for $h \geq 1$. Then write 
\begin{equation*}
    v=2a'\mu_1-(a'+2h-1)\mu_2=a'(2\mu_1-\mu_2)+ \mu_2 -h(2\mu_2)=a'\Phi_*([\cO_x])- \Phi_*([\cI_x])+ h\Phi_*([\cO_H]).
\end{equation*}
    Then 
$$(a'[\cO_x]-[\cI_x]+h[\cO_H])^2 = 4h^2 + 2a' -4h \geq -2.$$
Indeed, the above inequality is equivalent to
$$(h-1)^2+h^2+a' \geq 0$$
which is satisfied since $a' \geq 0$. 

Now if $a$ is odd, write $a=2a'+1$. If $a'+b+1 \geq 0$, then 
\begin{align*}
    v & =(2a'+1)\mu_1+b\mu_2 \\ 
    & =a'(2\mu_1-\mu_2)+(a'+b)\mu_2+\mu_1  \\ 
    & =a'\Phi_*([\cO_x])-(a'+b)\Phi_*([\cI_x])+\Phi_*([\cO_L]-[\cO_X]). 
\end{align*}
Then 
$$(a'[\cO_x]-(a'+b)[\cI_x]+[\cO_L]-[\cO_X])^2 =  -2+2a'(a'+b+1) \geq -2,$$
as we want. If $a'+b+1< 0$, write $b=-1-a'-k$, for $k>0$. If $k=2h-1$ for $h \geq 1$, then $b=-a'-2h$ and we can write 
\begin{equation*}
    v=(2a'+1)\mu_1-(a'+2h)\mu_2=a'\Phi_*([\cO_x])+h\Phi_*([\cO_H])+\Phi_*([\cO_L]-[\cO_X]). 
\end{equation*}
Then 
$$(a'[\cO_x]+h[\cO_H]+[\cO_L]-[\cO_X])^2=4h^2-2h-2+2a' \geq -2,$$
since $h \geq 1$. Finally, if $k=2h$ for $h \geq 0$, then \begin{align*}
    v& =(2a'+1)\mu_1-(1+a'+2h)\mu_2 \\ 
    & =a'(2\mu_1-\mu_2)+(\mu_1-\mu_2)+h(2\mu_2) \\ 
    & =a'\Phi_*([\cO_x])+\Phi_*([\cO_L]-[\cO_x])+h\Phi_*([\cO_H]). 
\end{align*}    
The element $a'[\cO_x]+[\cO_L]-[\cO_x]+h[\cO_H]$ has square
$$-2+4h^2+2h \geq -2$$
as $h\geq 0$. This implies the statement.
\end{proof}

\subsection{Proof of Theorem~\ref{theorem-M-quartic-double}}
\label{section-proof-quartic-double}

We first prove part~\eqref{non-empty}. Since Serre invariant stability conditions on $\Ku(Y)$ are in the same $\wtilde{\text{GL}}^+(2, \bR)$-orbit by \cite[Theorem 3.2 and Remark 3.8]{FP}, and the stability conditions on $\Ku(Y)$ constructed by \cite{BLMS} are Serre invariant by \cite[Proposition 5.7]{PY}, it is enough to prove the result for $\sigma_Y \in \Stab(\Ku(Y))$ defined in \eqref{eq_sigmaY} (see also \eqref{eq_stabonKu}). Note also that we can reduce to proving the statement in the case $v$ primitive. Indeed, if $v=k(a \mu_1 + b \mu_2)$ and we know there exists $F \in M_{\sigma_Y}(\Ku(Y), a \mu_1 + b \mu_2)$, then $F^{\oplus k}$ will be an object in $M_{\sigma_Y}(\Ku(Y), v)$. 

Assume $v=a \mu_1+ b \mu_2$, where $a, b$ are coprime integers. 
Since all quartic double solids are deformation equivalent, we may choose a family 
$\cY \to S$ of quartic double solids over a smooth connected curve $S$ with two points $0, 1 \in S(\bC)$ such that $\cY_0 \cong Y$ and $\cY_1 \cong Y'$, where $Y'$ is a quartic double solid whose branch locus is a K3 surface $X'$ containing a line. Let $\fC:=\Ku(\cY)$ be the associated Enriques category over $S$ such that $\fC_0\cong \Ku(Y)$ and $\fC_1 \cong \Ku(Y')$ (see Remark~\ref{rmk_families_qdsGM}).
Moreover, by Remark~\ref{rmk_stabcondinfamily} there exists a stability condition $\underline{\sigma} = (\sigma_s)_{s \in S}$ on $\Ku(\cY)$ over $S$ such that $\sigma_s$ is a stability condition as defined in~\eqref{eq_stabonKu}. 
By construction, $\fC$ and $\underline{\sigma}$ satisfy the conditions~\eqref{1}-\eqref{defo-fiber-0} in Theorem~\ref{theorem-deformation}, with $\cC = \Ku(Y)$ and $\sigma = \sigma_Y$.

We claim that condition~\eqref{1-exists} also holds. 
First, $\sigma_{Y'} = \sigma_1$ satisfies \eqref{1-exists-a} due to Lemma~\ref{lemma_BLMSstabiotainv_qds} and the primitivity of $v$. The induced stability condition on the CY2 cover $\fD_1:=\Db(X')$, denoted $\sigma_{X'}$, satisfies \eqref{1-exists-b} by Lemma~\ref{lemma_geometricsigma_qds}. Finally, by Lemma~\ref{lemma_deformationtoquartiwithline} there exists a class $w \in \Knum(X')$ such that $\pi_*(w):=\Phi_*(w)=v$ and $w^2 \geq -2$. Thus by \cite{bayer-macri-projectivity} the moduli stack $\cM_{\sigma_{X'}}(\Db(X'), w)$ is nonempty. It remains to show 
$$-\chi(w,w)+2=w^2+2 < -\chi(v, v)+1= a^2+2ab+2b^2+1.$$
By a straightforward computation it is possible to check that this condition holds for all $w$ computed in Lemma~\ref{lemma_deformationtoquartiwithline}, thus \eqref{conditions-w1} is satisfied.

We conclude by Theorem~\ref{theorem-deformation} that the moduli stack $\cM_{\sigma_Y}(\Ku(Y), v)$ is nonempty, and so is the moduli space $M_{\sigma_Y}(\Ku(Y), v)$.

We now prove part~\eqref{v-smooth-M-KuY} of Theorem~\ref{theorem-M-quartic-double}. As noted in Remark~\ref{rmk_aodd_noimage} the class $v$ is not in the image of $\Phi_*$. By Theorem~\ref{theorem-moduli}\eqref{theorem-enriques-moduli} it follows that the fixed locus $\cM_{\sigma_Y}(\Ku(Y), v)^{\bZ/2}$ of the action induced by $\iota$ on $\cM_{\sigma_Y}(\Ku(Y), v)$ is empty, so $\cM_{\sigma_Y}(\Ku(Y), v)$ is smooth and there is a $\bG_m$-gerbe $\cM_{\sigma_Y}(\Ku(Y), v) \to M_{\sigma_Y}(\Ku(Y), v)$. By deformation theory the dimension of $M_{\sigma_Y}(\Ku(Y), v)$ at any point $E$ is equal to $\dim \Ext^1(E,E) = - \chi(v,v) + 1$. Finally, $M_{\sigma_Y}(\Ku(Y), v)$ is projective by \cite[Corollary 3.4]{Villa}. \qed 

\begin{remark}
\label{remark-projectivity}
We expect that the moduli space $M_{\sigma}(\Ku(Y), v)$ is projective for every $Y$, $v$, and $\sigma$ Serre invariant. The strategy to show this would be to make use of Theorem~\ref{theorem-moduli}\eqref{theorem-CY2-moduli} and deduce the projectivity of $M_{\sigma_Y}(\Ku(Y), v)$ from the projectivity of the moduli space of semistable objects in the associated CY2 category $\Db(X)$. However, the stability condition on $\Db(X)$ may fail to be $w$-generic and, at this moment, structure results for moduli spaces of semistable objects in K3 categories with respect to non-generic stability conditions are unavailable.
\end{remark}

\begin{remark}
Note that it is possible to show a stronger version of \eqref{conditions-w1} in the case of the deformation used in the proof of Theorem~\ref{theorem-M-quartic-double}. More precisely, a straightforward but long computation implies that for every lifting $w$ of $v$ the inequality 
\begin{equation*}
    -\chi(w,w)+2=w^2+2 < -\chi(v, v)+1
\end{equation*} 
holds. As a consequence, no irreducible component of $M_{\sigma}(\Ku(Y), v)$ is fixed by the $\bZ/2$-action.
\end{remark}


\section{Gushel--Mukai threefolds} 
\label{section-GM-3folds}

In this section we prove Theorem~\ref{theorem-Msigma-GM}\eqref{nonempty-GM}, as an application of Theorem~\ref{theorem-deformation}. The strategy consists in deforming to a special GM threefold and arguing similarly as in the case of the quartic double solid. The difficulty in this case compared to the previous one is that a different deformation is needed for each numerical class $v$. 
We postpone the proof of Theorem~\ref{theorem-Msigma-GM}\eqref{Msigma-GM-smooth} until \S\ref{subsection-Msigma-GM-smooth}, as it uses the relation to GM fourfolds discussed later. 

\subsection{Special GM threefolds}

A special GM threefold is a double cover $f \colon Y \to M$ of a codimension three linear section $M$ of $\Gr(2,5)$ ramified over a smooth divisor $S \in |\cO_M(2H)|$, where $H$ denotes the (restriction to $M$ of the) ample generator of $\Pic(\Gr(2,5))$. The branch locus $S$ is a GM surface, or equivalently a Brill-Noether general polarized K3 surface of degree $10$ by \cite[Proposition 2.13]{DebKuz:birGM}. As mentioned in Example~\ref{ex_GM}, the Kuznetsov component $\Ku(Y)$ is an Enriques category with CY2 cover given by $\Db(S)$.

In fact, we are in the setting of \S\ref{subsec_generalsetting} with $d=1$, $m=2$, $\cB=\langle \cO_M, \cU_M^\vee \rangle$. In particular, we have the equivalence
$$\Phi_0:=\rL_{\langle \cO_Y \otimes 1, \cU_Y^\vee \otimes 1 \rangle} \circ (j_0)_* \circ (- \otimes \cO_S(H)) \colon \Db(S) \simeq \Ku(Y)^{\bZ/2}.$$
The $\bZ/2$-action on $\Ku(Y)$ is induced by the involutive autoequivalence $\iota \colon \Ku(Y) \to \Ku(Y)$ defined by the double cover involution. 

In the next lemmas, we show that the known stability conditions on $\Ku(Y)$ are $\iota$-fixed, and the induced stability conditions on $\Db(S)$ are geometric. 

\begin{lemma} \label{lemma_BLMSiotainv_GM3}
Let $\sigma_Y$ be a stability condition constructed in \cite{BLMS}. Then $\sigma_Y$ is $\iota$-invariant.
\end{lemma}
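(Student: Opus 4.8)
The plan is to prove the stronger statement that $\sigma_Y$ is $\iota$-fixed, i.e.\ $\iota\cdot\sigma_Y=\sigma_Y$, which in particular gives $\iota$-invariance in the sense of Definition~\ref{def_serreinv}; the argument is the direct analog of the one for quartic double solids in Lemma~\ref{lemma_BLMSstabiotainv_qds}, with one extra point of care.

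First I would record the geometric input. Write $\theta\colon Y\to Y$ for the covering involution of the double cover $f\colon Y\to M$, so that $\iota$ is the restriction to $\Ku(Y)$ of the autoequivalence $\theta^{*}$ of $\Db(Y)$. Since $f\circ\theta=f$, the functor $\theta^{*}$ fixes every object pulled back from $M$ along $f$; in particular $\theta^{*}H=H$, $\theta^{*}\cO_Y\cong\cO_Y$, and $\theta^{*}\cU_Y^{\vee}\cong\cU_Y^{\vee}$, the last because $\cU_Y=f^{*}\cU_M$ and hence $\theta^{*}f^{*}\cU_M=(f\circ\theta)^{*}\cU_M=f^{*}\cU_M$. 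It follows that $\theta^{*}$ acts trivially on the Chern character of every object of $\Db(Y)$ and preserves the semiorthogonal decomposition $\Db(Y)=\langle\Ku(Y),\cO_Y,\cU_Y^{\vee}\rangle$, so that it indeed restricts to the autoequivalence $\iota$ of $\Ku(Y)$.

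Next I would transport the construction of $\sigma_Y$ through $\theta^{*}$, exactly as in Lemma~\ref{lemma_BLMSstabiotainv_qds}. Being induced by an automorphism, $\theta^{*}$ is t-exact for the standard t-structure, so it preserves $\Coh(Y)$; together with the fact that it fixes Chern characters, this shows that it preserves the slope weak stability condition, the tilted heart $\Coh^{\beta}(Y)$, tilt stability $\sigma_{\alpha,\beta}$ and its rotation, hence the heart $\Coh^{0}_{\alpha,\beta}(Y)$, while acting as the identity on the central charge $Z^{0}_{\alpha,\beta}$ (which depends only on the Chern character). Since $\theta^{*}$ also preserves $\Ku(Y)$, it fixes the heart $\cA(\alpha,\beta)=\Coh^{0}_{\alpha,\beta}(Y)\cap\Ku(Y)$ and the central charge $Z(\alpha,\beta)=Z^{0}_{\alpha,\beta}|_{\Knum(\Ku(Y))}$, which is precisely the statement $\iota\cdot\sigma_Y=\sigma_Y$.

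The only step that is not already covered verbatim by the quartic double solid argument — and hence the one I would be most careful with — is the compatibility of $\iota$ with the additional exceptional object $\cU_Y^{\vee}$ in the semiorthogonal decomposition, i.e.\ the isomorphism $\theta^{*}\cU_Y^{\vee}\cong\cU_Y^{\vee}$; this, however, reduces to the elementary identity $\cU_Y=f^{*}\cU_M$. With that in hand, everything else is a routine transfer of the construction of \cite{BLMS} through an autoequivalence induced by a polarized automorphism.
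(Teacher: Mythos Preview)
Your proof is correct and coincides with one of the two arguments the paper offers: the paper's primary proof simply notes that $\rS_{\Ku(Y)}=\iota[2]$ and invokes the Serre-invariance results of \cite[Theorem 1.1, Corollary 4.3]{PR}, but then explicitly states that ``alternatively, one can argue as in Lemma~\ref{lemma_BLMSstabiotainv_qds}'', which is precisely the direct argument you have written out, including the one additional check that $\theta^{*}\cU_Y^{\vee}\cong\cU_Y^{\vee}$.
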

\begin{proof}
Recall that the Serre functor of $\Ku(Y)$ satisfies $S_{\Ku(Y)}=\iota[2]$. Then the result follows from \cite[Theorem 1.1, Corollary 4.3]{PR}. Alternatively, one can argue as in Lemma~\ref{lemma_BLMSstabiotainv_qds}.
\end{proof}

By Theorem~\ref{theorem-equivariant-stability} it follows that $\sigma_Y$ induces a stability condition $\sigma_Y^{\bZ/2}$
on $\Ku(Y)^{\bZ/2}$ and thus a stability condition 
$$\sigma_S:=\Phi_0^{-1}(\sigma_Y^{\bZ/2})$$ on $\Db(S)$.
Similarly to \S\ref{section-quartic-double-solids}, we introduce the notation  
\begin{equation}
    \Phi \coloneqq \Forg \circ \Phi_0 \colon \Db(S) \to \Ku(Y), 
\end{equation}
which Lemma~\ref{lemma1} can be described as 
\begin{equation}
\label{formula-forgphi-GM3}
    \Phi = \rL_{\langle \cO_Y, \cU_Y^{\vee} \rangle} \circ j_* \circ (- \otimes \cO_X(H)) .
\end{equation}

\begin{lemma} \label{lemma_geometricstability_GM3}
The stability condition $\sigma_S$ on $\Db(S)$ is geometric.
\end{lemma}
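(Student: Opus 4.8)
The plan is to transcribe the proof of Lemma~\ref{lemma_geometricsigma_qds}, with the exceptional pair $\langle \cO_Y, \cU_Y^\vee \rangle$ in the role played there by $\langle \cO_Y, \cO_Y(H) \rangle$. Fix a point $s \in S$. Since $\Phi_0$ is an equivalence and $\sigma_S = \Phi_0^{-1}(\sigma_Y^{\bZ/2})$, the sheaf $\cO_s$ is $\sigma_S$-stable if and only if $\Phi_0(\cO_s)$ is $\sigma_Y^{\bZ/2}$-stable; by Theorem~\ref{theorem-equivariant-stability}\eqref{Forg-G-s} it therefore suffices to prove that
\begin{equation*}
K_s := \Phi(\cO_s) = \Forg(\Phi_0(\cO_s))
\end{equation*}
is $\sigma_Y$-stable. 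Using the description~\eqref{formula-forgphi-GM3} of $\Phi$ together with the fact that $\cO_s$ is a skyscraper — so $\cO_s(H) \cong \cO_s$ and $j_*\cO_s$ is the skyscraper at the point $s \in Y$ — this means $K_s = \rL_{\langle \cO_Y, \cU_Y^\vee \rangle}(\cO_s)$.

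Next I would compute $K_s$ by carrying out the two mutations in turn. The first mutation sits in the triangle $\cU_Y^\vee \otimes_k \Hom^\bullet(\cU_Y^\vee, \cO_s) \to \cO_s \to \rL_{\cU_Y^\vee}(\cO_s)$; since $\Hom^\bullet(\cU_Y^\vee, \cO_s) = \cU_Y|_s$ is two-dimensional concentrated in degree $0$ and the evaluation map is surjective (as $\cU_Y^\vee$ is globally generated), we get $\rL_{\cU_Y^\vee}(\cO_s) \cong N_s[1]$ for a rank-two sheaf $N_s$ fitting in $0 \to N_s \to (\cU_Y^\vee)^{\oplus 2} \to \cO_s \to 0$. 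The second mutation $\rL_{\cO_Y}(N_s[1])$ is then controlled by $H^\bullet(Y, N_s)$, which I read off from this sequence using the vanishing $H^{>0}(Y, \cU_Y^\vee) = 0$ (obtained from $f_*\cO_Y \cong \cO_M \oplus \cO_M(-H)$ and the known cohomology of $\cU_M^\vee$ on the linear section $M$ of $\Gr(2,5)$). This identifies the cohomology sheaves of $K_s$; along the way, computing Chern characters and comparing with~\eqref{eq_basisKnumGM3}, I obtain the class $[K_s] \in \Knum(\Ku(Y))$, and since $\Knum(\Ku(Y))$ has rank two, checking that $[K_s]$ is primitive reduces $\sigma_Y$-stability of $K_s$ to $\sigma_Y$-semistability.

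Then, exactly as in the quartic case, I would compute the graded space $\Hom^\bullet_{\Ku(Y)}(K_s, K_s)$. The pieces in degrees $\geq 2$ follow from adjunction for $\Phi$ — rewriting these groups as $\Hom$-groups on $Y$ between $\cO_s$ and a shift of $K_s$ — and the spectral sequence in the cohomology sheaves of $K_s$. The pieces in degrees $\leq 0$ then follow from Serre duality $\Hom(K_s, K_s[i]) \cong \Hom(K_s, \iota(K_s)[2-i])^\vee$ together with $\iota(K_s) \cong K_s$, which holds because $\iota$ is induced by the covering involution, which fixes the ramification surface pointwise and fixes $\cO_Y$ and $\cU_Y^\vee$, so it commutes with $j_*$ and with the mutations defining $K_s$. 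Finally $\hom^1(K_s,K_s)$ is pinned down by $\chi(K_s,K_s)$. With these in hand I run the destabilization argument of \cite{PY}: if $K_s$ were not $\sigma_Y$-stable, take a destabilizing triangle $A \to K_s \to B$ with $B$ the last HN factor; applying $\iota$ and invoking $\iota$-invariance of $\sigma_Y$ (Lemma~\ref{lemma_BLMSiotainv_GM3}) with uniqueness of HN filtrations gives $\iota(B) \cong B$, hence $\hom(A,B) = 0$, and then the estimate $\hom^1(A,A) + \hom^1(B,B) \leq \hom^1(K_s,K_s)$ of \cite[Lemma 6.4]{PY} (cf.\ \cite[Proposition 3.3]{FP}), combined with the fact that the Euler form on $\Knum(\Ku(Y))$ is negative definite with $\chi(v,v) \leq -1$ for every nonzero $v$ — so an $\iota$-invariant $\sigma_Y$-semistable $B$ has $\chi(B,B) = 2\hom(B,B) - \hom^1(B,B)$ forcing $\hom^1(B,B) \geq 2$ — yields a contradiction. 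Hence $K_s$ is $\sigma_Y$-stable, so $\cO_s$ is $\sigma_S$-stable, and $\sigma_S$ is geometric.

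The main obstacle is the second step: unlike the quartic double solid, the mutation through $\cU_Y^\vee$ involves the rank-two tautological bundle rather than a line bundle, so one must carefully track the auxiliary sheaf $N_s$, verify the cohomology vanishings for $\cU_Y^\vee$ on $Y$, and thereby identify the cohomology sheaves of $K_s$ precisely enough to carry out the $\Hom$-computation feeding the destabilization argument. The remaining steps are a direct adaptation of Lemma~\ref{lemma_geometricsigma_qds}.
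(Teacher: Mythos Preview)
Your outline adapts the quartic double solid argument (Lemma~\ref{lemma_geometricsigma_qds}) step by step, whereas the paper takes a shortcut: it invokes \cite[Theorem 5.9]{JLLZ}, which already proves that $\rL_{\langle \cU_Y, \cO_Y \rangle}(\cO_x)$ is stable for every Serre invariant stability condition on the alternative Kuznetsov component $\langle \cU_Y, \cO_Y \rangle^{\perp}$, and then transfers this to $\Ku(Y) = \langle \cO_Y, \cU_Y^\vee\rangle^\perp$ via the mutation $\rL_{\cU_Y}$, using \cite[Corollary 3.16]{PR} to see that $\rL_{\cU_Y}$ preserves the $\widetilde{\GL}^+_2(\bR)$-orbit of the \cite{BLMS} stability conditions. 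So the two approaches are genuinely different: the paper outsources the hard work, while you propose to redo it from scratch.

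There is, however, a real gap in your adaptation at the final destabilization step. In the GM threefold case one computes $[K_s] = \kappa_1 + 2\kappa_2$ (this is Lemma~\ref{lemma_phifromtheK3}), and since the Euler form on $\Knum(\Ku(Y))$ is $-I_2$ this gives $\chi(K_s,K_s) = -5$. Together with $\iota$-invariance and simplicity of $K_s$ (both of which you can verify as you say), this forces $\hom^1(K_s,K_s) = 7$, not $4$ as in the quartic case. Now the inequality $\hom^1(A,A) + \hom^1(B,B) \leq 7$ from \cite[Lemma 6.4]{PY}, combined only with the lower bound $\hom^1 \geq 3$ for an $\iota$-invariant semistable object (coming from $\chi \leq -1$), does \emph{not} yield an immediate contradiction: the configurations $(3,3)$, $(3,4)$, $(4,3)$ are a priori allowed. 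To close the argument you would need a further case analysis on the possible classes $[A], [B]$ summing to $\kappa_1 + 2\kappa_2$ with the correct phase ordering and Euler characteristics --- this may well succeed, but it is not the one-line conclusion you sketch, and is precisely the ``main obstacle'' you flag as the mutation computation, when in fact that computation is routine and the difficulty lies here. The paper's route via \cite{JLLZ} sidesteps this entirely.
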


\begin{proof}
We need to show that the skyscraper sheaf $\cO_x$ at any point $x \in S$ is $\sigma_S$-stable. 
By Theorem~\ref{theorem-equivariant-stability} it is enough to show that $\Phi(\cO_x) = \rL_{\langle \cO_Y, \cU_Y^\vee \rangle}(\cO_x)$ is $\sigma_Y$-stable.  
By \cite[Theorem 5.9]{JLLZ} it is known that $\rL_{\langle \cU_Y, \cO_Y \rangle}(\cO_x)$ is stable with respect to any Serre invariant stability condition on $\langle \cU_Y, \cO_Y \rangle^{\perp}$. By Serre duality it is straightforward to check that $\langle \cU_Y, \cO_Y \rangle^{\perp}= \rL_{\cU_Y}\Ku(Y)$, and by \cite[Corollary 3.16]{PR} the $\wtilde{\text{GL}}^+(2, \bR)$-orbit of the stability conditions constructed in \cite{BLMS} is preserved by $\rL_{\cU_X}$. Combining these observations, we deduce the $\sigma_Y$-stability of $\rL_{\langle \cO_Y, \cU_Y^\vee \rangle}(\cO_x)$, as we wanted.
\end{proof}

By \cite[Proposition 3.9]{Kuz_Fano} the numerical Grothendieck group $\Knum(\Ku(Y))$ has rank $2$, with a basis $\kappa_1, \kappa_2$ such that 
\begin{equation} \label{eq_basisKnumGM3}
\ch(\kappa_1) =1 - \frac{1}{5}H^2, \quad \ch(\kappa_2) =2-H +\frac{5}{6}p,     
\end{equation}
where $p$ denotes the class of a point. 
Moreover, the Euler form is given in this basis by 
$$\langle \kappa_1,\kappa_2 \rangle= 
\begin{pmatrix}
-1 & 0 \\ 
0 & -1
\end{pmatrix}. 
$$
In fact, this description of $\Knum(\Ku(Y))$ applies to any GM threefold $Y$, whether special or ordinary. 
As in Lemma~\ref{lemma_iotaisidentity_qds}, we have the following.
\begin{lemma} \label{lemma_iotaisidentity_GM3}
The involution $\iota_*$ induced by $\iota$ on $\Knum(\Ku(Y))$ is trivial.    
\end{lemma}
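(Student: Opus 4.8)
The plan is to treat special GM threefolds directly, exactly as in Lemma~\ref{lemma_iotaisidentity_qds}, and then deduce the general case by deformation. First, suppose $Y$ is a special GM threefold, so that $\iota$ is induced by the covering involution $\sigma \colon Y \to Y$ of the double cover $Y \to M$. Then $\iota_*$ is the restriction to $\Knum(\Ku(Y))$ of the automorphism $\sigma^* \colon \Knum(Y) \to \Knum(Y)$, and this restriction is compatible with the Chern character $\Knum(\Ku(Y)) \hookrightarrow \Knum(Y) \xrightarrow{\ch} \rH^*(Y,\bQ)$. Since $\sigma^*$ is a ring automorphism of $\rH^*(Y,\bQ)$ fixing the unit, the class $H$, and the point class $p$, it fixes $H^2 = (\sigma^*H)^2$ and hence fixes both $\ch(\kappa_1) = 1 - \tfrac{1}{5}H^2$ and $\ch(\kappa_2) = 2 - H + \tfrac{5}{6}p$ of~\eqref{eq_basisKnumGM3}. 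Writing $\iota_*(\kappa_1) = a\kappa_1 + b\kappa_2$ and comparing $\ch_0$ and $\ch_1$ in the identity $\ch(a\kappa_1 + b\kappa_2) = \ch(\kappa_1)$ forces $a + 2b = 1$ and $b = 0$, so $\iota_*(\kappa_1) = \kappa_1$; the same computation with $\kappa_2$ gives $\iota_*(\kappa_2) = \kappa_2$. Hence $\iota_* = \id$ when $Y$ is special.

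For a general GM threefold $Y$, I would choose a family $\cY \to S$ of GM threefolds over a smooth connected curve $S$ with $\cY_0 \cong Y$ and $\cY_1$ a special GM threefold --- such a family exists since GM threefolds form an irreducible family containing special members --- and set $\fC = \Ku(\cY)$, which is an Enriques category over $S$ by Remark~\ref{rmk_families_qdsGM} and \cite[Lemma 5.9(1)]{BP}. The classes $\kappa_1, \kappa_2$ are cut out in each fiber by~\eqref{eq_basisKnumGM3} from the flat classes $H$ and $p$, so they are flat sections of the local system $s \mapsto \Knum(\fC_s)$; since $\iota$ agrees with the relative Serre functor of $\fC/S$ shifted by $[-2]$, it induces a parallel, hence locally constant, endomorphism of this local system, so the matrix of $\iota_{s,*}$ in the basis $(\kappa_1, \kappa_2)$ is independent of $s \in S$. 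By the special case this matrix is the identity at $s = 1$, hence also at $s = 0$, which proves the lemma.

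The step with actual content is the constancy of $s \mapsto \iota_{s,*}$: one needs to know that $\Knum$ of the Kuznetsov components of the fibers forms a local system over $S$ with $\kappa_1, \kappa_2$ as flat sections, and that the relative Serre functor of the smooth proper $S$-linear category $\fC$ acts on it by a single globally constant matrix. Both are standard, but they are what makes the reduction legitimate. As an alternative that avoids families altogether, note that for GM threefolds $d = 1$, so \eqref{eq_rotfunct_iota} gives $\iota \simeq \sO_Y = \rL_{\langle \cO_Y, \cU_Y^\vee \rangle}(- \otimes \cO_Y(H))[-1]$, and $\iota_*(\kappa_1)$ and $\iota_*(\kappa_2)$ can be computed directly by evaluating the Euler characteristics $\chi(\cO_Y, -)$ and $\chi(\cU_Y^\vee, -)$ on the twists $\kappa_i \otimes \cO_Y(H)$ via Riemann--Roch on $Y$; this is more computational but entirely self-contained.
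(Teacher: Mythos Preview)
Your first paragraph is correct and is exactly the paper's argument: the paper simply says ``As in Lemma~\ref{lemma_iotaisidentity_qds}'', i.e. the covering involution fixes $H$ (and $p$), hence fixes the Chern characters of $\kappa_1,\kappa_2$, hence acts trivially on $\Knum(\Ku(Y))$.

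However, you have misread the scope of the lemma. In the paper the entire subsection concerns \emph{special} GM threefolds, and $\iota$ is by definition the autoequivalence induced by the covering involution of $Y \to M$; the lemma is only asserted in that setting. So your second paragraph (the deformation argument passing through a family $\cY \to S$) and the alternative via the rotation-functor formula are both unnecessary for the statement as written. That said, your rotation-functor remark is a legitimate way to see the analogous statement for ordinary GM threefolds, where $\iota$ is not geometric but equals $\sO_Y$ by~\eqref{eq_rotfunct_iota}; this is more than the lemma claims, but it is correct.
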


As in the case of quartic double solids, the next lemma implies that Theorem~\ref{theorem-moduli} is not enough to prove the desired nonemptiness (see Remark~\ref{rmk_aodd_noimage}). 

\begin{lemma} \label{lemma_phifromtheK3}
We have the following equalities in $\Knum(\Ku(Y))$: 
\begin{align*}
    \Phi_*([\cO_X(-H)]) & =5 \kappa_1+ 4\kappa_2, \\
    \Phi_*([\cO_X]) & =4\kappa_2, \\
    \Phi_*([\cO_x]) & =\kappa_1+2\kappa_2.
\end{align*}
\end{lemma}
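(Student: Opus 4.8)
The plan is to compute each class by unwinding the explicit formula~\eqref{formula-forgphi-GM3}, namely $\Phi = \rL_{\langle \cO_Y, \cU_Y^{\vee} \rangle} \circ j_* \circ (-\otimes \cO_S(H))$, on the level of Grothendieck groups, exactly parallel to the proof of Lemma~\ref{lemma_forgofO-H} in the quartic case. Writing $\rL_{\langle \cO_Y, \cU_Y^{\vee} \rangle} = \rL_{\cO_Y} \circ \rL_{\cU_Y^{\vee}}$ and using that for an exceptional object $E$ one has $[\rL_E(G)] = [G] - \chi(E,G)\,[E]$ in $\rK$-theory, it suffices to: (a) determine $[j_*(F \otimes \cO_S(H))] \in \rK(\Db(Y))$ for $F \in \{\cO_S(-H), \cO_S, \cO_x\}$; (b) compute the Euler characteristics $\chi(\cU_Y^{\vee}, -)$ and $\chi(\cO_Y, -)$ needed for the two successive mutations; and (c) express the resulting class, which lies in $\rK(\Ku(Y))$ by construction of $\Phi$, in the basis $\kappa_1, \kappa_2$ by comparing Chern characters against~\eqref{eq_basisKnumGM3}.

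For step (a): since $\cO_x \otimes \cO_S(H) \cong \cO_x$, we have $[j_*(\cO_x \otimes \cO_S(H))] = [\cO_{j(x)}]$, the class of a skyscraper on $Y$, and the computation of $\rL_{\langle \cO_Y, \cU_Y^{\vee}\rangle}(\cO_{j(x)})$ then proceeds just as in the quartic case (first $\rL_{\cU_Y^{\vee}}$, then $\rL_{\cO_Y}$). For the other two, one uses that $X \subset Y$ lies in $|\cO_Y(H)|$ — since $X \in |\cO_M(2)|$ and $d = 1$, so $\cO_Y(X) = f^*\cO_M(1)$ — whence $[j_*\cO_S] = [\cO_Y] - [\cO_Y(-H)]$, and by the projection formula $[j_*(\cO_S(-H) \otimes \cO_S(H))] = [\cO_Y] - [\cO_Y(-H)]$ and $[j_*(\cO_S \otimes \cO_S(H))] = [\cO_Y(H)] - [\cO_Y]$. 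For step (b) the required numbers — $\chi(\cO_Y,\cO_Y) = 1$, $\chi(\cO_Y, \cU_Y^{\vee}) = 5$, $\chi(\cU_Y^{\vee},\cU_Y^{\vee}) = 1$, and the Euler pairings of $\cO_Y$ and $\cU_Y^{\vee}$ with $\cO_Y(\pm H)$ and with skyscrapers — follow from Hirzebruch–Riemann–Roch on $Y$, using $H^3 = 10$, $H\cdot c_2(Y) = 24$, and the Chern classes of $\cU_Y^{\vee}$ obtained by restricting those of $\cU^{\vee}$ on $\Gr(2,5)$ via the standard Schubert calculus. For step (c), it is enough to match $\ch_0$ and $\ch_1$: since $\ch_0(\kappa_1) = 1$, $\ch_0(\kappa_2) = 2$, $\ch_1(\kappa_1) = 0$, $\ch_1(\kappa_2) = -H$, the coefficients $a,b$ in $a\kappa_1 + b\kappa_2$ are pinned down by $a + 2b = \ch_0$ and $b = -\ch_1/H$, and the components $\ch_2, \ch_3$ then serve as a consistency check.

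I expect the only real difficulty to be bookkeeping: keeping the $\rK$-theoretic identities for $j_*$ and the two iterated mutations straight, and correctly assembling the Chern-character data of $\cU_Y^{\vee}$. There is no conceptual obstacle — the argument is a direct if slightly lengthy Riemann–Roch computation, entirely analogous to Lemma~\ref{lemma_forgofO-H}. As an internal cross-check, one verifies $\Phi_*([\cO_S]) - \Phi_*([\cO_S(-H)]) = -5\kappa_1$, which is $\Phi_*$ of the class $[\cO_C] = [\cO_S] - [\cO_S(-H)]$ of a hyperplane-section curve $C \subset S$ of arithmetic genus $6$ with $\chi(\cO_C) = -5$, consistent with the direct mutation computation of $\rL_{\langle \cO_Y, \cU_Y^{\vee}\rangle}(j_*\cO_C(H))$ on $Y$.
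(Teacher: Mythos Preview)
Your proposal is correct and follows essentially the same route as the paper's proof: both use the explicit description~\eqref{formula-forgphi-GM3}, the $\rK$-theoretic identity $[j_*\cO_S] = [\cO_Y] - [\cO_Y(-H)]$ coming from the divisor sequence for $X \in |\cO_Y(H)|$, and then compute the two successive mutations numerically before reading off the coefficients in the basis $\kappa_1,\kappa_2$ via Chern characters. The only cosmetic difference is that the paper first identifies the actual cohomology groups (e.g.\ $\rH^i(Y,\cU_Y(-H))$) to write down the mutation triangle explicitly and then passes to numerics, whereas you go directly to the $\rK$-theoretic formula $[\rL_E(G)] = [G] - \chi(E,G)[E]$; your shortcut of pinning down $(a,b)$ from $\ch_0$ and $\ch_1$ alone is also slightly cleaner than the paper's full Chern-character match, but none of this is a genuinely different argument.
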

\begin{proof}
We start with $\Phi_*([\cO_X(-H)])$ which is equal to the class of $\rL_{\langle\cO_Y, \cU_Y^\vee \rangle}(j_*\cO_X)$. Consider the short exact sequence
\begin{equation} \label{eq_sesforX}
0 \to \cO_Y(-H) \to \cO_Y \to j_*\cO_X \to 0.    
\end{equation}
Tensoring it by $\cU_Y$ and since $\rH^i(Y, \cU_Y)=0$ for every $i$, $\rH^i(Y, \cU_Y(-H))=\rH^{3-i}(Y, \cU_Y^\vee)$ which is $0$ for $i \neq 3$ and $\bC^5$ for $i=3$, we get the triangle
$$\cU_Y^{\vee \oplus 5}[-2] \to j_*\cO_X \to \rL_{\cU_Y^\vee }j_*\cO_X.$$
Since $\chi(\cO_Y, \cO_X)=2$, we have $\chi(\cO_Y, \rL_{\cU_Y^\vee }j_*\cO_X)=-23$. It follows that
$$\ch(\Phi(\cO_X(-H)))= \ch(j_*\cO_X)- 5 \ch(\cU_Y^\vee)+ 23 \ch(\cO_Y)=- \ch(\cO_Y(-H)) - 5 \ch(\cU_Y^\vee)+ 24 \ch(\cO_Y),$$
which is equal to $13-4H-H^2+\frac{10}{3}p$. Thus $\Phi_*([\cO_X(-H)])=5 \kappa_1+ 4\kappa_2$.

Similarly, to compute $\Phi_*([\cO_X])$ we use the sequence~\eqref{eq_sesforX} and we get the triangle
$$\cU_Y^{\vee \oplus 5}\to j_*\cO_X(H) \to \rL_{\cU_Y^\vee }j_*\cO_X(H).$$
Since $\chi(\cO_Y, \rL_{\cU_Y^\vee }j_*\cO_X(H))= -18$, we conclude that
$$\ch(\Phi(\cO_X))=\ch(\cO_Y(H))- 5 \ch(\cU_Y^\vee)+ 17 \ch(\cO_Y)= 8-4H+\frac{10}{3}p,$$
as we wanted.

Finally, we compute that $\Phi_*([\cO_x])=[\cO_x]-2[\cU_Y^\vee]+9[\cO_Y]=\kappa_1+2\kappa_2$.
\end{proof}

\subsection{Construction of special GM threefolds} 

The goal of this subsection is to show that 
for any class $v \in \Knum(\Ku(X))$, where $X$ is a GM threefold, there exists a special GM threefold $Y$ for which $v$ lies in the image of $\Phi_* \colon \Knum(S) \to \Knum(\Ku(Y))$ (under the identification $\Knum(\Ku(Y)) = \Knum(\Ku(X) = \langle \kappa_1, \kappa_2 \rangle$). 

The surjectivity of the period map for K3 surfaces ensures a lot of freedom in finding a suitable lifting for $v$, but one issue is to make sure that the K3 surface is Brill-Noether general. We deal with this in the next lemma, making use of \cite[Lemma 2.8]{GLT}. 

\begin{lemma} \label{lemma_K3BNgeneral}
\begin{enumerate}
\item \label{x2} For every integer $x\geq 6$ there exists a special GM threefold $Y$ whose branch locus is a Brill-Noether general K3 surface $S$ of degree $10$ with Picard group generated by two classes $H$ and $L$ with intersetion matrix
\begin{equation} \label{eq_PicS}
\Pic(S)= \langle H, L \rangle=
\begin{pmatrix}
10 & x\\
x & 2
\end{pmatrix}.    
\end{equation}
\item \label{50} There exists a special GM threefold $Y$ whose branch locus is a Brill-Noether general K3 surface $S$ of degree $10$ with Picard group generated by two classes $H$ and $L$ with intersetion matrix
\begin{equation} \label{eq_PicS_50}
\Pic(S)= \langle H, L \rangle=
\begin{pmatrix}
10 & 5\\
5 & 0
\end{pmatrix}.    
\end{equation}
\item \label{9and74} For $x=7,9$, there exists a special GM threefold $Y$ whose branch locus is a Brill-Noether general K3 surface $S$ of degree $10$ with Picard group generated by two classes $H$ and $L$ with intersetion matrix
\begin{equation} \label{eq_PicS_9and74}
\Pic(S)= \langle H, L \rangle=
\begin{pmatrix}
10 & x\\
x & 4
\end{pmatrix}.    
\end{equation}
\end{enumerate}
\end{lemma}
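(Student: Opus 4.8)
The strategy is to verify, for each of the three lattices, the hypotheses of \cite[Lemma 2.8]{GLT}. Granting these, that lemma --- combined with Mukai's description \cite{mukai} of Brill--Noether general genus $6$ K3 surfaces as quadric sections of smooth three-dimensional linear sections of $\Gr(2,5)$, and with the surjectivity of the period map --- produces a Brill--Noether general polarized K3 surface $(S,H)$ of degree $10$ whose Picard lattice is the prescribed one and which occurs as the branch locus of a special GM threefold. So the real content is a lattice computation.

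To start, each of the three lattices is even (every diagonal entry is even) and of signature $(1,1)$, as the determinants are $20-x^2<0$ for $x\geq 6$, then $-25<0$, and $40-x^2<0$ for $x\in\{7,9\}$; in particular each admits a primitive embedding into the K3 lattice, so by the surjectivity of the period map there is a very general K3 surface $S$ with that Picard lattice. The remaining hypotheses of \cite[Lemma 2.8]{GLT} are numerical conditions guaranteeing that the distinguished class $H$, with $H^2=10$, is a polarization and that $(S,H)$ is Brill--Noether general; concretely, they forbid the existence in $\Pic(S)=\langle H,L\rangle$ of a nonzero class $D$ of one of finitely many types --- essentially $D^2=0$ with $0<D\cdot H\leq 3$, $D^2=-2$ with $D\cdot H=0$, and $D^2=2$ with $D\cdot H=5$ (cf.\ \cite[Proposition 2.13]{DebKuz:birGM}). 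Writing $D=aH+bL$, so that $D\cdot H=10a+bx$ and $D^2=10a^2+2abx+cb^2$ with $c\in\{2,0,4\}$, each forbidden type turns into a quadratic Diophantine system; a short computation --- which reduces to a congruence modulo a small integer together with an equation of the shape $b^2(x^2-20)=20$ or $b^2=a(5a-3)$ --- shows that no such $D$ exists for the lattices in the statement. This also accounts for the excluded cases: for instance when $x=5$ the class $-H+2L$ has square $-2$ and is orthogonal to $H$, so that lattice really is obstructed.

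The step I expect to be the main obstacle is the uniformity in $x$ required by part~\eqref{x2}, where forbidden classes must be ruled out for all $x\geq 6$ at once. This is tractable because a forbidden class satisfies $|D\cdot H|=|10a+bx|\leq 5$, which forces $b=0$ once $x$ exceeds a small explicit bound, and then $D^2=10a^2$ is never $0$ or $\pm 2$ for $a\neq 0$; only finitely many values of $x$ remain, and these --- together with the two individual lattices of parts~\eqref{50} and~\eqref{9and74} --- are disposed of by direct inspection. A last detail to watch is that the three-dimensional linear section of $\Gr(2,5)$ spanned by $S$ should be smooth, which holds for a general choice of $S$ within the (positive-dimensional) moduli of K3 surfaces with the given Picard lattice and is harmless since Brill--Noether generality is an open condition.
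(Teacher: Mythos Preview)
Your approach is correct and matches the paper's: both use surjectivity of the period map to produce a polarized K3 surface with the prescribed rank-$2$ Picard lattice, then invoke \cite[Lemma 2.8]{GLT} to conclude Brill--Noether generality, and finally pass to a special GM threefold via the description of BN general degree-$10$ K3 surfaces as quadric sections of a smooth $V_5$. The paper's proof is terser --- it simply notes that the determinant is negative and that there is no $(-2)$-class orthogonal to $H$ (citing \cite[Remark 6.3.7]{Huy:K3}), then observes that $x\neq 5$ suffices to apply \cite[Lemma 2.8]{GLT} --- whereas you spell out the Diophantine analysis more explicitly; and for the last step the paper cites \cite[Lemma 2.7]{GLT} directly (which already gives a smooth linear section $M\subset\Gr(2,5)$ containing $S$), so your genericity workaround is unnecessary.
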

\begin{proof}
We show \eqref{x2}; the proof of \eqref{50}-\eqref{9and74} is similar. By the surjectivity of the period map for K3 surfaces, there exists a polarized K3 surface $(S, H)$ with Picard group as in \eqref{eq_PicS}, since $20 -x^2< 0$ and $\Pic(S)$ does not contain $(-2)$-classes orthogonal to $H$ \cite[Remark 6.3.7]{Huy:K3}. 
Since $x \neq 5$, by \cite[Lemma 2.8]{GLT} the pair $(S,H)$ is a polarized Brill-Noether general K3 surface of degree $10$. In particular, $S$ is a GM surface by \cite[Proposition 2.13]{DebKuz:birGM}. Moreover, by \cite[Lemma 2.7]{GLT} we have that $S$ is contained in a prime Fano threefold $M$ of index $2$ and degree $5$, which is a codimension $3$ linear section of the Grassmannian $\Gr(2, 5)$. Then the double cover $Y$ of $M$ ramified in $S$ is a special GM threefold satisfying the required properties. 
\end{proof}

The following lemmas provide for any $v \in \langle \kappa_1, \kappa_2 \rangle$ a special GM threefold such that there is a class $w$ in the numerical Grothendieck group of the branch K3 surface which lifts $v$ along $\Phi_*$ and satisfies $w^2 \geq -2$.

\begin{lemma} \label{lemma_phiL}
The following equalities hold in $\Knum(\Ku(Y))$: 
\begin{enumerate}
\item \label{L-in-x2} If $x$, $Y$ and $S$ are as in Lemma~\ref{lemma_K3BNgeneral}\eqref{x2}, then 
    $\Phi_*([\cO_S(L)])=\kappa_1+(6+x)\kappa_2$. 
\item \label{L-in-50} If $Y$ and $S$ are as in Lemma~\ref{lemma_K3BNgeneral}\eqref{50}, then $\Phi_*([\cO_S(L)])=9\kappa_2$.  
\item \label{L-in-9and74} If $Y$ and $S$ are as in Lemma~\ref{lemma_K3BNgeneral}\eqref{9and74}, then $\Phi_*([\cO_S(L)])=2\kappa_1+17\kappa_2$ when $x=9$, $\Phi_*([\cO_S(L)])=2\kappa_1+15\kappa_2$ when $x=7$.  
\end{enumerate}
\end{lemma}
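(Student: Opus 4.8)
The plan is to deduce all three identities from a single computation, using linearity of $\Phi_*$ together with the values of $\Phi_*$ on $[\cO_S]$, $[\cO_S(-H)]$ and $[\cO_x]$ already recorded in Lemma~\ref{lemma_phifromtheK3}. Since the K3 lattice is even, for any $L \in \Pic(S)$ the class $e_L := [\cO_S(L)] - [\cO_S] - \tfrac{L^2}{2}[\cO_x] \in \Knum(S)$ is integral and has Chern character $(0, L, 0)$, so that $[\cO_S(L)] = [\cO_S] + e_L + \tfrac{L^2}{2}[\cO_x]$. Hence by Lemma~\ref{lemma_phifromtheK3} it suffices to prove the formula
\begin{equation*}
\Phi_*(e_L) = (L \cdot H)\,\kappa_2 \qquad \text{for all } L \in \Pic(S),
\end{equation*}
since then $\Phi_*([\cO_S(L)]) = 4\kappa_2 + (L\cdot H)\kappa_2 + \tfrac{L^2}{2}(\kappa_1 + 2\kappa_2) = \tfrac{L^2}{2}\kappa_1 + (4 + L\cdot H + L^2)\kappa_2$, and substituting the intersection data from \eqref{eq_PicS}, \eqref{eq_PicS_50} and \eqref{eq_PicS_9and74} (namely $L^2 = 2$ and $L\cdot H = x$; $L^2 = 0$ and $L\cdot H = 5$; $L^2 = 4$ and $L\cdot H = x \in \{7,9\}$) yields exactly the three claimed identities in \eqref{L-in-x2}, \eqref{L-in-50} and \eqref{L-in-9and74}.

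To prove the displayed formula, I would use the description $\Phi = \rL_{\langle \cO_Y, \cU_Y^\vee\rangle}\circ j_*\circ(-\otimes\cO_S(H))$ from \eqref{formula-forgphi-GM3}. In $\Knum(\Ku(Y))$ one has $\Phi_*(e_L) = [j_*(\cF_L)]$ modulo the subgroup spanned by $[\cO_Y]$ and $[\cU_Y^\vee]$, where $\cF_L$ is a class on $S$ with $\ch(\cF_L) = (0, L, (L\cdot H)p)$ (the twist of $e_L$ by $\cO_S(H)$); since $\kappa_1, \kappa_2$ lie in $\langle \cO_Y, \cU_Y^\vee\rangle^\perp$, pairing against them annihilates this correction, so $\chi(\Phi_*(e_L),\kappa_i) = \chi([j_*(\cF_L)],\kappa_i)$ for $i = 1,2$. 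Now $S$ is cut out in $Y$ by the ramification divisor, which is linearly equivalent to $H$, so $N_{S/Y} \cong \cO_S(H)$ and $j^!(-) \simeq j^*(-)\otimes\cO_S(H)[-1]$; adjunction $j_* \dashv j^!$ then gives $\chi([j_*(\cF_L)],\kappa_i) = -\chi_S(\cF_L, j^*\kappa_i \otimes \cO_S(H))$, an Euler pairing on the K3 surface $S$. Restricting the Chern characters in \eqref{eq_basisKnumGM3} and using $(H|_S)^2 = 10$ gives $\ch(j^*\kappa_1) = 1 - 2p$ and $\ch(j^*\kappa_2) = 2 - H$, after which a direct Hirzebruch–Riemann–Roch computation on $S$ (with $\td(S) = 1 + 2p$) shows that these pairings equal $0$ for $i = 1$ and $-(L\cdot H)$ for $i = 2$. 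Since the Euler form of $\Knum(\Ku(Y))$ in the basis $\kappa_1,\kappa_2$ is $\mathrm{diag}(-1,-1)$ by \eqref{eq_basisKnumGM3}, this forces $\Phi_*(e_L) = (L\cdot H)\kappa_2$, as desired.

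I do not expect a serious obstacle here: the argument is a bookkeeping exercise with Chern characters. The only points that require care are (i) the orthogonality $\kappa_i \in \langle\cO_Y,\cU_Y^\vee\rangle^\perp$, which is what lets us avoid computing the explicit integer multiples of $[\cO_Y]$ and $[\cU_Y^\vee]$ produced by the mutation, and (ii) the shift in $j^! \simeq j^*(-)\otimes\cO_S(H)[-1]$, which supplies the sign making the coefficient of $\kappa_2$ come out positive. Alternatively one can run the computation directly in the style of Lemma~\ref{lemma_phifromtheK3}: first mutate $j_*(\cF_L)$ through $\cU_Y^\vee$ and then through $\cO_Y$, using $\chi(\cO_Y, j_*(\cF_L)) = \chi_S(\cF_L) = L\cdot H$, $\chi(\cU_Y^\vee, j_*(\cF_L)) = \chi_S(\cU_S^\vee, \cF_L) = L\cdot H$ and $\chi(\cO_Y,\cU_Y^\vee) = 5$, and then read off the class from its Chern character; both routes are routine and agree with the values in Lemma~\ref{lemma_phifromtheK3}.
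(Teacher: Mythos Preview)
Your argument is correct, and it takes a genuinely different route from the paper's. The paper computes $\Phi_*([\cO_S(L)])$ directly, in the same style as Lemma~\ref{lemma_phifromtheK3}: it evaluates $\chi(\cU_Y^\vee, j_*\cO_S(L+H))$ and $\chi(\cO_Y, \rL_{\cU_Y^\vee}j_*\cO_S(L+H))$ explicitly, writes down the resulting class in $\Knum(Y)$, and then reads off the coefficients in the basis $\kappa_1,\kappa_2$ by matching Chern characters; parts \eqref{L-in-50} and \eqref{L-in-9and74} are declared ``similar''. Your approach instead proves the single closed formula $\Phi_*([\cO_S(L)]) = \tfrac{L^2}{2}\kappa_1 + (4 + L\cdot H + L^2)\kappa_2$ valid for any $L \in \Pic(S)$, by isolating the new class $e_L$ of Chern character $(0,L,0)$ and pairing against $\kappa_1,\kappa_2$ via adjunction $j_* \dashv j^!$ on the K3 surface. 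What your version buys is uniformity and a shorter calculation (you never have to track the explicit multiples of $[\cO_Y]$ and $[\cU_Y^\vee]$, precisely because $\kappa_i \in \langle \cO_Y,\cU_Y^\vee\rangle^\perp$); what the paper's direct computation buys is that it stays entirely within the Chern-character bookkeeping already set up in Lemma~\ref{lemma_phifromtheK3}, without invoking $j^!$ or the normal bundle identification. Both are routine, and you correctly flag the two points that need care.
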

\begin{proof}
We show \eqref{L-in-x2}; the proof of \eqref{L-in-50}-\eqref{L-in-9and74} is similar. By~\eqref{formula-forgphi-GM3} we need to compute the class of $\rL_{\langle\cO_Y, \cU_Y^\vee \rangle}j_*(\cO_S(L+H))$. Note that
$$\ch(\cU_S(L+H))=2 + (2L+H)+ (3+x).$$ 
Thus $\chi(\cU_Y^\vee, j_*\cO_S(L+H))=\chi(S, \cU_S(L+H))=7+x$. In particular, we have
$$[\rL_{\cU_Y^\vee}(j_*\cO_S(L+H))]=[j_*\cO_S(L+H)]-(7+x)[\cU_Y^\vee].$$
Using the above equation, we get $$\chi(\cO_Y, \rL_{\cU_Y^\vee}(j_*\cO_S(L+H)))=\chi(\cO_Y, j_*\cO_S(L+H)))-5(7+x)=-27-4x.$$
We conclude that
$$[\Phi(\cO_S(L))]= [j_*\cO_S(L+H)]-(7+x)[\cU_X^\vee]+(27+4x)[\cO_Y]=\kappa_1+(6+x) \kappa_2. \qedhere $$
\end{proof}

\begin{lemma} \label{lemma_podd}
Consider $v=p \kappa_1+q \kappa_2$ where either $q$ is a positive integer and $p$ an odd integer coprime with $q$, or $q=0$ and $p=1$. Then there is a special GM threefold $Y$ with branch K3 surface $S$ such that there exists $w \in \Knum(S)$ satisfying $\Phi_*(w)=v$ and $w^2 \geq -2$.
\end{lemma}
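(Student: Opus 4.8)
The plan is to prove the lemma by an explicit construction: given $v = p\kappa_1 + q\kappa_2$ with $p$ odd, I choose a special GM threefold $Y$ from Lemma~\ref{lemma_K3BNgeneral} adapted to $v$, and then exhibit $w$ as a low-complexity class in $\Knum(S)$ (an integer combination of $[\cO_S]$, $[\cO_S(-H)]$, $[\cO_x]$ and $[\cO_S(L)]$), where $S$ is the branch K3 surface; nonemptiness of $\cM_{\sigma_S}(\Db(S),w)$ then follows from \cite{bayer-macri-projectivity} once $w^2 \geq -2$. The input is the $\Phi_*$-dictionary supplied by Lemmas~\ref{lemma_phifromtheK3} and~\ref{lemma_phiL}.

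First I would put those computations into a usable form. Writing classes of $\Knum(S)$ in Mukai coordinates $(r,\,aH+bL,\,s)$ with $D = aH+bL \in \Pic(S)$, and setting $x = H\cdot L$, one extracts from Lemma~\ref{lemma_phifromtheK3} (using the Mukai vectors $(1,0,1)$, $(0,0,1)$, $(1,-H,6)$ of $\cO_S$, $\cO_x$, $\cO_S(-H)$) the identities $\Phi_*((1,0,0)) = -\kappa_1 + 2\kappa_2$, $\Phi_*((0,H,0)) = 10\kappa_2$, $\Phi_*((0,0,1)) = \kappa_1 + 2\kappa_2$, and from each of the three cases of Lemma~\ref{lemma_phiL} the uniform formula $\Phi_*((0,L,0)) = x\,\kappa_2$. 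Consequently
\begin{equation*}
\Phi_*\bigl((r,\,aH+bL,\,s)\bigr) = (s-r)\,\kappa_1 + (2r + 10a + bx + 2s)\,\kappa_2 ,
\end{equation*}
so that $\Phi_*(w) = v$ is equivalent to $s = p+r$ together with $4r + 10a + bx = q - 2p$, while $w^2 = D^2 - 2rs = 10a^2 + 2abx + b^2 L^2 - 2r(p+r)$.

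The heart of the argument is then to choose, for each $(p,q)$, a threefold — i.e. values $L^2 \in \{0,2,4\}$ and $x = H\cdot L$ in the ranges permitted by Lemma~\ref{lemma_K3BNgeneral} ($x\geq 6$ if $L^2=2$, $x=5$ if $L^2=0$, $x\in\{7,9\}$ if $L^2=4$) — together with integers $(r,a,b)$ solving the displayed linear equation, such that $w^2 \geq -2$. The key flexibility is that allowing $r\neq 0$ replaces the solvability condition ``$q-2p-bx$ divisible by $10$'' by the mere parity condition ``$q-2p-bx$ even'': thus for $q$ even one may take $b=0$ (pure $H$), and for $q$ odd one takes $b$ odd and $x$ odd. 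When $|q-2p|$ is large, a rank-$0$ lift with $b=1$ and $x$ the least admissible integer $\equiv q-2p \pmod{10}$ works, since then $a=(q-2p-x)/10$ is forced and $w^2 = 10a^2 + 2ax + L^2$ is dominated by $10a^2$; here the $L^2=0$ threefold gives $w^2 = 10a(a+1)\geq 0$ unconditionally for the residue class $q-2p\equiv 5$, and the $L^2=4$ threefolds absorb the residue classes $\equiv 7,9$. When $q$ is near $2p$ the rank-$0$ lifts can have square below $-2$, and I would instead take $w$ of small nonzero rank supported on $H$ (for $q$ even) or on a single copy of $L$ (for $q$ odd); for instance when $q=2p-2$ one can take $w=(-3,H,p-3)$, which lifts $v$ and has $w^2 = 6p-8 \geq -2$ for $p\geq 1$, the base case $(p,q)=(1,0)$ being the instance $p=1$. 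The parity hypothesis on $p$ enters through these congruence conditions; coprimality of $p$ and $q$ is used only to make $v$ primitive, non-primitive classes reducing to the primitive case as in the proof of Theorem~\ref{theorem-M-quartic-double}.

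The main obstacle is precisely the bookkeeping above: organizing the finitely many regimes of $(p,q)$ — indexed by the parity of $q$ and by the residue and size of $q-2p$ — so that in each one some admissible choice of threefold and of $(r,a,b)$ yields $w^2 \geq -2$, despite the rigidity of the permitted intersection numbers $H\cdot L$ when $L^2\in\{0,4\}$. Each individual verification — that $\Phi_*(w)=v$, that $w^2 = D^2 - 2rs \geq -2$, and (for the later deduction of Theorem~\ref{theorem-Msigma-GM}) that $-\chi(w,w)+2 < -\chi(v,v)+1$ — is an elementary computation with the formulas recorded above.
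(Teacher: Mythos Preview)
Your general formula $\Phi_*\bigl((r, aH+bL, s)\bigr) = (s-r)\kappa_1 + (2r+10a+bx+2s)\kappa_2$ is correct and genuinely useful, and the strategy is viable in principle. But what you have written is a plan, not a proof: you acknowledge that ``the main obstacle is precisely the bookkeeping'' and then do not carry it out. For instance, in the residue class $q-2p\equiv 9\pmod{10}$ your proposed rank-$0$ lift on the $L^2=4$, $x=9$ threefold has $w^2 = 10a^2+18a+4$, which equals $-4$ at $a=-1$ (i.e.\ for $q=2p-1$); you say you would then pass to nonzero rank, but give no formula, and the obvious guesses fail for small $p$. Each such hole can be patched by hand, but you have not done so.

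More to the point, the paper avoids all of this. The observation you miss is that one may let the intersection number $x = H\cdot L$ vary with $v$ rather than fixing $x$ and varying $(r,a,b)$. Taking the threefold from Lemma~\ref{lemma_K3BNgeneral}\eqref{x2} with $L^2=2$ and $x=q+6$ (valid since $q\geq 0$ gives $x\geq 6$), the single class
\[
w \;=\; -[\cO_S(L)] + b\,[\cO_S] - q\,[\cO_S(-H)] + (p+5q+1)\,[\cO_x], \qquad b = -q + \tfrac{5-p}{2},
\]
is integral precisely because $p$ is odd, satisfies $\Phi_*(w)=v$, and has $w^2 = (p^2-5)/2 \geq -2$ --- a uniform formula with no cases. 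In your coordinates this is $a=q$, $b_L=-1$, $r=\tfrac{3-p}{2}-2q$; once you allow $x$ to absorb the dependence on $q$, the linear constraint $4r+10a-x=q-2p$ decouples and the bound on $w^2$ becomes independent of $q$. The other threefolds in Lemma~\ref{lemma_K3BNgeneral} are not needed here at all; they enter only in the companion Lemma~\ref{lemma_peven} for $p$ even.
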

\begin{proof}
Set $x=q+6$. By Lemma~\ref{lemma_K3BNgeneral}\eqref{x2} there exists a special GM threefold $Y$ whose associated K3 surface $S$ has Picard group as in \eqref{eq_PicS}. Consider $w \in \Knum(S)$ defined by
$$w:=-[\cO_S(L)]+b[\cO_S]-q[\cO_S(-H)]+(p+5q+1)[\cO_x],$$
where $b:=-q+\frac{1}{2}(5-p) \in \bZ$, as $p$ is odd. Then $w^2+2 =\frac{1}{2}p^2-\frac{1}{2} \geq 0$. By a straightforward computation using Lemmas~\ref{lemma_phifromtheK3} and  \ref{lemma_phiL}\eqref{L-in-x2} one can check that $\Phi_*(w)=v$. 
\end{proof}

\begin{lemma} \label{lemma_peven}
Consider $v=p \kappa_1+q \kappa_2$ where $q$ is a positive integer and $p$ an even integer coprime with $q$. Then there is a special GM threefold $Y$ with branch K3 surface $S$ such that there exists $w \in \Knum(S)$ satisfying $\Phi_*(w)=v$ and $w^2 \geq -2$.
\end{lemma}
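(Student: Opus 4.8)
The plan is to follow the template of Lemma~\ref{lemma_podd}: choose a special GM threefold $Y$ from the list in Lemma~\ref{lemma_K3BNgeneral}, exhibit an explicit integral class $w \in \Knum(S)$ as a $\bZ$-combination of $[\cO_S(L)]$, $[\cO_S]$ and $[\cO_x]$, verify $\Phi_*(w) = v$ using Lemmas~\ref{lemma_phifromtheK3} and~\ref{lemma_phiL}, and bound $w^2 = -\chi(w,w)$ from below by computing the Mukai pairing of the corresponding Mukai vector. I will use the standard conventions: in Mukai coordinates $[\cO_S(D)]$ maps to $(1, D, \tfrac12 D^2 + 1)$, $[\cO_S]$ to $(1,0,1)$, $[\cO_x]$ to $(0,0,1)$, and $(r,c,s)^2 = c^2 - 2rs$. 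Since $p$ is even and coprime to $q$, the integer $q$ is odd; in particular $p = 0$ forces $q = 1$. Because $p$ is now even rather than odd, the specific lift used in Lemma~\ref{lemma_podd} is no longer integral, so I will replace it by another; the price is that a handful of small classes cannot be reached from the ``generic'' family of Picard lattices and will require the auxiliary ones.

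\textit{Main range $q \ge 2$.} Here I would take $x = q + 4 \ge 6$ and let $S$ be the branch K3 surface from Lemma~\ref{lemma_K3BNgeneral}\eqref{x2}, so that $L^2 = 2$, $H \cdot L = x$, and $\Phi_*([\cO_S(L)]) = \kappa_1 + (q+10)\kappa_2$ by Lemma~\ref{lemma_phiL}\eqref{L-in-x2}. Setting
\[
w = [\cO_S(L)] - \bigl(\tfrac{p}{2} + 2\bigr)[\cO_S] + (p-1)[\cO_x] \in \Knum(S)
\]
(which is integral precisely because $p$ is even), a short computation with Lemma~\ref{lemma_phifromtheK3} gives $\Phi_*(w) = p\kappa_1 + q\kappa_2 = v$, and the Mukai vector of $w$ is $\bigl(-\tfrac{p}{2}-1,\, L,\, \tfrac{p}{2}-1\bigr)$, so
\[
w^2 = L^2 - 2\bigl(-\tfrac{p}{2}-1\bigr)\bigl(\tfrac{p}{2}-1\bigr) = \tfrac{p^2}{2} \geq 0 \geq -2 .
\]

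\textit{Exceptional classes $q = 1$.} If $p \ne 0$, so $|p| \ge 2$, I would instead use Lemma~\ref{lemma_K3BNgeneral}\eqref{9and74} with $x = 9$, where $L^2 = 4$ and $\Phi_*([\cO_S(L)]) = 2\kappa_1 + 17\kappa_2$ by Lemma~\ref{lemma_phiL}\eqref{L-in-9and74}. Taking
\[
w = [\cO_S(L)] - \bigl(\tfrac{p}{2} + 3\bigr)[\cO_S] + (p-2)[\cO_x] ,
\]
one checks $\Phi_*(w) = p\kappa_1 + \kappa_2 = v$, and the Mukai vector is $\bigl(-\tfrac{p}{2}-2,\, L,\, \tfrac{p}{2}-2\bigr)$, giving $w^2 = L^2 - 2\bigl(4 - \tfrac{p^2}{4}\bigr) = \tfrac{p^2}{2} - 4 \geq -2$ since $|p| \ge 2$. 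The remaining case is $v = \kappa_2$ (i.e. $p = 0$); there I would use Lemma~\ref{lemma_K3BNgeneral}\eqref{50}, where $L^2 = 0$ and $\Phi_*([\cO_S(L)]) = 9\kappa_2$, and take $w = [\cO_S(L)] - 2[\cO_S]$, with $\Phi_*(w) = \kappa_2$, Mukai vector $(-1, L, -1)$, and $w^2 = -2$.

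The step I expect to be the main obstacle is not any single computation but rather organizing the case division correctly: one must notice that lifting the classes $p\kappa_1 + \kappa_2$ with $p$ small (concretely $p \in \{-2, 0, 2\}$) from the family of Lemma~\ref{lemma_K3BNgeneral}\eqref{x2} always produces a Mukai vector with $w^2 < -2$, which is exactly why the auxiliary Picard lattices with $L^2 = 0$ and $L^2 = 4$ are needed. Once the right Picard lattice is fixed in each case, verifying $\Phi_*(w) = v$ and computing $w^2$ are routine. (As in Lemma~\ref{lemma_podd}, coprimality of $p$ and $q$ is not actually used in the construction.)
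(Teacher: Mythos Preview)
Your proof is correct and follows the same overall strategy as the paper, but with a cleaner organization and simpler lifts. The paper's proof also takes $x = q+4$ for $q \ge 2$, but its lift there (and for $q=1$, $|p| \ge 4$) is a four-term expression involving $[\cO_S(-H)]$ as well, inherited from the template of Lemma~\ref{lemma_podd}; your three-term lift in $[\cO_S(L)]$, $[\cO_S]$, $[\cO_x]$ gives the same $w^2 = p^2/2$ with less to check. For $q=1$ the paper splits into four subcases: it uses Lemma~\ref{lemma_K3BNgeneral}\eqref{x2} with $x=9$ for $|p| \ge 4$, Lemma~\ref{lemma_K3BNgeneral}\eqref{50} for $p=0$, and Lemma~\ref{lemma_K3BNgeneral}\eqref{9and74} with $x=9$ for $p=2$ and with $x=7$ for $p=-2$. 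Your observation that the single lift from the $(x,L^2)=(9,4)$ lattice already yields $w^2 = p^2/2 - 4 \ge -2$ for every even $p \ne 0$ collapses three of these subcases into one, so in particular you never need the $x=7$ lattice from Lemma~\ref{lemma_K3BNgeneral}\eqref{9and74}. Both routes are short; yours is tidier.
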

\begin{proof}
Assume $q \neq 1$. Set $x=q+4$. The same argument as in Proposition~\ref{lemma_podd} with 
$$w:=-[\cO_S(L)]+b[\cO_S]-q[\cO_S(-H)]+(p+5q+1)[\cO_x],$$
where $b:=-q+ 2-\frac{1}{2}p\in \bZ$, as $p$ is even, implies the statement in this case, where $w^2= \frac{1}{2}p^2$.

Analogously, if $q=1$ and $p \neq 0, \pm 2$, set $x=9$. Then 
$$w:=-[\cO_S(L)]+(-\frac{1}{2}p+2)[\cO_S]-q[\cO_S(-H)]+(p+6)[\cO_x]$$
satisfies $w^2+2=\frac{1}{2}p^2-4 \geq 0$ and $\Phi_*(w)=v$.

In the case $q=1, p=0$, consider a special GM threefold whose branch $S$ is as in Lemma~\ref{lemma_K3BNgeneral}\eqref{50}. Then $w=[\cO_S(L)]-2[\cO_S]$ has square $-2$ and satisfies $\Phi_*(w)=v$ by Lemmas~\ref{lemma_phifromtheK3} and  \ref{lemma_phiL}\eqref{L-in-50}.

If $q=1, p=2$, set $x=9$ and consider a special GM threefold whose branch $S$ is as in Lemma~\ref{lemma_K3BNgeneral}\eqref{9and74}. Then $w=[\cO_S(L)]-4[\cO_S]$ has square $-2$ and satisfies $\Phi_*(w)=v$ by Lemmas~\ref{lemma_phifromtheK3} and  \ref{lemma_phiL}\eqref{L-in-9and74}.

If $q=1, p=-2$, set $x=7$ and consider a special GM threefold whose branch $S$ is as in Lemma~\ref{lemma_K3BNgeneral}\eqref{9and74}. Then $w=-[\cO_S(L)]+4[\cO_S]$ has square $-2$ and satisfies $\Phi_*(w)=v$ by Lemmas~\ref{lemma_phifromtheK3} and  \ref{lemma_phiL}\eqref{L-in-9and74}.
\end{proof}

\begin{lemma} \label{lemma_qnegative}
Consider $v=p \kappa_1+q \kappa_2$ where $q$ is a negative integer and $p$ an integer coprime with $q$. Then there is a special GM threefold $Y$ with branch K3 surface $S$ such that there exists $w \in \Knum(S)$ satisfying $\Phi_*(w)=v$ and $w^2 \geq -2$.
\end{lemma}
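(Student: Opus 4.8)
The plan is to reduce the case $q < 0$ to the cases already settled in Lemmas~\ref{lemma_podd} and~\ref{lemma_peven} by replacing $v$ with $-v$. Write $v = p\kappa_1 + q\kappa_2$ with $q < 0$ and $\gcd(p,q) = 1$, and set $v' := -v = (-p)\kappa_1 + (-q)\kappa_2$. Then $-q$ is a positive integer and $\gcd(-p,-q) = 1$, so $v'$ falls under the hypotheses of Lemma~\ref{lemma_podd} if $p$ is odd and of Lemma~\ref{lemma_peven} if $p$ is even (note that these two lemmas together cover all values of the $\kappa_1$-coefficient when the $\kappa_2$-coefficient is a positive integer coprime to it). In either case, I would obtain a special GM threefold $Y$ with branch K3 surface $S$ and a class $w' \in \Knum(S)$ with $\Phi_*(w') = v'$ and $(w')^2 \geq -2$.

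To conclude, I would take $w := -w'$. Since $\Phi_* \colon \Knum(S) \to \Knum(\Ku(Y))$ is a group homomorphism, $\Phi_*(w) = -\Phi_*(w') = -v' = v$; and since the self-intersection $w \mapsto w^2$ on $\Knum(S)$ is a quadratic form (coming from the Mukai pairing), $w^2 = (w')^2 \geq -2$. This produces exactly the required lift, so the lemma follows.

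There is no substantive obstacle in this argument: all of the real work — the explicit choices of Picard lattices via Lemma~\ref{lemma_K3BNgeneral}, the computation of $\Phi_*$ on the relevant classes via Lemmas~\ref{lemma_phifromtheK3} and~\ref{lemma_phiL}, and the bookkeeping of $w^2$ — is contained in the earlier lemmas. The present statement merely records that the construction is symmetric under $v \mapsto -v$, since negating a lift of $-v$ of self-intersection $\geq -2$ yields a lift of $v$ with the same self-intersection.
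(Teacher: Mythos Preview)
Your proposal is correct and is essentially identical to the paper's proof: both set $v' = -v$, invoke Lemma~\ref{lemma_podd} or Lemma~\ref{lemma_peven} to obtain $w'$ with $\Phi_*(w') = v'$ and $(w')^2 \geq -2$, and then take $w = -w'$. You have simply spelled out a bit more explicitly why $\Phi_*(w) = v$ and $w^2 = (w')^2$, but the argument is the same.
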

\begin{proof}
Consider $v':=-v$. Then we can apply either Lemma~\ref{lemma_peven} or Lemma~\ref{lemma_podd} which imply there exists a special GM threefold $Y$ whose branch K3 surface $S$ has a class $w' \in \Knum(S)$ such that $\Phi_*(w')=v'$ and $(w')^2 \geq -2$. Then $w:=-w'$ satisfies the required properties.
\end{proof}

\subsection{Proof of Theorem~\ref{theorem-Msigma-GM}\eqref{nonempty-GM}}
Recall that Serre invariant stability conditions on $\Ku(X)$ are in the same $\wtilde{\text{GL}}^+(2, \bR)$-orbit by \cite[Corollary 4.5]{PR}, and the stability conditions on $\Ku(X)$ constructed by \cite{BLMS} are Serre invariant by \cite[Theorem 1.1]{PR}. Thus it is enough to prove the statement for a stability condition $\sigma_X \in \Stab(\Ku(X))$ as constructed by \cite{BLMS}. Note also that we can reduce to proving the result in the case $v$ primitive, as explained in the proof of Theorem~\ref{theorem-M-quartic-double}.

Set $v=p \kappa_1+ q \kappa_2$, where $p, q$ are coprime integers. 
Since all GM threefolds are  deformation equivalent, 
by Lemmas~\ref{lemma_podd}, \ref{lemma_peven}, and \ref{lemma_qnegative}
we may choose a family 
$\cX \to S$ of GM threefolds over a smooth connected curve $S$ with two points $0, 1 \in S(\bC)$ such that 
$\cX_0 \cong X$ and $\cX_1 \cong Y$, where $Y$ is a special GM threefold whose branch K3 surface $S$ has a class $w \in \Knum(S)$ satisfying $\Phi_*(w) = v$ and $w^2 \geq -2$. 
Now using the associated Enriques category $\fC \coloneqq \Ku(\cX)$ and arguing exactly as in the proof of Theorem~\ref{theorem-M-quartic-double} in \S\ref{section-proof-quartic-double}, we find that $M_{\sigma}(\Ku(X), v)$ is nonempty via the criterion of Theorem~\ref{theorem-deformation}.  \qed 

\begin{remark}
As for quartic double solids, it is possible to show a stronger version of \eqref{conditions-w1} in the case of the deformations used in the proof of Theorem~\ref{theorem-Msigma-GM}\eqref{nonempty-GM}. More precisely, it is possible to show that for every lifting $w$ of $v$ under $\Phi_*$ we have the inequality 
\begin{equation*}
    -\chi(w,w)+2=w^2+2 < -\chi(v, v)+1.
\end{equation*}
In particular, no irreducible component of $M_{\sigma}(\Ku(X), v)$ is fixed by the $\bZ/2$-action.
\end{remark}


\section{Gushel--Mukai fourfolds}
\label{section-GM-4folds}

In this section, we first prove Theorem~\ref{theorem-Msigma-GM}\eqref{Msigma-GM-smooth}, after some preliminaries on special GM fourfolds. 
Then we show that for such a fourfold $W$, every stability condition $\sigma \in \Stab^\circ(\Ku(W))$ constructed in \cite{PPZ} is fixed by natural $\bZ/2$-action (Theorem \ref{thm_invariantstab}). 
Finally, we prove Theorem~\ref{theorem-fixed-locus} by combining this with Theorem~\ref{theorem-Msigma-GM}\eqref{nonempty-GM} and Theorem~\ref{theorem-moduli}.

\subsection{Preparation}

Let $W$ be a special GM fourfold, which is defined as the double cover $f \colon W \to M$ of a codimension $2$ linear section $M$ of the Grassmannian $\Gr(2,5)$ ramified in a quadric section $X \subset M$. Note that $X$ is an ordinary GM threefold. 
We denote by $i \colon X \hookrightarrow M$ and $j \colon X \hookrightarrow W$ the natural inclusions. 

We are in the setting of \S\ref{subsec_generalsetting} with $d=1$, $m=3$, $\cB=\langle \cO_M, \cU^\vee_M \rangle$, where $\cU_M$ denotes the restriction to $M$ of the rank $2$ tautological bundle on $\Gr(2,5)$ (see \cite[\S8.2]{KuzPer_cycliccovers}). More precisely, we have the semiorthogonal decompositions
$$\Db(X)= \llangle \Ku(X), \cB_X \rrangle \quad \text{and} \quad \Db(W)= \llangle \Ku(W), \cB_W, \cB_W(H) \rrangle$$
where
$$\cB_X= \llangle \cO_X, \cU_X^\vee \rrangle \quad \text{and} \quad  \cB_W=\llangle \cO_W, \cU_W^\vee \rrangle, $$
and the equivalence in \eqref{eq_Phi0} is 
\begin{equation}
\label{eq_Phi0_oGM}    
\Phi_0:=\rL_{\cB_W \otimes 1} \circ (j_0)_* \circ (- \otimes \cO_X(H)) \colon \Ku(X) \to \Ku(W)^{\bZ/2}.
\end{equation}
Then $\Ku(W)$ is the CY2 cover of the Enriques category $\Ku(X)$ (see Example~\ref{ex_GM}).

Recall that the numerical Grothendieck group of $\Ku(W)$ contains a canonical rank $2$ sublattice 
\begin{equation}\label{eq_lambda12}
\llangle \lambda_1,\lambda_2 \rrangle \cong -A_1^{\oplus 2}= 
\begin{pmatrix}
-2 & 0 \\ 
0 & -2
\end{pmatrix}    
\end{equation} 
by \cite[Proposition 2.25]{KuzPerry:dercatGM}, \cite[Eq.(4)]{Pert}.

We consider the functor 
\begin{equation*}
    \Forg \circ \Phi_0 \colon \Ku(X) \to \Ku(W),
\end{equation*}
which by Lemma~\ref{lemma1} can be given explicitly by the formula 
\begin{equation}
\label{formula-forgphi-GM4}
    \Forg \circ \Phi_0 = \rL_{\langle \cO_W, \cU_W^{\vee} \rangle} \circ j_* \circ (- \otimes \cO_X(H)) .
\end{equation}
In the next lemma, we compute the image via $(\Forg \circ \Phi_0)_*$ 
of the basis $\kappa_1, \kappa_2$ defined in \eqref{eq_basisKnumGM3}.

\begin{lemma} \label{lemma_Forgkappa}
For $i=1,2$ we have
$$(\Forg \circ \Phi_0)_*(\kappa_i)=\lambda_i. $$
\end{lemma}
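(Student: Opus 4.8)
The plan is to compute $(\Forg \circ \Phi_0)_*(\kappa_i)$ by unwinding the explicit formula~\eqref{formula-forgphi-GM4}, exactly as in the proofs of Lemmas~\ref{lemma_forgofO-H} and~\ref{lemma_phifromtheK3}. Namely, I would first pick convenient representatives of the classes $\kappa_1,\kappa_2 \in \Knum(\Ku(X))$ coming from~\eqref{eq_basisKnumGM3}, for instance objects of $\Db(X)$ built from $\cO_X$, $\cO_X(-H)$, $\cU_X^\vee$ and $\cO_x$ whose projections to $\Ku(X)$ realize $\kappa_1$ and $\kappa_2$; then apply $- \otimes \cO_X(H)$, push forward along $j_*$, and left-mutate through $\langle \cO_W, \cU_W^\vee \rangle$. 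Since everything here is at the level of numerical Grothendieck groups, the mutation through $\cO_W$ (resp. $\cU_W^\vee$) amounts to subtracting $\chi(\cO_W, -) \cdot [\cO_W]$ (resp. $\chi(\cU_W^\vee, -)\cdot[\cU_W^\vee]$ after the first step), and the whole computation reduces to a few Euler-characteristic evaluations on $W$ together with a Chern-character bookkeeping, using that the class of $X \subset W$ is $2H$.

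The key inputs are: the Hirzebruch--Riemann--Roch computations of $\chi(\cO_W, j_*\cO_X(H))$, $\chi(\cU_W^\vee, j_*\cO_X(H))$, and similarly with $\cO_X(H)$ replaced by the twist of the representatives of $\kappa_1,\kappa_2$; the known cohomology of $\cU_W$ and $\cU_W^\vee$ on the GM fourfold; and the description of $\lambda_1, \lambda_2$ via the projection of $\rK_0(\Gr(2,5))$ (equivalently, via their Chern characters, cf.~\cite[Proposition 2.25]{KuzPerry:dercatGM}, \cite[Eq.(4)]{Pert}), so that at the end I can match the resulting Chern characters against those of $\lambda_1$ and $\lambda_2$. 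A clean way to organize this is to observe that $\Forg \circ \Phi_0$ fits into the general pattern of~\cite{KuzPer_cycliccovers}: the composition $j^* \circ (\Forg \circ \Phi_0)$ (or an adjoint) relates the basis on $\Ku(X)$ to the image lattice, and the sublattice $-A_1^{\oplus 2}$ is precisely the one cut out by the mutation, so the map is forced on these generators; but for a self-contained argument the direct HRR computation is shortest.

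The main obstacle, such as it is, is purely computational: keeping track of the twists, the contribution of the branch divisor $X = \{2H\}$ when pushing forward, and the rank-$2$ bundle $\cU_W^\vee$ (which enters both the mutation and the Euler pairings) without sign or factor errors, and then correctly identifying the output with the intended basis vectors $\lambda_i$ of $-A_1^{\oplus 2} \subset \Knum(\Ku(W))$ rather than some other integral basis of the same sublattice. There is no conceptual difficulty; the statement is essentially a compatibility between the basis of~\eqref{eq_basisKnumGM3} on the threefold side and the canonical sublattice~\eqref{eq_lambda12} on the fourfold side, and once the two Euler characteristics $\chi(\cO_W,\cdot)$ and $\chi(\cU_W^\vee,\cdot)$ are computed for each generator the equality $(\Forg \circ \Phi_0)_*(\kappa_i) = \lambda_i$ drops out by comparing Chern characters. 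I would present the computation for $\kappa_1$ in full and indicate that $\kappa_2$ is entirely analogous.
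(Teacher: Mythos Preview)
Your approach is essentially the same as the paper's: compute the numerical effect of the explicit formula~\eqref{formula-forgphi-GM4} via Hirzebruch--Riemann--Roch (the paper packages this as a single Grothendieck--Riemann--Roch step, computing $\td(X)$, $\td(W)$, and $\td(T_j)$ to obtain $\ch(j_*\kappa_i(H))$ directly, then subtracting the contributions of $\cO_W$ and $\cU_W^\vee$), and then match the resulting Chern characters against the explicit formulas for $\ch(\lambda_i)$ from \cite[Eq.(4)]{Pert}.

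One small correction to watch for: you write ``using that the class of $X \subset W$ is $2H$'', by analogy with the quartic double solid case. That analogy is off by a factor of two. Here $f \colon W \to M$ is a double cover branched along $X \in |2H_M|$, so the ramification divisor $j(X) \subset W$ has class $H_W$ (since $f^*(2H_M) = 2H_W = 2[j(X)]$), not $2H$. This affects the normal bundle $N_{X/W} \cong \cO_X(H)$ and hence the Todd correction in GRR; if you use $2H$ instead the Chern characters will not match $\lambda_i$.
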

\begin{proof}
The Todd classes of $X$ and $W$ can be computed to be 
$$\td(X)=1+\frac{1}{2}H+\frac{17}{60}H^2 + 1,$$
$$\td(W)=1+H+ (\frac{2}{3}H^2-\frac{1}{12}\gamma_W^*\sigma_2)+\frac{17}{60}H^3+1.$$
Here $\gamma_W$ is the double covering map composed with the inclusion of $M$ into $\Gr(2,5)$, and $\sigma_2$ is a Schubert cycle on $\Gr(2,5)$. By Grothendieck--Riemann--Roch we have
$$\ch(j_* \kappa_i(H))=j_*(\ch(\kappa_i(H)) \cdot \td(T_j))$$
where 
$$\td(T_j)=1 -\frac{1}{2}H+\frac{1}{6}H^2-\frac{5}{12}.$$
Now a computation using the description~\eqref{formula-forgphi-GM4} gives 
\begin{align*}
\ch((\Forg \circ \Phi_0)_*(\kappa_1)) & =-2+ \gamma_W^*\sigma_{1,1} -\frac{1}{2}, \\ 
\ch((\Forg \circ \Phi_0)_*(\kappa_2)) & =-4 +2H -\frac{1}{6}H^3.
\end{align*}
Using the description of $\lambda_1$ and $\lambda_2$ in \cite[Eq.(4)]{Pert}, we find that $ \ch(\lambda_1)$ and $\ch(\lambda_2)$ are given by the same formulas, so we conclude $(\Forg \circ \Phi_0)_*(\kappa_1) = \lambda_1$ and $(\Forg \circ \Phi_0)_*(\kappa_2) = \lambda_2$. 
\end{proof}

\subsection{Proof of Theorem~\ref{theorem-Msigma-GM}\eqref{Msigma-GM-smooth}}
\label{subsection-Msigma-GM-smooth}

Let $X$ be a generic GM threefold, in the sense that $X$ is the branch locus of a very general special GM fourfold $W$. 
By Theorem~\ref{theorem-moduli}\eqref{theorem-enriques-moduli} the moduli stack $\cM_{\sigma}(\Ku(X), v)$ can be singular only at points corresponding to objects $E$ such that $E \cong \tau(E)$, where $\tau$ is the generator of the $\bZ/2$-action. 

If $E \in \Ku(X)$ is such an object, then we claim there exists an object $F \in \Ku(X)^{\bZ/2}$ such that $\Forg(F) = E$. 
Indeed, $E$ is necessarily $\sigma$-stable, as its class $v$ is primitive and 
$\Knum(\Ku(Y))$ has rank $2$. 
In particular, $E$ is simple, so by \cite[Lemma 1]{Ploog} it admits a $\bZ/2$-linearization, i.e. a lift to $(\rh \Ku(X))^{\bZ/2}$, the $\bZ/2$-invariants of the homotopy category of $\Ku(X)$; by Proposition~\ref{proposition-enhancement-CG}, this means that $E$ also lifts to $\Ku(X)^{\bZ/2}$. 

As recalled above, there is an equivalence $\Phi_0 \colon \Ku(X) \to \Ku(W)^{\bZ/2}$. 
Set $E' \coloneqq \Phi_0(E)$. 
Then by the previous paragraph, there exists $F' \in (\Ku(W)^{\bZ/2})^{\bZ/2}$ such that $\Forg'(F') = E'$, where $\Forg' \colon (\Ku(W)^{\bZ/2})^{\bZ/2} \to \Ku(W)^{\bZ/2}$ is the forgetful functor for the residual $\bZ/2$-action on $\Ku(W)^{\bZ/2}$. 
If $\Forg \colon \Ku(W)^{\bZ/2} \to \Ku(W)$ denotes the forgetful functor, then by Lemma~\ref{lemma_generalnonsense} it follows that 
$$\Forg(E')= \Forg \Forg'(F')= \Forg \Inf \rho(F')=\rho(F') \oplus \iota(\rho(F')),$$
where $\rho \colon (\Ku(W)^{\bZ/2})^{\bZ/2} \to \Ku(W)$ is the equivalence of Theorem~\ref{thm_elagin} and $\iota$ is the generator of the $\bZ/2$-action on $\Ku(W)$.

Since $\Knum(\Ku(W)) \cong -A_1^{\oplus 2}$ by the assumption that $W$ is very general, and $\iota$ acts as the identity on $-A_1^{\oplus 2}$  by \cite[Proposition 5.7]{BP}, it follows that $[\iota(\rho(F'))]=[\rho(F')]$ in $\Knum(\Ku(W))$. Hence we conclude that $[\Forg(E')]=2[\rho(F')] \in \Knum(\Ku(W))$. However, this contradicts Lemma~\ref{lemma_Forgkappa} as $v$ is primitive. We conclude that $\cM_{\sigma}(\Ku(X), v)$ is smooth. We have a $\bG_m$-gerbe $\cM_{\sigma}(\Ku(X), v) \to M_{\sigma}(\Ku(X), v)$ and the moduli space $M_{\sigma}(\Ku(X), v)$ is smooth. By deformation theory the dimension of $M_{\sigma}(\Ku(X), v)$ at a point $E$ is equal to 
\begin{equation*} 
\dim \Ext^1(E,E)= -\chi(E,E) + \dim \Hom(E,E) =-\chi(v,v)+1, 
\end{equation*} 
where for the first equality we used Lemma~\ref{lemma-M-Enriques-smooth}. 

The projectivity of $M_{\sigma}(\Ku(X), v)$ follows from the smoothness and \cite[Corollary 3.4]{Villa}.

\subsection{Invariant stability conditions} \label{subsec_invstabGM}
Let $W$ be a special GM fourfold. 
By \cite{PPZ} there is a collection of proper full numerical stability conditions which we denote by $\Stab^{\circ}(\Ku(W))$. In this subsection, we show that every $\sigma_W \in \Stab^{\circ}(\Ku(W))$ is fixed by the $\bZ/2$-action induced by the covering involution. 

\begin{theorem}
\label{thm_invariantstab}
Let $W$ be a special GM fourfold, and $\sigma_W \in \Stab^{\circ}(\Ku(W))$. Then $\sigma_W$ is $\bZ/2$-fixed.
\end{theorem}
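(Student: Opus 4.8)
The plan is to show that the construction of $\Stab^{\circ}(\Ku(W))$ in \cite{PPZ} is manifestly compatible with the covering involution $\iota$ of $W$, so that each $\sigma_W$ in this family is automatically $\iota$-fixed. First I would recall precisely how these stability conditions are built: $\Ku(W)$ embeds into $\Db(\widetilde{Y}, \Cl_0)$ for a twisted quadric threefold $(\widetilde{Y}, \Cl_0)$ arising from the quadric fibration structure on $W$ (or the blow-up construction of \cite[\S9]{BLMS} adapted in \cite{PPZ}), and $\sigma_W = \sigma_{\alpha,\beta}$ is obtained by restricting a rotated tilt stability $\sigma^{\mu}_{\alpha,\beta}$ on the twisted threefold to $\Ku(W)$ via the criterion of \cite[Proposition 5.1]{BLMS}. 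The heart $\cA(\alpha,\beta) = \Coh^{\mu}_{\alpha,\beta} \cap \Ku(W)$ and central charge $Z(\alpha,\beta)$ are defined purely in terms of the Chern character data $H^j\ch_i$ on $W$ (or the twisted variety) and the ample polarization $H$.

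The key step is then to observe that $\iota$ preserves every piece of this data. Concretely: the covering involution $\iota$ of $W \to M$ fixes the ample class $H$ (pulled back from $M$), hence acts trivially on $H^j\ch_i(F)$ for all $F$; therefore $\iota$ preserves the slope function $Z_{\mathrm{slope}}$, the torsion pairs $(\cT^{\beta}, \cF^{\beta})$, the tilted heart $\Coh^{\beta}(W)$, and likewise $\mu_{\alpha,\beta}$, $Z_{\alpha,\beta}$, and the rotated heart $\Coh^{\mu}_{\alpha,\beta}(W)$; it also preserves $\Ku(W)$ (since $\iota$ commutes with mutations through $\iota$-invariant exceptional objects such as $\cO_W$ and $\cU_W^{\vee}$, which are pulled back from $M$). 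Consequently $\iota$ preserves the heart $\cA(\alpha,\beta)$ of $\sigma_W$ and acts as the identity on $Z(\alpha,\beta)$, i.e. $Z(\alpha,\beta) \circ \iota_*^{-1} = Z(\alpha,\beta)$ since $\iota_*$ is the identity on $\Knum(\Ku(W))$. This is exactly the statement that $\iota \cdot \sigma_W = \sigma_W$, i.e. $\sigma_W$ is $\bZ/2$-fixed. The argument is the direct analog of Lemmas~\ref{lemma_BLMSstabiotainv_qds} and~\ref{lemma_BLMSiotainv_GM3}, transported to the fourfold/twisted threefold setting.

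The main obstacle I anticipate is bookkeeping around the twisted quadric threefold $(\widetilde{Y},\Cl_0)$ and the precise functor realizing $\Ku(W) \hookrightarrow \Db(\widetilde{Y},\Cl_0)$: one must check that the residual/covering involution $\iota$ on $\Ku(W)$ is compatible with an involution on $(\widetilde{Y},\Cl_0)$ under which the relevant tilt stability is visibly preserved, so that restricting the tilt stability commutes with the involution. Equivalently, one needs to verify that the intermediate constructions in \cite{PPZ} (the blow-up, the sheaf of even Clifford algebras, the exceptional/mutation steps) are all $\iota$-equivariant. I expect this to be routine but notation-heavy: each geometric input is pulled back from $M$ or canonically associated to the quadric fibration, hence $\iota$-equivariant, and $\iota_*$ acts trivially on the relevant numerical lattices (including $-A_1^{\oplus 2} \subset \Knum(\Ku(W))$, by \cite[Proposition 5.7]{BP}, so that the central charge is genuinely fixed and not merely fixed up to the $\widetilde{\GL}_2^{+}(\bR)$-action). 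Assembling these observations gives that $\sigma_W$ is fixed, not just invariant, which is what the subsequent application of Theorem~\ref{theorem-equivariant-stability} and Theorem~\ref{theorem-moduli} requires.

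Alternatively, if a direct check of equivariance of the \cite{PPZ} construction is cumbersome, one can argue more softly: $\sigma_W$ is Serre invariant on $\Ku(W)$ (since the whole component $\Stab^{\dagger}(\Ku(W))$ is preserved by the Serre functor), and combine this with the structure of the $\widetilde{\GL}_2^{+}(\bR)$-action together with an explicit computation that $\iota$ acts trivially on $\Knum(\Ku(W))$ and fixes the phases of the stable objects generating the heart (the images of skyscraper sheaves under the relevant equivalence), forcing $\iota \cdot \sigma_W$ and $\sigma_W$ to have the same heart; uniqueness of the heart among those in a given $\widetilde{\GL}_2^{+}(\bR)$-orbit with the prescribed central charge then yields $\iota \cdot \sigma_W = \sigma_W$. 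Either way, the crux is that all the defining data of $\sigma_W$ descend from, or are compatible with, the double cover $W \to M$.
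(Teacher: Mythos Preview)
There is a genuine gap in your plan, stemming from a misreading of how $\Stab^{\circ}(\Ku(W))$ is actually constructed in \cite{PPZ} when $W$ is special. The stability conditions are \emph{not} built directly on $W$: the construction in \cite{PPZ} requires blowing up along a smooth canonical quadric, which is available for ordinary GM fourfolds. For a special $W$, one passes to an ordinary period partner $W'$ via the equivalence $\Ku(W) \simeq \Ku(W')$ of \cite{KuzPerry:cones}, constructs $\sigma_{W'}$ on $\Ku(W')$ by the blow-up/Clifford method, and then transports back. Under this equivalence the covering involution $\iota$ on $\Ku(W)$ does \emph{not} correspond to any geometric involution of $W'$; it corresponds to the rotation functor $\sO_{W'} = \rL_{\cB_{W'}}(-\otimes \cO_{W'}(H))[-1]$, a purely categorical autoequivalence built from a mutation and a twist (Lemma~\ref{lemma_fromspecialtoordinary}). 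So your heuristic that ``each geometric input is pulled back from $M$, hence $\iota$-equivariant'' breaks down: the canonical quadric $T \subset W'$, the blow-up $\widetilde{X} \to W'$, and the conic fibration $\widetilde{X} \to Y$ are not naturally equivariant for $\sO_{W'}$, and there is no reason a priori that a mutation-plus-twist should preserve a tilted heart.

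What the paper actually does is trace $\sO_{W'}$ through the chain $\Ku(W') \xrightarrow{\xi} \Ku(X)' \subset \Phi(\Db(Y,\Cl_0)) \xrightarrow{\Xi} \Db(Y,\Cl_0)$ and identify it with the functor $-\otimes_{\Cl_0}\Cl_1$ on the twisted quadric threefold (Lemmas~\ref{lemma_Xiisequivariant} and~\ref{lemma_xiisequivariant}). The latter functor \emph{does} visibly fix the tilt stability, because $\Cl_1$ has the same truncated Chern character as $\Cl_0$. The equivariance of $\xi$ (Lemma~\ref{lemma_xiisequivariant}) is the technical heart and is far from bookkeeping: it requires a delicate comparison of iterated mutations on $\widetilde{X}$, including the Sublemma involving the spinor bundle, and is not a consequence of anything being ``pulled back from $M$''. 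Your alternative soft argument via Serre invariance also does not go through, since uniqueness of Serre-invariant stability conditions on $\Ku(W)$ (a K3 category) is not available in the way it is for the threefold components.
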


Recall that by construction, $\sigma_W$ is induced from a stability condition $\sigma_{W'}$ on $\Ku(W')$, where $W'$ is an ordinary GM fourfold with smooth canonical quadric which is a period partner of $W$ (see \cite[Theorem 4.18]{PPZ}). Indeed, the duality conjecture proved in \cite[Theorem 1.6]{KuzPerry:cones} implies $\Ku(W) \simeq \Ku(W')$. The stability condition $\sigma_W$ is obtained from the constructed $\sigma_{W'}$ through this equivalence. We have the following lemma.

\begin{lemma}
\label{lemma_fromspecialtoordinary}
Given $W$ and $W'$ as above, there exists an equivalence $\Ku(W) \simeq \Ku(W')$ which is $\bZ/2$-equivariant with respect to the following $\bZ/2$-actions: 
\begin{itemize}
\item on $\Ku(W)$ the group $\bZ/2$ acts via the covering involution,
\item on $\Ku(W')$ the group $\bZ/2$ acts via the rotation functor
$$\sO_{W'}:=\rL_{\cB_{W'}} \circ (- \otimes \cO_{W'}(H))[-1]$$
\end{itemize}
\end{lemma}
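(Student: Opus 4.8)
\textbf{Proof plan for Lemma~\ref{lemma_fromspecialtoordinary}.}

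The plan is to exploit the fact that both $\Ku(W)$ (for $W$ special) and $\Ku(W')$ (for $W'$ an ordinary period partner with smooth canonical quadric) can be realized as Kuznetsov components of a common auxiliary derived category, and that the duality/equivalence $\Ku(W)\simeq\Ku(W')$ provided by \cite[Theorem 1.6]{KuzPerry:cones} is explicit enough to track what happens to the covering involution. First I would recall from \S\ref{subsec_generalsetting} (with $d=1$, $m=3$, $\cB=\langle\cO_M,\cU_M^\vee\rangle$) that since $W\to M$ is the double cover branched along the ordinary GM threefold $X$, the covering involution generates a $\bZ/2$-action on $\Ku(W)$, and that by Theorem~\ref{theorem-cyclic-cover} and the identities \eqref{eq_rotfunct_iota}, \eqref{eq_rotfunct_chi} the generator of this action is identified — after passing through the equivalence $\Phi_0\colon\Ku(X)\to\Ku(W)^{\bZ/2}$ and the reconstruction of Theorem~\ref{thm_elagin} — with the rotation functor $\sO_X$ up to shift, via $\tau\simeq\sO_X^d[d-1]$ as in \eqref{eq_tau}. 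The key point is that under the duality equivalence $\Ku(W)\simeq\Ku(W')$, the covering involution of $W$ should correspond precisely to the rotation functor $\sO_{W'}$ on the ordinary side, because both are manifestations of the same intrinsic $\bZ/2$-symmetry of the CY2 category detected by its residual action over the common threefold $X=W^{\op}$.

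The key steps, in order, would be: (1) Recall the precise construction of the equivalence $\Ku(W)\simeq\Ku(W')$ from \cite{KuzPerry:cones} — it passes through the Kuznetsov component of a common GM threefold or, in the double-cover description, through an identification of both with $\Db$ of a twisted quadric fibration; I would pin down which functors realize it. (2) Identify the covering involution of $\Ku(W)$ intrinsically: using Theorem~\ref{theorem-cyclic-cover}, the involution $\iota$ of $\Ku(W)$ and the character twist $-\otimes\chi$ on $\Ku(W)^{\bZ/2}$ are intertwined with the rotation functors $\sO_W$, $\sO_W^{\bZ/2}$ by \eqref{eq_rotfunct_iota}–\eqref{eq_intertwine}, so the involution is controlled by the rotation functor of $W$ over $M$. (3) On the ordinary side, $W'$ is a quadric section of a linear section of $\Gr(2,5)$ and $\sO_{W'}=\rL_{\cB_{W'}}\circ(-\otimes\cO_{W'}(H))[-1]$ is the analogous rotation functor; I would show that the duality equivalence carries the degree-shift/rotation structure on one side to that on the other, so that $\iota$ (covering involution of $W$) goes to $\sO_{W'}$. (4) Finally, verify that $\sO_{W'}$ really generates a $\bZ/2$-action: one needs $\sO_{W'}^2\simeq\id$ (up to canonical 2-isomorphism and vanishing obstruction in $\rH^3(\bZ/2,k^\times)$), which follows since $\Ku(W')$ is connected and the corresponding autoequivalence on the Enriques side squares to the identity by the same mechanism as $\tau^2\simeq\id$.

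The main obstacle I expect is step (3): matching the covering involution of the special fourfold with the rotation functor of the ordinary fourfold requires unwinding the rather intricate equivalence of \cite{KuzPerry:cones}, which is not a geometric functor but built from mutations and quadric-fibration formalism, and checking $\bZ/2$-equivariance at the level of the enhancement rather than just the triangulated category. A clean way to sidestep some of this would be to instead argue on the CY2 cover: both $\Ku(W)$ and $\Ku(W')$ have CY2 cover equivalent to $\Ku$ of the common GM threefold $X=W^{\op}=(W')^{\op}$ (note a special fourfold and an ordinary period partner share opposite threefolds up to the relevant identifications), and the residual $\bZ/2$-action on this CY2 category is intrinsic; then the covering involution on the special side and $\sO_{W'}$ on the ordinary side are both the autoequivalence induced by this single residual action via Theorem~\ref{thm_elagin}, hence correspond under any equivalence respecting the CY2-cover structure. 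I would likely present the proof this way, reducing to the uniqueness of the $\bZ/2$-action with a given generator (the obstruction-vanishing statement recalled after Definition~\ref{definition-CY2-Enriques}) together with the explicit formulas \eqref{eq_rotfunct_iota} and \eqref{eq_tau} from \cite{KuzPer_cycliccovers}.
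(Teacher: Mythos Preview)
Your sidestep idea at the end --- passing to the Enriques level and using that the $\bZ/2$-action there is intrinsic --- is essentially the approach the paper takes, but your execution contains two substantive errors that would prevent the argument from going through as written.

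First, you have the Enriques/CY2 roles reversed: $\Ku(W)$ and $\Ku(W')$ are the CY2 (K3-type) categories, not Enriques categories, so they do not ``have CY2 covers''. Rather, they \emph{are} the CY2 covers of the Enriques categories $\Ku(X)$ and $\Ku(X')$ where $X=W^{\op}$ and $X'=(W')^{\op}$. Second, $W^{\op}$ and $(W')^{\op}$ are not equal and are not even of the same dimension: since $W$ is special its opposite $X$ is the ordinary GM threefold which is the branch locus, while since $W'$ is ordinary its opposite $X'$ is the special GM \emph{fivefold} double-covering a linear section of $\Gr(2,5)$ ramified along $W'$. What is true is that $X$ and $X'$ are generalized period partners (opposites preserve the period point, and $W$, $W'$ are period partners by construction), so \cite[Theorem~1.6]{KuzPerry:cones} gives an equivalence $\Ku(X)\simeq\Ku(X')$ of \emph{Enriques} categories.

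The paper then invokes a result you did not identify: \cite[Lemma~4.9]{BP}, which says that any such equivalence of Enriques categories is automatically equivariant for the $(-2)$-shifted Serre functors and therefore induces an equivalence of the CY2 covers $\Ku(W)\simeq\Ku(W')$ that is equivariant for the residual $\bZ/2$-actions. Since by \eqref{eq_rotfunct_iota} with $d=1$ the covering involution $\iota$ on $\Ku(W)$ coincides with the rotation functor $\sO_W$, and the residual action on $\Ku(W')$ is by $\sO_{W'}$, this gives exactly the claimed equivariant equivalence. Your steps (1)--(3), attempting to track $\iota$ through the duality equivalence at the fourfold level via the quadric-fibration description, are unnecessary and --- as you yourself suspected --- would be much harder to carry out.
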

\begin{proof}
We denote by 
$$\iota \colon \Ku(W) \to \Ku(W)$$
the involutive autoequivalence induced by the covering involution. By \eqref{eq_rotfunct_iota} we have that $\iota \simeq \sO_W$. On the other hand, let $X'$ be the special GM fivefold obtained as a double cover of a linear section of the Grassmannian $\Gr(2,5)$ ramified in $W'$, and let $X$ be the branch ordinary GM threefold of $W$. In the terminology of Example~\ref{ex_GM}, 
the GM varieties $X$ and $X'$ are opposite to $W$ and $W'$. Note that $\Ku(X)$, $\Ku(X')$ are Enriques categories and $\Ku(W)$, $\Ku(W')$ are their CY2 covers. Since a GM variety and its opposite have the same period point (see \cite[\S3.1]{KuzPerry:dercatGM}), we have that $X$ and $X'$ are generalized period partners in the sense of \cite[Definition 3.5]{KuzPerry:dercatGM}. By \cite[Theorem 1.6]{KuzPerry:cones} there is an equivalence $\Ku(X) \simeq \Ku(X')$. By \cite[Lemma 4.9]{BP} this equivalence is equivariant with respect to the action induced by the $(-2)$-shifted Serre functors and induces an equivalence between the covers $\Ku(W)$ and $\Ku(W')$ which is equivariant with respect to the action of the rotation functors.
\end{proof}

As a consequence, in order to prove Theorem~\ref{thm_invariantstab}, it is enough to show that  for an ordinary GM fourfold $X$ with smooth canonical quadric, a stability condition $\sigma_X \in \Stab^\circ(\Ku(X))$ is invariant with respect to the action of the rotation functor $\sO_{X}$.\footnote{In the rest of this subsection we change the notation from $W'$ to $X$ to denote an ordinary GM fourfold with smooth canonical quadric in order to be compatible with \cite{PPZ}.} 

In order to do this, we need to go through the construction of $\sigma_X$. Denote by $T$ the smooth canonical quadric of $X$. By \cite[Lemma 2.1]{PPZ} the blowup $\widetilde{X}$ of $X$ in $T$ resolves the linear projection from $T$, giving a commutative diagram
$$
\xymatrix{
& \ar_-b[ld] \widetilde{X} \ar[rd]^-\pi & \\
X & & Y
}$$
where $Y$ is a quadric threefold and $\pi$ is a conic fibration. We denote by $h$ the hyperplane class on $Y$. Denote by $\Cl_0$ and $\Cl_1$ the even and odd parts of the sheaf of Clifford algebras associated to $\pi$, and let $\Db(Y, \Cl_0)$ be the bounded derived category of coherent $\Cl_0$-modules on $Y$. By \cite{kuznetsov08quadrics} there is a semiorthogonal decomposition of the form
$$\Db(\widetilde{X})= \llangle \Phi(\Db(Y, \Cl_0)), \pi^*\Db(Y) \rrangle,$$
where $\Phi \colon \Db(Y, \Cl_0) \to \Db(\widetilde{X})$ is a fully faithful functor. By \cite[Proposition 2.3]{PPZ} there is a fully faithful embedding
$$\xi:= \rL_{\cO_{\tX}(-h)}\circ \rL_{\cU_{\tX}} \circ b^* \colon \Ku(X) \hookrightarrow \Db(\tX)$$
and if $\Ku(X)'$ denotes the essential image of $\xi$, then we have the semiorthogonal decomposition
$$\Phi(\Db(Y, \Cl_0))= \llangle \Ku(X)', \cU_{\tX}, \cF_a, \cF_b, \cG \rrangle,$$
where $\cF_a, \cF_b, \cG$ are rank $2$ vector bundles on $\tX$. In particular, $\Ku(X)'$ is a full subcategory of $\Phi(\Db(Y, \Cl_0))$ which is equivalent to $\Ku(X)$. Moreover, by \cite[Lemma 2.15]{PPZ} there is an equivalence 
$$\Xi \colon \Phi(\Db(Y, \Cl_0)) \simeq \Db(Y, \Cl_0);$$
the restriction of $\Xi$ to $\Ku(X)'$ (which we still denote by $\Xi$ for simplicity) induces a fully faithful embedding
$$\Xi \colon \Ku(X)' \hookrightarrow \Db(Y, \Cl_0)$$
whose essential image is denoted by $\Ku(Y, \Cl_0)$. A stability condition $\sigma$ on $\Ku(Y, \Cl_0)$ is constructed in \cite[Theorem 4.12]{PPZ} by restricting a double tilting of the slope stability on $\Db(Y, \Cl_0)$, and by definition $\sigma_X=\xi^{-1} \cdot \Xi^{-1} \cdot \sigma$. 

Now, consider the rotation functor
$$\sO_{\widetilde{X}}:=\rL_{\pi^*\Db(Y)} \circ (- \otimes \cO_{\tX}(H))[-1] \colon \Db(\tX) \to \Db(\tX).$$
On the other hand, the functor
$$- \otimes_{\Cl_0} \Cl_1 \colon \Db(Y, \Cl_0) \to \Db(Y, \Cl_0)$$
defines a $\bZ/2$-action on $\Db(Y, \Cl_0)$, since $\Cl_1 \otimes_{\Cl_0} \Cl_1 \cong \Cl_0$ by \cite[Corollary 3.9]{kuznetsov08quadrics}. We call it the rotation functor on $\Db(Y, \Cl_0)$. In the next lemmas, we show that the functors $\Xi$ and $\xi$ are equivariant with respect to the actions of the rotations functors.

\begin{lemma} \label{lemma_Xiisequivariant}
The rotation functor $\sO_{\widetilde{X}}$ induces a  $\bZ/2$-action on $\Phi(\Db(Y, \Cl_0))$, and the equivalence $\Xi \colon \Phi(\Db(Y, \Cl_0)) \simeq \Db(Y, \Cl_0)$ is $\bZ/2$-equivariant with respect to the actions of $\sO_{\widetilde{X}}$ and $- \otimes_{\Cl_0} \Cl_1$.
\end{lemma}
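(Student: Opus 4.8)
The plan is to reduce the assertion to a compatibility between geometric rotations on a conic fibration and the Clifford twist, which is essentially contained in \cite{kuznetsov08quadrics}. First I would note that, by construction, $\rL_{\pi^*\Db(Y)}$ sends every object of $\Db(\widetilde{X})$ into $(\pi^*\Db(Y))^{\perp}=\Phi(\Db(Y,\Cl_0))$ (with the semiorthogonality convention of \S\ref{subsec_preliminarydef}), so $\sO_{\widetilde{X}}=\rL_{\pi^*\Db(Y)}\circ(-\otimes\cO_{\widetilde{X}}(H))[-1]$ carries the subcategory $\Phi(\Db(Y,\Cl_0))$ into itself. Thus the statement reduces to: (i) the resulting endofunctor of $\Phi(\Db(Y,\Cl_0))$ is carried by $\Xi$ to $-\otimes_{\Cl_0}\Cl_1$; and (ii) together with the evident $\bZ/2$-structure on $-\otimes_{\Cl_0}\Cl_1$ this exhibits $\sO_{\widetilde{X}}|_{\Phi(\Db(Y,\Cl_0))}$ as a $\bZ/2$-action.

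The key point is a natural isomorphism of functors
\[
\sO_{\widetilde{X}}\circ\Phi\;\simeq\;\Phi\circ(-\otimes_{\Cl_0}\Cl_1)\colon\Db(Y,\Cl_0)\longrightarrow\Db(\widetilde{X}).
\]
This is the Clifford-theoretic description of the ``rotation'' of the decomposition $\Db(\widetilde{X})=\langle\Phi(\Db(Y,\Cl_0)),\pi^*\Db(Y)\rangle$ by $\cO_{\widetilde{X}}(H)$, and follows from the analysis of the functor $\Phi$ and its mutations for quadric (here conic) fibrations in \cite{kuznetsov08quadrics}, where consecutive pieces of the standard decomposition of $\Db(\widetilde{X})$ are related by left mutations through $\pi^*\Db(Y)$ and tensoring by $\Cl_1$. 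Composing with $\Xi\colon\Phi(\Db(Y,\Cl_0))\xrightarrow{\ \sim\ }\Db(Y,\Cl_0)$ of \cite[Lemma 2.15]{PPZ} and using the explicit relation between $\Xi$ and $\Phi$ recorded there yields the required natural isomorphism $\Xi\circ\sO_{\widetilde{X}}\simeq(-\otimes_{\Cl_0}\Cl_1)\circ\Xi$ on $\Phi(\Db(Y,\Cl_0))$; in particular $\sO_{\widetilde{X}}$ restricts to an autoequivalence there. In carrying this out one must reconcile the normalization of $\Phi$ in \cite{PPZ} with that of \cite{kuznetsov08quadrics} and keep track of the twist by $\cO_{\widetilde{X}}(H)$ and of the shift $[-1]$ so that the isomorphism is the correct one.

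Finally I would promote this to a genuine $\bZ/2$-action. On $\Db(Y,\Cl_0)$ the functor $-\otimes_{\Cl_0}\Cl_1$ has a canonical $\bZ/2$-action structure: since $\Cl_\bullet=\Cl_0\oplus\Cl_1$ is a $\bZ/2$-graded sheaf of algebras, the isomorphism $\Cl_1\otimes_{\Cl_0}\Cl_1\cong\Cl_0$ of \cite[Corollary 3.9]{kuznetsov08quadrics} is compatible with the associativity constraints, which provides the coherence data making $-\otimes_{\Cl_0}\Cl_1$ the $\bZ/2$-action arising from the module structure of $\Db(Y,\Cl_0)$ over $\bZ/2$-graded complexes. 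Transporting this $\bZ/2$-action along $\Xi^{-1}$ and identifying its generator with $\sO_{\widetilde{X}}$ via the isomorphism of the previous paragraph gives the asserted $\bZ/2$-action on $\Phi(\Db(Y,\Cl_0))$, and makes $\Xi$ equivariant by construction. I expect the main obstacle to be precisely this middle step: proving the rotation-equals-Clifford-twist identity as an isomorphism of functors with all of their structure, rather than merely as autoequivalences or on the level of numerical Grothendieck groups, which forces one to work with the explicit Fourier--Mukai kernels defining $\Phi$ and $\Xi$.
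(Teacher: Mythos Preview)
Your proposal and the paper's proof take genuinely different routes to the key intertwining $\Xi\circ\sO_{\widetilde{X}}\simeq(-\otimes_{\Cl_0}\Cl_1)\circ\Xi$. You aim to extract this directly from the Clifford analysis of \cite{kuznetsov08quadrics}, working with the explicit Fourier--Mukai kernels for $\Phi$ and $\Xi$ and tracking the normalizations; you correctly flag this as the main obstacle, and indeed carrying it through would require identifying $\cO_{\widetilde{X}}(H)$ with the relative $\cO(1)$ of the conic fibration $\pi$ (note $H=h+E$ here) and matching the shift conventions. The paper instead bypasses this computation entirely with a Serre functor trick: since any equivalence intertwines Serre functors, and both Serre functors are known explicitly --- $\rS_{\Phi}^{-1}\simeq\rL_{\pi^*\Db(Y)}\circ(-\otimes\cO_{\widetilde{X}}(H+h))[-4]$ by \cite[Proposition 3.8]{kuznetsov-cubic} and $\rS_{\Cl_0}^{-1}\simeq-\otimes_{\Cl_0}\Cl_1(h)[-3]$ by \cite[Lemma 4.5]{PPZ} --- the identity $\Xi\circ\rS_{\Phi}^{-1}\simeq\rS_{\Cl_0}^{-1}\circ\Xi$ is automatic. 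One then observes $\rL_{\pi^*\Db(Y)}(A(H+h))\cong\rL_{\pi^*\Db(Y)}(A(H))\otimes\cO_{\widetilde{X}}(h)$ and uses the explicit form $\Xi=\pi_*(\cG^\vee\otimes-)$ together with the projection formula to cancel the common $\cO_Y(h)$-twist, yielding $\Xi\circ\sO_{\widetilde{X}}\simeq(-\otimes_{\Cl_0}\Cl_1)\circ\Xi$ directly. This is both shorter and avoids precisely the kernel-level verification you identify as the hard step; it also gives the isomorphism of functors (not merely of endofunctors or numerically) for free, since the commutation with Serre functors is canonical. Your approach would work in principle, but the Serre functor argument is the cleaner one here.
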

\begin{proof}
First note that the restriction of $\sO_{\tX}$ to $\Phi(\Db(Y, \Cl_0))$ has image in $\Phi(\Db(Y, \Cl_0))$, thus $\sO_{\tX}$ defines an endofunctor $\sO_{\tX} \colon \Phi(\Db(Y, \Cl_0)) \to \Phi(\Db(Y, \Cl_0))$ which is an autoequivalence by \cite[Proposition 3.8]{kuznetsov-cubic}. 

Set $F:= - \otimes_{\Cl_0} \Cl_1$. We are going to show that there is an isomorphism of functors $\Xi \circ \sO_{\tX} \simeq F \circ \Xi$. As a consequence, we have that $\sO_{\tX}$ induces a $\bZ/2$-action on $\Phi(\Db(Y, \Cl_0))$, as $F$ does on $\Db(Y, \Cl_0)$. 

To prove the claim, note that the Serre functor of $\Phi(\Db(Y, \Cl_0))$ satisfies 
$$\rS^{-1}_{\Phi}:= \rL_{\pi^*\Db(Y)} \circ (- \otimes \cO_{\tX}(H+h))[-4]$$
 by \cite[Proposition 3.8]{kuznetsov-cubic}, since the canonical divisor of $\tX$ is $K_{\tX}=-H-h$. On the other hand, the Serre functor of $\Db(Y, \Cl_0)$ satisfies 
$$\rS_{\Cl_0}^{-1}=- \otimes_{\Cl_0} \Cl_1(h)[-3]$$
by \cite[Lemma 4.5]{PPZ}. Since every equivalence commutes with the Serre functor, we have
$$\Xi \circ \rS^{-1}_{\Phi} \simeq \rS^{-1}_{\Cl_0} \circ \Xi.$$
Spelling this out and using that 
$$\rL_{\pi^*\Db(Y)}(A(H+h))\cong \rL_{\pi^*\Db(Y)(-h)}(A(H)) \otimes \cO_{\tX}(h) \cong \rL_{\pi^*\Db(Y)}(A(H)) \otimes \cO_{\tX}(h)$$
for every $A \in \Phi(\Db(Y, \Cl_0))$, we get 
$$\Xi (\sO_{\tX}(-) \otimes \cO_{\tX}(h)) \simeq \Xi(-) \otimes_{\Cl_0} \Cl_1(h).$$
Recall that 
$$\Xi= \pi_*(\cG^\vee \otimes -)$$
by \cite[Eq.(2.24)]{PPZ}, where $\cG$ is a rank $2$ vector bundle on $\tX$. By projection formula and the fact that $\cO_{\tX}(h)=\pi^*\cO_Y(h)$, we see that
$$\Xi (\sO_{\tX}(-)) \otimes \cO_Y(h) \simeq \Xi(-) \otimes_{\Cl_0} \Cl_1(h),$$
which proves the claim.
\end{proof}

\begin{lemma}
\label{lemma_xiisequivariant}
The functor $\xi$ is $\bZ/2$-equivariant, i.e.\ there is an isomorphism of functors 
\begin{equation}
\xi \circ \sO_X \simeq \sO_{\tX} \circ \xi.
\label{eq_xivsrotationfunctors}    
\end{equation}
\end{lemma}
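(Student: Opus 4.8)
The plan is to expand both composites in the claimed isomorphism, reduce it to an identity between compositions of mutation functors on $\Db(\tX)$, and then establish that identity by a direct mutation calculation exploiting the blowup $b$ and the conic bundle $\pi$.

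First I would unwind all the functors. Recalling $\sO_X = \rL_{\cB_X}(- \otimes \cO_X(H))[-1]$ with $\cB_X = \langle \cO_X, \cU_X^{\vee} \rangle$, so that $\rL_{\cB_X} \simeq \rL_{\cO_X} \circ \rL_{\cU_X^{\vee}}$, together with $\xi = \rL_{\cO_{\tX}(-h)} \circ \rL_{\cU_{\tX}} \circ b^*$ and $\sO_{\tX} = \rL_{\pi^*\Db(Y)}(- \otimes \cO_{\tX}(H))[-1]$, I would use two elementary facts: (i) since $H$ is pulled back from $X$ along $b$, the twist $- \otimes \cO_{\tX}(H)$ commutes with $b^*$ and with any mutation $\rL_F$ (turning it into $\rL_{F \otimes \cO_{\tX}(H)}$); and (ii) since $b$ is the blowup of a smooth subvariety, $Rb_*\cO_{\tX} \simeq \cO_X$, hence $Rb_* Lb^* \simeq \id$ and $\Hom_{\tX}(b^*A, b^*B) \simeq \Hom_X(A,B)$, so that applying $b^*$ to the triangles defining $\rL_{\cO_X}$ and $\rL_{\cU_X^{\vee}}$ and using $b^*\cO_X \simeq \cO_{\tX}$, $b^*\cU_X^{\vee} \simeq \cU_{\tX}^{\vee}$ gives $b^*\bigl(\rL_{\cO_X}\rL_{\cU_X^{\vee}}(-)\bigr) \simeq \rL_{\cO_{\tX}}\rL_{\cU_{\tX}^{\vee}}\bigl(b^*(-)\bigr)$. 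Writing $F_E := b^*E \otimes \cO_{\tX}(H)$ for $E \in \Ku(X)$, these facts turn the two sides of \eqref{eq_xivsrotationfunctors} into
\begin{align*}
\xi(\sO_X(E)) &\simeq \rL_{\cO_{\tX}(-h)}\, \rL_{\cU_{\tX}}\, \rL_{\cO_{\tX}}\, \rL_{\cU_{\tX}^{\vee}}(F_E)[-1], \\
\sO_{\tX}(\xi(E)) &\simeq \rL_{\pi^*\Db(Y)}\, \rL_{\cO_{\tX}(H-h)}\, \rL_{\cU_{\tX}(H)}(F_E)[-1],
\end{align*}
so the lemma is reduced to the identity
\[
\rL_{\cO_{\tX}(-h)}\, \rL_{\cU_{\tX}}\, \rL_{\cO_{\tX}}\, \rL_{\cU_{\tX}^{\vee}} \;\simeq\; \rL_{\pi^*\Db(Y)}\, \rL_{\cO_{\tX}(H-h)}\, \rL_{\cU_{\tX}(H)}
\]
of functors on the subcategory $b^*\bigl(\Ku(X) \otimes \cO_X(H)\bigr) \subset \Db(\tX)$.

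To prove this identity I would play off the two semiorthogonal decompositions of $\Db(\tX)$: the blowup one, in which $b^*\Db(X)$ is a semiorthogonal component with complement generated by pushforwards of twists from the exceptional divisor of $b$, and the conic-bundle one $\Db(\tX) = \langle \Phi(\Db(Y, \Cl_0)), \pi^*\Db(Y) \rangle$ with $\Phi(\Db(Y, \Cl_0)) = \langle \Ku(X)', \cU_{\tX}, \cF_a, \cF_b, \cG \rangle$. Applied to an object $F_E$, many of the mutations on each side are controlled by the orthogonality relations satisfied by $E \in \Ku(X) = \langle \cO_X, \cU_X^{\vee}, \cO_X(H), \cU_X^{\vee}(H) \rangle^{\perp}$; I would show that both sides reduce to the projection onto $\Ku(X)'$ followed by a single residual mutation, and then check that these residual mutations agree — using that $\cO_{\tX}(h)$ and $\cO_{\tX}(2h)$ are pulled back along $\pi$, and the explicit relations among $\cU_{\tX}, \cF_a, \cF_b, \cG$ and the generators of $\pi^*\Db(Y)$ recorded in \cite{PPZ}.

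The hardest part will be this last mutation calculation: the blowup and conic-bundle decompositions of $\Db(\tX)$ have different shapes (different numbers of ``extra'' exceptional objects), so one must track carefully how the exceptional objects of one decomposition are produced from those of the other under the relevant mutations, which is exactly where the geometry of the diagram $X \leftarrow \tX \to Y$ and the bundles $\cU, \cF_a, \cF_b, \cG$ enters. As a possible simplification — not obviously shorter — one could instead transport the whole comparison along the restriction of $\Xi \circ \Phi^{-1}$ to $\Db(Y, \Cl_0)$, where by Lemma~\ref{lemma_Xiisequivariant} the functor $\sO_{\tX}$ becomes $- \otimes_{\Cl_0} \Cl_1$, and carry it out there.
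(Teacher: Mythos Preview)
Your reduction is correct and essentially the same as the paper's: after commuting twists through mutations and $b^*$, both sides become compositions of mutation functors applied to $b^*E(H)$, and the paper reduces to exactly the identity you write (equivalently, after untwisting by $H$, to $\rL_{\langle \cO(-h-H), \cU(-H), \cO(-H) \rangle} \simeq \rL_{\pi^*\Db(Y)(-H)} \circ \rL_{\cO(-h)}$ on $\Ku(X)^o := \rL_{\cU}b^*\Ku(X)$).

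The gap is in the second half: what you describe as ``play off the two semiorthogonal decompositions'' and ``both sides reduce to the projection onto $\Ku(X)'$ followed by a single residual mutation'' is not a proof and does not accurately capture what needs to happen. Both sides already land in $\Ku(X)'$, so saying each is ``the projection onto $\Ku(X)'$'' is vacuous; the content is entirely in matching the specific mutations. The paper does \emph{not} use the conic-bundle exceptionals $\cF_a, \cF_b, \cG$ at all here. Instead it fixes the decomposition $\Db(Y) = \langle \cO(-h), \cO, \cS^\vee, \cO(h) \rangle$ with $\cS$ the spinor bundle on $Y$, brings in the tautological quotient $\cQ$ on $\Gr(2,5)$ and the quotient $\cW$ on $\Gr(2,V_4)$, and performs a chain of explicit mutation identities (e.g.\ $\cW(-H) \cong \rL_{\cO(-h)}\cQ(-H)$ and $\rR_{\cQ(-H)}\cO(-h) = \cS^\vee(-H)[-1]$) to rewrite the right-hand side. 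The key nontrivial step is a sublemma showing $\Hom^\bullet(M, \rL_{\langle \cO(-H), \cQ(-H)\rangle}F) = 0$ for $F \in \Ku(X)^o$, where $M = \rL_{\langle \cO(-H), \cQ(-H)\rangle}\cS^\vee(-H)$; this vanishing is proved via the exceptional-divisor sequences and Serre duality on both $\tX$ and $X$, and is what finally kills the extra exceptional object on the conic-bundle side. None of this machinery is visible in your sketch, and it is not clear that an approach via $\cF_a, \cF_b, \cG$ would work without reintroducing these same relations.

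Your alternative suggestion of transporting to $\Db(Y, \Cl_0)$ is legitimate in principle, but note that you would then need an explicit description of $\Xi \circ \xi$ on $\Ku(X)$ and to compare its intertwining with $-\otimes_{\Cl_0}\Cl_1$ against $\sO_X$; this is not obviously simpler and the paper does not take that route.
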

\begin{proof}
Note that the left-hand-side of \eqref{eq_xivsrotationfunctors} shifted by $1$ is
$$\rL_{\cO(-h)}\circ \rL_{\cU} \circ b^* \circ \rL_{\langle \cO, \cU^\vee \rangle} \circ (- \otimes \cO_X(H)) \simeq (- \otimes \cO_{\tX}(H)) \circ \rL_{\langle \cO(-h-H), \cU(-H), \cO(-H) \rangle} \circ \rL_{\cU} \circ b^*$$
using $\cU^\vee(-H) \cong \cU$.\footnote{In this proof, we often simply write $\cU$ instead of $\cU_{\tX}$ for the pullback of $\cU$ to $\tX$, and similarly for other pullback bundles.} Similarly, the right-hand-side shifted by $1$ is
$$\rL_{\pi^*\Db(Y)} \circ (- \otimes \cO_{\tX}(H)) \circ \rL_{\cO(-h)}\circ \rL_{\cU} \circ b^* \simeq (- \otimes \cO_{\tX}(H)) \circ \rL_{\pi^*\Db(Y)(-H)} \circ \rL_{\cO(-h)} \circ \rL_{\cU} \circ b^*.$$
We claim that
\begin{equation} \label{eq_relationonleftmutations}
\rL_{\langle \cO(-h-H), \cU(-H), \cO(-H) \rangle} \simeq \rL_{\pi^*\Db(Y)(-H)} \circ \rL_{\cO(-h)}
\end{equation}
on the subcategory $\Ku(X)^o:=\rL_{\cU}b^*(\Ku(X))$. 

First we consider the right-hand-side. We may write $\Db(Y)=\langle \cO(-h), \cO, \cS^\vee, \cO(h) \rangle$, where $\cS$ is the spinor bundle on $Y$ (equal to the restriction of the tautological subbundle $\cS$ on the Grassmannian $\Gr(2,V_4)$ of which $Y$ is a hyerplane section). 
Then the right-hand-side of \eqref{eq_relationonleftmutations} is identified with
$$\rL_{\langle  \cO(-h-H), \cO(-H), \cS^\vee(-H), \cO(h-H) \rangle} \circ \rL_{\cO(-h)}.$$
Note that $\Ku(X)^o$ and $\cO_{\tX}(-h)$ are right orthogonal to $\cO(h-H)$ by \cite[(2.17)]{PPZ}. Also note that 
$$(\langle  \cO(-h-H), \cO(-H), \cS^\vee(-H) \rangle, \cO(-h))$$ 
are semiorthogonal. Indeed,  by Serre duality
$$\Hom^\bullet(\cO(-h),\cS^\vee(-H))=\Hom^\bullet(\cS^\vee, \cO(H-h-h-H))=\Hom^\bullet(\cS^\vee, \cO(-2h))=0.$$
Similarly, if $E$ denotes the exceptional divisor of $b \colon \tX \to X$, then 
$$\Hom^{\bullet}(\cO(-h), \cO(-H)) = \rH^{\bullet}(\cO(-E)) = 0,$$ 
and $\Hom^{\bullet}(\cO(-h), \cO(-h-H)) = \rH^{\bullet}(\cO(-H)) = 0$. 
As a consequence, we can write the right-hand-side of \eqref{eq_relationonleftmutations} as
\begin{equation} 
\label{RHS-mutation-proof}
\rL_{\langle  \cO(-h-H), \cO(-H), \cS^\vee(-H), \cO(-h) \rangle}
\end{equation} 
on the subcategory $\Ku(X)^{o}$. 

On the other hand, if we right mutate $\cU(-H)$ through $\cO(-H)$, we obtain $\cQ(-H)$ (up to shift), where $\cQ$ denotes the tautological quotient bundle on $\Gr(2,5)$. So the left-hand-side of \eqref{eq_relationonleftmutations} can be written as
\begin{equation}
\label{LHS-mutation-proof}
\rL_{\langle \cO(-h-H), \cO(-H), \cQ(-H) \rangle}.
\end{equation}
Our goal is to manipulate \eqref{RHS-mutation-proof} into this form. 

Note that the tautological bundle $\cS$ and the quotient bundle $\cW$ on $\Gr(2,V_4)$ satisfy the following relation:
$$\cS \cong \cW^\vee.$$
Consider the following commutative diagram of sheaves on $\tX$(see \cite[Lemma 2.6]{PPZ}):
$$
\xymatrix{
\cS^\vee \ar[r] & \cU^\vee \ar[r] & \cO_E \\
\cO^{\oplus 4} \ar[u] \ar[r]& \cO^{\oplus 5} \ar[u] \ar[r] & \cO \ar[u] \\
\cW^\vee \ar[u] \ar[r] & \cQ^\vee \ar[u] \ar[r] & \cO(-E) \ar[u].
}
$$
Since $\cS^\vee = \rR_{\cO_E} \cU^\vee$, we deduce that $\cW^\vee = \rR_{\cO(-E)} \cQ^\vee$. Dualizing, we get that
$\cW = \rL_{\cO(E)} \cQ$, thus $$\cW(-H) \cong \rL_{\cO(E-H)} \cQ(-H)=\rL_{\cO(-h)} \cQ(-H).$$
Putting everything together, we get
\begin{align*}
\rL_{\langle \cS^\vee(-H), \cO(-h) \rangle}& = \rL_{\langle \cW(-H), \cO(-h) \rangle}= \rL_{\langle \rL_{\cO(-h)}(\cQ(-H)), \cO(-h) \rangle} \\
& = \rL_{\rL_{\cO(-h)}(\cQ(-H))} \circ \rL_{\cO(-h)} \cong \rL_{\cO(-h)} \circ \rL_{\cQ(-H)}.
\end{align*}
Thus~\eqref{RHS-mutation-proof} can be written as
$$\rL_{\langle  \cO(-h-H), \cO(-H), \cO(-h), \cQ(-H) \rangle}.$$
Using the sequence
$$0 \to \cO(E) \to \cQ \to \cW \to 0,$$
we deduce that 
$$\rR_{\cQ(-H)}\cO(-h)=\cW(-H)[-1]=\cS^\vee(-H)[-1].$$
So~\eqref{RHS-mutation-proof} is equal to 
\begin{equation*}
\rL_{\langle  \cO(-h-H), \cO(-H), \cQ(-H), \cS^\vee(-H) \rangle}.
\end{equation*}

Now set
$$M:=\rL_{\langle \cO(-H),\cQ(-H) \rangle} \cS^\vee(-H).$$
Thus~\eqref{RHS-mutation-proof} can be written as
\begin{equation}
\label{eq_rhs}
\rL_{\cO(-h-H)} \circ \rL_{\langle M, \cO(-H), \cQ(-H) \rangle}.
\end{equation} 

\begin{sublemma}
\label{sublemma-mutation}
If $F \in \Ku(X)^o$, then
$$\Hom^\bullet(M, \rL_{\langle \cO(-H), \cQ(-H) \rangle}F)=0.$$
\end{sublemma}
\begin{proof}
By definition there exists an exact triangle
$$A \to \cS^\vee(-H) \to M,$$
where $A \in \langle \cO(-H), \cQ(-H) \rangle$. Thus 
$$\Hom^\bullet(M, \rL_{\langle \cO(-H), \cQ(-H) \rangle}F)=\Hom^\bullet(\cS^\vee(-H), \rL_{\langle \cO(-H), \cQ(-H) \rangle}F).$$
Recall the sequence
$$0 \to \cS^\vee(-H) \to \cU^\vee(-H) \to \cO_E(-H) \to 0,$$
where $\cU^\vee(-H) \cong \cU$. Note that
$$\Hom^\bullet(\cU, \rL_{\langle \cO(-H), \cQ(-H) \rangle}F)=0,$$
since $\Ku(X)^o$ is right orthogonal to $\cU$ (by definition) and
$$\Hom^\bullet(\cU, \cO(-H))=0= \Hom^\bullet(\cU, \cQ(-H)).$$
It follows that
$$\Hom^\bullet(\cS^\vee(-H), \rL_{\langle \cO(-H), \cQ(-H) \rangle}F)=\Hom^{\bullet+1}(\cO_E(-H), \rL_{\langle \cO(-H), \cQ(-H) \rangle}F).$$

Now consider the sequence
$$0 \to \cO(-E-H) \to \cO(-H) \to i_*\cO_E(-H).$$
We see that
$$\Hom^{\bullet}(i_*\cO_E(-H), \rL_{\langle \cO(-H), \cQ(-H) \rangle}F)=\Hom^{\bullet-1}(\cO(-E-H), \rL_{\langle \cO(-H), \cQ(-H) \rangle}F).$$
By Serre duality on $\tX$, this can be computed through
$$\Hom^{\bullet}(\rL_{\langle \cO(-H), \cQ(-H) \rangle}F, \cO(-E-H-H-h))$$
which is
\begin{equation}
\label{eq_sdontX}
\Hom^{\bullet}(\rL_{\langle \cO(-H), \cQ(-H) \rangle}F, \cO(-3H)).
\end{equation}

If $F= \rL_{\cU}b^*G$ for $G \in \Ku(X)$, then we have
$$\rL_{\langle \cO(-H), \cQ(-H) \rangle}F=\rL_{\langle \cO(-H), \cQ(-H) \rangle}\rL_{\cU}b^*G \cong b^* \rL_{\langle \cO(-H), \cQ(-H) \rangle}\rL_{\cU} G.$$
So by Serre duality on $X$, we can compute \eqref{eq_sdontX} via
\begin{align*}
\Hom^{\bullet}(b^* \rL_{\langle \cO(-H), \cQ(-H) \rangle}\rL_{\cU} G, \cO(-3H)) & \cong \Hom^{\bullet}(\rL_{\langle \cO(-H), \cQ(-H) \rangle}\rL_{\cU} G, \cO(-3H))\\
& \cong \Hom^{\bullet}(\cO(-H),\rL_{\langle \cO(-H), \cQ(-H) \rangle}\rL_{\cU} G)=0.
\end{align*}
This vanishing implies the statement.
\end{proof}

By Sublemma~\ref{sublemma-mutation} we see that
$$\rL_{\langle M, \cO(-H), \cQ(-H) \rangle} \simeq \rL_{\langle \cO(-H), \cQ(-H) \rangle}$$
on $\Ku(X)^o$. This proves that \eqref{eq_rhs} is equal to $\rL_{\langle \cO(-h-H), \cO(-H), \cQ(-H) \rangle}$, i.e. to~\eqref{LHS-mutation-proof}, on $\Ku(X)^o$, as we wanted.
\end{proof}

Now consider the restriction of $F:= - \otimes_{\Cl_0} \Cl_1$ to $\Ku(Y, \Cl_0)$. Note that $F$ is an autoequivalence of $\Ku(Y, \Cl_0)$. Indeed, by \cite[Theorem 2.11]{PPZ} we have the semiorthogonal decomposition of the form
$$\Db(Y, \Cl_0)=\langle \Ku(Y, \Cl_0), \Cl_0, \Cl_1, \cR_a, \cR_b \rangle.$$
For every $A \in \Ku(Y, \Cl_0)$, using that $F(\Cl_1)=\Cl_0$ and $F$ either fixes $\cR_a$ and $\cR_b$ or swaps them (see the proof of \cite[Lemma 4.9]{PPZ}), we have
$$\Hom^\bullet(\Cl_0, A \otimes_{\Cl_0} \Cl_1)=\Hom^\bullet(\Cl_1, A)=0= \Hom^\bullet(\Cl_1, A \otimes_{\Cl_0} \Cl_1)=\Hom^\bullet(\Cl_0, A)=0,$$
$$\Hom^\bullet(\cR_c, A \otimes_{\Cl_0} \Cl_1)=\Hom^\bullet(\cR_c \otimes_{\Cl_0} \Cl_1, A)=0$$
for $c=a,b$, which implies $F(A) \in \Ku(Y, \Cl_0)$. Since $F$ is an equivalence, the same holds for its restriction. Thus by abuse of notation we set
$$F:=- \otimes \Cl_1 \colon \Ku(Y, \Cl_0) \xrightarrow{\sim} \Ku(Y, \Cl_0).$$
By Lemma~\ref{lemma_xiisequivariant} we also have the autoequivalence
$$\sO_{\tX} \colon \Ku(X)' \xrightarrow{\sim} \Ku(X)'.$$
\begin{proof}[Proof of Theorem~\ref{thm_invariantstab}]
By Lemma~\ref{lemma_fromspecialtoordinary}, it is enough to show that $\sO_X \cdot \sigma_X= \sigma_X$ in the case of an ordinary GM fourfold $X$ with smooth canonical quadric. Recall that then $\sigma_X=\xi^{-1} \cdot \Xi^{-1} \cdot \sigma$ where $\sigma$ is a stability condition $\sigma$ constructed on $\Ku(Y, \Cl_0)$. 

First, note that $F \cdot \sigma=\sigma$. Indeed, recall that $\sigma$ is the restriction of the weak stability condition $\sigma_\alpha=(\Coh^0_{\alpha, \beta}(Y, \Cl_0), Z^0_{\alpha, \beta})$, where $\beta=-\frac{5}{4}$ and $0< \alpha < \frac{1}{4}$. By definition 
\begin{equation*}
    F \cdot \sigma=(F(\Coh^0_{\alpha, \beta}(Y, \Cl_0)|_{\Ku(Y, \Cl_0)}), Z^0_{\alpha, \beta}|_{\Knum(\Ku(Y, \Cl_0))} \circ F_*^{-1}),
\end{equation*}
where $F_*$ denotes the automorphism of $\Knum(\Ku(Y, \Cl_0))$ induced by $F$. Since tensoring by $\Cl_1$ does not change the Chern character of objects in $\Db(Y, \Cl_0)$ in degrees $\leq 2$ by \cite[Lemma 3.9(3)]{PPZ}, we have that $Z^0_{\alpha, \beta}|_{\Knum(\Ku(Y, \Cl_0))} \circ F_*^{-1}=Z^0_{\alpha, \beta}|_{\Knum(\Ku(Y, \Cl_0))}$. Moreover, tensoring by $\Cl_1$ preserves $\Coh(Y, \Cl_0)$ and slope stability, thus $\Coh^\beta(Y, \Cl_0) \otimes_{\Cl_0} \Cl_1=\Coh^\beta(Y, \Cl_0)$. We also observe that if $E \in \Coh^\beta(Y, \Cl_0)$ is $\sigma_{\alpha, \beta}$-semistable, then $E \otimes_{\Cl_0} \Cl_1$ is $\sigma_{\alpha, \beta}$-semistable with the same $\mu_{\alpha,\beta}$-slope, since $\ch(E \otimes_{\Cl_0} \Cl_1)_{\leq 2}=\ch(E)_{\leq 2}$. Using the definition of $\Coh^0_{\alpha,\beta}(Y, \Cl_0)$ as the tilting at $\mu_{\alpha, \beta}=0$ of $\Coh^\beta(Y, \Cl_0)$ we conclude that $\Coh^0_{\alpha, \beta}(Y, \Cl_0) \otimes_{\Cl_0} \Cl_1=\Coh^0_{\alpha, \beta}(Y, \Cl_0)$ and so $F(\Coh^0_{\alpha, \beta}(Y, \Cl_0)|_{\Ku(Y, \Cl_0)})=\Coh^0_{\alpha, \beta}(Y, \Cl_0)|_{\Ku(Y, \Cl_0)}$, as we wanted.

Now note that the stability condition $\sigma':=\Xi^{-1} \cdot \sigma$ on $\Ku(X)'$ satisfies $\sO_{\tX}^{-1} \cdot \sigma'=\sigma'$. Indeed, by Lemma \ref{lemma_Xiisequivariant} and the previous computation, we have
$$\sO_{\tX}^{-1} \cdot \sigma'= \sO_{\tX}^{-1} \cdot \Xi^{-1} \cdot \sigma = \Xi^{-1} \cdot F^{-1} \cdot \sigma= \Xi^{-1} \cdot \sigma=\sigma'.$$
As a consequence, by Lemma~\ref{lemma_xiisequivariant} it follows that
$$\sO_X^{-1} \cdot \sigma_X = \sO_X^{-1} \cdot \xi^{-1} \cdot \sigma' = \xi^{-1} \cdot \sO_{\tX}^{-1} \cdot \sigma'= \xi^{-1} \cdot \sigma'=\sigma_X.$$
Equivalently, $\sO_X \cdot \sigma_X= \sigma_X$, proving the statement.
\end{proof}

\subsection{Serre invariant stability conditions on $\Ku(X)$}

Let us now denote by $W$ a special GM fourfold and by $X$ its associated ordinary GM threefold.

By Theorem~\ref{thm_invariantstab} and Theorem~\ref{theorem-equivariant-stability} a stability condition $\sigma_W \in \Stab^\circ(\Ku(W))$ induces a stability condition $\sigma_W^{\bZ/2}$ on the invariant category $\Ku(W)^{\bZ/2}$ and thus a full numerical stability condition 
\begin{equation} \label{eq_stabinducedonordinaryGM}
\sigma_X:=\Phi_0^{-1}(\sigma_W^{\bZ/2})    
\end{equation}
on $\Ku(X)$. This provides an alternative construction to \cite{BLMS} of stability conditions on $\Ku(X)$.

Recall that in \cite{BLMS} stability conditions on $\Ku(X)$ are constructed by restriction of double tilting of slope stability on $\Db(X)$ (see \eqref{eq_stabonKu}). We denote one of these stability conditions by $\tau_X$.

\begin{corollary} \label{cor_sigmaassigmaX}
The stability condition $\sigma_X$ is in the same $\widetilde{\emph{GL}}^+(2, \bR)$-orbit of every stability condition $\tau_X$ constructed in \cite{BLMS}. In particular, $\sigma_X$ is proper.
\end{corollary}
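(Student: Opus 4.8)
The plan is to reduce the statement to the known fact that Serre invariant stability conditions on $\Ku(X)$ form a single $\widetilde{\GL}^+_2(\bR)$-orbit. Concretely, I would first show that $\sigma_X$ is a Serre invariant full numerical stability condition on $\Ku(X)$. Granting this, since the stability conditions $\tau_X$ of \cite{BLMS} are Serre invariant by \cite[Theorem 1.1]{PR} and all Serre invariant (full numerical) stability conditions on $\Ku(X)$ lie in one $\widetilde{\GL}^+_2(\bR)$-orbit by \cite[Corollary 4.5]{PR}, it follows that $\sigma_X$ lies in the $\widetilde{\GL}^+_2(\bR)$-orbit of each $\tau_X$. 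Properness of $\sigma_X$ then follows because $\tau_X$ is proper (by the construction recalled in \S\ref{subsec_dcdcandstability}, using \cite[Theorem 6.9]{BLMS} and \cite{BLMNPS}) and elements of $\widetilde{\GL}^+_2(\bR)$ do not change the collection of (semi)stable objects of any given class, hence by Theorem~\ref{thm_BLMNPS} do not change the associated moduli stacks.

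The key step is thus the Serre invariance of $\sigma_X$, which I would establish as follows. By Theorem~\ref{thm_invariantstab}, $\sigma_W$ is fixed by the $\bZ/2$-action on $\Ku(W)$ generated by the covering involution $\iota$, so Theorem~\ref{theorem-equivariant-stability} applies and, as observed immediately after that theorem, the induced stability condition $\sigma_W^{\bZ/2}$ on $\Ku(W)^{\bZ/2}$ is fixed by the residual $(\bZ/2)^{\vee}$-action, whose generator is $-\otimes\chi$. It then remains to identify, under the equivalence $\Phi_0 \colon \Ku(X) \xrightarrow{\sim} \Ku(W)^{\bZ/2}$, this residual action with the $\bZ/2$-action on the Enriques category $\Ku(X)$ whose generator $\tau$ satisfies $S_{\Ku(X)} \simeq \tau \circ [2]$. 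This is exactly the content of the compatibilities in \S\ref{subsec_generalsetting} applied to the double cover $W \to M$ ramified in $X$, where $d = 1$: by \eqref{eq_rotfunct_chi} we have $-\otimes\chi \simeq \sO_W^{\bZ/2}$, by \eqref{eq_intertwine} we have $\Phi_0^{-1} \circ \sO_W^{\bZ/2} \circ \Phi_0 \simeq \sO_X$, and by \eqref{eq_tau} we have $\tau \simeq \sO_X$. Hence $\Phi_0^{-1}(-\otimes\chi)\Phi_0 \simeq \tau$, so $\sigma_X$ is $\tau$-fixed. Since every stability condition is $[2]$-invariant (the shift $[2]$ acting on $\Stab(\Ku(X))$ through an element of $\widetilde{\GL}^+_2(\bR)$), $\tau$-invariance of $\sigma_X$ implies $(\tau\circ[2])$-invariance, i.e. $S_{\Ku(X)}$-invariance, which is Serre invariance in the sense of Definition~\ref{def_serreinv}.

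The main obstacle I anticipate is bookkeeping rather than anything deep: one must carefully track which $\bZ/2$-action is in play at each stage (the covering involution on $\Ku(W)$, the residual $(\bZ/2)^{\vee}$-action on $\Ku(W)^{\bZ/2}$, and the Enriques action on $\Ku(X)$), and make sure \eqref{eq_rotfunct_iota}--\eqref{eq_tau} are applied with the correct value $d = 1$ for this geometry. One should also confirm that $\sigma_X$ is genuinely a full numerical stability condition on $\Ku(X)$, so that the classification of \cite[Corollary 4.5]{PR} applies verbatim; this follows from the construction of $\sigma_X$ via Theorem~\ref{theorem-equivariant-stability} together with the fact that $\Knum(\Ku(X))$ has rank $2$. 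Finally, one must note that properness is inherited along a $\widetilde{\GL}^+_2(\bR)$-orbit, which is immediate since the moduli stacks $\cM_{\sigma_X}(\Ku(X),v)$ depend only on the set of semistable objects of class $v$.
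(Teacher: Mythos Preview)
Your proposal is correct and follows essentially the same approach as the paper: both arguments show that $\sigma_X$ is Serre invariant by transporting the $(\bZ/2)^{\vee}$-fixedness of $\sigma_W^{\bZ/2}$ through $\Phi_0$ via the identities \eqref{eq_rotfunct_chi}--\eqref{eq_tau}, then invoke \cite[Theorem 1.1]{PR} and the uniqueness of the Serre invariant orbit (the paper cites \cite[Lemmas 4.27--4.29]{JLLZ} alongside \cite[Corollary 4.5]{PR}). Your version is simply more explicit in unpacking the chain of identifications and in justifying properness along the $\widetilde{\GL}^+_2(\bR)$-orbit, which the paper leaves implicit.
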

\begin{proof}
Since $\sigma_W^{\bZ/2}$ is
fixed by the $\bZ/2$-action on $\Ku(W)^{\bZ/2}$ 
as remarked in \S\ref{subsec_dcdcandstability}, by \eqref{eq_tau} it follows that $\sigma_X$ is Serre invariant. By \cite[Theorem 1.1]{PR} the stability condition $\tau_X$ is Serre invariant. By \cite[Lemmas 4.27, 4.28, 4.29]{JLLZ} (see also \cite[Corollary 4.5]{PR}) there is a unique $\widetilde{\text{GL}}^+(2, \bR)$-orbit of Serre invariant stability conditions on $\Ku(X)$. This implies the statement.
\end{proof}

\begin{remark}
Up to now, the known stability conditions on $\Ku(X)$ are Serre invariant. It would be interesting to understand whether every stability condition on $\Ku(X)$ is Serre invariant.    
\end{remark}

As a consequence of Theorem~\ref{thm_invariantstab}, we remark the following identification of the stability conditions in $\Stab^\circ(\Ku(W))$ for a GM fourfold, which is of independent interest.

\begin{corollary} \label{cor_samestabinStabcirc}
Let $W$ be a GM fourfold. Then for every pair $\sigma^1_W$, $\sigma^2_W \in \Stab^\circ(\Ku(W))$, there exists $(M,g) \in \widetilde{\GL}_2^{+}(\bR)$ such that $\sigma^2_W= \sigma^1_W \cdot (M,g)$. 
\end{corollary}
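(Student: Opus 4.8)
The plan is to reduce the statement for an arbitrary GM fourfold $W$ to the case of an ordinary GM fourfold with smooth canonical quadric, where $\Stab^\circ(\Ku(W))$ is literally constructed, and then to leverage the Serre-invariance already established. First I would recall that by the very definition of $\Stab^\circ(\Ku(W))$ given in \cite{PPZ}, every $\sigma_W \in \Stab^\circ(\Ku(W))$ is obtained by transporting, via the equivalence $\Ku(W) \simeq \Ku(W')$ provided by \cite[Theorem 1.6]{KuzPerry:cones}, a stability condition on $\Ku(W')$ for an ordinary period partner $W'$ with smooth canonical quadric; and that in turn $\Ku(W') \simeq \Ku(Y,\Cl_0)$ via $\Xi \circ \xi$, under which $\sigma_{W'}$ pulls back from the explicitly constructed stability condition $\sigma$ on $\Ku(Y,\Cl_0)$. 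Since an equivalence of categories carries the $\widetilde{\GL}_2^+(\bR)$-action to itself, it suffices to prove that any two stability conditions on $\Ku(Y,\Cl_0)$ arising from the construction of \cite[Theorem 4.12]{PPZ} differ by an element of $\widetilde{\GL}_2^+(\bR)$, equivalently that all $\sigma_W^1,\sigma_W^2 \in \Stab^\circ(\Ku(W))$ lie in a single $\widetilde{\GL}_2^+(\bR)$-orbit.

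The cleanest route is via Serre-invariance. By Theorem~\ref{thm_invariantstab}, every $\sigma_W \in \Stab^\circ(\Ku(W))$ is $\bZ/2$-fixed for the covering involution; but the covering involution $\iota$ of $\Ku(W)$ satisfies $\rS_{\Ku(W)} \simeq \iota[2]$ (the analog, for CY2 covers of Enriques categories, of $\rS = \id[2]$ twisted by the residual action). More precisely, $\Ku(W)$ is a CY2 category, so $\rS_{\Ku(W)} \simeq [2]$, and being $\bZ/2$-fixed gives $\iota \cdot \sigma_W = \sigma_W$; combined with $\rS_{\Ku(W)} = [2]$ we see each $\sigma_W$ is trivially Serre-invariant, but the real content is that they all lie in one orbit. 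For this I would invoke the uniqueness of Serre-invariant stability conditions on $\Ku(X)$ for the associated ordinary GM threefold $X$: by Corollary~\ref{cor_sigmaassigmaX} (more precisely, by the argument proving it, using \cite[Corollary 4.5]{PR} and \cite[Lemmas 4.27--4.29]{JLLZ}) there is a unique $\widetilde{\GL}_2^+(\bR)$-orbit of Serre-invariant stability conditions on $\Ku(X)$. Applying the equivariant construction of Theorem~\ref{theorem-equivariant-stability} in reverse — that is, using that $\sigma_W = \sigma_W^{\bZ/2,\vee}$ is determined (up to the $\widetilde{\GL}_2^+(\bR)$-action and the scaling of Lemma~\ref{lemma_backtosamestability}) by the induced $\sigma_X$ on $\Ku(X) = \Ku(W)^{\bZ/2}$ — the uniqueness on $\Ku(X)$ propagates back up to $\Ku(W)$.

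So the key steps, in order, are: (1) reduce to an ordinary GM fourfold $W'$ with smooth canonical quadric via \cite{KuzPerry:cones}, noting equivalences respect the $\widetilde{\GL}_2^+(\bR)$-action; (2) for such $W'$, pass to the associated ordinary GM threefold $X$ with $\Ku(X) = \Ku(W')^{\bZ/2}$, and observe via Theorem~\ref{thm_invariantstab} plus Theorem~\ref{theorem-equivariant-stability} that each $\sigma^i_{W'} \in \Stab^\circ(\Ku(W'))$ descends to a Serre-invariant $\sigma^i_X$ on $\Ku(X)$; (3) invoke uniqueness of the Serre-invariant orbit on $\Ku(X)$ to get $(M_i,g_i)$ with $\sigma^2_X = \sigma^1_X \cdot (M_1, g_1)$; (4) lift this back: since $\sigma^i_X = \Phi_0^{-1}((\sigma^i_{W'})^{\bZ/2})$ and by Lemma~\ref{lemma_backtosamestability} the double-invariants operation recovers $\sigma^i_{W'}$ up to $\widetilde{\GL}_2^+(\bR)$ and scaling, conclude $\sigma^2_{W'} = \sigma^1_{W'} \cdot (M,g)$ for some $(M,g) \in \widetilde{\GL}_2^+(\bR)$; (5) transport back to $W$. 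The main obstacle is step (4): one must check that the correspondence $\sigma_W \leftrightarrow \sigma_X$ between $\bZ/2$-fixed stability conditions is compatible with the $\widetilde{\GL}_2^+(\bR)$-actions on both sides — i.e. that $(\sigma_W \cdot \tilde g)^{\bZ/2}$ corresponds to $\sigma_X \cdot \tilde g$ under $\Phi_0$ — and, conversely, that if two $\bZ/2$-fixed stability conditions on $\Ku(W)$ induce stability conditions on $\Ku(X)$ in the same $\widetilde{\GL}_2^+(\bR)$-orbit, then the originals were in the same orbit. The forward compatibility is immediate from the construction in Theorem~\ref{theorem-equivariant-stability} since $\cA^{\bZ/2}$ and $Z^{\bZ/2}$ are built functorially from $\cA$ and $Z$; the converse uses the injectivity statement from \cite{MMS} (that $\Stab_\Lambda(\cC)^G \hookrightarrow \Stab_\Lambda(\cC^G)$ is a closed embedding onto a union of components), together with Lemma~\ref{lemma_backtosamestability} to undo the squaring. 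Once these compatibilities are in hand the argument closes formally.
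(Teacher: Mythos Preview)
Your overall strategy matches the paper's: descend to the Enriques side $\Ku(X)$, invoke the uniqueness of the Serre-invariant $\widetilde{\GL}_2^+(\bR)$-orbit there (\cite[Corollary 4.5]{PR}), and lift back. Steps (1)--(3) are essentially what the paper does. However, your step (4) has a genuine gap.

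The injectivity you cite from \cite{MMS} and Lemma~\ref{lemma_backtosamestability} is formulated for stability conditions with respect to a \emph{fixed} lattice $\Lambda$. In the construction of Theorem~\ref{theorem-equivariant-stability}, the pair $\sigma^{\bZ/2} = (\cA^{\bZ/2}, Z)$ carries the \emph{same} central charge $Z \colon \Lambda \to \bC$ as $\sigma$. So in order to apply injectivity and conclude $(\sigma_W^1 \cdot (M,g))^{\bZ/2} = (\sigma_W^2)^{\bZ/2}$ in $\Stab_{\Knum(\Ku(W))}(\Ku(X))$, you would already need to know $M^{-1} Z_W^1 = Z_W^2$ on all of $\Knum(\Ku(W))$ --- which is part of what you are trying to prove. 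The equality $\sigma_X^1 \cdot (M,g) = \sigma_X^2$ of \emph{full numerical} stability conditions on $\Ku(X)$ only yields $M^{-1} Z_W^1 = Z_W^2$ on the image of $(\Forg \circ \Phi_0)_*$, namely the sublattice $-A_1^{\oplus 2} \subset \Knum(\Ku(W))$, which may be proper.

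The paper fills this gap with one additional input you are missing: by \cite[Lemma 4.17]{PPZ}, the central charge of every $\sigma_W \in \Stab^{\circ}(\Ku(W))$ factors through $-A_1^{\oplus 2}$, so agreement on that sublattice is enough. Concretely, the paper splits the lifting into two pieces: the slicings are recovered from those of $\sigma_X^i$ via the double-invariants construction (your Lemma~\ref{lemma_backtosamestability} reasoning), while the central charges are matched using this factorization together with \cite[Proposition 5.7]{BP} identifying the $\bZ/2$-invariant sublattice with $-A_1^{\oplus 2}$. Once you add this ingredient, your argument closes.
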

\begin{proof}
Let $X$ be the opposite GM variety to $W$ (see Example~\ref{ex_GM}), so $\Ku(W)$ is the CY2 cover of $\Ku(X)$. By Theorem~\ref{thm_invariantstab} and Lemma~\ref{lemma_fromspecialtoordinary} the stability conditions $\sigma^1_W$ and $\sigma^2_W$ are $\bZ/2$-fixed, thus by Theorem~\ref{theorem-equivariant-stability} they induce stability conditions $\sigma^1_X, \sigma^2_X$ on $\Ku(X)$, which are fixed by the $\bZ/2$-action. The generator for this action is the $(-2)$-shifted Serre functor of $\Ku(X)$. By \cite[Corollary 4.5]{PR} there exists $(g, M) \in \widetilde{\GL}_2^{+}(\bR)$ such that $\sigma^2_X= \sigma^1_X \cdot (M,g)$. 

As a consequence, if $\cP_{\sigma_W^i}$ and $\cP_{\sigma_X^i}$ denote the slicings of $\sigma_W^i$ and $\sigma_X^i$, respectively, then 
$$\cP_{\sigma_W^2}(\phi)=\Forg(\cP_{\sigma_X^2}(\phi))=\Forg(\cP_{\sigma_X^1}(g(\phi)))=\cP_{\sigma_W^1}(g(\phi))$$
for every phase $\phi \in \bR$. Moreover, since the central charges of the $\sigma_X^i$'s satisfy $Z^2_X=M^{-1}\circ Z^1_X$, and $Z^i_X=Z^i_W \circ \bv \circ \Forg_*$, where $\bv \colon \rK(\Ku(W)) \to \Knum(\Ku(W)))$, then 
$$Z^2_W \circ \bv \circ \Forg_*=M^{-1} \circ Z^1_W \circ \bv \circ \Forg_*.$$
This implies that $Z^2_W$ and $M^{-1} \circ Z^1_W$ coincide on the image of the forgetful functor in $\Knum(\Ku(W))$, which is equal to the invariant lattice with respect to the $\bZ/2$-action. By \cite[Proposition 5.7]{BP} this lattice is the canonical sublattice $-A_1^{\oplus 2}$. By \cite[Lemma 4.17]{PPZ} the central charges $Z_W^i$'s factors through the $-A_1^{\oplus 2}$-lattice. We conclude  $Z^2_W=M^{-1} \circ Z^1_W$. 
\end{proof}

\subsection{Proof of Theorem~\ref{theorem-fixed-locus}}

Assume first that $W$ is ordinary. Then there is a special GM fourfold $W'$ and an equivalence $\Ku(W') \cong \Ku(W)$ which intertwines $\iota$ and the rotation functor $\sO_{W}$ by Lemma \ref{lemma_fromspecialtoordinary}. Thus if $\sigma'$ denotes the stability condition on $\Ku(W')$ corresponding to $\sigma \in \Stab^\circ(\Ku(W))$ under this equivalence and $w' \in \Knum(\Ku(W'))$ denotes the class corresponding to $w \in \Knum(\Ku(W))$, then the fixed loci $M_\sigma(\Ku(W), w)^{\langle \sO_{W} \rangle}$ and $M_{\sigma'}(\Ku(W'), w')^{\langle \iota \rangle}$ are identified by the isomorphism $M_\sigma(\Ku(W), w)\cong M_{\sigma'}(\Ku(W'), w')$ induced by $\Ku(W') \simeq \Ku(W)$. Thus it is enough to prove the result for special GM fourfolds.

So assume that $W$ is special, and we denote by $X$ its branch GM threefold. By Theorem~\ref{thm_invariantstab} and Theorem~\ref{theorem-equivariant-stability} the stability condition $\sigma$ induces a stability condition $\sigma_X$  on $\Ku(X)$ defined as in \eqref{eq_stabinducedonordinaryGM}, which is Serre invariant by Corollary~\ref{cor_sigmaassigmaX}. On the other hand, since $w$ is primitive, there exist coprime integers $p, q$ such that $w=p\lambda_1+q\lambda_2$. By Lemma~\ref{lemma_Forgkappa} we have $(\Forg \circ \Phi_0)_*(p \kappa_1+q \kappa_2)=w$, and by Theorem~\ref{theorem-Msigma-GM}\eqref{nonempty-GM} the moduli stack $\cM_{\sigma_X}(\Ku(X), p \kappa_1+q \kappa_2)$ is nonempty.  
By \cite[Proposition 5.7]{BP} the lattice $-A_1^{\oplus 2}$ is fixed by $\iota$, so $w$ is $\iota$-fixed. Then by Theorem~\ref{theorem-moduli}\eqref{theorem-CY2-moduli} there is a surjective double cover 
$$\bigsqcup_{(\Forg \circ \Phi_0)_*(v) = w} \cM_{\sigma_X}(\Ku(X), v) \to \cM_{\sigma}(\Ku(W), w)^{\langle \iota \rangle}.$$
Hence the above morphism maps the nonempty moduli stack $\cM_{\sigma_X}(\Ku(X), p \kappa_1+q \kappa_2)$ to $\cM_{\sigma}(\Ku(W), w)^{\langle \iota \rangle}$, implying that the latter is nonempty as well.

Since $\cM_{\sigma}(\Ku(W), w) \to M_{\sigma}(\Ku(W), w)$ is a $\bG_m$-gerbe, we obtain that the fixed locus of $\iota$ in $M_{\sigma}(\Ku(W), w)$ is nonempty. The fact that it is a smooth Lagrangian subvariety follows from \cite[Lemma 1]{Beauville}.

\begin{remark}
Note that the projectivity of the moduli space in Theorem~\ref{theorem-Msigma-GM}\eqref{Msigma-GM-smooth} can be obtained by applying the strategy used in the proof of Theorem~\ref{theorem-fixed-locus}. Indeed, we can assume that $X$ is generic, in the sense that it is the branch of a very general GM fourfold $W$. Then every $\sigma \in \Stab^\circ(\Ku(W))$ is $(\Forg \circ \Phi_0)_*(v)$-generic. Hence by \cite{PPZ} the moduli space $M_{\sigma}(\Ku(W), w)$ is projective for $w:=(\Forg \circ\Phi_0)_*(v)$. Thus the fixed locus of $\iota$ in $M_{\sigma}(\Ku(W), w)$ is projective and since we have a surjective double covering
$$\bigsqcup_{(\Forg \circ \Phi_0)_*(v') = w} \cM_{\sigma_X}(\Ku(X), v') \to \cM_{\sigma}(\Ku(W), w)^{\langle \iota \rangle},$$
we conclude that $M_{\sigma_X}(\Ku(X), v)$ is projective.
\end{remark}


\newcommand{\etalchar}[1]{$^{#1}$}
\providecommand{\bysame}{\leavevmode\hbox to3em{\hrulefill}\thinspace}
\providecommand{\MR}{\relax\ifhmode\unskip\space\fi MR }
\providecommand{\MRhref}[2]{%
  \href{http://www.ams.org/mathscinet-getitem?mr=#1}{#2}
}
\providecommand{\href}[2]{#2}



\begin{thebibliography}{BMMS12}

\bibitem[ADPZ17]{rational-point-twisted}
Kenneth Ascher, Krishna Dasaratha, Alexander Perry, and Rong Zhou,
  \emph{Rational points on twisted {K}3 surfaces and derived equivalences},
  Brauer groups and obstruction problems, Progr. Math., vol. 320,
  Birkh\"{a}user/Springer, Cham, 2017, pp.~13--28.

\bibitem[APR19]{rota}
Matteo Altavilla, Marin Petkovic, and Franco Rota, \emph{Moduli spaces on the
  {K}uznetsov component of {F}ano threefolds of index $2$}, arXiv:1908.10986,
  2019.

\bibitem[BBF{\etalchar{+}}20]{feyz:desing}
Arend Bayer, Sjoerd Beentjes, Soheyla Feyzbakhsh, Georg Hein, Diletta
  Martinelli, Fatemeh Rezaee, and Benjamin Schmidt, \emph{The desingularization
  of the theta divisor of a cubic threefold as a moduli space},
  arXiv:2011.12240, 2020.

\bibitem[Bea83]{Beauville:remarksonc1zero}
Arnaud Beauville, \emph{Some remarks on {K}\"ahler manifolds with {$c_{1}=0$}},
  Classification of algebraic and analytic manifolds ({K}atata, 1982), Progr.
  Math., vol.~39, Birkh\"auser Boston, Boston, MA, 1983, pp.~1--26.

\bibitem[Bea11]{Beauville}
\bysame, \emph{Antisymplectic involutions of holomorphic symplectic manifolds},
  J. Topol. \textbf{4} (2011), no.~2, 300--304.

\bibitem[BK89]{Bondal:representable-functors-serre-functor}
A.~I. Bondal and M.~M. Kapranov, \emph{Representable functors, {S}erre
  functors, and reconstructions}, Izv. Akad. Nauk SSSR Ser. Mat. \textbf{53}
  (1989), no.~6, 1183--1205, 1337.

\bibitem[BLM{\etalchar{+}}21]{BLMNPS}
Arend Bayer, Mart\'{\i} Lahoz, Emanuele Macr\`\i, Howard Nuer, Alexander Perry,
  and Paolo Stellari, \emph{Stability conditions in families}, Publ. Math.
  Inst. Hautes \'{E}tudes Sci. \textbf{133} (2021), 157--325.

\bibitem[BLMS19]{BLMS}
Arend Bayer, Mart\'{\i} Lahoz, Emanuele Macr\`{\i}, and Paolo Stellari,
  \emph{Stability conditions on {K}uznetsov components}, (Appendix joint also
  with X. Zhao), To appear in Ann. Sci. Ec. Norm. Super., arXiv:1703.10839,
  (2019).

\bibitem[BM14]{bayer-macri-projectivity}
Arend Bayer and Emanuele Macr\`\i, \emph{Projectivity and birational geometry
  of {B}ridgeland moduli spaces}, J. Amer. Math. Soc. \textbf{27} (2014),
  no.~3, 707--752.

\bibitem[BMMS12]{BMMS:categoricalcubic3}
Marcello Bernardara, Emanuele Macr\`\i, Sukhendu Mehrotra, and Paolo Stellari,
  \emph{A categorical invariant for cubic threefolds}, Adv. Math. \textbf{229}
  (2012), no.~2, 770--803.

\bibitem[BMS16]{BMS:stabCY3s}
Arend Bayer, Emanuele Macr{\`{\i}}, and Paolo Stellari, \emph{The space of
  stability conditions on abelian threefolds, and on some {C}alabi-{Y}au
  threefolds}, Inventiones Mathematicae \textbf{206} (2016), 1--65.

\bibitem[BMT14]{BMT:3folds-BG}
Arend Bayer, Emanuele Macr{\`{\i}}, and Yukinobu Toda, \emph{Bridgeland
  stability conditions on threefolds {I}: {B}ogomolov-{G}ieseker type
  inequalities}, J. Algebraic Geom. \textbf{23} (2014), no.~1, 117--163.

\bibitem[BO20a]{BO}
Thorsten Beckmann and Georg Oberdieck, \emph{Equivariant categories of
  symplectic surfaces and fixed loci of {B}ridgeland moduli spaces}, to appear
  in Algebraic Geometry, arXiv:2006.13899 (2020).

\bibitem[BO20b]{BO-equivariant}
\bysame, \emph{On equivariant derived categories}, arXiv:2006.13626 (2020).

\bibitem[Bon89]{bondal}
A.~I. Bondal, \emph{Representations of associative algebras and coherent
  sheaves}, Izv. Akad. Nauk SSSR Ser. Mat. \textbf{53} (1989), no.~1, 25--44.

\bibitem[BP22]{BP}
Arend Bayer and Alexander Perry, \emph{Kuznetsov's {F}ano threefold conjecture
  via {K}3 categories and enhanced group actions}, arXiv:2202.04195 (2022).

\bibitem[Bri07]{bridgeland}
Tom Bridgeland, \emph{Stability conditions on triangulated categories}, Ann. of
  Math. (2) \textbf{166} (2007), no.~2, 317--345.

\bibitem[Bri08]{bridgeland-K3}
\bysame, \emph{Stability conditions on {$K3$} surfaces}, Duke Math. J.
  \textbf{141} (2008), no.~2, 241--291.

\bibitem[Coh16]{dg-versus-stable}
Lee Cohn, \emph{Differential graded categories are $k$-linear stable infinity
  categories}, arXiv:1308.2587 (2016).

\bibitem[Del23]{dell}
Hannah Dell, \emph{Stability conditions on free abelian quotients}, in
  preparation (2023).

\bibitem[DK18]{DebKuz:birGM}
Olivier Debarre and Alexander Kuznetsov, \emph{Gushel-{M}ukai varieties:
  classification and birationalities}, Algebraic Geometry \textbf{5} (2018),
  no.~1, 15--76.

\bibitem[Ela15]{Elagin}
Alexey Elagin, \emph{On equivariant triangulated categories}, arXiv:1403.7027
  (2015).

\bibitem[FMOS22]{FMOGS}
Laure Flapan, Emanuele Macr\`{\i}, Kieran O'Grady, and Giulia Sacc\`a,
  \emph{The geometry of antisymplectic involutions, {I}}, Math. Z. (special
  issue in honor of Olivier Debarre) \textbf{300} (2022), 3457--3495.

\bibitem[FP23]{FP}
Soheyla Feyzbakhsh and Laura Pertusi, \emph{Serre-invariant stability
  conditions and ulrich bundles on cubic threefolds}, \'Epijournal de
  Géométrie Algébrique \textbf{7} (2023), no.~1, 1--32.

\bibitem[GLT15]{GLT}
Francois Greer, Zhiyuan Li, and Zhiyu Tian, \emph{Picard groups on moduli of
  {K}3 surfaces with {M}ukai models}, Int. Math. Res. Not. IMRN (2015), no.~16,
  7238--7257.

\bibitem[GLZ22]{GLZ}
Hanfei Guo, Zhiyu Liu, and Shizhuo Zhang, \emph{A moduli theoretic approach to
  lagrangian subvarieties of hyperkähler varieties: Examples},
  arXiv:2203.13091 (2022).

\bibitem[Gus83]{gushel}
N.~P. Gushel', \emph{{On {F}ano varieties of genus 6}}, {Izv. Math.}
  \textbf{21} (1983), no.~3, 445--459.

\bibitem[HRS96]{HapReiSm}
Dieter Happel, Idun Reiten, and Sverre~O. Smal{\o}, \emph{Tilting in abelian
  categories and quasitilted algebras}, Mem. Amer. Math. Soc. \textbf{120}
  (1996), no.~575, viii+ 88.

\bibitem[Huy16]{Huy:K3}
Daniel Huybrechts, \emph{Lectures on {K}3 surfaces}, Cambridge Studies in
  Advanced Mathematics, vol. 158, Cambridge University Press, Cambridge, 2016.

\bibitem[JLLZ21a]{JLLZ}
Augustinas Jacovskis, Zhiyu Liu, Xun Lin, and Shizhuo Zhang, \emph{Hochschild
  cohomology and categorical {T}orelli for {G}ushel-{M}ukai threefolds},
  arXiv:2108.02946 (2021).

\bibitem[JLLZ21b]{zhang-hochschild:}
\bysame, \emph{Hochschild cohomology and categorical {T}orelli for
  {G}ushel-{M}ukai threefolds}, arXiv:2108.02946, 2021.

\bibitem[KP17]{KuzPer_cycliccovers}
Alexander Kuznetsov and Alexander Perry, \emph{Derived categories of cyclic
  covers and their branch divisors}, Selecta Math. (N.S.) \textbf{23} (2017),
  no.~1, 389--423.

\bibitem[KP18]{KuzPerry:dercatGM}
\bysame, \emph{Derived categories of {G}ushel-{M}ukai varieties}, Compositio
  Mathematica \textbf{154} (2018), no.~7, 1362--1406.

\bibitem[KP19]{KuzPerry:cones}
\bysame, \emph{Categorical cones and quadratic homological projective duality},
  To appear in Ann. Sci. Ec. Norm. Super., arXiv:1902.09824 (2019).

\bibitem[KP21]{kuznetsov-perry-SD}
\bysame, \emph{Serre functors and dimensions of residual categories},
  arXiv:2109.02026, 2021.

\bibitem[Kuz08]{kuznetsov08quadrics}
Alexander Kuznetsov, \emph{Derived categories of quadric fibrations and
  intersections of quadrics}, Adv. Math. \textbf{218} (2008), no.~5,
  1340--1369.

\bibitem[Kuz09]{Kuz_Fano}
A.~G. Kuznetsov, \emph{Derived categories of {F}ano threefolds}, Tr. Mat. Inst.
  Steklova \textbf{264} (2009), no.~Mnogomernaya Algebraicheskaya Geometriya,
  116--128.

\bibitem[Kuz10]{kuznetsov-cubic}
Alexander Kuznetsov, \emph{Derived categories of cubic fourfolds},
  Cohomological and geometric approaches to rationality problems, Progr. Math.,
  vol. 282, Birkh\"auser Boston, Inc., Boston, MA, 2010, pp.~219--243.

\bibitem[Kuz19]{kuznetsov-CY}
\bysame, \emph{Calabi--{Y}au and fractional {C}alabi--{Y}au categories}, J.
  Reine Angew. Math. \textbf{753} (2019), 239--267.

\bibitem[Lie06]{lieblich-moduli}
Max Lieblich, \emph{Moduli of complexes on a proper morphism}, J. Algebraic
  Geom. \textbf{15} (2006), no.~1, 175--206.

\bibitem[LMS15]{arithmetically-CM-LMS}
Mart\'{\i} Lahoz, Emanuele Macr\`\i, and Paolo Stellari, \emph{Arithmetically
  {C}ohen-{M}acaulay bundles on cubic threefolds}, Algebr. Geom. \textbf{2}
  (2015), no.~2, 231--269.

\bibitem[LPZ22]{LPZ3}
Chunyi Li, Laura Pertusi, and Xiaolei Zhao, \emph{Derived categories of hearts
  on {K}uznetsov components}, arXiv:2203.13864 (2022).

\bibitem[Lur]{HA}
Jacob Lurie, \emph{Higher algebra}, available at
  \url{https://www.math.ias.edu/~lurie/}.

\bibitem[LZ21]{LiuZhang}
Zhiyu Liu and Shizhuo Zhang, \emph{A note on {B}ridgeland moduli spaces and
  moduli spaces of sheaves on $x_{14}$ and $y_3$}, arXiv:2106.01961, 2021.

\bibitem[Mat16]{akhil-galois}
Akhil Mathew, \emph{The {G}alois group of a stable homotopy theory}, Adv. Math.
  \textbf{291} (2016), 403--541.

\bibitem[MMS09]{MMS}
Emanuele Macr\`\i, Sukhendu Mehrotra, and Paolo Stellari, \emph{Inducing
  stability conditions}, J. Algebraic Geom. \textbf{18} (2009), no.~4,
  605--649.

\bibitem[Muk87]{Mukai:BundlesK3}
S.~Mukai, \emph{On the moduli space of bundles on {$K3$} surfaces. {I}}, Vector
  bundles on algebraic varieties ({B}ombay, 1984), Tata Inst. Fund. Res. Stud.
  Math., vol.~11, Tata Inst. Fund. Res., Bombay, 1987, pp.~341--413.

\bibitem[Muk89]{mukai}
Shigeru Mukai, \emph{{Biregular classification of {F}ano {$3$}-folds and {F}ano
  manifolds of coindex {$3$}}}, Proc. Nat. Acad. Sci. U.S.A. \textbf{86}
  (1989), no.~9, 3000--3002.

\bibitem[Nue16]{Nuer}
Howard Nuer, \emph{Projectivity and birational geometry of {B}ridgeland moduli
  spaces on an {E}nriques surface}, Proc. Lond. Math. Soc. (3) \textbf{113}
  (2016), no.~3, 345--386.

\bibitem[O'G97]{OG:weight2}
Kieran O'Grady, \emph{The weight-two hodge structure of moduli spaces of
  sheaves on a k3 surface}, J. Algebraic Geometry \textbf{6} (1997), 599--644.

\bibitem[Per19a]{NCHPD}
Alexander Perry, \emph{Noncommutative homological projective duality}, Adv.
  Math. \textbf{350} (2019), 877--972.

\bibitem[Per19b]{Pert}
Laura Pertusi, \emph{On the double {EPW} sextic associated to a
  {G}ushel-{M}ukai fourfold}, J.\ London Math.\ Soc.\ \textbf{100} (2019),
  no.~1, 83--106.

\bibitem[Per21]{perry-HH}
Alexander Perry, \emph{Hochschild cohomology and group actions}, Math. Z.
  \textbf{297} (2021), no.~3-4, 1273--1292.

\bibitem[Per22]{IHC-CY2}
\bysame, \emph{The integral {H}odge conjecture for two-dimensional
  {C}alabi-{Y}au categories}, Compos. Math. \textbf{158} (2022), no.~2,
  287--333.

\bibitem[Plo07]{Ploog}
David Ploog, \emph{Equivariant autoequivalences for finite group actions}, Adv.
  Math. \textbf{216} (2007), no.~1, 62--74.

\bibitem[Pol07]{polishchuk}
A.~Polishchuk, \emph{Constant families of {$t$}-structures on derived
  categories of coherent sheaves}, Mosc. Math. J. \textbf{7} (2007), no.~1,
  109--134, 167.

\bibitem[PPZ22]{PPZ}
Alexander Perry, Laura Pertusi, and Xiaolei Zhao, \emph{Stability conditions
  and moduli spaces for {K}uznetsov components of {G}ushel--{M}ukai varieties},
  Geometry and Topology \textbf{26} (2022), no.~7, 3055--3121.

\bibitem[PR21]{PR}
Laura Pertusi and Ethan Robinett, \emph{Stability conditions on {K}uznetsov
  components of {G}ushel--{M}ukai threefolds and {S}erre functor}, To appear in
  Mathematische Nachrichten, arXiv:2112.04769, DOI: 10.1002/mana.202200010
  (2021).

\bibitem[PS22]{categorical-torelli-survey}
Laura Pertusi and Paolo Stellari, \emph{Categorical torelli theorems: results
  and open problems}, To appear in Rendiconti del Circolo Matematico di Palermo
  Series 2 DOI:10.1007/s12215-022-00796-x, arXiv:2201.03899 (2022).

\bibitem[PY22]{PY}
Laura Pertusi and Song Yang, \emph{Some remarks on {F}ano three-folds of index
  two and stability conditions}, Int. Math. Res. Not. IMRN (2022), no.~17,
  12940--12983.

\bibitem[Rom05]{Romagny}
Matthieu Romagny, \emph{Group actions on stacks and applications}, Michigan
  Math. J. \textbf{53} (2005), no.~1, 209--236.

\bibitem[Sos12]{sosna}
Pawel Sosna, \emph{Linearisations of triangulated categories with respect to
  finite group actions}, Math. Res. Lett. \textbf{19} (2012), no.~5,
  1007--1020.

\bibitem[{Sta}23]{stacks-project}
The {Stacks Project Authors}, \emph{\textit{Stacks Project}},
  \url{https://stacks.math.columbia.edu}, 2023.

\bibitem[VP21]{Villa}
David Villalobos-Paz, \emph{Moishezon spaces and projectivity criteria},
  arXiv:2105.14630 (2021).

\bibitem[Yos99]{Yoshioka:Irreducibility}
K{\=o}ta Yoshioka, \emph{Irreducibility of moduli spaces of vector bundles on
  {K3} surfaces}, 1999.

\bibitem[Yos01]{Yoshioka:Abelian}
K{\=o}ta Yoshioka, \emph{Moduli spaces of stable sheaves on abelian surfaces},
  Math. Ann. \textbf{321} (2001), no.~4, 817--884.

\bibitem[Zha21]{Zhang}
Shizhuo Zhang, \emph{Bridgeland moduli spaces for {G}ushel--{M}ukai threefolds
  and {K}uznetsov's {F}ano threefold conjecture}, arXiv:2012.12193 (2021).

\end{thebibliography}
\end{document}